\newtheorem{lemma}{Lemma}[section]
\newtheorem{corollary}[lemma]{Corollary}
\newtheorem{theorem}[lemma]{Theorem}
\newtheorem{proposition}[lemma]{Proposition}
\newtheorem{example}[lemma]{Example}
\newtheorem{remark}[lemma]{Remark}
\theoremstyle{definition}
\newtheorem{definition}[lemma]{Definition}
\numberwithin{equation}{section}
\renewcommand{\epsilon}{\varepsilon}
\renewcommand{\phi}{\varphi}
\renewcommand{\theta}{\vartheta}
\DeclareMathOperator*{\tensor}{\otimes}
\DeclareMathOperator{\Hol}{Hol}
\newcommand{\F}{\mathbb{F}}
\newcommand{\C}{\mathbb{C}}
\newcommand{\oo}{\otimes}
\newcommand{\id}{\mathrm{id}}
\newcommand{\End}{\mathrm{End}}
\newcommand{\HSpace}{\mathcal{N}}
\newcommand{\st}[0]{{\bf s}}
\newcommand{\ta}[0]{{\bf t}}
\newcommand{\inv}[0]{{-1}}
\begin{document}

\def\mytitle{Defects and excitations in the Kitaev model}
\author{Thomas Vo\ss}

%

\begin{center}
  {\huge\mytitle}

  \vspace{2em}

  {\large
   Thomas Vo\ss\footnote{{\tt thomas.voss@uni-hamburg.de}} 
 }

 Fachbereich Mathematik \\
  Universit\"at Hamburg \\
  Bundesstra\ss e 55, Hamburg, Germany\\[+2ex]

{May 1, 2022}

  \begin{abstract}
	  We construct a Kitaev model with defects using twists or 2-cocycles of semi-simple, finite-dimensional Hopf algebras as defect data. This data is derived by applying Tannaka duality to Turaev-Viro topological quantum field theories with defects. From this we also derive additional conditions for moving, fusing and braiding excitations in the Kitaev model with defects. We give a description of excitations in the Kitaev model and show that they satisfy conditions we derive from Turaev-Viro topological quantum field theories with defects. Assigning trivial defect data one obtains transparent defects and we show that they can be removed, yielding the Kitaev model without defects.
  \end{abstract}
\end{center}
\tableofcontents
\section{Introduction}

Kitaev's \emph{lattice model} or \emph{quantum double model} was introduced by Kitaev \cite{Ki} as a realistic physics model for a topological quantum computer. The models provide codes that are protected against errors by topological effects. For this reason, they are investigated extensively in topological quantum computing and in condensed matter physics. 
In \cite{BK12} a close relation between the Kitaev model and topological quantum field theories (TQFTs) of Turaev-Viro type \cite{TV, BW} was shown - both assign the same vector spaces to oriented surfaces.
These TQFTs are of mathematical interest as they produce invariants of two- and three-dimensional manifolds. It was discovered in \cite{KKR} that Turaev-Viro  invariants \cite{TV, BW} can be used to define error correcting quantum codes, providing an additional link to Kitaev models.

The goal of this article is to introduce \emph{topological defects} in Kitaev models. These defects are of interest in topological quantum computing and condensed matter physics, because they are expected to occur in practical realizations of these and related models, and this requires a theoretical understanding of their effects \cite{BD, KK}.
From the mathematical perspective, Kitaev models with defects are of interest as they would describe the two-dimensional parts of Turaev-Viro-TQFTs with defects. A TQFT is by definition a symmetric monoidal functor $\mathcal Z: \text{Cob}_n\to \mathcal C$ from a cobordism category $\text{Cob}_{n}$ into a symmetric monoidal category $\mathcal C$, e.g. $\mathcal{C}=\mathrm{Vect}$.
In a defect TQFT the cobordism category $\text{Cob}_{n}$ is replaced by a modified cobordism category in which the $(n-1)$-manifolds are equipped with distinguished submanifolds that are assigned higher categorical data. These submanifolds can intersect with the incoming and outgoing boundaries of a cobordism and a Kitaev model with defects thus can contribute to the understanding of defect TQFTs.
%
\\\\
\textbf{Kitaev lattice models and TQFTs}\\
	\\
	 Kitaev's original lattice model  from~\cite{Ki} was based on the algebraic data of a group algebra of a finite group. 
	This model was then then generalized to finite-dimensional semisimple Hopf-$*$ algebras $H$ in~\cite{BMCA}, and the relation between the model for $H$ and its dual $H^{*}$ was clarified in~\cite{BCKA}.  	Further generalizations of Kitaev models were developed in~\cite{Ch} to a model based on unitary quantum groupoids and more recently in~\cite{KMM}, to a model based on crossed modules of semisimple finite-dimensional Hopf algebras.
	\\
	The ingredients of the Kitaev model from \cite{BMCA} are a finite-dimensional semisimple Hopf-$*$ algebra $H$ and a \emph{ribbon graph} $\Gamma$ or, equivalently, a \emph{embedded graph} on a  surface $\Sigma$. 
	From this data the model constructs an \emph{extended Hilbert space} $\HSpace$ by assigning a copy of the Hopf algebra $H$ to every edge $e$ of $\Gamma$.\\
	\\
	Every pair of a face and an adjacent vertex of $\Gamma$, usually called \emph{site}, defines an action of $H$ on $\HSpace$ in terms of \emph{vertex operators} and an action of $H^{*}$ on $\HSpace$ in terms of \emph{face operators}. 
	Together, the face and vertex operators define an action of the Drinfel'd double or quantum double $D(H)$ of $H$ on $\HSpace$. This action is local, i.e. it only affects the copies of $H$ assigned to edges in that face or incident at that vertex, and the actions for sites with distinct pairs of faces and vertices commute. 
	For this reason, the model is sometimes referred to as \emph{quantum double model}. \\
	\\
	The invariants of the $D(H)$-actions associated with all sites in $\Gamma$ form the \emph{protected space} $\mathcal{L}$. It was shown in \cite{Ki, BMCA} that it is a topological invariant of the surface: it depends only on the surface, but not on the choice of the ribbon graph in the definition of the model. 
	\emph{Ribbon operators} are operators on the Hilbert space $\HSpace$ associated to ribbons in the graph. When applied to the protected space, they generate a pair of non-trivial $D(H)$-modules at their endpoints, the \emph{excitations}.
	Quantum computation can be performed by moving and braiding these excitations~\cite{Ki}.\\
	\\
	The Turaev-Viro TQFT~\cite{TV,BW} based on the fusion category $H\text{-Mod}$ also assigns a topological invariant $Z_{TV}(\Sigma)$ to the surface $\Sigma$. It was shown in~\cite{BK12} that this topological invariant coincides with the protected space $\mathcal{L}$ of the Kitaev model.
	
	Excitations in the Kitaev model were also investigated in \cite{BK12} and interpreted as a special type of boundary structure in the model.  These insights were also used in~\cite{BA,Kr}   to relate Kitaev models to another class of models from topological quantum computing, the  Levin-Wen models \cite{LW}. 
\\
	\\
	\textbf{Kitaev models and TQFTs with defects}\\
	\\
	As explained above, defects and also boundaries are of interest both from the viewpoint of Kitaev models and from the viewpoint of TQFTs.
	In a Kitaev model based on group algebras of finite groups defects and boundaries have first been described in~\cite{BD}. 
	In~\cite{KK} Kitaev and Kong determined the categorical data for defects in a Levin-Wen model. 
	Bulk regions, i.e. the regions separated by defects and bordered by boundaries, are labeled with unitary tensor categories $\mathcal{C},\mathcal{D}$ and defects are labeled with a $\mathcal{C}\mathrm{-}\mathcal{D}$-bimodule category $\mathcal{N}$.
	\\
	\\
	A Kitaev model with general defects was constructed very recently by Koppen~\cite{K} using the Hopf algebraic counterpart of this data.
	Here bulk regions are labeled with finite-dimensional semisimple Hopf algebras $H_{1},H_{2}$ and defects are labeled with finite-dimensional, semisimple $(H_{1},H_{2})$-bicomodule algebras.
	\\
	\\
	However, in this article we are interested in a more specific type of defects in a Kitaev model  that behave well with respect to excitations and can be viewed as the two-dimensional part of a Turaev-Viro TQFT with \emph{topological defects and boundaries}. Three-dimensional TQFTs of Turaev-Viro and Reshetikhin-Turaev type with \emph{topological defects and boundaries}
were investigated by Fuchs, Schweigert and Valentino in \cite{FSV}. 	They use physics considerations on the braiding, fusion and transport of excitations to determine the categorical data for these defects. More specifically, the categorical data for Turaev-Viro TQFTs with topological boundaries and defects from~\cite{FSV} is
	\begin{compactenum}[$\quad\bullet$]
		\item the center $\mathcal{Z}(\mathcal{A})$ of a fusion category $\mathcal{A}$ for every bulk region,
		\item a fusion category $\mathcal{W}_{a}$ (resp. $\mathcal{W}_{d}$) for every topological boundary $a$ (resp. topological surface defect $d$),
		\item a braided equivalence $\widetilde{F}_{\to a}:\mathcal{Z}(\mathcal{A}) \to \mathcal{Z}(\mathcal{W}_{a})$ for every pair of a topological boundary labeled with $\mathcal{W}_{a}$ and an adjacent bulk labeled with $\mathcal{Z(A)}$,
		\item a braided equivalence $\widetilde{F}_{\to d \leftarrow}: \mathcal{Z}(\mathcal{A}_1)\boxtimes \mathcal{Z}(\mathcal{A}_2)^{rev} \to \mathcal{Z}(\mathcal{W}_{d})$ for every topological surface defect labeled with $\mathcal{W}_{d}$ separating bulk regions labeled with $\mathcal{Z}(\mathcal{A}_{1})$ and $\mathcal{Z}(\mathcal{A}_2)$, composed of  braided monoidal functors $\widetilde{F}_{\to d}:\mathcal{Z}\left( \mathcal{A}_{1} \right) \to \mathcal{Z}(\mathcal{W}_{d})$ and $\widetilde{F}_{d \leftarrow}:\mathcal{Z}\left( \mathcal{A}_{2} \right)^{rev} \to \mathcal{Z}(\mathcal{W}_{d})$.
\end{compactenum}
There are three types of excitation in a Turaev-Viro TQFT with topological boundaries and defects:
\begin{compactenum}[$\quad\bullet$]
	\item bulk excitations are objects of $\mathcal{Z(A)}$,
	\item boundary excitations are objects of $\mathcal{W}_{a}$,
	\item defect excitations are objects of $\mathcal{W}_{d}$.
\end{compactenum}
These excitations can be moved, fused and braided and these procedures obey the following conditions:
\begin{compactenum}[$\quad\bullet$]
	\item moving an excitation $M$ from a bulk region into an adjacent boundary or defect turns $M$ into $\widetilde{F}_{\to a}(M)$, resp. $\widetilde{F}_{\to d}(M)$ or $\widetilde{F}_{d\leftarrow}(M)$.
	\item fusion of excitations in a bulk region, boundary or defect uses the tensor product of $\mathcal{Z(A)},\mathcal{W}_{a}$ or $\mathcal{W}_{d}$, respectively.
	\item braiding two bulk excitations uses the braiding of $\mathcal{Z(A)}$,
	\item braiding a bulk excitation with a boundary excitation or defect excitation uses the half-braiding defined by the functors $\widetilde{F}_{\to a}, \widetilde{F}_{\to d}$ or $\widetilde{F}_{d\leftarrow}$.
\end{compactenum}
	The direct and close relation between Turaev-Viro TQFTs and Kitaev models from \cite{BK12} shows that there must be an associated Kitaev model with topological defects and boundaries that satisfies analogous conditions. However, such a model has not been constructed so far, and  we construct it in this article.
	\\
	\\
	\textbf{Summary of the results}
	\\
	\\
	In this article we consider Kitaev lattice models based on finite-dimensional, semisimple Hopf algebras. We extend the relation between Kitaev models and Turaev-Viro TQFTs by generalizing the Kitaev model to a model with \emph{topological defects and boundary conditions}.  
These defects are less general than the defects considered in~\cite{KK,K}, but allow for more structure. 
In~\cite{K} bicomodule algebras are used for defects, whereas our defects are labeled with twisted Hopf algebras. 
This allows us to move and fuse excitations inside a topological defect or boundary and we can braid bulk excitations with defect or boundary excitations.

To obtain suitable conditions for our model, we use the link between Turaev-Viro TQFTs and the Kitaev model without defects \cite{BK12}.
We first illustrate how the categorical data for a Turaev-Viro TQFT relates to the Hopf algebraic data in the Kitaev model.
We then use the categorical data and the conditions for a Turaev-Viro TQFT with topological boundaries and defects from~\cite{FSV} to derive conditions for a Kitaev model with topological defects and boundaries.

The algebraic input for a Turaev-Viro TQFT without defects is the center $\mathcal{Z}(\mathcal{A})$ of a fusion category $\mathcal{A}$, whereas the algebraic input for the Kitaev model without defects is the Drinfel'd double $D(H)$ of a finite-dimensional, semisimple Hopf algebra $H$. 

These two are related by \emph{Tannaka-Krein duality}~\cite{U,S}: 
\begin{compactenum}[$\quad\bullet$]
	\item the category $H\mathrm{-Mod}$ is a fusion category and its center is equivalent to $D(H)\mathrm{-Mod}$ as braided monoidal category, and
	\item a strict fiber functor $\mathcal{A}\to \mathrm{Vect}_{\C}$ defines a finite-dimensional, semisimple Hopf algebra $H$ such that $\mathcal{A}\cong H\mathrm{-Mod}$.
\end{compactenum}

In the Kitaev model, the Hopf algebra $D(H)$ not only enters as input data.
The prominent symmetries of the extended space $\HSpace$ are local $D(H)$-module structures and to every object $M$ of $D(H)\mathrm{-Mod}$ and site $s$ in $\Gamma$ we can assign a subspace $\HSpace(s,M)$ - the excitation of type $M$ at $s$.
The fusion of excitations in the Kitaev model uses the tensor product of $D(H)\mathrm{-Mod}$, i.e. the coalgebra structure of $D(H)$  and the braiding of excitations uses the $R$-matrix of $D(H)$. 

This comprehensive view of the relation between the Kitaev model based on $H$ and the braided monoidal category $D(H)\mathrm{-Mod}$ serves as a starting point for the translation of the conditions for a Turaev-Viro TQFT with topological defects and boundaries into conditions for a Kitaev model with topological defects and boundaries. 

In Section~\ref{section:TranslationKitaev} we apply Tannaka-Krein duality to the categorical data for Turaev-Viro TQFTS with topological defects and boundaries from~\cite{FSV} and obtain the following result:\\
\\
\textbf{Theorem 1:} \emph{ The algebraic data for Kitaev models with topological boundaries and defects is
\begin{compactenum}[$\quad\bullet$]
		\item a Drinfel'd Double $D(H_{b})$ of a complex finite-dimensional, semisimple Hopf algebra $H_{b}$ for every bulk region $b$,
		\item a twist $F_{c}$ of $D(H_{b})$ for every boundary line $c$ adjacent to the bulk region $b$,
		\item a twist $F_{d}$ of $D(H_{b_{1}}) \oo D(H_{b_{2}})$ for every defect line $d$ separating two bulk regions $b_{1}$ and $b_{2}$.
	\end{compactenum}
	\emph{This data is Tannaka dual to the categorical data for Turaev-Viro TQFTS with topological defects and boundaries.} 
}
We then use this data to define a Kitaev model with topological defects and boundaries. 
As a second ingredient our model uses a ribbon graph $\Gamma$ with additional structure for defects and boundaries. 

From this data we construct an extended space $\HSpace$ together with the following local operators 
\begin{compactenum}[$\quad\bullet$]
	\item For every bulk region $b$ and every boundary adjacent to $b$: Local operators which define a $D(H_{b})$-module structure on the extended space $\HSpace$.
	\item For every defect $d$ separating two boundary regions $b_{1}$ and $b_{2}$: Local operators which define a $D(H_{b_{1}}) \oo D(H_{b_{2}})$-module structure on the extended space $\HSpace$.
\end{compactenum}
Excitations in this model then are either $D(H_{b})$-modules or $D(H_{b_{1}}) \oo D(H_{b_{2}}) $-modules.

We implement the movement, fusion (i.e. the tensor product) and the braiding of excitations in our model by a \emph{transport operator} $T_{\rho}:\HSpace\to \HSpace$. 
It depends on a path $\rho:s_{1}\to s_{2}$ in the thickened graph $D(\Gamma)$ and moves excitations from the site $s_{1}$ to $s_{2}$ and fulfills the following conditions we derived from the conditions for a Turaev-Viro TQFT with topological boundaries and defects in~\cite{FSV}:\\
\\
\textbf{Theorem 2:}\emph{ The map $T_{\rho}$ fuses excitations $M_{1}$ at $s_{1}$ and $M_{2}$ at $s_{2}$ into $M_{2} \oo M_{1}$ at $s_{2}$, where $ \oo $ is the tensor product
		\begin{compactenum}[$\quad\bullet$]
			\item of $D(H_{b})\mathrm{-Mod}$, if $s_{2}$ lies in a bulk region,
			\item of $D(H_{b})_{F_{c}}\mathrm{-Mod}$, if $s_{2}$ lies in a boundary line,
			\item of $\left( D(H_{b_{1}}) \oo D(H_{b_{2}}) \right)_{F_{d}}\mathrm{-Mod}$, if $s_{2}$ lies in a defect line.
		\end{compactenum}
		There is a path $\rho':s_{1}\to s_{2}$ canonically related to $\rho$ such that $T_{\rho'}$ fuses $M_{1}$ and $M_{2}$ into $M_{1} \oo M_{2}$. $T_{\rho}$ and $T_{\rho'}$ only differ up to a braiding
		\begin{compactenum}[$\quad\bullet$]
			\item of $D(H_{b})\mathrm{-Mod}$, if $s_{2}$ lies in a bulk region,
			\item of $D(H_{b})_{F_{c}}\mathrm{-Mod}$, if $s_{2}$ lies in a boundary line,
			\item of $\left( D(H_{b_{1}}) \oo D(H_{b_{2}}) \right)_{F_{d}}\mathrm{-Mod}$, if $s_{2}$ lies in a defect line.
		\end{compactenum}
	}
These two theorems show that our model can indeed be considered a Kitaev model counterpart to a Turaev-Viro TQFT with topological boundaries and defects.

We conclude this article by an examination of \emph{transparent defects}. 
These are trivial defects between two bulk regions labeled with the same Hopf algebra $H$.
We show that these defects can be removed and doing so in a model with only transparent defects, one obtains the Kitaev model without defects and boundaries from \cite{BMCA}.
\\
\\
\textbf{Structure of the article}
\\
\\
In Section~\ref{section:HopfAlgebras} we introduce the required background on Hopf algebras and tensor categories. 
Section~\ref{sec:ribbon} contains the relevant information on ribbon graphs.
In section~\ref{section:KitaevModel} we present the Kitaev model based on a finite-dimensional semisimple Hopf algebra from~\cite{BMCA}. 

In Section~\ref{section:FSVSummary} we summarize the categorical data for a Turaev-Viro-TQFT with topological defects and boundaries from~\cite{FSV} and use Tannaka-Krein reconstruction to obtain corresponding Hopf algebraic data (Theorem~1). 
\\
In Section~\ref{section:TranslationKitaev} we then determine how this Hopf algebraic data should appear in a Kitaev model with topological defects and boundaries. 
We translate the conditions for moving, fusion and braiding of excitations from~\cite{FSV} into conditions on a Kitaev model with topological defects and boundaries.
\\
Section~\ref{sec:defect model} constructs a Kitaev model with topological boundaries and defects based on the data we determined in section~\ref{section:FSVSummary}. 
We define an extended space, face and vertex operators, holonomies and a transport operator for moving excitations. 
We then show that the conditions we derived in~\ref{section:TranslationKitaev} are satisfied by our model (Theorem~2).
We conclude this section by relating the Kitaev model without defects from~\cite{BMCA} to a Kitaev model with transparent defects.
\\
\\
\section{Hopf algebras, Heisenberg doubles, modules}
\label{section:HopfAlgebras}
In this section we summarize background on Hopf algebras. Most results we present can be found in standard textbooks such as~\cite{EGNO,Ka,Ma,Mo,R} and otherwise specific citations are given. 

\subsection{Hopf algebras}

In the following we focus on finite-dimensional semisimple Hopf algebras over $\mathbb C$. 
Throughout the article we use Sweedler notation without summation signs  and write $\Delta(h)=h_{(1)}\otimes h_{(2)}$ for for the comultiplication. 
For a Hopf algebra $H$ we denote by  $H^{op}$, $H^{cop}$  and $H^{op,cop}$  the Hopf algebras with the opposite multiplication, comultiplication and the opposite multiplication and comultiplication. 

We denote by $\langle\;,\;\rangle: H^*\oo H\to \C$, $\langle \alpha, h\rangle=\alpha(h)$ the pairing between $H$ and $H^*$ and use the same symbol for the induced pairing $\langle\;,\;\rangle: H^{*\otimes n}\oo H^{\otimes n}\to\C$. Throughout the article, we use Roman letters for elements of $H$ and Greek letters for elements of $H^*$. 

For any finite-dimensional $H$-left module $M$ with the $h\in H$ acting on $m\in M$ denoted by $h \vartriangleright m$, the dual vector space $M^{*}$ is an $H$-right module and an $H$-left module.  The action of $h\in H$ on $\alpha\in M^{*}$ are given by
\begin{align*}
	\alpha  \vartriangleleft h = \alpha\left( h  \vartriangleright (\cdot)  \right), \qquad h  \vartriangleright \alpha = \alpha\left( S(h)  \vartriangleright (\cdot) \right)
\end{align*}

By the Artin-Wedderburn theorem, see for instance \cite[Chapter II]{Kn}, any finite-dimensional semisimple complex Hopf algebra  can be decomposed as a direct sum of a set of representatives of irreducible $H$-modules tensored with their duals. 

\begin{theorem}[Artin-Wedderburn]
	Let $H$ a semisimple finite-dimensional Hopf algebra and let $\mathrm{Irr}(H)$ be a system of representatives of irreducible $H$-modules. Then there is an isomorphism of bimodules
	\begin{align}
		H \cong \bigoplus_{s \in \mathrm{Irr}(H)} s \oo s^{*},
		\label{eq:ArtinWedderburn}
	\end{align}
	where $H$ is equipped with the bimodule structure by left and right multiplication and $s \oo s^{*}$  with the left action of $H$ on $s$ and the right action on $s^{*}$ dual to the left action on $s$.
	\label{theorem:ArtinWedderburn}
\end{theorem}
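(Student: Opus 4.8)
The plan is to deduce the bimodule isomorphism from classical Artin--Wedderburn structure theory for semisimple algebras, so the Hopf structure plays essentially no role beyond guaranteeing that $H$ is a finite-dimensional semisimple $\mathbb{C}$-algebra. First I would invoke the Artin--Wedderburn theorem in its classical form: since $H$ is semisimple and $\mathbb{C}$ is algebraically closed, there is an algebra isomorphism $H \cong \bigoplus_{s \in \mathrm{Irr}(H)} \mathrm{End}_{\mathbb{C}}(s)$, where $s$ runs over a system of representatives of the simple $H$-modules. The key point is that this is not merely an algebra isomorphism but an isomorphism of $H$-$H$-bimodules, where the left and right actions on $H$ are by multiplication and the actions on $\mathrm{End}_{\mathbb{C}}(s)$ are induced by the algebra map $H \to \mathrm{End}_{\mathbb{C}}(s)$ (pre- and post-composition). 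This follows because the isomorphism $H \to \bigoplus_s \mathrm{End}_{\mathbb{C}}(s)$ is a homomorphism of algebras and hence automatically intertwines left and right multiplication on both sides.

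Next I would rewrite each matrix block $\mathrm{End}_{\mathbb{C}}(s)$ as $s \otimes_{\mathbb{C}} s^{*}$ via the canonical isomorphism $s \otimes s^{*} \xrightarrow{\sim} \mathrm{End}_{\mathbb{C}}(s)$ sending $v \otimes \alpha \mapsto (w \mapsto \alpha(w)\, v)$, which is an isomorphism for finite-dimensional $s$. Under this identification one checks directly that the left multiplication action of $h \in H$ on $\mathrm{End}_{\mathbb{C}}(s)$, namely $f \mapsto (h \vartriangleright (\cdot)) \circ f$, corresponds to the action $h \vartriangleright (v \otimes \alpha) = (h \vartriangleright v) \otimes \alpha$ on $s \otimes s^{*}$; and the right multiplication $f \mapsto f \circ (h \vartriangleright (\cdot))$ corresponds to $(v \otimes \alpha) \vartriangleleft h = v \otimes (\alpha \vartriangleleft h)$, which is exactly the right $H$-module structure on $s^{*}$ recalled earlier in the excerpt via $\alpha \vartriangleleft h = \alpha(h \vartriangleright (\cdot))$. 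Assembling the blocks gives the claimed bimodule isomorphism $H \cong \bigoplus_{s} s \otimes s^{*}$.

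I expect the main (and only genuine) obstacle to be bookkeeping: carefully matching the two one-sided actions through the chain of identifications $H \cong \bigoplus_s \mathrm{End}_{\mathbb{C}}(s) \cong \bigoplus_s s \otimes s^{*}$, and in particular making sure the right action coming from right multiplication in $H$ lands on the \emph{right} $H$-module structure of $s^{*}$ (the one dual to the left action on $s$) rather than the other module structure on $s^{*}$ mentioned in the preliminaries. This is a routine but slightly delicate verification of where evaluation and composition land. There is no deep step: the content is entirely classical semisimple algebra theory, repackaged in the tensor-product notation that will be convenient later in the paper.
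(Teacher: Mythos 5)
Your proposal is correct, and it is essentially the same route as the paper, which gives no proof of its own but simply cites the classical Artin--Wedderburn theorem (via the reference to Knapp): invoke $H\cong\bigoplus_{s}\mathrm{End}_{\mathbb C}(s)$ as algebras, observe that this is automatically a bimodule isomorphism for the actions induced by the algebra map, and identify $\mathrm{End}_{\mathbb C}(s)\cong s\oo s^{*}$ while checking that right multiplication lands on the dual right action $\alpha\vartriangleleft h=\alpha(h\vartriangleright(\cdot))$. Your bookkeeping of the two one-sided actions is exactly the verification needed, so nothing is missing.
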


Recall that by Larson-Radford's theorem a  finite-dimensional Hopf algebra $H$ over $\C$  is semisimple if and only if
$H^*$ is semisimple and if and only if  its antipode is involutive.
Recall also that by Maschke's theorem this is equivalent to the existence of a normalized Haar integral, and that this normalized Haar integral is unique.

\begin{theorem} For every finite-dimensional  semisimple Hopf algebra over $\C$, there is a unique element $\lambda\in H$, the \emph{Haar integral} of $H$, with
	\begin{align}
		\label{eq:HaarIntegral}
	\varepsilon(\lambda)=1,\qquad \lambda\cdot h=h\cdot \lambda=\varepsilon(h)\lambda\quad \forall h\in H.
	\end{align}
\end{theorem}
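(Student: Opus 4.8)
The plan is to dispatch uniqueness by a one-line computation and to obtain existence from Maschke's theorem, i.e.\ from semisimplicity of $H$ (equivalently, from the Artin--Wedderburn decomposition~\eqref{eq:ArtinWedderburn} recorded above).

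For uniqueness, I would note that if $\lambda,\lambda'\in H$ both satisfy~\eqref{eq:HaarIntegral}, then, using that $\lambda'$ is a left integral, that $\lambda$ is a right integral, and that $\varepsilon(\lambda)=\varepsilon(\lambda')=1$,
\begin{align*}
\lambda' = \varepsilon(\lambda)\,\lambda' = \lambda\lambda' = \varepsilon(\lambda')\,\lambda = \lambda ,
\end{align*}
so there is at most one such element.

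For existence, I would first produce a normalized left integral. View $\C$ as the one-dimensional left $H$-module $\C_\varepsilon$ on which $h$ acts as multiplication by $\varepsilon(h)$; then $\varepsilon\colon H\to\C_\varepsilon$ is a surjection of left $H$-modules (with $H$ carrying the regular left action). Since $H$ is semisimple this surjection splits, so there is a left $H$-linear map $\iota\colon\C_\varepsilon\to H$ with $\varepsilon\circ\iota=\id$; setting $\lambda_L:=\iota(1)$ gives $\varepsilon(\lambda_L)=1$ and $h\lambda_L=\iota(\varepsilon(h)1)=\varepsilon(h)\lambda_L$ for all $h$. The same argument for the regular right action yields a normalized right integral $\lambda_R$ with $\varepsilon(\lambda_R)=1$ and $\lambda_R h=\varepsilon(h)\lambda_R$. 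I would then set $\lambda:=\lambda_L\lambda_R$ and check, for all $h\in H$,
\begin{align*}
h\lambda=(h\lambda_L)\lambda_R=\varepsilon(h)\lambda_L\lambda_R=\varepsilon(h)\lambda ,\qquad
\lambda h=\lambda_L(\lambda_R h)=\varepsilon(h)\lambda_L\lambda_R=\varepsilon(h)\lambda ,
\end{align*}
together with $\varepsilon(\lambda)=\varepsilon(\lambda_L)\varepsilon(\lambda_R)=1$, so that $\lambda$ is a Haar integral; by the uniqueness step it is the unique one.

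I do not expect a real obstacle here: the only substantive ingredient is semisimplicity, invoked once to split $\varepsilon$, plus the elementary fact that a product of a left and a right integral is two-sided. A variant I might use instead is to read everything off~\eqref{eq:ArtinWedderburn} directly: since $\varepsilon\circ S=\varepsilon$ the trivial module $\C_\varepsilon$ is self-dual and hence contributes a one-dimensional sub-bimodule $\C_\varepsilon\oo\C_\varepsilon\subseteq H$; its elements are exactly the two-sided integrals, and $\varepsilon$ restricts to a nonzero (hence bijective) map on this line by Schur's lemma, which simultaneously gives existence and uniqueness of the normalization.
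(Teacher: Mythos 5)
Your argument is correct, and it is essentially the standard one: the paper does not prove this statement at all, but simply recalls it as a known consequence of Maschke's theorem (with Larson--Radford quoted alongside), so your proposal supplies exactly the argument the paper leaves implicit. The uniqueness computation $\lambda'=\lambda\lambda'=\lambda$ is the classical one, and your existence step is clean: splitting $\varepsilon\colon H\to\C_\varepsilon$ as a map of left (resp.\ right) $H$-modules uses semisimplicity precisely once, and the product trick $\lambda=\lambda_L\lambda_R$ neatly yields a two-sided normalized integral without having to invoke the (true, but less elementary) fact that a semisimple Hopf algebra over $\C$ is unimodular, the normalization $\varepsilon(\lambda)=\varepsilon(\lambda_L)\varepsilon(\lambda_R)=1$ coming for free since $\varepsilon$ is an algebra map. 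One small caveat on your Artin--Wedderburn variant: identifying the line $\C_\varepsilon\oo\C_\varepsilon\subseteq H$ as the space of two-sided integrals is fine, but the claim that $\varepsilon$ is nonzero on that line is not just Schur's lemma for the bimodule decomposition --- it is essentially the Maschke-type statement that $\varepsilon(\Lambda)\neq 0$ for a nonzero integral in the semisimple case, so as written that aside quietly re-uses what your main argument proves; since your first argument already produces an integral with $\varepsilon=1$ on that line, the variant is redundant rather than wrong.
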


For a given Hopf algebra $H$, we also consider the Drinfel'd double $D(H)$, its dual  and  the Hopf algebras obtained by twisting $H$ and $ D(H)$, see for instance the textbooks \cite{Ma, EGNO, Ka, Mo}.

\begin{definition}
	\label{def:Drinfel'dDouble}
	The Drinfel'd double of a finite-dimensional Hopf algebra $H$ is the quasitriangular Hopf algebra $D(H)=H^{*} \oo H$  with multiplication
\begin{align}
	\left( \alpha \oo h \right)\cdot \left( \beta \oo k \right) := \langle \beta_{(1)} \oo \beta_{(3)} , S^{-1}(h_{(3)}) \oo h_{(1)} \rangle \alpha \beta_{(2)} \oo h_{(2)} k
	\label{eq:Drinfel'dMultiplication}
\end{align}
and comultiplication
\begin{align}
	\Delta\left( \alpha \oo h \right) = \alpha_{(2)} \oo h_{(1)} \oo \alpha_{(1)} \oo h_{(2)}.
\end{align}
Its unit $1_{D(H)}$, counit $\varepsilon:D(H)\to \F$ and antipode $S:D(H) \to D(H)$ are 
\begin{align*}
	1_{D(H)} &= \varepsilon \oo 1_{H},\qquad
	\varepsilon\left( \alpha \oo h \right) = \alpha(1)\varepsilon(h),
\\
	S(\alpha \oo h) &= \langle \alpha_{(1)} \oo  \alpha_{(3)} , h_{(3)} \oo S^{-1}(h_{(1)}) \rangle S^{-1}(\alpha_{(2)})  \oo  S(h_{(2)}).
\end{align*}
Its $R$-Matrix is the element $R\in D(H) \oo D(H)$ given by
\begin{align}\label{eq:Rmatrix}
	R = \sum_{i=1}^{n} \varepsilon \oo a_{i} \oo \alpha_{i}  \oo 1_{H}=: \varepsilon\oo x\oo X\oo 1_H,
\end{align}
where $\left( a_{1},\dots,a_{n} \right)$ is a basis of $H$ with dual basis $\left( \alpha_{1},\dots,\alpha_{n} \right)$ and the  right-hand side is symbolic notation. 
\end{definition}
By the formula of the antipode of $D(H)$ it is easy to see that the antipode of $D(H)$ is involutive, if and only if the antipode of $H$ is involutive. For $H$ finite-dimensional over $\C$ Larson-Radford's theorem then implies that $D(H)$ is semisimple, if and only if $H$ is semisimple.

When using multiple copies of $x \oo X$ in the same equation we distinguish them by using different upper indices $x \oo X=  x^{1} \oo X^{1} = x^{2} \oo X^{2}=\cdots $.

By using Hopf algebra duality and some direct computations, one finds that the dual of the Drinfel'd double is given as follows. 

\begin{remark}
	\label{lemma:DualDrinfel'd}
	The dual of the Drinfel'd double of a finite-dimensional Hopf algebra $H$ is the vector space $D(H)^{*}= H \oo H^{*}$ with multiplication and comultiplication
	\begin{align}
		&\left( h \oo \alpha \right) \left( k \oo \beta \right) = kh \oo \alpha\beta
		\label{eq:Drinfel'dDualMultiplication}\\
	&\Delta_{D(H)^{*}} (h \oo \alpha) = \sum_{i,j=1}^n h_{(1)} \oo \alpha_{i} \alpha_{(1)} \alpha_{j} \oo S(a_{j})h_{(2)}a_{i} \oo \alpha_{(2)}
	\label{eq:Drinfel'dDualComultiplication}\\
	&\qquad\qquad \qquad =: h_{(1)}\oo X^1\alpha_{(1)} X^2\oo S(x^2)h_{(2)} x^1\oo \alpha_{(2)}.\nonumber
\end{align}
Its unit $1_{D(H)^{*}}$, counit $\varepsilon_{D(H)^{*}}$ and antipode $S_{D(H)^{*}}$ are
\begin{align}
	1_{D(H)^{*}}= 1_{H} \oo \varepsilon_{h},\quad \varepsilon_{D(H)^{*}}(h \oo \alpha) = \varepsilon(h)\alpha(1)
	\\
	S_{D(H)^{*}}(h \oo \alpha) = x^{1}S^{-1}(h)x^{(2)} \oo S^{-1}(X^{2})S(\alpha)X^{1}
\end{align}
\end{remark}

The dual of the Drinfel'd double is a special case of a twisted Hopf algebra, namely a twisting of  the Hopf algebra $H\oo H^*$ with the opposite of the universal $R$-matrix in \eqref{eq:Rmatrix}. In the following, we consider more general twistings with unitary cocycles.

\begin{definition}
	\label{def:Twist}
	Let $B$ be a bialgebra.
	A twist (or unitary 2-cocycle) for $B$ is an invertible element $F \in B \otimes B$ such that:
	\begin{align}
		(\varepsilon \otimes \id_{B}) (F) = 1_{B} = (\id_{B} \otimes \varepsilon ) (F)
		\label{eq:TwistEpsilon}\\
		F_{12} \cdot (\Delta \otimes \id_{B}) (F) = F_{23} \cdot (\id_{B} \otimes \Delta) (F)
		\label{eq:TwistDelta}
	\end{align}
	We write $F = F^{(1)} \otimes F^{(2)}$ in Sweedler notation, and $F_{12} := F^{(1)} \otimes F^{(2)} \otimes 1_{B}$, $F_{23} := 1_{B} \otimes F^{(1)} \otimes F^{(2)}$.
\end{definition}
When using multiple copies of the same element $F\in B \oo B$ in the same equation we distinguish them by using different upper indices $F=F^{(1)} \oo F^{(2)}=F^{(3)} \oo F^{(4)}=\cdots$. For instance, Equation~\eqref{eq:TwistDelta} reads in Sweedler notation:
\begin{align*}
	F^{(1)}F^{(3)}_{(1)} \oo F^{(2)}F^{(3)}_{(2)} \oo F^{(4)} = F^{(3)} \oo F^{(1)}F^{(4)}_{(1)} \oo F^{(2)}F^{(4)}_{(2)}
\end{align*}
If $F$ is invertible we similarly write $F^{-1}=F^{(-1)} \oo F^{(-2)}=F^{(-3)} \oo F^{(-4)}=\cdots$.

\begin{example}
	\label{example:ProjectedTwist}
	Let $H$ and $K$ be bialgebras.
	\begin{itemize}
		\item Any universal R-matrix $R\in H \oo H$ also is a twist for $H$.
		\item If $H\subseteq K$ is a sub-bialgebra and $F \in H \oo H$ is a twist for $H$, then $F$ also is a twist for $K$.
		\item Let $F\in H \oo K \oo H \oo K$ a twist for the bialgebra $H \oo K$. Then the projections of $F$ onto $H \oo H$ and $K \oo K$
	\begin{align}
		F_{H} &= \left(\id_{H} \oo \varepsilon_{K} \oo \id_{H} \oo \varepsilon_{K}\right)(F) \in H \oo H
		\label{eq:ProjectedTwistLeft}
		\\
		F_{K} &= \left( \varepsilon_{H} \oo \id_{K} \oo \varepsilon_{H} \oo id_{K} \right)(F) \in K \oo K
		\label{eq:ProjectedTwistRight}
	\end{align}
	are twists for $H$ and $K$, respectively.
	\end{itemize}
\end{example}

Twists allow one to change  a bialgebra or Hopf algebra by modifying its comultiplication and antipode. The following is a standard result, see for instance  \cite[Chapter 5.14]{EGNO}.

\begin{lemma}\label{lemma:TwistedHopfAlgebra}
	Let $H=\left( H,\mu,\eta, \Delta,\varepsilon,S \right)$ be a finite-dimensional Hopf algebra and $F\in H \oo H$ a twist for $H$. Then there is a twisted Hopf algebra $H_{F} = \left( H,\mu_{F}=\mu,\eta_{F}=\eta,\Delta_{F},\varepsilon_{F}=\varepsilon, S_{F} \right)$ with comultiplication and antipode given by
\begin{align*}
	\Delta_{F}(h)&= F\cdot \Delta(h)\cdot F^{-1},\qquad
	S_{F}(h)=Q\cdot S(h) \cdot Q^{-1},
\end{align*}
where
\begin{align}
	\label{eq:TwistDrinfel'dElement}
	Q:=F^{(1)}S(F^{(2)}),\quad
	Q^{-1}=S(F^{(-1)})F^{(-2)}.
\end{align}
If $H$ is quasitriangular with universal $R$-matrix $R$, then $H_F$ is quasitriangular with universal $R$-matrix
\begin{align}	
		R^F=F_{21} R F^{-1}.
	\label{eq:TwistedRMatrix}
\end{align}
We write $\Delta_{F}(h)=h_{(F1)} \oo h_{(F2)}$ in Sweedler notation.
\end{lemma}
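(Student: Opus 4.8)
The plan is to verify each of the three asserted structures in turn, all by direct manipulation of the twist axioms \eqref{eq:TwistEpsilon}--\eqref{eq:TwistDelta} in Sweedler notation. First I would check that $H_F = (H,\mu,\eta,\Delta_F,\varepsilon,S_F)$ is a bialgebra: compatibility of $\mu$ with $\varepsilon$ is unchanged; compatibility of $\mu$ with $\Delta_F$ reduces to the computation $\Delta_F(h)\Delta_F(k) = F\Delta(h)F^{-1}F\Delta(k)F^{-1} = F\Delta(hk)F^{-1} = \Delta_F(hk)$, using that $\Delta$ is an algebra map; counitality of $\Delta_F$ follows from \eqref{eq:TwistEpsilon}, since $(\varepsilon\otimes\id)(F\Delta(h)F^{-1}) = (\varepsilon\otimes\id)(F)\,h\,(\varepsilon\otimes\id)(F^{-1}) = 1\cdot h\cdot 1$, where one first notes $(\varepsilon\otimes\id)(F^{-1}) = 1$ by applying $\varepsilon\otimes\id$ to $FF^{-1}=1$. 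The one genuinely nontrivial point is coassociativity of $\Delta_F$: expanding $(\Delta_F\otimes\id)\Delta_F(h)$ and $(\id\otimes\Delta_F)\Delta_F(h)$, one gets conjugates of $(\Delta\otimes\id)\Delta(h)$ by $F_{12}\,(\Delta\otimes\id)(F)$ and by $F_{23}\,(\id\otimes\Delta)(F)$ respectively, and these agree precisely because of the $2$-cocycle condition \eqref{eq:TwistDelta}; this is the step I expect to be the main bookkeeping obstacle, though it is standard.

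Next I would verify the antipode axiom for $S_F(h) = Q\,S(h)\,Q^{-1}$ with $Q = F^{(1)}S(F^{(2)})$. The key algebraic fact needed is the identity $Q^{-1} = S(F^{(-1)})F^{(-2)}$ asserted in \eqref{eq:TwistDrinfel'dElement}: one checks $F^{(1)}S(F^{(2)})\,S(F^{(-1)})F^{(-2)} = 1$ and the reverse, using the antHopf property $S(ab)=S(b)S(a)$, the antipode relation for $S$, and the cocycle condition in the rearranged form obtained by applying $\id\otimes\varepsilon\otimes\id$ or $\varepsilon\otimes\id\otimes\id$ to \eqref{eq:TwistDelta}. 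Granting that, one computes $\mu(S_F\otimes\id)\Delta_F(h) = \mu((Q S(\cdot)Q^{-1})\otimes\id)(F\Delta(h)F^{-1})$; expanding with $F = F^{(1)}\otimes F^{(2)}$, $F^{-1} = F^{(-1)}\otimes F^{(-2)}$ and the definition of $Q$, the middle factors $Q^{-1}$ and $Q$ telescope against the twist legs, and the inner $\mu(S\otimes\id)\Delta(h) = \varepsilon(h)1$ collapses everything to $\varepsilon(h)1$; the convolution inverse on the other side is symmetric. This again is a standard but somewhat delicate computation.

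For the quasitriangular case, I would take $R^F := F_{21}RF^{-1}$ and verify the three defining properties of a universal $R$-matrix for $H_F$: $\Delta_F^{\mathrm{op}}(h) = R^F\Delta_F(h)(R^F)^{-1}$, and the two hexagon-type identities $(\Delta_F\otimes\id)(R^F) = R^F_{13}R^F_{23}$, $(\id\otimes\Delta_F)(R^F) = R^F_{13}R^F_{12}$. The first is immediate: conjugating $\Delta_F(h) = F\Delta(h)F^{-1}$ by $F_{21}RF^{-1}$ gives $F_{21}R\Delta(h)R^{-1}F_{21}^{-1} = F_{21}\Delta^{\mathrm{op}}(h)F_{21}^{-1} = \Delta_F^{\mathrm{op}}(h)$, since $\Delta_F^{\mathrm{op}} = (\tau\circ\Delta_F)$ conjugates $\tau\Delta = \Delta^{\mathrm{op}}$ by $F_{21}$. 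The hexagon identities are the real content: one substitutes $(\Delta_F\otimes\id)(R^F)$, writes $\Delta_F\otimes\id$ applied to each of the three factors of $R^F$ in terms of $\Delta\otimes\id$ conjugated by $F$, and then repeatedly invokes the cocycle condition \eqref{eq:TwistDelta} (and its opposite-leg version, which holds since $F_{21}$ is a twist for $H^{\mathrm{cop}}$) together with the original hexagons for $R$ to collapse the expression to $R^F_{13}R^F_{23}$; I expect this to be the most involved computation of the lemma, and I would cite \cite[Chapter 5.14]{EGNO} for the full details rather than reproduce them. Throughout, no additional hypotheses beyond those in the statement are used; finite-dimensionality and semisimplicity play no role here and are only carried along for consistency with the rest of the paper.
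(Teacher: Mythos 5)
The paper does not prove this lemma at all: it is quoted as a standard result with a pointer to \cite[Chapter 5.14]{EGNO}, so there is no in-paper argument to compare against. Your direct verification is the standard one and is correct in outline: multiplicativity and counitality of $\Delta_F$ are immediate from \eqref{eq:TwistEpsilon}, coassociativity is exactly conjugation of $(\Delta\oo\id)\Delta(h)$ by $F_{12}(\Delta\oo\id)(F)$ versus conjugation of $(\id\oo\Delta)\Delta(h)$ by $F_{23}(\id\oo\Delta)(F)$, which agree by \eqref{eq:TwistDelta}, and the first quasitriangularity axiom for $R^F=F_{21}RF^{-1}$ follows by the conjugation computation you give. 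One clarification on the antipode step: the only place the cocycle condition genuinely enters is in proving that $Q=F^{(1)}S(F^{(2)})$ is invertible with inverse $S(F^{(-1)})F^{(-2)}$ as in \eqref{eq:TwistDrinfel'dElement}; once that is granted, the verification of $\mu(S_F\oo\id)\Delta_F=\eta\varepsilon$ and its mirror needs no further use of \eqref{eq:TwistDelta} --- the inner factor $\sum S(F^{(1)})Q^{-1}F^{(2)}=\mu(S\oo\id)(F^{-1}F)=1$ collapses by the anti-multiplicativity of $S$ alone, which is precisely the telescoping you describe. Deferring the hexagon identities for $R^F$ to \cite[Chapter 5.14]{EGNO} (or \cite{Ma}) is consistent with the paper's own treatment, and your remark that finite-dimensionality plays no role here is accurate.
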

Another standard result relates the tensor categories defined by the two Hopf algebras $H$ and $H_{F}$, see for instance \cite[Proposition 5.14.4]{EGNO}:
\begin{proposition}
	\label{proposition:FunctorTranslation}
	The following data defines a tensor equivalence $G:H_{F}\mathrm{-Mod}\to H\mathrm{-Mod} $ :
	\begin{itemize}
		\item For any $H$-module $M$, $GM$ is the same underlying vector space equipped with the same $H$-module structure.
		\item For any morphism $f:M\to N$ in $H\mathrm{-Mod}$, $Gf:GM \to GN$ is the same linear map.
		\item For the tensor unit $\mathbb{F}$ of $D(H)\mathrm{-Mod}$, the natural isomorphism $G\mathbb{F}\to \mathbb{F}$ to the tensor unit $\mathbb{F}$ of $H_{F}\mathrm{-Mod}$ is the identity.
		\item For $H$-modules $(M, \vartriangleright_{M}),(N, \vartriangleright_{N})$, the natural isomorphism $G\left( M \oo_{F} N \right) \to GM \oo GN$ is the linear map
			\begin{align}
				G\left(M\oo_{F} N\right) &\to GM \oo GN,\nonumber\\
				m \oo n &\mapsto \left(F^{(-1)} \vartriangleright_{M}m \right) \oo \left( F^{(-2)} \vartriangleright_{N} n \right)
				\label{eq:TwistCoherenceData}
		\end{align}
	\end{itemize}
	If $H$ is quasitriangular, then $G$ is a braided tensor equivalence.
\end{proposition}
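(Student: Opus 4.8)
The plan is to verify the three defining properties of a tensor functor for the data in Proposition~\ref{proposition:FunctorTranslation}: that $G$ is a well-defined functor, that the coherence isomorphisms $J_{M,N}\colon G(M\oo_F N)\to GM\oo GN$ are $H$-module morphisms, and that they satisfy the hexagon/pentagon compatibility with associators and unit constraints; then, in the quasitriangular case, that $J$ intertwines the braidings.

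First I would check that $G$ is a functor. Since $G$ acts as the identity on underlying vector spaces and on linear maps, functoriality is immediate; the only point is that an $H$-module is the same as an $H_F$-module as a plain module because $\mu_F=\mu$, $\eta_F=\eta$ by Lemma~\ref{lemma:TwistedHopfAlgebra}, so the $H$-action defining $GM$ genuinely makes $GM$ an $H$-module and $Gf$ is $H$-linear because $f$ was $H_F$-linear, i.e. $H$-linear. Next I would show the map in \eqref{eq:TwistCoherenceData} is an isomorphism of $H$-modules. Invertibility follows from invertibility of $F$, with inverse $m\oo n\mapsto (F^{(1)}\vartriangleright_M m)\oo(F^{(2)}\vartriangleright_N n)$. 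For $H$-linearity one computes, for $h\in H$, the action on $G(M\oo_F N)$ which uses $\Delta_F(h)=F\Delta(h)F^{-1}$: applying $J_{M,N}$ to $h\vartriangleright(m\oo n)=(h_{(F1)}\vartriangleright m)\oo(h_{(F2)}\vartriangleright n)$ produces $F^{(-1)}h_{(F1)}\vartriangleright m\oo F^{(-2)}h_{(F2)}\vartriangleright n$, and since $F^{(-1)}\oo F^{(-2)}$ times $\Delta_F(h)=F\Delta(h)F^{-1}$ collapses to $\Delta(h)F^{-1}=h_{(1)}F^{(-1)}\oo h_{(2)}F^{(-2)}$, this equals $h\vartriangleright(J_{M,N}(m\oo n))$ for the $H$-action on $GM\oo GN$. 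So $J_{M,N}$ is natural in $M$ and $N$ and $H$-linear.

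The main obstacle is the monoidal coherence, i.e. the hexagon relating $J$ on a triple tensor product to the associators of $H\mathrm{-Mod}$ and $H_F\mathrm{-Mod}$. Since both categories have the trivial associator (as $H$ and $H_F$ are honest Hopf algebras, not quasi-Hopf), the hexagon reduces to the identity
\begin{align*}
	\bigl(J_{M,N}\oo\id_P\bigr)\circ J_{M\oo_F N, P} = \bigl(\id_M\oo J_{N,P}\bigr)\circ J_{M, N\oo_F P}
\end{align*}
as maps $G((M\oo_F N)\oo_F P)=G(M\oo_F(N\oo_F P))\to GM\oo GN\oo GP$. Tracing $m\oo n\oo p$ through the left side gives $F^{(-3)}\!\cdot\! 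F^{(-1)}_{?}$-type expressions; concretely the left side yields $(F^{(-1)}\vartriangleright F^{(-3)}_{(1)}\vartriangleright m)\oo(F^{(-2)}\vartriangleright F^{(-3)}_{(2)}\vartriangleright n)\oo(F^{(-4)}\vartriangleright p)$ and the right side $(F^{(-1)}\vartriangleright m)\oo(F^{(-3)}\vartriangleright F^{(-2)}_{(1)}\vartriangleright n)\oo(F^{(-4)}\vartriangleright F^{(-2)}_{(2)}\vartriangleright p)$, and these agree precisely by the 2-cocycle condition \eqref{eq:TwistDelta} applied to $F^{-1}$ (equivalently, the inverse cocycle identity $(\Delta\oo\id)(F^{-1})\cdot F_{12}^{-1}=(\id\oo\Delta)(F^{-1})\cdot F_{23}^{-1}$). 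The unit constraints are trivial because $(\varepsilon\oo\id)(F)=(\id\oo\varepsilon)(F)=1$ by \eqref{eq:TwistEpsilon}, so $J_{\mathbb F,N}$ and $J_{M,\mathbb F}$ are identities, matching the stated unit isomorphism $G\mathbb F\to\mathbb F$.

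Finally, for the braided case: if $H$ is quasitriangular with $R$-matrix $\mathcal R=R^{(1)}\oo R^{(2)}$, then $H_F$ is quasitriangular with $R^F=F_{21}\mathcal R F^{-1}$ by \eqref{eq:TwistedRMatrix}. I must check $J_{N,M}\circ G(c^F_{M,N}) = c_{GM,GN}\circ J_{M,N}$, where $c^F$ is the braiding on $H_F\mathrm{-Mod}$ given by $c^F_{M,N}(m\oo n)=\tau\circ(R^F\vartriangleright(m\oo n))$ and $c$ the braiding on $H\mathrm{-Mod}$ given by $\mathcal R$. Writing out both sides on $m\oo n$ and substituting $R^F=F_{21}\mathcal R F^{-1}$, the $F^{-1}$ absorbs the two factors of $F^{(-1)},F^{(-2)}$ coming from $J_{M,N}$, the flip $\tau$ converts $F_{21}$ into a factor that is cancelled by the $F^{(-1)},F^{(-2)}$ introduced by $J_{N,M}$ on the output, leaving exactly $\tau\circ(\mathcal R\vartriangleright(m\oo n))$ in the appropriate order — establishing that $G$ is braided. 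This is a short diagram chase once the previous computations are in place, so I expect it to be routine relative to the cocycle manipulation in the hexagon step.
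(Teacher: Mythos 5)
Your route is the standard direct verification that lies behind the citation the paper gives for this statement (the paper itself offers no proof and simply refers to \cite[Proposition 5.14.4]{EGNO}), and its overall architecture is sound: functoriality is trivial because $\mu_F=\mu$, $\eta_F=\eta$; $H$-linearity of $J_{M,N}$ follows from $F^{-1}\Delta_F(h)=\Delta(h)F^{-1}$; the unit constraints come from \eqref{eq:TwistEpsilon}; and the braided statement is the short check with $R^F=F_{21}RF^{-1}$ from \eqref{eq:TwistedRMatrix}, which you describe correctly in substance.

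There is, however, a slip in your element-level tracing of the hexagon. The inner factor $G(M\oo_F N)$ carries the $H_F$-module structure, so in $J_{M\oo_F N,P}$ the element $F^{(-3)}$ acts on $m\oo n$ through $\Delta_F(F^{(-3)})=F\Delta(F^{(-3)})F^{-1}$, not through $\Delta$. Your displayed formulas, e.g. $\left(F^{(-1)}F^{(-3)}_{(1)}\vartriangleright m\right)\oo\left(F^{(-2)}F^{(-3)}_{(2)}\vartriangleright n\right)\oo\left(F^{(-4)}\vartriangleright p\right)$, correspond to the operators $F_{12}^{-1}\cdot(\Delta\oo\id)(F^{-1})$ and $F_{23}^{-1}\cdot(\id\oo\Delta)(F^{-1})$, and their equality is \emph{not} equivalent to \eqref{eq:TwistDelta}; it would amount to requiring that $F^{-1}$ itself satisfy the left cocycle condition, which does not follow in general. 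Carrying out the composition with the correct $\Delta_F$-action, the conjugating copies of $F$ cancel against the outer $F^{-1}$ from $J_{M,N}$ (resp.\ $J_{N,P}$), and the two composites become
\begin{align*}
	(\Delta\oo\id)(F^{-1})\cdot F_{12}^{-1}
	\qquad\text{and}\qquad
	(\id\oo\Delta)(F^{-1})\cdot F_{23}^{-1},
\end{align*}
which agree precisely by inverting \eqref{eq:TwistDelta} --- i.e.\ by the identity you in fact cite. So the argument is correct once the intermediate formulas are repaired; nothing else is missing (that $G$ is an equivalence is immediate since it is the identity on underlying objects and morphisms).
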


Twisting defines an equivalence relation on the class of complex Hopf algebras. In the following, we consider Hopf algebras that are isomorphic up to a twist and call such Hopf algebras \emph{twist equivalent.}

\begin{definition}
	\label{definition:TwistEquivalence}
	Let $H$ and $K$ be Hopf algebras. A \emph{twist equivalence} is a pair $(F,\varphi)$ of a twist $F$ of $H$ and an isomorphism $\varphi:H_{F} \to K$ of Hopf algebras. If both $H$ and $K$ are quasitriangular and $\varphi$ is an isomorphism of quasitriangular Hopf algebras, then we call $(F,\varphi)$ a \emph{braided twist equivalence}.
\end{definition}

\begin{example}\cite[13.6.1]{R}
	If $(K,R)$ is a factorizable Hopf algebra, then $K \oo K$ is twist equivalent to $D(K)$, where $F=1_{K} \oo R^{(2)} \oo R^{(1)} \oo 1_{K}\in K^{\oo 4}$ is a twist of $K \oo K$ and the isomorphism $\varphi$ is given by 
	\begin{align}
		\varphi:  &D(K) \to (K \oo K)_{F}\nonumber\\
		&\alpha \oo h \mapsto \langle \alpha_{(2)} , R^{(2)} \rangle \langle \alpha_{(1)} , R^{(3)} \rangle  S(R^{(1)})\cdot h_{(1)} \oo R^{(4)}h_{(2)}
		\label{eq:Drinfel'dFactorisableIsomorphism}
	\end{align}
	in the notation introduced after Definition \ref{def:Twist}.  The twist equivalence becomes braided, when  $K \oo K$ is equipped with the $R$-matrix
	\begin{align*}
		R^{(-2)} \oo R^{(1)} \oo R^{(-1)} \oo R^{(2)}\in K^{ \oo 4}.
	\end{align*}
	\label{example:Drinfel'dQuadrupleTwist}
\end{example}


Twisting a semi-simple Hopf algebra $H$ preserves its semi-simplicity, as twisting does not change $H$'s algebra structure.
Additionally,  the Haar integral of $H$ and $H^*$ remain unchanged under twisting and the element $Q$ from \eqref{eq:TwistDrinfel'dElement} is preserved by the antipode:

\begin{remark}
	\label{remark:TwistedSemisimpleHopfalgebra}\cite[Remark 3.8]{AE}
	If $H$ is a semisimple finite-dimensional Hopf algebra with Haar integrals $\lambda\in H$, $\smallint \in H^{*}$, then $S(Q)=Q$, $\lambda$ is an Haar integral of $H_{F}$ and $\smallint$ is an Haar integral of $(H_{F})^{*}$.
\end{remark}

\subsection{Module and comodule algebras}

In this section we introduce certain module and comodule algebras  over a Hopf algebra $H$ and its Drinfel'd double $D(H)$ that are required in the following. 

\begin{definition} Let $H$ be a Hopf algebra. 
\begin{enumerate}
\item A \emph{$H$(-left) module algebra} is an associative unital algebra $(A,m,\eta)$ with an $H$-module structure $\rhd: H\oo A\to A$ such that
$m$ and $\eta$ are morphisms of $H$-modules. 

\item An \emph{$H$(-left) comodule algebra} is an associative unital algebra $(A,m,\eta)$ with an $H$-comodule structure $\delta:  A\to H\oo A$ such that
$m$ and $\eta$ are morphisms of $H$-comodules. 

\end{enumerate}
\end{definition}

$H$-right module algebras are defined as $H$-left module algebras over $H^{op}$ and $(H,H)$-bimodule algebras as left module algebras over $H\oo H^{op}$.
Analogously, $H$-right comodule algebras are left comodule algebras over $H^{cop}$ and $(H,H)$-bicomodule algebras are left comodule algebras over $H\oo H^{cop}$.
By composing the action of $H$ with the antipode of $H$, one can transform $H$-right module (co)algebras into $H^{cop}$-left module algebras and $H$-right comodule (co)algebras into $H^{op}$-left comodule (co)algebras.
By duality, any $H$-comodule algebra structure on a  vector space $V$ corresponds to a $H^*$-right module algebra structure on $V$ and vice versa.

\begin{example}\label{definition:CoregularAction} Let $H$ be a finite-dimensional Hopf algebra. Then $H$ is an $H$-bicomodule algebra with the \emph{left and right regular coaction}
$\delta=\Delta: H\to H\oo H$. Its dual
 $H^*$ is an $H$-module algebra  with the \emph{left coregular action}
	\begin{align}
		\label{eq:CoregularLeftAction}
		\vartriangleright: H \oo H^{*} &\to H^{*},\quad h \vartriangleright \alpha= \langle \alpha_{(2)} , h \rangle \alpha_{(1)}
	\end{align}
and an $H$-right module algebra  with the \emph{right coregular action}
	\begin{align}
		\label{eq:CoregularRightAction}
		\vartriangleleft:  H^{*} \oo H &\to H^{*},\quad \alpha  \vartriangleleft h= \langle \alpha_{(1)} , h \rangle \alpha_{(2)}.
	\end{align}
The latter defines an $H^{cop}$-left module algebra structure	given by
	\begin{align}
		\rhd': H \oo H^{*} \to H^{*},\quad h \oo \alpha\mapsto \alpha  \vartriangleleft S(h).
		\label{eq:RightCoregularLeftAction}
	\end{align}

\end{example}

\begin{remark}
	For a Drinfel'd double $D(H)$, the left and right  coregular actions take the  form:
\begin{align}
	\vartriangleright: D(H)  \oo D(H)^{*} \;\; &\to D(H)^{*} \nonumber\\
	\left( \beta \oo k \right)  \vartriangleright \left( h \oo \alpha \right)  	
	&= \langle \beta \oo k, S(x^{2})h_{(2)}x^{1} \oo \alpha_{(2)}  \rangle h_{(1)} \oo X^{1}\alpha_{(1)}X^{2}\nonumber
	\\
	&= \langle \beta_{(2)} \oo k , h_{(2)} \oo \alpha_{(2)} \rangle h_{(1)} \oo \beta_{(3)}\alpha_{(1)}S(\beta_{(1)})
	\label{eq:LeftCoregularActionDrinfel'd}
	\\
	\vartriangleleft:  D(H)^{*} \oo D(H) \;\;&\to D(H)^{*}\nonumber\\
	\left( h \oo \alpha \right)  \vartriangleleft \left( \beta \oo k \right) 
	&= \langle \beta \oo k , h_{(1)} \oo X^{1}\alpha_{(1)} X^{2} \rangle S(x^{2})h_{(2)}x^{1}  \oo \alpha_{(2)} \nonumber
	\\
	&= \langle \beta  \oo k_{(2)} , h_{(1)}  \oo \alpha_{(1)} \rangle  S(k_{(3)})h_{(2)}k_{(1)}  \oo \alpha_{(2)},
	\label{eq:RightCoregularActionDrinfel'd}
\end{align}
where we use the symbolic notation from Definition \ref{def:Drinfel'dDouble} and Remark \ref{lemma:DualDrinfel'd}.
\end{remark}

Given a module algebra $A$ over a Hopf algebra $H$, one can form the smash product  algebra $A\# H$. For the module algebras from Example \ref{definition:CoregularAction} this yields the  \emph{Heisenberg double} algebras. As $H$ is finite-dimensional, modules over the Heisenberg double are in bijection with Hopf modules over $H$, see for instance \cite[Cor 7.10.7, Defs 7.10.8, 7.10.9]{EGNO}. Just as Hopf modules, Heisenberg doubles exist in several versions, 
for the different combinations of $H$-left or right-actions on $H^*$. Further versions are obtained by exchanging the Hopf algebras $H$ and $H^*$.

\begin{definition}
	\label{def:HeisenbergDouble}

The Heisenberg double $\mathcal{H}_{R}(H)$ is 
the vector space $H \oo H^{*}$ with the algebra structure given by 
\begin{align}
	\left( h \oo \alpha \right) \cdot \left( k \oo \beta \right) :=
	\langle \alpha_{(1)} , k_{(2)} \rangle hk_{(1)}  \oo \alpha_{(2)}\beta.
	\label{eq:HeisenbergRightMult}
\end{align}

The Heisenberg double $\overline{\mathcal{H}}_{R}(H)$ is the vector space
$H \oo H^{*}$ with the algebra structure given by
\begin{align}
	\left( h \oo \alpha \right) \cdot \left( k \oo \beta \right) :=
	\langle \alpha_{(1)} , S(k_{(2)}) \rangle k_{(1)}h_{(1)}  \oo \beta\alpha_{(2)}.
	\label{eq:HeisenbergRightAlternativeMult}
\end{align}
\end{definition}

\begin{remark}\label{rem:heisenbergdoubleendo}
	The right Heisenberg double $\mathcal{H}_{R}(H)$ is isomorphic to $\mathrm{End}_{\C}(H)$, where the isomorphism is given in terms of the action
	\begin{align}
		\vartriangleright: \mathcal{H}_{R}(H) \oo H &\to H\nonumber\\
		\left( h \oo \alpha \right) \vartriangleright m = \langle \alpha , m_{(2)} \rangle hm_{(1)}
		\label{eq:HeisenbergAction}
	\end{align}
	of $\mathcal{H}_{R}(H)$ on $H$, see for instance \cite[9.4.3]{Mo}.
\end{remark}

\begin{remark}$\quad$
	\label{remark:HeisenbergDoubleComoduleAlgebra}
	\begin{compactenum}
	\item $\mathcal{H_{R}}(H)$ is a $D(H)^{*,op}$-left comodule algebra with the coaction given by the comultiplication
		\begin{align*}
			\Delta_{D(H)^{*}}: \mathcal{H}_{R}(H) \to D(H)^{*,op} \oo \mathcal{H}_{R}(H)
		\end{align*}
		and a $D(H)^{*}$-right comodule algebra with the coaction given by the comultiplication
		\begin{align*}
			\Delta_{D(H)^{*}}: \mathcal{H}_{R}(H) \to  \mathcal{H}_{R}(H) \oo D(H)^{*}
		\end{align*}
		\label{remarkPoint:RightHeisenbergComoduleAlgebra}
	\item $\overline{\mathcal{H}}_{R}(H)$ is a $D(H)^{*}$-left comodule algebra with the coaction given by the comultiplication
		\begin{align*}
			\Delta_{D(H)^{*}}: \overline{\mathcal{H}}_{R}(H) \to D(H)^{*} \oo \mathcal{H}_{R}(H)
		\end{align*}
		and a $D(H)^{*,op}$-right comodule algebra with the coaction given by the comultiplication
		\begin{align*}
			\Delta_{D(H)^{*}}: \mathcal{H}_{R}(H) \to  \mathcal{H}_{R}(H) \oo D(H)^{*,op}
		\end{align*}
		\label{remarkPoint:RightHeisenbergAlternativeComoduleAlgebra}
	\item The comultiplication of $D(H)^{*}$ is an algebra homomorphism
		\begin{align}
			\Delta_{D(H)^{*}} : D(H)^{*} \to \overline{\mathcal{H}}_{R}(H)  \oo \mathcal{H}_{R}(H) 
			\label{eq:ComultHeisenbergDoubleAlgebraHom}
		\end{align}
		and an algebra homomorphism
		\begin{align}
			\Delta_{D(H)^{*}} : D(H)^{*,op} \to \mathcal{H}_{R}(H)  \oo \overline{\mathcal{H}}_{R}(H)
			\label{eq:ComultHeisenbergDoubleOppositeAlgebraHom}
		\end{align}
	\end{compactenum}
\end{remark}

\begin{remark}
	The $D(H)^{*}$-comodule (resp. $D(H)^{*,op}$-comodule) algebras $\mathcal{H}_{R}(H)$ and $\overline{\mathcal{H}}_{R}(H)$ are related to $D(H)^{*}$ and $D(H)^{*,op}$ by a one-sided cotwist with the $R$-matrix from \eqref{eq:Rmatrix}. Concretely, we have for $h \oo \alpha,k \oo \beta \in H \oo H^{*}$:
	\begin{align}
		\left( h \oo \alpha \right)\cdot_{\mathcal{H}_{R}(H)} \left( k \oo \beta \right)
		&= \left( R^{(2)}  \vartriangleright \left( k \oo \beta \right) \right) \cdot_{D(H)^{*}} \left( R^{(1)}  \vartriangleright \left( h \oo \alpha \right) \right)
		\label{eq:RightHeisenbergDoubleRTwist}
		\\
		\left( h \oo \alpha \right)\cdot_{\overline{\mathcal{H}}_{R}(H)} \left( k \oo \beta \right)
		&= \left( R^{(-1)}  \vartriangleright \left( h \oo \alpha \right) \right) \cdot_{D(H)^{*}} \left( R^{(-2)}  \vartriangleright \left( k \oo \beta \right) \right)
		\label{eq:RightHeisenbergDoubleAlternativeRTwist}
	\end{align}

\end{remark}

\subsection{Tensor categories and Tannaka duality}

In this section we consider Tannaka reconstruction for fusion categories over $\C$ (see e.g.~\cite{EGNO,S,U,Ma,JS}).
We follow the conventions of \cite[Chapters 6 and 9]{EGNO} and define

\begin{definition}$\quad$
	\label{definition:FusionCat}
	
	\begin{compactenum}
	\item A \emph{fusion category} is a $\C$-linear, finitely semisimple, rigid monoidal category. 

\item 	A \emph{fiber functor} is a faithful, $\C$-linear, strict monoidal functor from a fusion category to $\mathrm{Vect}_{\C}^{fin}$.
\end{compactenum}
\end{definition}
For $\mathcal{C}$ a fusion category and $\omega_{\mathcal{C}}: \mathcal{C}\to \C$ a fiber functor, we denote $\mathrm{End}(\omega_{\mathcal{C}})$ the natural endomorphisms of $\omega_{\mathcal{C}}$. The composition of natural endomorphisms equips $\mathrm{End}(\omega_{\mathcal{C}})$ with a $\C$-algebra structure. The algebra $\mathrm{End}(\omega_{\mathcal{C}})$ also inherits a $\C$-coalgebra structure from the tensor product of $\mathcal{C}$, for details see~\cite[Chapter~5.2]{EGNO}.

We denote by $\boxtimes$ the Deligne product of locally finite $\C$-linear categories, see \cite[Chapter 1.11]{EGNO} and by  $\varphi_{\mathcal C}: \mathcal Z(\mathcal C)\to \mathcal C$ the canonical monoidal functor that forgets the half-braidings. We then have
\begin{lemma}
	\label{lemma:FibreFunctorClassic}
	Let  $\mathcal{C},\mathcal{D}$ be fusion categories and  
	$\omega_{\mathcal{C}}, \omega_{\mathcal{D}}$
	be fiber functors for $\mathcal{C}$ and $\mathcal{D}$.
	\begin{compactenum}
		\item Then $H=\mathrm{End}(\omega_{\mathcal{C}})$ is a semisimple, finite-dimensional Hopf algebra over $\C$.
		\item There is a strict tensor equivalence $\mathcal{C}\cong H\mathrm{-Mod}$. 
		\item If $\mathcal{C}$ is braided, then $H$ admits a quasitriangular structure such that the equivalence is braided.
		\item There is an isomorphism $D(H) \cong \mathrm{End}(\omega_\mathcal{C}\circ \varphi_\mathcal{C})$ of quasitriangular Hopf algebras. 
		\item There is an isomorphism $H \oo K \cong \mathrm{End}( \oo_{\C} \circ (\omega_{\mathcal{C}} \boxtimes \omega_{\mathcal{D}}))$ of Hopf algebras. 
	\end{compactenum}
\end{lemma}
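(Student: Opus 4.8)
The plan is to treat Lemma~\ref{lemma:FibreFunctorClassic} as five applications of a single engine --- classical Tannaka--Krein reconstruction for fiber functors --- and to reduce each of the five statements to that engine together with the structural facts about $D(H)$ and the Deligne product already recorded in this section. For (1) and (2): given a fiber functor $\omega_{\mathcal C}:\mathcal C\to\vect_{\C}^{fin}$, the convolution coalgebra structure on $\End(\omega_{\mathcal C})$ coming from the tensor product of $\mathcal C$ (as cited from \cite[Chapter 5.2]{EGNO}), together with the algebra structure from composition, makes $H:=\End(\omega_{\mathcal C})$ a bialgebra; rigidity of $\mathcal C$ supplies the antipode, so $H$ is a Hopf algebra. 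It is finite-dimensional because $\mathcal C$ is finitely semisimple and $\omega_{\mathcal C}$ is faithful (one writes $\End(\omega_{\mathcal C})\cong\bigoplus_{X\in\mathrm{Irr}(\mathcal C)}\End_{\C}(\omega_{\mathcal C}X)$). Faithfulness of $\omega_{\mathcal C}$ gives that the reconstruction functor $\mathcal C\to H\mathrm{-Mod}$, $X\mapsto(\omega_{\mathcal C}X, \mathrm{ev})$, is fully faithful, and essential surjectivity plus strict monoidality is the content of the reconstruction theorem; semisimplicity of $H$ follows since $H\mathrm{-Mod}\cong\mathcal C$ is semisimple (equivalently, by Larson--Radford, from involutivity of the antipode, which holds as $\mathcal C$ is a fusion category over $\C$).

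For (3): if $\mathcal C$ carries a braiding $c_{X,Y}$, then for each $X$ the family $\{c_{X,Y}\}_Y$ is a natural transformation, hence after applying $\omega_{\mathcal C}$ it assembles into an element of $\End(\omega_{\mathcal C})\oo\End(\omega_{\mathcal C})=H\oo H$; one checks the hexagon axioms translate into the quasitriangularity identities for this element, and naturality of $c$ in the first variable gives that the equivalence of (2) intertwines the braidings. For (4): the forgetful functor $\varphi_{\mathcal C}:\mathcal Z(\mathcal C)\to\mathcal C$ is strict monoidal, so $\omega_{\mathcal C}\circ\varphi_{\mathcal C}$ is a fiber functor on the fusion category $\mathcal Z(\mathcal C)$; by (1)--(3) applied to $\mathcal Z(\mathcal C)$ (which is braided), $\End(\omega_{\mathcal C}\circ\varphi_{\mathcal C})$ is a quasitriangular Hopf algebra with $\mathcal Z(\mathcal C)\cong \End(\omega_{\mathcal C}\circ\varphi_{\mathcal C})\mathrm{-Mod}$ as braided categories. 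On the other hand the standard identification $\mathcal Z(H\mathrm{-Mod})\cong D(H)\mathrm{-Mod}$ as braided monoidal categories (Tannaka--Krein, as cited in the introduction) combined with the equivalence $\mathcal C\cong H\mathrm{-Mod}$ from (2) gives $\mathcal Z(\mathcal C)\cong D(H)\mathrm{-Mod}$ braided; transporting the fiber functor through these equivalences and invoking the uniqueness part of reconstruction (two fiber functors related by a monoidal natural isomorphism give isomorphic reconstructed Hopf algebras) yields $D(H)\cong\End(\omega_{\mathcal C}\circ\varphi_{\mathcal C})$ as quasitriangular Hopf algebras.

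For (5): the Deligne product $\mathcal C\boxtimes\mathcal D$ is again a fusion category, and $\oo_{\C}\circ(\omega_{\mathcal C}\boxtimes\omega_{\mathcal D}):\mathcal C\boxtimes\mathcal D\to\vect_{\C}^{fin}$ is a fiber functor (it is faithful because $\omega_{\mathcal C},\omega_{\mathcal D}$ are and because $\boxtimes$ is computed on objects by $\oo_{\C}$ on hom-spaces, and it is strict monoidal since both $\boxtimes$ and $\oo_{\C}$ are); by (1) its endomorphism algebra is a semisimple finite-dimensional Hopf algebra. To see it is $H\oo K$, use that on the generating objects $X\boxtimes Y$ one has $\omega(X\boxtimes Y)=\omega_{\mathcal C}X\oo\omega_{\mathcal D}Y$, so a natural endomorphism is determined by its components on such objects and these factor as $\End(\omega_{\mathcal C})\oo\End(\omega_{\mathcal D})=H\oo K$; one then checks the induced algebra and coalgebra structures are the tensor-product ones. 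I expect the only genuinely delicate point to be (4): one must be careful that the chain of braided equivalences $\mathcal Z(\mathcal C)\cong\mathcal Z(H\mathrm{-Mod})\cong D(H)\mathrm{-Mod}$ is compatible with the fiber functors so that the reconstructed Hopf algebras are isomorphic \emph{as quasitriangular} Hopf algebras, rather than merely as algebras or as abstract Hopf algebras --- this requires tracking the half-braidings through the equivalence and appealing to the uniqueness clause of Tannaka reconstruction. The remaining parts are essentially bookkeeping on top of the cited reconstruction theorem.
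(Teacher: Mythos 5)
Your proposal is correct and follows essentially the same route as the paper, which simply cites \cite[Theorem 5.3.12]{EGNO} for (1)--(2), deduces (3) from these, cites \cite[Chapter 7.14]{EGNO} for (4), and derives (5) from (1)--(2) and the definition of the Deligne product. You merely unfold the standard Tannaka--Krein reconstruction arguments behind those citations (including the compatibility of fiber functors in (4)), so there is no substantive difference in approach.
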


The first two statements in Lemma \ref{lemma:FibreFunctorClassic} follow directly from \cite[Theorem 5.3.12]{EGNO}, and the third is a direct consequence of the first two. For the fourth statement, see \cite[Chapter 7.14]{EGNO}, in particular the discussion before Definition 7.14.1. The last statement follows from the first two and the Definition of the Deligne product.
\begin{lemma}
	\label{lemma:FibreFunctorTensorEquivalence}
	Let $\mathcal{C}, \mathcal{D}$ be  fusion categories, $T:\mathcal{C} \to \mathcal{D}$ a $\C$-linear tensor equivalence and $\omega_{\mathcal{C}},\omega_{\mathcal{D}}$ be fiber functors for $\mathcal{C}$ and $\mathcal{D}$ such that there is a natural isomorphism
	\begin{align*}
		\omega_{\mathcal{C}} \cong \omega_{\mathcal{D}}\circ T
	\end{align*}
	of tensor functors.
	Then the Hopf algebra $H=\mathrm{End}(\omega_{\mathcal{C}})$ admits a twist $F\in H \oo H$ such that $K=\mathrm{End}(\omega_{\mathcal{D}})\cong H_{F}$ as Hopf algebras.
	If $\mathcal{C}$ and $\mathcal{D}$ are braided and $T$ is a braided equivalence, then $K\cong H_{F}$ as quasitriangular Hopf algebras.
\end{lemma}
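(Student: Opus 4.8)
The plan is to reduce the statement to Lemma~\ref{lemma:FibreFunctorClassic} together with the standard Tannakian formalism for natural transformations between fiber functors. First I would observe that a $\C$-linear tensor equivalence $T:\mathcal{C}\to\mathcal{D}$ together with a monoidal natural isomorphism $\eta:\omega_{\mathcal{C}}\xrightarrow{\sim}\omega_{\mathcal{D}}\circ T$ produces, by composition, a new fiber functor $\omega_{\mathcal{C}}':=\omega_{\mathcal{D}}\circ T$ for $\mathcal{C}$. By Lemma~\ref{lemma:FibreFunctorClassic}(1)--(2) we have two semisimple finite-dimensional Hopf algebras $H=\End(\omega_{\mathcal{C}})$ and $K=\End(\omega_{\mathcal{D}})$, with strict tensor equivalences $\mathcal{C}\cong H\mathrm{-Mod}$ and $\mathcal{D}\cong K\mathrm{-Mod}$. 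The key point is that $T$, being a tensor equivalence, induces an isomorphism of $\C$-algebras $\End(\omega_{\mathcal{D}})\cong\End(\omega_{\mathcal{D}}\circ T)=\End(\omega_{\mathcal{C}}')$ (precomposition with $T$ on natural transformations, which is bijective because $T$ is an equivalence), and this isomorphism is even one of Hopf algebras once one tracks the coalgebra structure coming from the monoidal structure of $T$. So it suffices to compare the two Hopf algebra structures $\End(\omega_{\mathcal{C}})$ and $\End(\omega_{\mathcal{C}}')$ attached to the \emph{same} fusion category $\mathcal{C}$ via two fiber functors that are isomorphic \emph{as mere functors} (forgetting monoidality), since $\eta$ gives such an isomorphism.

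Next I would invoke the standard fact (essentially \cite[Chapter 5.14]{EGNO}, cf. Lemma~\ref{lemma:TwistedHopfAlgebra} and Proposition~\ref{proposition:FunctorTranslation}): if two fiber functors $\omega,\omega'$ of a fusion category are naturally isomorphic as plain $\C$-linear functors, then their endomorphism Hopf algebras share the same underlying algebra, and the natural isomorphism $\eta:\omega\to\omega'$ transports the coalgebra structure in such a way that the two comultiplications differ by conjugation by an invertible element $F\in\End(\omega)\oo\End(\omega)$, which is precisely a twist in the sense of Definition~\ref{def:Twist}. Concretely, $F$ is read off from the failure of $\eta$ to be monoidal: the element $\eta_{X\oo Y}^{-1}\circ(\eta_X\oo\eta_Y)$, viewed through the identification $\End(\omega_{\mathcal C})\oo\End(\omega_{\mathcal C})\subseteq\End(\oo\circ(\omega_{\mathcal C}\boxtimes\omega_{\mathcal C}))$ of Lemma~\ref{lemma:FibreFunctorClassic}(5), defines $F$; the cocycle condition \eqref{eq:TwistDelta} follows from the pentagon/compatibility of $\eta$ with associators, and the counit condition \eqref{eq:TwistEpsilon} from compatibility with units. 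This yields $K\cong\End(\omega_{\mathcal{C}}')\cong H_F$ as Hopf algebras.

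For the braided statement, I would additionally use Lemma~\ref{lemma:FibreFunctorClassic}(3): when $\mathcal{C},\mathcal{D}$ are braided and $T$ is a braided equivalence, the quasitriangular structures on $H$ and $K$ are those making the equivalences $\mathcal{C}\cong H\mathrm{-Mod}$, $\mathcal{D}\cong K\mathrm{-Mod}$ braided, and by Lemma~\ref{lemma:TwistedHopfAlgebra} the twisted Hopf algebra $H_F$ carries the quasitriangular structure $R^F=F_{21}RF^{-1}$ whose category of modules is braided-equivalent to $H\mathrm{-Mod}$ via Proposition~\ref{proposition:FunctorTranslation}. Since $T$ braided and $\eta$ monoidal natural force the braided equivalence $K\mathrm{-Mod}\cong\mathcal{D}\cong\mathcal{C}\cong (H_F)\mathrm{-Mod}$ to be realized by the functor $G$ of Proposition~\ref{proposition:FunctorTranslation}, a second application of Tannakian uniqueness (the quasitriangular structure is determined by the braiding on the module category) identifies the $R$-matrices, giving $K\cong H_F$ as quasitriangular Hopf algebras.

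The main obstacle I anticipate is purely bookkeeping rather than conceptual: one must be careful that the isomorphism $\End(\omega_{\mathcal{D}})\cong\End(\omega_{\mathcal{D}}\circ T)$ induced by precomposition with $T$ genuinely respects the coalgebra structures — this requires using the monoidal structure isomorphisms of $T$ and checking that the resulting identification is a Hopf algebra map, not merely an algebra map — and that the element $F$ extracted from $\eta$ lands in $\End(\omega_{\mathcal{C}})\oo\End(\omega_{\mathcal{C}})$ under the identification of Lemma~\ref{lemma:FibreFunctorClassic}(5) and satisfies the normalization and $2$-cocycle identities on the nose. Both are standard consequences of the coherence axioms for monoidal functors and monoidal natural transformations, so no genuinely new difficulty arises; the proof is essentially an assembly of Lemmas~\ref{lemma:FibreFunctorClassic} and~\ref{lemma:TwistedHopfAlgebra} and Proposition~\ref{proposition:FunctorTranslation}.
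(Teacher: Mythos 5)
Your proposal is correct and follows essentially the same route as the paper: the paper settles the tensor case by citing \cite[Prop.~5.14.4]{EGNO}, which is exactly the twist--fiber-functor correspondence you reconstruct via Lemma~\ref{lemma:FibreFunctorClassic}, and obtains the braided refinement by combining with Lemma~\ref{lemma:FibreFunctorClassic}.3, just as you do. The only point to tighten is your formula for the twist: since $\omega_{\mathcal{D}}\circ T$ is not strict, $F$ must be read off from $\eta$ together with the image under $\omega_{\mathcal{D}}$ of the coherence isomorphisms of $T$ (otherwise $\eta_{X\oo Y}^{-1}\circ(\eta_X\oo\eta_Y)$ does not even typecheck), a bookkeeping issue you already acknowledge.
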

\begin{proof}
	The statement for tensor categories $\mathcal{C},\mathcal{D}$ follows immediately from \cite[Prop 5.14.4]{EGNO}. The braided version follows by combining with Lemma~\ref{lemma:FibreFunctorClassic}.3.
\end{proof}

The following table shows the relation between categorical notions and algebraic structures given by a fiber functor.
\\

\begin{tabular}[center]{|c|c|}
	\hline
	categorical notion & algebraic structure
	\\
	\hline\hline
	(braided) fusion category $\mathcal{C},\mathcal{D}$	& (quasitriangular) semisimple Hopf algebras $H$, $K$
	\\
	\hline
	center $\mathcal{Z(D)}$		& Drinfel'd double $D(K)$
	\\\hline
	Deligne product $\mathcal{C} \boxtimes \mathcal{D}$ & tensor product Hopf algebra $H \oo K$
	\\\hline
	tensor equivalence $\mathcal{C}\to \mathcal{D}$ & twist $F$ with $H_{F} \cong K$ 
	\\\hline
\end{tabular}
\section{Ribbon graphs}
\label{sec:ribbon}

In this section, we summarize the required background on {\em embedded graphs} or {\em ribbon graphs}. For more details,  see for instance \cite{BL,LZ}. All graphs considered in this article are directed and finite, but we allow  loops, multiple edges and univalent vertices.
\subsection{Paths}
\label{subsec:paths}
Paths in a graph are most easily described by orienting the edges of $\Gamma$ and considering the free groupoid generated by the resulting directed graph. Note that different choices of orientation yield isomorphic groupoids. 

 \begin{definition} \label{def:pathgroupoid}The \emph{path groupoid} $\mathcal G_\Gamma$ of a graph $\Gamma$ is the free groupoid generated by $\Gamma$. A \emph{path} in $\Gamma$ from a vertex $v$ to a vertex $w$ is a morphism $\gamma: v\to w$ in $\mathcal G_\Gamma$.
 \end{definition}

 The objects of $\mathcal G_\Gamma$ are the vertices of $\Gamma$. A morphism from $v$ to $w$ in $\mathcal G_\Gamma$ is a finite sequence $\rho=e_1^{\epsilon_1}\circ ...\circ e_n^{\epsilon_n}$, $\epsilon_i\in\{\pm 1\}$ of oriented edges $e_i$ and their inverses such that the starting vertex of the first edge 
$e_{n}^{\epsilon_{n}}$
 is $v$, the target vertex of the last edge 
$e_{1}^{\epsilon_{1}}$
 is $w$, and the starting vertex of each edge in the sequence is the target vertex of the preceding edge.
 These sequences are taken with the relations  $e\circ e^\inv=1_{t(e)}$ and $e^\inv\circ e=1_{s(e)}$, where $e^\inv$ denotes the edge $e$ with the reversed orientation, $s(e)$ the starting and $t(e)$ the target vertex of $e$ and we set $s(e^{\pm 1})=t(e^{\mp 1})$. 

 A sequence $e_{1}^{\varepsilon_{1}} \circ \cdots \circ e_{n}^{\varepsilon_{n}}$  is said to be a \emph{reduced word} or \emph{in reduced form}, if it does not contain subsequences of the form $e^{-1}\circ e$ and $e \circ e^{-1}$ for $e\in E$.
 
 An edge $e\in E$ with  $s(e)=t(e)$ is called a {\em loop}, and a path  $\rho\in \mathcal G_\Gamma$  is called {\em closed} if it is an automorphism of a vertex.  
 We call a path $\rho\in\mathcal G_\Gamma$ a {\em subpath} of a path $\gamma \in \mathcal G_\Gamma$ if  the expression for $\gamma$ as a reduced word in $E$  is of the form $\gamma=\gamma_1\circ\rho\circ\gamma_2$ with (possibly empty) reduced words $\gamma_1,\gamma_2$. We call it  a {\em proper subpath} of $\gamma$ if $\gamma_1,\gamma_2$ are not both empty. We say that two paths $\rho,\gamma\in \mathcal G_\Gamma$ {\em overlap} if there is an edge in $\Gamma$ that is traversed by both $\rho$ and $\gamma$, and by both  in the same direction.

\subsection{Ribbon graphs}
The graphs we consider have additional structure. They are called
 {\em ribbon graphs}, \emph{fat graphs} or \emph{embedded graphs} and give a combinatorial description of oriented surfaces with or without boundary. 

 \begin{definition} A \emph{ribbon graph} is a directed graph together with a cyclic ordering of the edge ends at each vertex. 
 A \emph{ciliated vertex} in a ribbon graph is a vertex together with  a linear ordering of the incident  edge ends  that is compatible with their cyclic ordering.
 \end{definition}
 
A ciliated vertex in a ribbon graph is obtained  by selecting one of its incident edge ends  as the starting end of the linear ordering. We indicate 
this in figures by assuming the counterclockwise cyclic ordering in the plane and inserting a line, the {\em cilium},  that separates the edges of minimal and maximal order, as shown in Figure \ref{fig:ciliated}. We say that an edge end $e_{2}$ at a ciliated vertex $v$ is {\em between} two edge ends $e_{1}$ and $e_{3}$ incident at $v$ if $e_{1}<e_{2}<e_{3}$ or $e_{3}<e_{2}<e_{1}$. We denote by $\st(e_{1})$ the starting end and by $\ta(e_{1})$ the target end of a directed edge $e_{1}$.

The cyclic ordering of the edge ends at each vertex  allows  one to thicken the edges of a ribbon graph to  strips or ribbons and its vertices to polygons.
 It also equips the ribbon graph with the notion of a face. One says that a path in a ribbon graph $\Gamma$ turns {\em maximally left} at a vertex $v$ if it
 enters $v$ through an edge end $\alpha$ and leaves it through an edge end $\beta$ that comes directly before $\alpha$ with respect to 
  the cyclic ordering at $v$. 

 \begin{definition}\label{def:face} Let $\Gamma$ be a ribbon graph. 
 \begin{compactenum}
 \item A \emph{partial face} in $\Gamma$ is a path  that turns maximally left at each  vertex in the path and traverses each edge at most once in each direction. 
 \item A \emph{ciliated face} in $\Gamma$  is a closed partial face whose cyclic permutations are also partial faces.
 \item A \emph{face} of $\Gamma$ is an equivalence class of ciliated faces under  cyclic permutations. 
 \end{compactenum}
 \end{definition}

 \begin{figure}
	\centering
		\begin{tikzpicture}[scale=.6]
			\draw [color=black, fill=black] (0,0) circle (.2); 
			\draw[color=black, style=dotted, line width=1pt] (0,-.2)--(0,-1);
			\draw [black,->,>=stealth,domain=-90:240] plot ({cos(\x)}, {sin(\x)});
			\draw[color=red, line width=1.5pt, ->,>=stealth] (.2,-.2)--(2,-2);
			\draw[color=violet, line width=1.5pt, <-,>=stealth] (.2,0)..controls (6,0) and (0,6).. (0,.2);
			\draw[color=cyan, line width=1.5pt, ->,>=stealth] (.2,.2)..controls (4,4) and (-4,4).. (-.2,.2);
			\draw[color=magenta, line width=1.5pt, <-,>=stealth] (-.2,0)--(-3,0);
			\node at (.7,-.7)[color=red, anchor=north]{${1}$};
			\node at (1,-.3)[color=violet, anchor=west]{${2}$};
			\node at (.7,.7)[color=cyan, anchor=west]{$3$};
			\node at (0,1.2)[color=violet, anchor=west]{${4}$};
			\node at (-.6,.8)[color=cyan, anchor=south]{$5$};
			\node at (-.9,.4)[color=magenta, anchor=east]{$6$};
		\end{tikzpicture}
		\qquad\qquad
				\begin{tikzpicture}[scale=.6]
		\begin{scope}[shift={(-6,0)}]
		\draw [color=black, fill=black] (-2,0) circle (.2); 
		\draw [color=black, fill=black] (2,0) circle (.2); 
		\draw [color=black, fill=black] (1,2) circle (.2); 
		\draw [color=black, fill=black] (4,4) circle (.2); 
		\draw [color=black, fill=black] (-4,4) circle (.2); 
		\draw[color=black, line width=1pt, style=dotted](1.9,.1)--(1.3,.7);
		\draw[color=blue, line width=1.5pt, ->,>=stealth] (-1.8,0)--(1.8,0);
		\node at (.7,1.2)[color=blue, anchor=north]{$1$};
		\draw[color=red, line width=1.5pt, ->,>=stealth] (-1.8,.2).. controls (2,0) and (-2,4).. (-2,.2);
		\node at (-1.5,2.5)[color=red, anchor=west]{$2$};
		\draw[color=magenta, line width=1.5pt, ->,>=stealth] (-2.2,.2)--(-3.8,3.8);
		\node at (-2,2.3)[color=magenta, anchor=east]{$3$};
		\draw[color=cyan, line width=1.5pt, ->,>=stealth] (2,.2)--(4,3.8);
		\node at (2.2,1.3)[color=cyan, anchor=east]{$7$};
		\draw[color=violet, line width=1.5pt, ->,>=stealth] (3.8,4)--(-3.8,4);
		\node at (0,3.7)[color=violet, anchor=north]{${4}$};
		\draw[color=brown, line width=1.5pt, ->,>=stealth] (3.8,3.8)--(1.2,2.2);
		\node at (2.1,2.4)[color=brown, anchor=north]{$6$};
		\node at (1.9,2.8)[color=brown, anchor=south]{$5$};
		\draw[color=black, line width=1.5pt] (-2.2,0)--(-2.8,0);
		\draw[color=black, line width=1.5pt ] (-2,-.2)--(-2,-.8);
		\draw[color=black, line width=1.5pt ] (-2.1,-.1)--(-2.6,-.6);
		\draw[color=black, line width=1.5pt] (2.2,0)--(2.8,0);
		\draw[color=black, line width=1.5pt ] (2,-.2)--(2,-.8);
		\draw[color=black, line width=1.5pt ] (2.1,-.1)--(2.6,-.6);
		\draw[color=black, line width=1.5pt] (-4,4.2)--(-4,4.6);
		\draw[color=black, line width=1.5pt ] (-4.2,4)--(-4.8,4);
		\draw[color=black, line width=1.5pt ] (-4.1,4.1)--(-4.6,4.6);
		\draw[color=black, line width=1.5pt] (4,4.2)--(4,4.6);
		\draw[color=black, line width=1.5pt ] (4.2,4)--(4.8,4);
		\draw[color=black, line width=1.5pt ] (4.1,4.1)--(4.6,4.6);
		\draw[line width=.5pt, color=black,->,>=stealth] plot [smooth, tension=0.6] coordinates 
      {(1.65,.45)(3.4,3.2)(1.5,2)(1,1.5)(.5,2)(2.8,3.6)(0,3.6)(-3.2,3.6)(-2.6,2) (-2.2,.9)(-1.8,2)(-.5,2)(.3,1)(0,.4)(1.4,.4)};
		\end{scope}
		\end{tikzpicture}
\caption{A ciliated vertex and a ciliated face in a directed ribbon graph}
\label{fig:ciliated}
\end{figure}
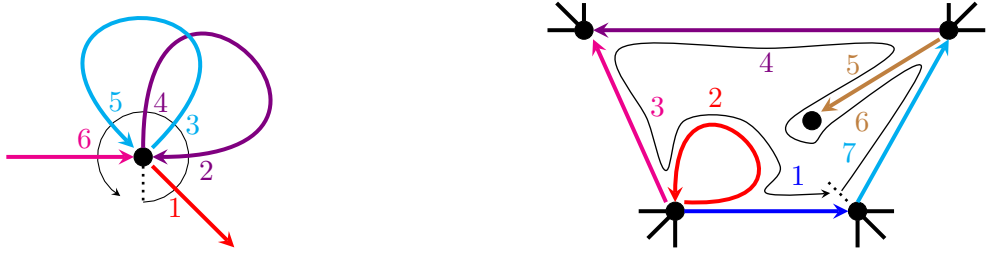

Examples of  faces and partial faces are shown in Figure \ref{fig:ciliated} .
Each  face defines a cyclic  ordering of the edges and their inverses in  the face. A ciliated face  is a face together with the choice of a starting vertex and induces a linear ordering of these edges. These orderings are taken counterclockwise, as shown in Figure \ref{fig:ciliated}. 
 
 A graph $\Gamma$ embedded into an oriented surface $\Sigma$ inherits a cyclic ordering of the edge ends at each vertex from the orientation of $\Sigma$ and hence a ribbon graph structure. Conversely, every ribbon graph $\Gamma$ defines a compact oriented surface $\Sigma_\Gamma$ that is obtained by attaching a disc at each face.  For a graph $\Gamma$ embedded into an oriented surface  $\Sigma$, the surfaces $\Sigma_\Gamma$ and $\Sigma$ are homeomorphic if and only if $\Sigma\setminus\Gamma$ is a disjoint union of discs.  
 An oriented surface with a boundary is obtained from a ribbon graph $\Gamma$ by attaching annuli instead of discs to some of its faces. For details on this topic see~\cite[Chapter 1.3]{LZ}.

  \subsection{Thickening of a ribbon graph}

The cyclic ordering of the edge ends  at each vertex allows one to thicken the edges of the graph to strips and its vertices to polygons, as shown in Figure \ref{figure:Thickening}. 
The resulting graph inherits a ribbon graph structure from $\Gamma$. 

\begin{definition} \label{def:sites}
	The \emph{thickening} of a ribbon graph $\Gamma$ with edge set $E$ is the ribbon graph $D(\Gamma)$ in which each edge $e\in E$ of $\Gamma$  is replaced by  four edges $e^{s},e^{t},e^{L}, e^{R}$, as shown in Figure \ref{figure:Thickening}. 
	Vertices of $D(\Gamma)$ are called \emph{sites}.
\end{definition}
Every site is associated to a vertex and a face of $\Gamma$, see Figure~\ref{figure:Thickening}. We call two such sites $s,t$ \emph{disjoint}, if neither the vertices nor the faces of $\Gamma$ associated to $s$ and $t$ coincide.
\begin{figure}[H]
	\centering
	\begin{tikzpicture}[scale=1, baseline=(current bounding box.center)]
		\begin{scope}[shift={(0,0)}]
			\draw[color=black, line width=1,fill=black] (0,0) circle (.15);
			\draw[color=black, line width=1,fill=black] (2,0) circle (.15);
			\draw[color=red, line width=2.5,->,>=stealth] (-1,1) -- (-.1,.1);
			\draw[color=blue, line width=2.5,->,>=stealth] (-1,-1) -- (-.1,-.1);
			\draw[color=violet, line width=2.5,->,>=stealth] (0.15,0) -- node[above]{$e$} (1.85,0);
			\draw[color=green, line width=2.5,->,>=stealth] (2.1,.1) -- (3,1);
			\draw[color=cyan, line width=2.5,->,>=stealth] (2.1,-.1) -- (3,-1);
		\end{scope}
		\draw[color=black, line width=1.5pt,->,>=stealth] (4,0) -- (6,0);
		\begin{scope}[shift={(8,0)}]
			\draw[color=red, line width=2.5,->,>=stealth] (-.8,0.05) -- (-.05,.45);
			\draw[color=red, line width=2.5,->,>=stealth] (-1,1.5) -- (-.05,.45);
			\draw[color=red, line width=2.5,->,>=stealth] (-1.85,1) -- (-.9,.05);
			\draw[color=blue, line width=2.5,->,>=stealth] (-.05,-.45) -- (-.8,-.05);
			\draw[color=blue, line width=2.5,->,>=stealth] (-1,-1.5) -- (-.05,-.55);
			\draw[color=blue, line width=2.5,->,>=stealth] (-1.85,-1) -- (-.9,-.05);
			\draw[color=violet, line width=2.5,->,>=stealth] (0,-.4) --node[right]{$e^{s}$} (0,.4);
			\draw[color=violet, line width=2.5,->,>=stealth] (3,-.4) --node[left]{$e^{t}$} (3,.4);
			\draw[color=violet, line width=2.5,->,>=stealth] (.1,.5) --node[above]{$e^{L}$} (2.9,.5);
			\draw[color=violet, line width=2.5,->,>=stealth] (.1,-.5) --node[above]{$e^{R}$} (2.9,-.5);
			\draw[color=green, line width=2.5,->,>=stealth] (3.8,.05) -- (3.05,.45);
			\draw[color=green, line width=2.5,->,>=stealth] (3.05,.55) -- (4,1.5);
			\draw[color=green, line width=2.5,->,>=stealth] (3.9,.05) -- (4.85,1);
			\draw[color=cyan, line width=2.5,->,>=stealth] (3.05,-.45) -- (3.8,-.05);
			\draw[color=cyan, line width=2.5,->,>=stealth] (3.05,-.55) -- (4,-1.5);
			\draw[color=cyan, line width=2.5,->,>=stealth] (3.9,-.05) -- (4.85,-1);
			\draw[color=black, line width=1,fill=black] (0,.5) circle (.1);
			\draw[color=black, line width=1,fill=black] (0,-.5) circle (.1);
			\draw[color=black, line width=1,fill=black] (-.85,0) circle (.1);
			\draw[color=black, line width=1,fill=black] (3,.5) circle (.1);
			\draw[color=black, line width=1,fill=black] (3,-.5) circle (.1);
			\draw[color=black, line width=1,fill=black] (3.85,0) circle (.1);
		\end{scope}
	\end{tikzpicture}
	\caption{Thickening of a ribbon graph.}
	\label{figure:Thickening}
\end{figure}
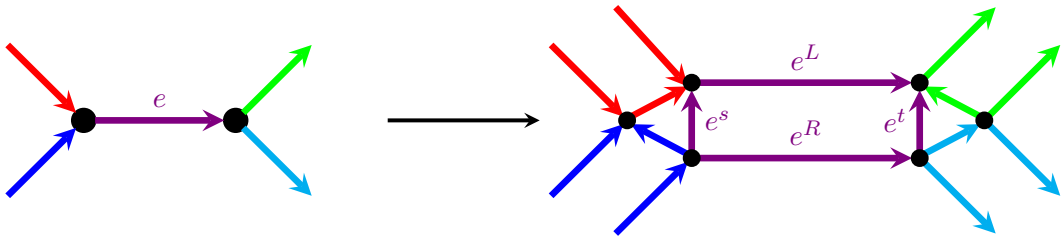


In the following, we often consider various paths in the thickening $D(\Gamma)$, that is, morphisms in the associated path groupoid.
We write $\rho: u\to v$ for a path $\rho$ in $D(\Gamma)$ from a site $u$ to a site $v$.  
We often draw paths in $D(\Gamma)$ in the graph $\Gamma$ for better legibility. 
In this case, $e^{\pm L}$ and $e^{\pm R}$ are drawn slightly to the left and right of an edge $e\in E$, viewed in the direction of its orientation, and $e^{\pm s}, e^{\pm t}$ cross $e$ at the starting and target end, as shown in Figure \ref{fig:paths}.

We also consider the starting point and end point of paths in $D(\Gamma)$ relative to its sites. We say that a path ends to the left or right of a site, when it ends to the left or right of the site viewed in the direction from the adjacent vertex of $\Gamma$ to the adjacent face. 
This translates into the following condition involving the four edges in each rectangle.

\begin{definition}\label{def:leftorright} Let $u,v$ be sites and  $\rho=\rho_1^{\eta_1}\circ \ldots\circ \rho_n^{\eta_n}: u\to v$ the reduced expression for a path $\rho$ in $D(\Gamma)$ with $\rho_i\in E$ and $\eta_i\in \{\pm s,\pm t,\pm L,\pm R\}$.  We say that $\rho$
\begin{itemize}
\item  \emph{ends to the left} of $v$, denoted $\rho: u\to v^L$, if $\eta_1\in\{-s,t,-R,L\}$,
\item  \emph{ends to the right} of $v$, denoted $\rho: u\to v^R$, if $\eta_1\in\{s,-t,R,-L\}$,
\item  \emph{starts to the left} of $u$,  denoted $\rho: u^L\to v$, if $\eta_n\in\{s,-t,R,-L\}$,
\item  \emph{starts to the right} of $u$,  denoted $\rho: u^R\to v$, if $\eta_n\in\{-s,t,-R,L\}$.
\end{itemize}
We call a path $\rho:u^{L}\to v^{R}$ a \emph{left-right path} and similarly use the terms \emph{left-left, right-left} and \emph{right-right path} (cf. Figure~\ref{fig:paths}).
\end{definition}

In the following, we often consider certain distinguished paths in thickened ribbon graphs known as \emph{ribbon paths}. 
These paths were considered in many publications concerned with Kitaev models such as \cite{Ki,BD, Me} to define vertex and face operators and ribbon operators.

\begin{definition}\label{def:ribbon path} Let $\Gamma$ be a ribbon graph with thickening $D(\Gamma)$.
\begin{compactenum}
\item A \emph{ribbon path} is a path $\rho$ in $D(\Gamma)$ such that
	\begin{compactenum}
		\item every edge of $D(\Gamma)$ occurs at most once in the reduced form of $\gamma$.
		\item if the reduced form of $\gamma$ contains one of the edges $e^{\pm L}$ or $e^{\pm R}$ for an edge  $e\in E$, then it does not contain the edges $e^{\pm s}$ and $e^{\pm t}$.
	\end{compactenum}
	\item The \emph{vertex path of the site $u$} is the closed  ribbon path $v:u\to u$ in $D(\Gamma)$ whose reduced form only contains edges of the form $e^{- s}, e^{ t}$. 
	\item The \emph{face path of the site $u$} is the closed ribbon path $f:u\to u$ in $D(\Gamma)$ whose reduced form only contains edges of the form $e^{- L}, e^{ R}$. 
\end{compactenum}
\end{definition}
Examples of ribbon paths are shown in Figure \ref{fig:paths}. 
As a consequence of the first property there are two types of ribbon paths with opposite orientation. 
A \emph{right-left} ribbon path traverses only edges of the form $e^{L}, e^{-R}, e^{-s}, e^{t}$ while a \emph{left-right} ribbon path only traverses edges of the form $e^{-L}, e^{R}, e^{s}, e^{-t}$, as shown in Figure \ref{fig:paths}. Note that vertex paths are right-left paths and face paths are left-right paths by definition.

\begin{figure}
	\centering
\def\svgscale{.4}
\begingroup%
  \makeatletter%
  \providecommand\color[2][]{%
    \errmessage{(Inkscape) Color is used for the text in Inkscape, but the package 'color.sty' is not loaded}%
    \renewcommand\color[2][]{}%
  }%
  \providecommand\transparent[1]{%
    \errmessage{(Inkscape) Transparency is used (non-zero) for the text in Inkscape, but the package 'transparent.sty' is not loaded}%
    \renewcommand\transparent[1]{}%
  }%
  \providecommand\rotatebox[2]{#2}%
  \newcommand*\fsize{\dimexpr\f@size pt\relax}%
  \newcommand*\lineheight[1]{\fontsize{\fsize}{#1\fsize}\selectfont}%
  \ifx\svgwidth\undefined%
    \setlength{\unitlength}{841.88976378bp}%
    \ifx\svgscale\undefined%
      \relax%
    \else%
      \setlength{\unitlength}{\unitlength * \real{\svgscale}}%
    \fi%
  \else%
    \setlength{\unitlength}{\svgwidth}%
  \fi%
  \global\let\svgwidth\undefined%
  \global\let\svgscale\undefined%
  \makeatother%
  \begin{picture}(1,0.70707071)%
    \lineheight{1}%
    \setlength\tabcolsep{0pt}%
    \put(0,0){\includegraphics[width=\unitlength,page=1]{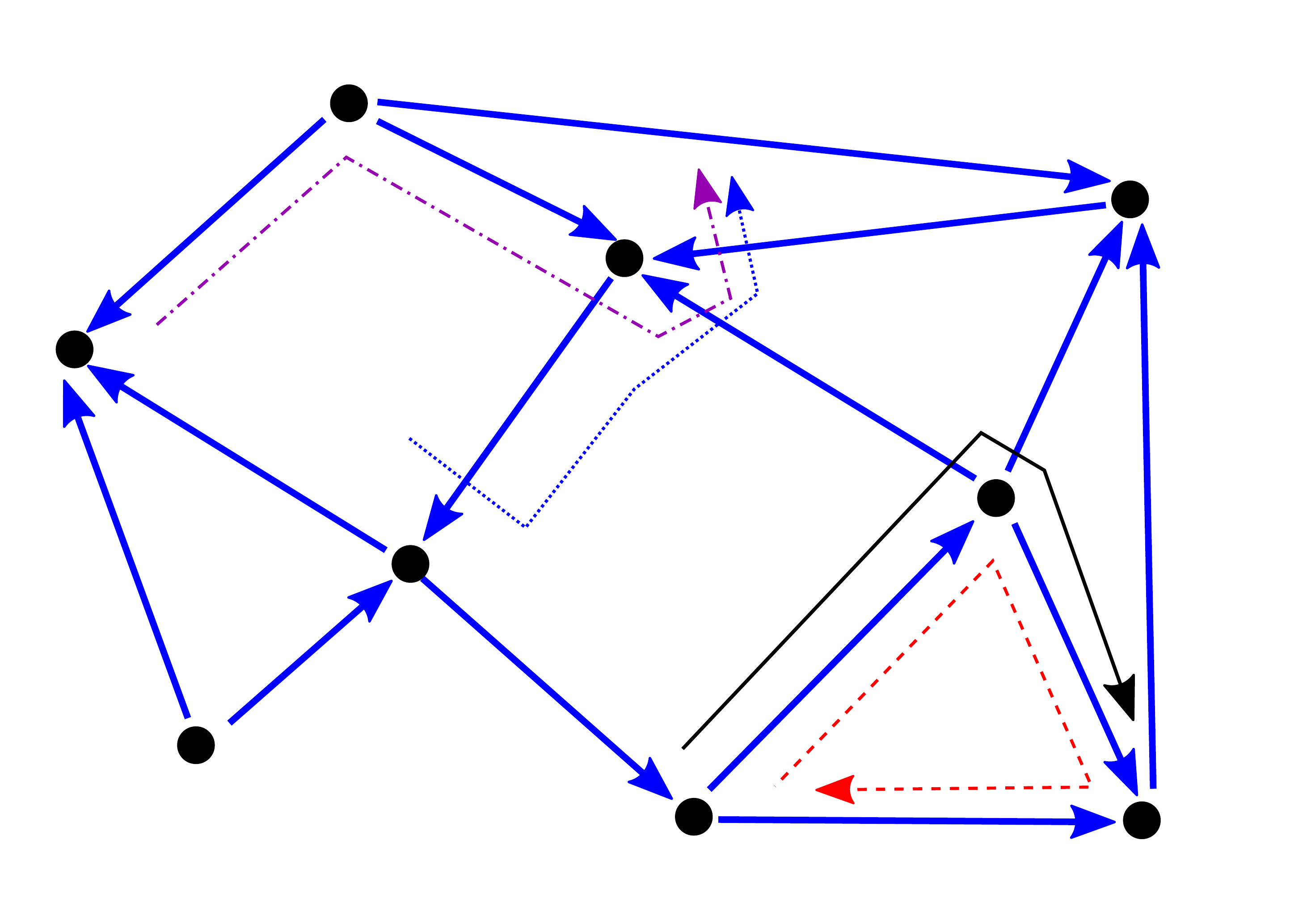}}%
    \put(0.83256582,0.2786977){\color[rgb]{0,0,0}\makebox(0,0)[lt]{\lineheight{1.25}\smash{\begin{tabular}[t]{l}$\rho$\end{tabular}}}}%
    \put(0.64141496,0.12699834){\color[rgb]{0,0,0}\makebox(0,0)[lt]{\lineheight{1.25}\smash{\begin{tabular}[t]{l}{\color{red}$\sigma$}\end{tabular}}}}%
    \put(0.47291637,0.35989262){\color[rgb]{0,0,0}\makebox(0,0)[lt]{\lineheight{1.25}\smash{\begin{tabular}[t]{l}{\color{blue}$\tau$}\end{tabular}}}}%
    \put(0.17308011,0.47061293){\color[rgb]{0,0,0}\makebox(0,0)[lt]{\lineheight{1.25}\smash{\begin{tabular}[t]{l}{\color{violet}$\omega$}\end{tabular}}}}%
    \put(0,0){\includegraphics[width=\unitlength,page=2]{paths2.pdf}}%
    \put(0.77395219,0.64507669){\color[rgb]{0,0,0}\makebox(0,0)[lt]{\lineheight{1.25}\smash{\begin{tabular}[t]{l}{\color{magenta}$\nu$}\end{tabular}}}}%
    \put(0,0){\includegraphics[width=\unitlength,page=3]{paths2.pdf}}%
  \end{picture}%
\endgroup%

\caption{Examples of simple paths, drawn in a ribbon graph:\\
$\bullet$  $\nu$ is a right-left vertex path,  \\
$\bullet$ $\rho$ is a right-left ribbon path,\\
$\bullet$ $\sigma$ is a  left-right face path,  \\
$\bullet$ $\omega$ is a left-right ribbon path, \\
$\bullet$ $\tau$ is a  right-right simple path, but not a ribbon path.\\
Only the paths $\tau$ and $\omega$ cross, and they form a left joint $(\tau,\omega)_{\prec}$. }
\label{fig:paths}
\end{figure}

To describe the relative position of paths in $D(\Gamma)$, it is convenient to introduce a shorthand notation for the paths around the thickened edge $e$ in Figure \ref{figure:Thickening}. 
We set
\begin{align}\label{eq:eddef}
	&e^{d}= e^{t}\circ e^{R},
	&
	&e^{\overline{d}} = e^{-t}\circ e^{L}
	&
	&e^{d'}= e^{L}\circ e^{s} &
	&e^{\overline{d'}} = e^{R}\circ e^{-s}.
\end{align}

\begin{definition}
	\label{def:cross}Let $\rho_1,\rho_2$ be paths in $D(\Gamma)$. We say that $\rho_1$ and $\rho_2$
\begin{itemize}
	\item \emph{do not cross}, denoted $(\rho_{1}, \rho_{2})_{\emptyset}$, if for any edge $e \in E$ traversed by both $\rho_{1}$ and $\rho_{2}$ the following condition is fulfilled: If $\rho_{1}$ traverses $e^{\pm s}$, then $\rho_{2}$ only traverses $e^{\pm t}$ and if $\rho_{1}$ traverses $e^{\pm L}$ then $\rho_{2}$ only traverses $e^{\pm R}$ and vice versa. 
	\item 	form a \emph{left joint}, denoted $(\rho_{1}, \rho_{2})_{\prec}$, if there are (possibly empty) paths $\rho, \rho_{1}', \rho_{2}'$ and an edge $e$ such that $\rho$ is a ribbon path, $\rho, \rho_{1}',\rho_{2}'$ do not cross and the reduced expressions for $\rho_{1},\rho_{2}$ are
		\begin{align*}
			\rho_{1} = \rho \circ e^{a} \circ \rho_{1}' \text{ and } \rho_{2} = \rho\circ e^{b} \circ \rho_{2'}
		\end{align*}
		with either
		 $a\in \left\{ \pm L, \pm R, \pm d, \pm \overline{d}, \pm d', \pm \overline{d'} \right\}$, $b\in \left\{ \pm s, \pm t \right\}$
		 \\
		  $\qquad$ or $a\in \left\{ \pm L, \pm R \right\}$, $b\in \left\{ \pm s, \pm t, \pm d, \pm \overline{d},\pm d', \pm \overline{d'} \right\}$.
	\item 	form a \emph{right joint}, denoted $(\rho_{1}, \rho_{2})_{\succ}$, if $\left( \rho_{1}^{-1}, \rho_{2}^{-1} \right)_{\prec}$.
	\item form a \emph{middle joint}, denoted $\left( \rho_{1},\rho_{2} \right)_{\succ\prec}$, if there are decompositions $\rho_{1}= \rho_{1}'\circ\rho_{1}''$ and $\rho_{2}=\rho_{2}'\circ\rho_{2}''$ in reduced form such that either $(\rho_{1}',\rho_{2}')_{\succ}$ and $(\rho_{1}'',\rho_{2}'')_{\prec}$ or $(\rho_{2}',\rho_{1}')_{\succ}$ and $(\rho_{2}'',\rho_{1}'')_{\prec}$.
\end{itemize}
\end{definition}

Examples of paths that do not cross and paths that form a left joint are shown in Figure \ref{fig:paths}.

We also consider  slight generalizations of ribbon paths. These are paths that are allowed to traverse two adjacent sides of the rectangle in $D(\Gamma)$ associated with an edge $e\in E$ in Figure \ref{figure:Thickening}. An example of such a path is shown in Figure \ref{fig:paths}.

\begin{definition} \label{def:simplepath} A reduced path $\rho$ in $D(\Gamma)$ is called \emph{simple}, if for each decomposition  $\rho=\rho_1\circ \rho_2$ with reduced paths $\rho_1,\rho_2$ one has either $(\rho_1,\rho_2)_\emptyset$ or $\rho_1=\rho'_1\circ x$ and $\rho_s=y\circ \rho'_2$ with $x\circ y\in \{e^{\pm d}, e^{\pm \bar d}, e^{\pm d'}, e^{\pm \bar d'}\}$ for an edge $e\in E$, $(\rho'_1,\rho'_2)_\emptyset$, $(x\circ y, \rho'_1)_\emptyset$ and $(x\circ y, \rho'_2)_\emptyset$.

\end{definition}

\section{Kitaev models}
\label{section:KitaevModel}

Kitaev lattice models were introduced in \cite{Ki}  as a realistic model of a topological quantum computer. The models in \cite{Ki} were based on group algebras of finite groups and were later generalized in \cite{BMCA,BCKA} to finite-dimensional semisimple Hopf-${*}$ algebras.  They were related to Levin-Wen models in \cite{BA, Kr} and interpreted as a Hopf algebra gauge theory in \cite{Me}.  In
 \cite{BK12} it was shown that these models are closely related to topological quantum field theories of Turaev-Viro type \cite{TV}. More specifically, the \emph{protected space} of the Kitaev model for a finite-dimensional semisimple Hopf algebra $H$ on an oriented surface $\Sigma$ is the vector space a Turaev-Viro TQFT for the spherical fusion category $H\text{-Mod}$ assigns to $\Sigma$.  
 
 We summarize the Kitaev models without defects and boundaries  from \cite{BMCA}, but with some minor differences in conventions and in a formulation that prepares their generalization to Kitaev models with topological defects and boundaries in the following sections.

The Kitaev model requires a ribbon graph $\Gamma=(V,E)$ and a finite-dimensional, semisimple Hopf algebra $H$. In its most basic form the model then features:
	\begin{itemize}
		\item The \emph{extended space}  $\HSpace = H^{ \oo E}= \tensor_{e\in E}H$, i.e. the $|E|$-fold tensor product of $H$ with itself. Here we associate one copy of $H$ to every edge $e\in E$. 
		\item  \emph{Vertex operators} $A^{h}_{s}$ and \emph{face operators} $B^{\alpha}_{s}:H^{ \oo E}\to H^{ \oo E}$ for every site $s$ of $\Gamma$ and $h\in H, \alpha\in H^{*}$. As explained in Theorem \ref{ht:siterep} below, they define representations of the Drinfel'd double $D(H)$ on $H^{ \oo E}$.
		 \item The \emph{protected space}		
			 $$\mathcal{L}= \left\{ m \in H^{ \oo E} \;|\; A^{h}_{s}m = \varepsilon(h) m, B^{\alpha}_{s}m = \alpha(1) m \text{ for all sites }s,h\in H,\alpha\in H^{*}  \right\}$$
			 of invariants of the face and vertex operators.%
		 \end{itemize}
		 To consider models with excitations and to investigate the transport and braiding of excitations one also requires a set of operators on the extended space $\HSpace$ called \emph{ribbon operators} in \cite{Ki,BD} and \emph{holonomies} in \cite{Me} and assigned to certain paths in $D(\Gamma)$:		 
	\begin{itemize}	 
		 \item Holonomies $\Hol_{\rho}^{h \oo \alpha}\in \End_\C( \HSpace)$ for every path $\rho$ in $D(\Gamma)$ and $h \oo \alpha\in H \oo H^{*}$.
	\end{itemize}

All linear endomorphisms on $\HSpace$ are composites of four operators, called \emph{triangle operators} in \cite{BD}, that are assigned to each edge of $D(\Gamma)$ and viewed as the holonomies of the associated paths $e^t,e^s, e^L,e^R$. Hence, every edge $e\in E$ is associated with  four triangle operators.
 The operators are linear maps $\HSpace\to\HSpace$ which only act on the tensor factor associated to $e$. 
\begin{definition}
	\label{definition:HolonomyBasic}
	Let $e$ be an edge, $\eta\in \left\{ \pm t,\pm s,\pm R,\pm L \right\}$ and $h,m \in H, \alpha\in H^{*}$. The \emph{triangle operators} for $e$ are the following maps 
	$\Hol_{e^{\eta}}^{h \oo \alpha} :\HSpace\to \HSpace$ which only act on the tensor factor associated to $e$:
\begin{align}
	\Hol_{e^{s}}^{h \oo \alpha} (m) &= \alpha(1) \cdot mh, &
	\Hol_{e^{t}}^{h \oo \alpha} (m) &=\alpha(1) \cdot hm
	\label{eq:HolonomyEndsStandard}
	\\
	\Hol_{e^{R}}^{h \oo \alpha} (m) &= \varepsilon(h) \langle \alpha , m_{(2)} \rangle m_{(1)}, &
	\Hol_{e^{L}}^{h \oo \alpha} (m) &= \varepsilon(h)\langle \alpha , m_{(1)} \rangle m_{(2)}
	\label{eq:HolonomySidesStandard}
	\\
	\Hol_{e^{\eta}}^{h \oo \alpha} (m) &= \Hol_{e^{-\eta}}^{ S_{D(H)^{*}}(h \oo \alpha) } (m) & &\text{for $\eta\in \left\{ -s,-t,-R,-L \right\}$.}
	\label{eq:HolonomyOpposites}
\end{align}
Here $m\in H \subset \HSpace$ is the element in the tensor factor associated to $e$ and we extend linearly to $\HSpace$.
\end{definition}

A direct computation shows that for the element $m\in H$ associated to the edge $e$ we have 
\begin{align*}
	\Hol_{e^t}^{h\oo\varepsilon}\circ\Hol_{e^R}^{1\oo\alpha} (m)= \langle \alpha , m_{(2)} \rangle hm_{(1)},
\end{align*}
i.e. the edge operators $\Hol_{e^t}^{h\oo\alpha}$ and $\Hol_{e^R}^{h\oo\alpha}$ generate an algebra isomorphic to the Heisenberg double $\mathcal H_R(H)$ and they act on the copy of $H$ associated to the edge $e$ by the action from~\eqref{eq:HeisenbergAction}.
 By Remark \ref{rem:heisenbergdoubleendo}, this implies  that the edge operators for all edges $e\in E$ generate an algebra isomorphic to $\End_\C(\HSpace)$.

Holonomies along paths in $D(\Gamma)$ are constructed from these basic building blocks by applying the comultiplication of $D(H)^*$ to their argument $h\oo\alpha$ and composing these maps as follows:

\begin{definition}
	\label{definition:HolonomyComposite}
Let $\rho=\rho_{1}\circ\rho_{2}$ a simple path in reduced form with $\rho_{2}:s \to t$ and $\rho_{1}:t \to u$.
Write $\beta:=h \oo \alpha\in D(H)^{*}$ and $\Delta_{D(H)^{*}}(\beta)=\beta_{(1)} \oo \beta_{(2)}$. Then we define recursively
\begin{align}
	&\Hol_{\rho}^{\beta}  = \Hol_{\rho_{1}}^{\beta_{(1)}}\circ \Hol_{\rho_{2}}^{\beta_{(2)}} \quad &\text{if $(\rho_{1},\rho_{2})_{\emptyset}$ or $(\rho_{2},\rho_{1}^{-1})_{\prec}$}
	\label{eq:HolonomyRecursionStandard}
	\\
	&\Hol_{\rho}^{\beta}  = \Hol_{\rho_{2}}^{\beta_{(2)}}\circ \Hol_{\rho_{1}}^{\beta_{(1)}} &\text{if $(\rho_{1}^{-1},\rho_{2})_{\prec}$}
	\label{eq:HolonomyRecursionLeftJoin}
\end{align}
\end{definition}

\begin{lemma}
	\label{lemma:HolonomyWellDefined}
	For a simple path $\rho$, the definition of $\Hol_{\rho}^{\beta} $ does not depend on the decomposition chosen in~\ref{definition:HolonomyComposite}.
\end{lemma}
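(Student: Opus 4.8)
The plan is to reduce the well-definedness of $\Hol_\rho^\beta$ to a finite list of local "commutation" or "coassociativity" identities, one for each way two consecutive elementary decompositions of a simple path can overlap, and then verify each of these from the axioms of $D(H)^*$ (coassociativity of $\Delta_{D(H)^*}$, counitality) together with the Heisenberg-double relations recorded in Remark~\ref{rem:heisenbergdoubleendo} and Remark~\ref{remark:HeisenbergDoubleComoduleAlgebra}. Concretely, any two decompositions of a reduced simple path $\rho$ into $\rho_1\circ\rho_2$ are obtained from a common refinement $\rho = \sigma_1\circ\sigma_2\circ\sigma_3$ (at the level of reduced words over the edges of $D(\Gamma)$), so by an induction on the number of edges it suffices to check that the two ways of building $\Hol^\beta_{\sigma_1\circ\sigma_2\circ\sigma_3}$ — first grouping $\sigma_1\circ\sigma_2$, then grouping $\sigma_2\circ\sigma_3$ — agree. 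First I would set up this induction carefully, being explicit that "simple" (Definition~\ref{def:simplepath}) is exactly the hypothesis that guarantees every refinement step is again of one of the two admissible types in Definition~\ref{definition:HolonomyComposite}, so the recursion never gets stuck.

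The core of the argument is then a case analysis on the crossing types of the three pieces $\sigma_1,\sigma_2,\sigma_3$ relative to one another, i.e. which of $(\cdot,\cdot)_\emptyset$, $(\cdot,\cdot)_\prec$ holds for each pair $(\sigma_i,\sigma_j)$ and for the inverses. In the "generic" case where all pairs do not cross, both ways of associating give $\Hol^{\beta_{(1)}}_{\sigma_1}\circ\Hol^{\beta_{(2)}}_{\sigma_2}\circ\Hol^{\beta_{(3)}}_{\sigma_3}$ after applying coassociativity of $\Delta_{D(H)^*}$ to distribute $\beta$; here the only input is that $\Delta_{D(H)^*}$ is coassociative and that non-crossing triangle operators on disjoint tensor factors commute. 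The interesting cases are when two of the pieces overlap along the rectangle of a single edge $e$ and form a left joint: then one of the two groupings uses \eqref{eq:HolonomyRecursionLeftJoin} (reversed order of composition) while the other uses \eqref{eq:HolonomyRecursionStandard}, and equality of the two expressions becomes precisely the statement that, on the tensor factor of $e$, the relevant triangle operators generate the Heisenberg double $\mathcal H_R(H)$ (or $\overline{\mathcal H}_R(H)$) and that $\Delta_{D(H)^*}$ is an algebra homomorphism into $\overline{\mathcal H}_R(H)\oo\mathcal H_R(H)$ as in \eqref{eq:ComultHeisenbergDoubleAlgebraHom}–\eqref{eq:ComultHeisenbergDoubleOppositeAlgebraHom}. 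In other words, the "reversed composition" convention in \eqref{eq:HolonomyRecursionLeftJoin} is forced by, and compatible with, the comodule-algebra structure of the Heisenberg double over $D(H)^*$, and verifying this is a direct Sweedler-notation computation using \eqref{eq:HolonomyEndsStandard}–\eqref{eq:HolonomyOpposites} and \eqref{eq:Drinfel'dDualComultiplication}.

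I expect the main obstacle to be bookkeeping rather than conceptual depth: one has to enumerate the admissible refinement configurations allowed by Definition~\ref{def:simplepath} (a simple path may traverse two adjacent sides $x\circ y\in\{e^{\pm d},e^{\pm\bar d},e^{\pm d'},e^{\pm\bar d'}\}$ of one rectangle, which is an extra case beyond honest ribbon paths), and for each check both that the recursion applies and that the resulting operator is independent of the order. To keep this manageable I would first prove the statement for ribbon paths, where only the cases "$(\sigma_i,\sigma_j)_\emptyset$ for all pairs" and "exactly one left joint at a single edge" occur, and then treat the extra simple-path cases by noting that a segment $x\circ y$ with $x\circ y\in\{e^{\pm d},\dots\}$ satisfies $(x\circ y,\rho'_1)_\emptyset$ and $(x\circ y,\rho'_2)_\emptyset$ by hypothesis, so such a segment behaves exactly like a non-crossing block and contributes a factor $\Hol^{\beta_{(i)}}_{x}\circ\Hol^{\beta_{(i+1)}}_{y}$ that can be moved past the rest using coassociativity of $\Delta_{D(H)^*}$. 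Once both reduced-word refinements have been matched term-by-term in this way, the induction closes and $\Hol_\rho^\beta$ is well defined.
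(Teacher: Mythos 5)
Your overall strategy coincides with the paper's: pass to a common refinement $\rho=\rho_{1}\circ\rho_{2}\circ\rho_{3}$ in reduced form, compare the two groupings $\rho_{1}\circ(\rho_{2}\circ\rho_{3})$ and $(\rho_{1}\circ\rho_{2})\circ\rho_{3}$ by induction on the length of $\rho$, and use coassociativity of $\Delta_{D(H)^{*}}$ to match Sweedler legs. However, the identity you single out as the core of the joint case is not the one that is needed. In the recursion of Definition~\ref{definition:HolonomyComposite} the argument $\beta$ is only ever comultiplied, never multiplied: after either grouping is fully unfolded, the pieces $\rho_{1},\rho_{2},\rho_{3}$ carry the same legs $\beta_{(1)},\beta_{(2)},\beta_{(3)}$, and the two results can differ only in the order in which the three operators $\Hol_{\rho_{i}}^{\beta_{(i)}}$ are composed. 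No product of elements of $D(H)^{*}$ ever appears, so the fact that $\Delta_{D(H)^{*}}$ is an algebra map into $\overline{\mathcal{H}}_{R}(H)\oo\mathcal{H}_{R}(H)$, i.e.\ \eqref{eq:ComultHeisenbergDoubleAlgebraHom} and \eqref{eq:ComultHeisenbergDoubleOppositeAlgebraHom}, cannot decide the joint case. Those identities are the input for the later composition rules \eqref{eq:LeftRightHolonomy}--\eqref{eq:RightRightHolonomy} in Lemma~\ref{lemma:HolonomyCommutators}, where two holonomies along the \emph{same} path are multiplied; they are not what well-definedness rests on.

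What the joint case actually requires, and what your plan does not address, is a combinatorial compatibility: the conditions $(\cdot,\cdot)_{\emptyset}$ and $(\cdot,\cdot)_{\prec}$ that select between \eqref{eq:HolonomyRecursionStandard} and \eqref{eq:HolonomyRecursionLeftJoin} must propagate from the refined pieces to the composite subpaths. This is exactly how the paper argues: from $(\rho_{1}^{-1},\rho_{2})_{\prec}$ and $(\rho_{2}^{-1},\rho_{3})_{\prec}$ one deduces $(\rho_{1}^{-1},\rho_{2}\circ\rho_{3})_{\prec}$ and $((\rho_{1}\circ\rho_{2})^{-1},\rho_{3})_{\prec}$, so both groupings prescribe the same composition order and coassociativity finishes the computation. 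In the remaining mixed cases the two groupings can prescribe orders differing by a transposition of $\Hol_{\rho_{1}}^{\beta_{(1)}}$ and $\Hol_{\rho_{3}}^{\beta_{(3)}}$, and there one only needs that holonomies along subpaths that do not cross commute; on a shared edge this is the elementary fact that the triangle operators for $e^{\pm s}$ commute with those for $e^{\pm t}$ and those for $e^{\pm L}$ with those for $e^{\pm R}$, which is precisely what the non-crossing condition permits. So your scaffolding (refinement, induction, coassociativity, commutation of non-crossing triangle operators) is sound, but the Heisenberg-double computation you propose would not close the argument; replace it by the propagation of the joint/non-crossing conditions to the composite subpaths and the case analysis that goes with it.
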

\begin{proof}
	Let $\rho=\rho_{1}\circ\rho_{2}\circ\rho_{3}$ in reduced form. First let $(\rho_{1}^{-1},\rho_{2})_{\prec}$ and $(\rho_{2}^{-1},\rho_{3})_{\prec}$. Then we also have $(\rho_{1}^{-1},\rho_{2}\circ\rho_{3})_{\prec}$ and $((\rho_{1}\circ\rho_{2})^{-1},\rho_{3})_{\prec}$. Then
	\begin{align*}
		\Hol_{\rho_{1}\circ (\rho_{2}\circ\rho_{3})}^{ \beta}
		&= 
		\Hol_{\rho_{2}\circ\rho_{3}}^{ \beta_{(2)}} \circ \Hol_{\rho_{1}}^{ \beta_{(1)}} 
		= \Hol_{\rho_{3}}^{ \beta_{(3)}} \circ \Hol_{\rho_{2}}^{\beta} \circ\Hol_{\rho_{1}}^{ \beta_{(1)}} 
		= \Hol_{\rho_{3}}^{\beta_{(2)}} \circ \Hol_{\rho_{1}\circ\rho_{2}}^{ \beta_{(1)}} 
		=\Hol_{(\rho_{1}\circ \rho_{2})\circ\rho_{3}}^{ \beta}
	\end{align*}
	Here we see that applying the definition for the two decompositions $\rho_{1}\circ\left(\rho_{2}\circ\rho_{3}  \right)$ and $\left( \rho_{1}\circ\rho_{2} \right)\circ\rho_{3}$ leads to the same result. This can be proven analogously for the other cases.
\end{proof}

We also consider symmetries on the extended space $\HSpace$, given by the vertex and face operators of a Kitaev model. These are obtained as holonomies along special paths, namely the vertex and face paths  from Definition \ref{def:ribbon path} and Figure \ref{fig:paths}.
\begin{definition}
	\label{definition:VertexFaceOperators}
	Let $s$ be a site with vertex path $v$ 
	and face path $f$ and let $h\in H,\alpha\in H^{*}$. The vertex operator at $s$ is the map
	\begin{align}
		A_{s}^{h}:\, &\HSpace \to \HSpace, A_{s}^{h} = 	\Hol_{v}^{h \oo \varepsilon} 
		\label{eq:VertexOperatorStandard}
	\end{align}
	The face operator at $s$ is the map
	\begin{align}
		B_{s}^{\alpha}:\, &\HSpace \to \HSpace, B_{s}^{\alpha} = \Hol_{f}^{1 \oo \alpha} 
		\label{eq:FaceOperatorStandard}
	\end{align}
\end{definition}

It was shown in \cite{BMCA} that the commutation relation between the vertex and face operators associated with a given site form a representation of the Drinfel'd double  $D(H)$ and that the $D(H)$-actions at disjoint sites commute:

\begin{theorem}\label{ht:siterep}\cite{Ki,BMCA}
		Let $s$ be a site. Then
		\begin{align}
			D(H) \oo \HSpace &\to \HSpace\nonumber\\
			(\alpha \oo h) \oo m &\mapsto B^{\alpha}_{s}A^{h}_{s}(m)
			\label{eq:Drinfel'dActionHSpace}
		\end{align}
		defines an action of $D(H)$ on $\HSpace$. The actions of $D(H)$ for disjoint sites commute.
		\label{theorem:Drinfel'dActionHSpace}
	\end{theorem}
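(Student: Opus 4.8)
\textbf{Proof plan for Theorem~\ref{theorem:Drinfel'dActionHSpace}.}

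The plan is to reduce the statement to two independent verifications: first, that for a fixed site $s$ the assignment $(\alpha\oo h)\mapsto B^{\alpha}_s A^{h}_s$ is an algebra homomorphism $D(H)\to\End_{\C}(\HSpace)$, and second, that for disjoint sites $s,t$ the operators $B^{\alpha}_s A^{h}_s$ and $B^{\beta}_t A^{k}_t$ commute. Both should be deduced from the holonomy formalism already set up, rather than by hand computation on $\HSpace$.

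For the first part, I would proceed as follows. By Definition~\ref{definition:VertexFaceOperators}, $A^{h}_s=\Hol^{h\oo\varepsilon}_v$ and $B^{\alpha}_s=\Hol^{1\oo\alpha}_f$, where $v$ is the vertex path and $f$ the face path of $s$. The key structural input is Remark~\ref{remark:HeisenbergDoubleComoduleAlgebra}: the comultiplication of $D(H)^*$ is an algebra homomorphism into $\overline{\mathcal H}_R(H)\oo\mathcal H_R(H)$, and the edge operators along the ``$t,R$''-type edges of a thickened edge generate a Heisenberg double acting as in~\eqref{eq:HeisenbergAction}. Using the recursive Definition~\ref{definition:HolonomyComposite} and Lemma~\ref{lemma:HolonomyWellDefined}, one shows that $\rho\mapsto\Hol^{\beta}_\rho$ is, edge by edge, an action of $D(H)^{*}$-type data compatible with concatenation of simple paths; in particular that $\Hol^{\beta}_f\circ\Hol^{\gamma}_v$ realizes the multiplication of $D(H)$ once one identifies $D(H)=H^{*}\oo H$ acting via face-then-vertex. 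Concretely, one checks $A^{h}_s A^{h'}_s=A^{hh'}_s$ and $B^{\alpha}_s B^{\alpha'}_s=B^{\alpha'\alpha}_s$ from the coregular actions~\eqref{eq:CoregularLeftAction}--\eqref{eq:CoregularRightAction}, and then that the mixed relation $A^{h}_s B^{\alpha}_s = B^{\alpha'}_s A^{h'}_s$ matches the straightening relation in $D(H)$ coming from~\eqref{eq:Drinfel'dMultiplication}; this last identity is exactly where the left/right-joint bookkeeping of Definition~\ref{definition:HolonomyComposite} does its work, since the vertex path and face path of the same site share edges and cross at $s$.

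For the second part, disjointness of $s$ and $t$ means the vertex and face paths of $s$ and those of $t$ traverse disjoint sets of edges of $D(\Gamma)$ (no shared vertex of $\Gamma$ and no shared face). Since each triangle operator acts only on the tensor factor of its own edge (Definition~\ref{definition:HolonomyBasic}), operators supported on disjoint edge sets commute in $\End_\C(\HSpace)=\bigotimes_{e}\End_\C(H)$; hence $B^{\alpha}_s A^{h}_s$ and $B^{\beta}_t A^{k}_t$ commute. One should note the subtlety that two sites at the same vertex but different faces, or same face but different vertices, are \emph{not} disjoint in this sense, which is why the hypothesis is stated as it is; for those cases commutation can fail and is not claimed.

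The main obstacle is the first part, specifically the mixed relation between $A^{h}_s$ and $B^{\alpha}_s$: one must show that moving a face operator past a vertex operator at the same site reproduces precisely the nontrivial multiplication~\eqref{eq:Drinfel'dMultiplication} of $D(H)$, with the correct appearance of $S^{-1}$ and the Sweedler legs of $\alpha$. This is where the combinatorics of ribbon paths and the recursive rules~\eqref{eq:HolonomyRecursionStandard}--\eqref{eq:HolonomyRecursionLeftJoin} must be invoked carefully, and where one genuinely uses that $\Hol$ is built from the Heisenberg-double action via the coalgebra homomorphism~\eqref{eq:ComultHeisenbergDoubleAlgebraHom}. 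Since this is the content established in~\cite{BMCA} in slightly different conventions, I would organize the proof as: (i) recall/verify the Heisenberg-double identity for a single thickened edge, (ii) push it through the path recursion to get the two sub-homomorphisms $h\mapsto A^h_s$ and $\alpha\mapsto B^\alpha_s$, (iii) verify the straightening relation, and (iv) dispatch disjoint commutativity by support considerations.
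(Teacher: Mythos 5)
The paper itself does not prove this theorem: it is quoted from \cite{Ki,BMCA}, so your plan has to be judged as a self-contained argument in the paper's holonomy formalism. For the single-site part your outline is workable and is essentially how one would redo the \cite{BMCA} proof with the tools of Section~4: $A^h_sA^{h'}_s=A^{hh'}_s$ and $B^\alpha_sB^{\alpha'}_s=B^{\alpha\alpha'}_s$ follow from \eqref{eq:RightLeftHolonomy} and \eqref{eq:LeftRightHolonomy} together with the multiplication \eqref{eq:Drinfel'dDualMultiplication} of $D(H)^*$ (note the order: one gets $B^{\alpha\alpha'}_s$, not $B^{\alpha'\alpha}_s$ as you wrote --- with your order the map would not satisfy \eqref{eq:Drinfel'dMultiplication}), and the straightening relation between $A^h_s$ and $B^\alpha_s$ at the same site is exactly the kind of computation the joint relations \eqref{eq:LeftJoint}--\eqref{eq:RightJoint} are designed for, as in the proof of Lemma~\ref{lemma:RibbonHolonomyCommutators}; there is no circularity, since Lemma~\ref{lemma:HolonomyCommutators} does not use the theorem.

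The genuine gap is in your argument for commutation at disjoint sites. Disjointness of $s$ and $t$ only says that their vertices differ and their faces differ; it does not prevent the vertex of $t$ from lying on the face of $s$. In that configuration the face path of $s$ and the vertex path of $t$ traverse common edges $e\in E$ of $\Gamma$ --- the former through $e^{\pm L},e^{\pm R}$, the latter through $e^{\pm s},e^{\pm t}$ --- and all four triangle operators of an edge act on the \emph{same} tensor factor $H_e$ of $\HSpace$ (Definition~\ref{definition:HolonomyBasic}); the tensor factors of $\End_\C(\HSpace)$ are indexed by edges of $\Gamma$, not of $D(\Gamma)$, so ``disjoint edge sets in $D(\Gamma)$'' does not give disjoint supports. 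Worse, a side-type and an end-type triangle operator on the same copy of $H$ do not commute --- they generate a Heisenberg double, cf. Remark~\ref{rem:heisenbergdoubleendo} --- so in this adjacent-but-disjoint case the commutativity of $B^\beta_tA^k_t$ with $B^\alpha_sA^h_s$ is not a support argument at all. It rests on a genuine cancellation: the two adjacent coproduct legs of $k$ that the vertex operator at $t$ deposits on the two face-of-$s$ edges meeting at the vertex of $t$ cancel via the antipode inside the face holonomy, and this works precisely because the cilium of $t$ does not point into the face of $s$ (which is where the hypothesis $p_s\neq p_t$ enters). In the paper's formalism this is the content of the middle-joint relation \eqref{eq:MiddleJoint} of Lemma~\ref{lemma:HolonomyCommutators}, whose proof itself invokes \eqref{eq:LeftJoint}, \eqref{eq:RightJoint} and the $R$-matrix, and not of \eqref{eq:NonOverlap}. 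Your plan as written only covers disjoint sites whose vertex and face paths share no underlying edge of $\Gamma$; the overlapping case must be treated separately, either by this cancellation computation or by appealing to \eqref{eq:MiddleJoint}.
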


	The invariants of these actions are of special interest, as the \emph{protected space}
	\begin{align}
		\mathcal{L} := \left\{ m \in H^{ \oo E} \;|\; B^{\alpha}_{s}A^{h}_{s}m = \alpha(1) \varepsilon(h) \text{ for all sites }s,\alpha \oo h\in D(H)  \right\}.
		\label{eq:ProtectedSpace}
	\end{align}
	is a topological invariant, as shown in~\cite{Ki,BMCA}. 
	For a given site $s$, the map
	\begin{align*}
		P_{s}:=B^{\smallint}_{s}A^{\lambda}_{s}:\HSpace\to\HSpace
	\end{align*}
	is a projector onto these invariants \cite{Ki,BMCA}.
	Here $\smallint \in H^{*}, \lambda\in H$ are the Haar integrals of $H^{*}$ and $H$. 
	Multiplying the $P_{s}$ for all sites $s$ we obtain a projector onto the protected space:
	\begin{theorem}\cite{BMCA,Me}
		For any two sites $s,t$, the projectors $P_{s}$ and $P_{t}$ commute. The map
		\begin{align}
			P:= \prod_{s\text{ site}} P_{s}:\HSpace \to \mathcal{L}
			\label{eq:ProjectorProtected}
		\end{align}
		is a projector onto the protected space.
		\label{theorem:ProjectorProtected}
	\end{theorem}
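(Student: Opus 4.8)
The plan is to reduce both statements to three commutation relations among the Haar-integral vertex and face operators, and then to conclude with the elementary fact that a finite product of pairwise commuting idempotents is an idempotent projecting onto the intersection of their images.

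First I would fix notation and unwind the definitions. Using the defining properties~\eqref{eq:HaarIntegral} of the Haar integrals one checks (this is the standard ``cilium-independence'' of the integral operators, cf.~\cite{BMCA,Me}) that $A^{\lambda}_{s}$ depends only on the vertex underlying $s$ and $B^{\smallint}_{s}$ only on the face underlying $s$; write these $A^{\lambda}_{v}$ and $B^{\smallint}_{f}$. Since $\lambda\in H$ and $\smallint\in H^{*}$ are Haar integrals, $A^{\lambda}_{v}$ is the projector onto the subspace $\HSpace_{v}:=\operatorname{im}(A^{\lambda}_{v})=\{m\in\HSpace: A^{h}_{s}m=\varepsilon(h)m\ \forall h\in H\}$ of vertex invariants at $v$, and $B^{\smallint}_{f}$ the projector onto the subspace $\HSpace_{f}:=\operatorname{im}(B^{\smallint}_{f})$ of face invariants at $f$. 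Specialising $\alpha=\varepsilon$, resp.\ $h=1_{H}$, in the description of $\HSpace^{s}:=\operatorname{im}(P_{s})$ (the $D(H)$-invariants at $s$ onto which $P_{s}=B^{\smallint}_{s}A^{\lambda}_{s}$ projects, as recalled just before the theorem) yields $\HSpace^{s}=\HSpace_{v(s)}\cap\HSpace_{f(s)}$, where $v(s),f(s)$ are the vertex and face of $s$; hence, comparing with~\eqref{eq:ProtectedSpace},
\[
	\mathcal{L}=\bigcap_{s}\HSpace^{s}=\Big(\bigcap_{v}\HSpace_{v}\Big)\cap\Big(\bigcap_{f}\HSpace_{f}\Big),
\]
the intersections running over all vertices and all faces of $\Gamma$ (each of which arises as $v(s)$, resp.\ $f(s)$, for some site $s$).

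Next I would prove the three relations $[A^{\lambda}_{v},A^{\lambda}_{v'}]=0$, $[B^{\smallint}_{f},B^{\smallint}_{f'}]=0$ and $[A^{\lambda}_{v},B^{\smallint}_{f}]=0$ for all vertices $v,v'$ and faces $f,f'$. The first is trivial for $v=v'$, and for $v\neq v'$ it holds because vertex operators at distinct vertices commute (for disjoint sites this is Theorem~\ref{ht:siterep}, and in general it is \cite{Ki,BMCA}); the second is identical with $\smallint$ in place of $\lambda$. For the third: if $v$ does not lie on the boundary of $f$, then $A^{\lambda}_{v}$ and $B^{\smallint}_{f}$ act on disjoint sets of tensor factors of $\HSpace$ and therefore commute; if $v\in\partial f$, then $s=(v,f)$ is a site and the claim reads $A^{\lambda}_{s}B^{\smallint}_{s}=B^{\smallint}_{s}A^{\lambda}_{s}$, which follows by substituting the Haar integrals $h=\lambda,\alpha=\smallint$ into the $D(H)$-commutation relation between $A^{h}_{s}$ and $B^{\alpha}_{s}$ and using $\lambda h=\varepsilon(h)\lambda$ and $\beta\smallint=\beta(1)\smallint$ repeatedly --- this is essentially the same computation that, as recalled before the theorem, makes $P_{s}$ an idempotent.

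Finally I would assemble the proof. For sites $s=(v,f)$ and $t=(v',f')$, commuting the four factors past one another using the three relations gives
\[
	P_{s}P_{t}=B^{\smallint}_{f}A^{\lambda}_{v}B^{\smallint}_{f'}A^{\lambda}_{v'}=B^{\smallint}_{f}B^{\smallint}_{f'}A^{\lambda}_{v}A^{\lambda}_{v'}=B^{\smallint}_{f'}B^{\smallint}_{f}A^{\lambda}_{v'}A^{\lambda}_{v}=B^{\smallint}_{f'}A^{\lambda}_{v'}B^{\smallint}_{f}A^{\lambda}_{v}=P_{t}P_{s},
\]
which is the first assertion. Since the $P_{s}$ are then finitely many pairwise commuting idempotents, $P=\prod_{s}P_{s}$ is an idempotent with $\operatorname{im}(P)=\bigcap_{s}\operatorname{im}(P_{s})=\bigcap_{s}\HSpace^{s}=\mathcal{L}$, and each $P_{s}$ fixes $\mathcal{L}\subseteq\HSpace^{s}$ pointwise, so $P$ restricts to the identity on $\mathcal{L}$; thus $P:\HSpace\to\mathcal{L}$ is the asserted projector. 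The one genuinely non-formal step is the relation $[A^{\lambda}_{v},B^{\smallint}_{f}]=0$ for $v\in\partial f$: this is precisely the case left untouched by Theorem~\ref{ht:siterep} (which only yields commutation for disjoint sites), and it depends essentially on the defining property of the Haar integrals.
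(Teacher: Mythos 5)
You cannot be measured against the paper's own argument here, because the paper does not give one: Theorem~\ref{theorem:ProjectorProtected} is stated as a cited result from \cite{BMCA,Me}, and your proof is essentially the standard argument from those references --- reduce everything to commutation of the Haar-integral operators $A^{\lambda}_{v}$, $B^{\smallint}_{f}$ (using cilium-independence), observe that the only non-trivial case is a vertex on the boundary of a face, and finish with the fact that a product of finitely many pairwise commuting idempotents projects onto the intersection of their images; the identification $\mathcal{L}=\bigcap_{s}\operatorname{im}(P_{s})$ via specialising $\alpha=\varepsilon$ and $h=1_{H}$ is also fine. The one point where your justification is too thin is exactly the step you flag as non-formal: the absorption identities $\lambda h=\varepsilon(h)\lambda$ and $\beta\smallint=\beta(1)\smallint$ do not by themselves collapse the straightening coefficient coming from \eqref{eq:Drinfel'dMultiplication}. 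Writing the relation at a site as $A^{\lambda}_{s}B^{\smallint}_{s}=\langle\smallint_{(1)},S^{-1}(\lambda_{(3)})\rangle\langle\smallint_{(3)},\lambda_{(1)}\rangle\,B^{\smallint_{(2)}}_{s}A^{\lambda_{(2)}}_{s}$, you need the cocommutativity of $\lambda$ and the trace property $\smallint(ab)=\smallint(ba)$ (equivalently $S(\lambda)=\lambda$, $S(\smallint)=\smallint$ together with $S^{2}=\mathrm{id}$, i.e.\ the same semisimplicity input that underlies the cilium-independence you invoke) to reduce the right-hand side to $B^{\smallint}_{s}A^{\lambda}_{s}$; an equivalent clean formulation is that $\smallint\oo\lambda$ is the Haar integral of $D(H)$ and that $(\varepsilon\oo\lambda)\cdot(\smallint\oo 1_{H})=\smallint\oo\lambda=(\smallint\oo 1_{H})\cdot(\varepsilon\oo\lambda)$ holds in $D(H)$, from which both the commutation and the idempotency of $P_{s}$ follow at once since the site action is an algebra map. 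With that identity made explicit your proof is complete and correct.
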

\subsection{Excitations}
Recall that $H$ semi-simple implies that $D(H)$ also is semi-simple and that vertex and face operators at each site $s$ define a $D(H)$-module structure on $\HSpace$. This allows us to decompose the extended space into its isotypic components
	\begin{align}
		\HSpace= \bigoplus_{i \in \mathrm{Irr}(D(H))} \HSpace(s,i)
		\label{eq:HilbertSpaceDecomposition}
	\end{align}
	where $\mathrm{Irr}(D(H))$ is a set of representatives of irreducible $D(H)$-modules.
	For non-simple $D(H)$-modules $M$ we define the \emph{excitation of type $M$ at $s$} as
	\begin{align}
		\HSpace(s,M):=\bigoplus_{i \in \mathrm{Irr}(D(H)), M_{i}\neq 0} \HSpace(s,i)\subset \HSpace,
		\label{eq:SingleExcitationSpace}
	\end{align}
	where $M_i$ denotes the isotypic component of $M$ of type $i$.
	The \emph{trivial excitation} is the excitation corresponding to the trivial $D(H)$-representation.
	Finally, let $s_{1},\dots,s_{n}$ be $n$ disjoint sites and let $M_{1},\dots,M_{n}$ be $D(H)$-modules. We define
	\begin{align}
		\label{eq:MultipleExcitationSpace}
		\HSpace(s_{1},M_{1},\dots,s_{n},M_{n}):= \bigcap_{k=1}^{n} \HSpace(s_{k},M_{k})
	\end{align}
	This space is closely related to the \emph{space of $n$-particle states} in~\cite{Ki} for disjoint sites $s_1,...,s_n$ 
\begin{align}
	\mathcal{L}(s_{1},\dots,s_{n})= \big\{ 
	m\in \HSpace \;|\; &A_{s}^{h}m = \varepsilon(h) m, B^{\alpha}_{s}m = \alpha(1) m \text{ for  } h\in H,\alpha\in H^{*},\nonumber\\
	&s \text{ site disjoint to $s_{1},\dots,s_{n}$}\big\}.
	\label{eq:SpaceParticleStates}
\end{align}

Both $\HSpace(s_{1},M_{1},\dots,s_{n},M_{n})$ and $\mathcal{L}(s_{1},\dots,s_{n})$ have a $D(H)^{\otimes n}$-module structure from the face and vertex operators at the sites $s_{1},\dots,s_{n}$. There are two differences between these subspaces:
\begin{compactenum}[(i)]
	\item Elements $l\in\mathcal{L}(s_{1},\dots,s_{n}) $ are invariants of the $D(H)$-action defined by sites disjoint to $s_{1},\dots,s_{n}$, while elements $m \in \HSpace(s_{1},M_{1},\dots,s_{n},M_{n}) $ need not be invariant.
	\item The isotypic decomposition of $\HSpace(s_{1},M_{1},\dots,s_{n},M_{n})$ for the $D(H)$-action defined by a site $s_{i}$ only contains the same irreducible representations as the $D(H)$-module $M_{i}$, possibly with different multiplicities. The isotypic decomposition of $\mathcal{L}(s_{1},\dots,s_{n})$ may contain arbitrary irreducible representations. 
\end{compactenum}

In the following we will be interested in excitations of a type $M$ at a site $s$ and thus will only consider spaces of the form $\HSpace(s_{1},M_{1},\dots,s_{n},M_{n})$.

\subsection{Holonomies and excitations}
\label{subsection:HolonomiesExcitationClassic}
We now explain how to generate excitations of a specific type $i\in \mathrm{Irr}(D(H))$ at a site $s$. Our goal is to obtain linear endomorphisms of the extended space $\HSpace$, which restrict to maps $\mathcal{L} \to \HSpace(s,i)$. Examples for such linear endomorphisms are given by \emph{holonomies}.

The commutation relation for the holonomies of two paths $\rho_{1}, \rho_{2}$ depends on the relative constellation of the paths. The following technical lemma describes the commutation relation for several constellations. We will use this lemma in the remainder of the section to describe how holonomies generate excitations. 
The first four identities generalize the relations~(B17),(B18),(B23) from \cite{BD}.
\begin{lemma}
	\label{lemma:HolonomyCommutators}
	Let $\alpha,\beta\in D(H)^{*}$, $\rho,\rho_{1},\rho_{2},\gamma,\gamma_{1},\gamma_{2}$ be simple paths such that $\rho_{1},\rho_{2}$ end to the right of a site and $\gamma_{1},\gamma_{2}$ start to the left of a site. Denote $R= \varepsilon \oo x \oo X \oo 1\in D(H)\oo D(H)$ the $R$-matrix of $D(H)$.
	Then we have
	\begin{align}
		\label{eq:NonOverlap}
		(\rho,\gamma)_{\emptyset} \Rightarrow &\Hol_{\rho_{1}}^{\alpha} \Hol_{\rho_{2}}^{\beta}  = \Hol_{\rho_{2}}^{\beta} \Hol_{\rho_{1}}^{\alpha} 
		\\
		\label{eq:HolonomyReversal}
		&\Hol_{\rho^{-1}}^{ \alpha} = \Hol_{\rho}^{ S(\alpha)} 
		\\
		\label{eq:LeftRightHolonomy}
		\text{$\rho$ left-right path} \Rightarrow 	
		&\Hol_{\rho}^{\alpha}\Hol_{\rho}^{\beta} =\Hol_{\rho}^{\alpha\beta} 
		\\
		\label{eq:RightLeftHolonomy}
		\text{$\rho$ right-left path} \Rightarrow 	
		&\Hol_{\rho}^{\alpha}\Hol_{\rho}^{\beta} =\Hol_{\rho}^{\beta\alpha} 
		\\
		\label{eq:LeftLeftHolonomy}
		\text{$\rho$ left-left path} \Rightarrow 	
		&\Hol_{\rho}^{\alpha}\Hol_{\rho}^{\beta}
		= \Hol_{\rho}^{\left( R^{(2)} \vartriangleright \beta \right) \cdot \left( R^{(1)} \vartriangleright \alpha \right)}
		= \Hol_{\rho}^{ \alpha \cdot_{\mathcal{H}_{R}(H)} \beta} 
		\\
		\label{eq:RightRightHolonomy}
		\text{$\rho$ right-right path} \Rightarrow 	
		&\Hol_{\rho}^{\alpha}\Hol_{\rho}^{\beta} =\Hol_{\rho}^{\left( R^{(-1)} \vartriangleright \alpha \right) \cdot \left( R^{(-2)} \vartriangleright \beta \right)}
		= \Hol_{\rho}^{ \alpha \cdot_{\overline{\mathcal{H}}_{R}(H)} \beta} 
		\\
		\label{eq:LeftJoint}
		(\rho_{1},\rho_{2})_{\prec} \Rightarrow 
		&\Hol_{\rho_{1}}^{\alpha} \Hol_{\rho_{2}}^{\beta} 
		=\Hol_{\rho_{2}}^{\beta  \vartriangleleft  R^{(-2)}}   \Hol_{\rho_{1}}^{\alpha  \vartriangleleft R^{(-1)}} 
		\\
		\label{eq:RightJoint}
		(\gamma_{1},\gamma_{2})_{\succ} \Rightarrow 
		&\Hol_{\gamma_{1}}^{\alpha} \Hol_{\gamma_{2}}^{ \beta} 
		= 
		\Hol_{\gamma_{2}}^{R^{(2)}  \vartriangleright \beta} 
		\Hol_{\gamma_{1}}^{R^{(1)} \vartriangleright \alpha}
		\\
		\label{eq:MiddleJoint}
		(\rho,\gamma)_{\succ\prec} \Rightarrow 
		&\Hol_{\rho_{1}}^{\alpha} \Hol_{\rho_{2}}^{\beta} 
		= 
		 \Hol_{\rho_{2}}^{\beta} \Hol_{\rho_{1}}^{\alpha}
	\end{align}
\end{lemma}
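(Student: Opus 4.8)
The plan is to reduce every identity to the level of single edges of $D(\Gamma)$ by means of the recursive Definition~\ref{definition:HolonomyComposite}, and then to propagate the resulting single-edge relations along a simple path by induction on its length, choosing at each step a convenient decomposition as permitted by Lemma~\ref{lemma:HolonomyWellDefined}. The algebraic bookkeeping is organised around three facts about the dual Drinfel'd double: that $\Delta_{D(H)^*}$ is an algebra homomorphism into $\overline{\mathcal H}_R(H)\oo\mathcal H_R(H)$ and into $\mathcal H_R(H)\oo\overline{\mathcal H}_R(H)$ (Remark~\ref{remark:HeisenbergDoubleComoduleAlgebra}), that the two Heisenberg double products are one-sided $R$-twists of the $D(H)^*$-product (\eqref{eq:RightHeisenbergDoubleRTwist}, \eqref{eq:RightHeisenbergDoubleAlternativeRTwist}), and that $S_{D(H)^*}$ is an involutive coalgebra anti-endomorphism since $D(H)$ is semisimple. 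The first relation to establish is \eqref{eq:HolonomyReversal}: on a single edge it is exactly \eqref{eq:HolonomyOpposites} together with $S_{D(H)^*}^2=\id$, and it extends to simple paths by induction using $(\rho_1\circ\rho_2)^{-1}=\rho_2^{-1}\circ\rho_1^{-1}$ and the anti-coalgebra property of $S_{D(H)^*}$. Because $\Hol$ intertwines path reversal with $S_{D(H)^*}$, each "right'' identity \eqref{eq:RightLeftHolonomy}, \eqref{eq:RightRightHolonomy}, \eqref{eq:RightJoint} then either reduces to its "left'' counterpart via \eqref{eq:HolonomyReversal} (using that $S_{D(H)^*}$ is anti-multiplicative, exchanges the two Heisenberg-double products, and inverts the $R$-matrix) or is proved by the identical induction with $\mathcal H_R(H)$ and $R$ replaced by $\overline{\mathcal H}_R(H)$ and $R^{-1}$; so it suffices to treat \eqref{eq:NonOverlap}, \eqref{eq:LeftRightHolonomy}, \eqref{eq:LeftLeftHolonomy}, \eqref{eq:LeftJoint} and \eqref{eq:MiddleJoint}.

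For \eqref{eq:NonOverlap}: if $\rho_1,\rho_2$ do not cross they act on disjoint tensor factors of $\HSpace$ away from their shared edges $e\in E$, and at each shared edge the non-crossing condition forces the triangle operators of the two paths to be built from a complementary pair among $\{e^{\pm s},e^{\pm t}\}$ or among $\{e^{\pm L},e^{\pm R}\}$, which commute by a direct check from Definition~\ref{definition:HolonomyBasic} (left versus right multiplication on $H$, respectively the left and right coregular actions on $H^*$); expanding the two holonomies and reordering the building blocks gives the identity by induction. For the "same path'' relations, take a simple left-right path $\rho$: on a single edge the operators are of types $e^{-L},e^R,e^s,e^{-t}$ and one checks from Definition~\ref{definition:HolonomyBasic} that $\Hol_{e^\eta}^{\alpha}\Hol_{e^\eta}^{\beta}=\Hol_{e^\eta}^{\alpha\beta}$ for the product of $D(H)^*$, the opposite multiplication of the $H$-leg in the $e^{\pm s}$-block and the straight multiplication of the $H^*$-leg in the $e^{\pm R}$-block together matching precisely \eqref{eq:Drinfel'dDualMultiplication}; for longer $\rho=\rho_1\circ\rho_2$ one expands, commutes the inner factors by \eqref{eq:NonOverlap}, applies the inductive hypothesis to $\rho_1,\rho_2$, and closes the computation using coassociativity of $\Delta_{D(H)^*}$ and that it is an algebra map, which yields \eqref{eq:LeftRightHolonomy}. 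A simple left-left path is handled by the same induction, with the difference that it must contain a "turn'' $e^{\pm d},e^{\pm\bar d},e^{\pm d'},e^{\pm\bar{d'}}$ from \eqref{eq:eddef}; the composite of two triangle operators across such a turn is, by direct computation, the Heisenberg-double product \eqref{eq:HeisenbergRightMult}, and rewriting it as an $R$-twist of the $D(H)^*$-product by \eqref{eq:RightHeisenbergDoubleRTwist} before feeding it through the recursion produces the $R$-matrix of \eqref{eq:LeftLeftHolonomy}. The middle-joint identity \eqref{eq:MiddleJoint} then follows from the definition: splitting $\rho=\rho'\circ\rho''$, $\gamma=\gamma'\circ\gamma''$, applying \eqref{eq:RightJoint} to $(\rho',\gamma')$ and \eqref{eq:LeftJoint} to $(\rho'',\gamma'')$, and reassembling, the $R$-matrix from the first and the $R^{-1}$ from the second cancel by $R R^{-1}=1$ and the remaining factors recombine into $\Hol_\gamma^\beta\Hol_\rho^\alpha$ via \eqref{eq:NonOverlap} and \eqref{eq:LeftRightHolonomy}.

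The core of the proof, and the step I expect to be the main obstacle, is \eqref{eq:LeftJoint}. Here $\rho_1=\rho\circ e^a\circ\rho_1'$ and $\rho_2=\rho\circ e^b\circ\rho_2'$ share a ribbon path $\rho$ and afterwards run through mutually non-crossing tails $\rho_1',\rho_2'$. The strategy is: first factor off the common prefix $\rho$ — since $\rho,\rho_1',\rho_2'$ pairwise do not cross, $\Hol_\rho$ splits off from both holonomies by the recursion, reducing to $\rho$ empty; next, the "fork'' at the edge $e$ is a left-left constellation of the one-edge pieces $e^a$ and $e^b$, so the single-edge Heisenberg computation from the previous step introduces the $R$-matrix at this point; finally, the $R$-matrix must be transported past the non-crossing tails $\rho_1',\rho_2'$, which is carried out by combining \eqref{eq:NonOverlap} with the compatibility of the coregular $D(H)$-actions with holonomies — equivalently with the $D(H)^*$-comodule-algebra coactions of Remark~\ref{remark:HeisenbergDoubleComoduleAlgebra} and the hexagon identities for the $R$-matrix of $D(H)$. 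The delicate points here are: keeping precise track of which branch of the recursion, \eqref{eq:HolonomyRecursionStandard} or \eqref{eq:HolonomyRecursionLeftJoin}, is invoked at each stage, so that the inductive hypothesis is always applied to strictly shorter paths of the correct type; and verifying that the $R$-matrix factors emerging from the fork and from any intermediate reversals assemble into exactly $\beta\actr R^{(-2)}$ on $\rho_2$ and $\alpha\actr R^{(-1)}$ on $\rho_1$, as the statement requires. Once \eqref{eq:LeftJoint} is in hand, \eqref{eq:RightJoint}, \eqref{eq:RightLeftHolonomy}, \eqref{eq:RightRightHolonomy} and \eqref{eq:MiddleJoint} follow as described.
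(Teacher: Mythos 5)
Most of your plan runs parallel to the paper's proof: \eqref{eq:NonOverlap} via commuting triangle operators, \eqref{eq:HolonomyReversal} by induction using the anti-coalgebra property of $S_{D(H)^{*}}$, the family \eqref{eq:LeftRightHolonomy}--\eqref{eq:RightRightHolonomy} by an induction whose base cases are ribbon paths and the turns $e^{\pm d},e^{\pm\overline{d}},e^{\pm d'},e^{\pm\overline{d'}}$, glued together by the algebra homomorphisms $\Delta_{D(H)^{*}}\colon D(H)^{*}\to\overline{\mathcal{H}}_{R}(H)\oo\mathcal{H}_{R}(H)$ and $D(H)^{*,op}\to\mathcal{H}_{R}(H)\oo\overline{\mathcal{H}}_{R}(H)$ of Remark~\ref{remark:HeisenbergDoubleComoduleAlgebra}, and \eqref{eq:MiddleJoint} by splitting and cancelling $R$ against $R^{-1}$. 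One caution on the left--right case: a simple left--right path need not decompose into two left--right pieces (it splits into a right--right piece followed by a left--left piece), so the induction must be run simultaneously over all four endpoint types with the Heisenberg-double-valued algebra maps, not with plain multiplicativity of $\Delta_{D(H)^{*}}$; your toolkit contains the right facts, but the phrase ``closes the computation using \dots that it is an algebra map'' only works in that refined form.

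The genuine gap is in your treatment of \eqref{eq:LeftJoint}, which you rightly call the core. You propose to ``factor off the common prefix $\rho$ \dots reducing to $\rho$ empty''. That step is not available: $\Hol_{\rho}^{\alpha_{(1)}}$ and $\Hol_{\rho}^{\beta_{(1)}}$ are holonomies along the \emph{same} path, so they neither commute nor disappear; by \eqref{eq:LeftRightHolonomy} they merge into $\Hol_{\rho}^{\alpha_{(1)}\beta_{(1)}}$. After applying the empty-prefix case to the fork-and-tail pieces, the left-hand side reads $\Hol_{\rho}^{\alpha_{(1)}\beta_{(1)}}\,\Hol_{e^{b}\circ\rho_{2}'}^{\beta_{(2)}\actr R^{(-2)}}\,\Hol_{e^{a}\circ\rho_{1}'}^{\alpha_{(2)}\actr R^{(-1)}}$, while the right-hand side of \eqref{eq:LeftJoint}, expanded by the recursion and \eqref{eq:RightCoactionLeftActionCommute}, is $\Hol_{\rho}^{(\beta_{(1)}\actr R^{(-2)})(\alpha_{(1)}\actr R^{(-1)})}\,\Hol_{e^{b}\circ\rho_{2}'}^{\beta_{(2)}}\,\Hol_{e^{a}\circ\rho_{1}'}^{\alpha_{(2)}}$. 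Equating the two requires precisely the identity that the $R$-twisted flip $\alpha\oo\beta\mapsto(\beta\actr R^{(2)})\oo(\alpha\actr S(R^{(1)}))$ on $D(H)^{*}\oo D(H)^{*}$ is a morphism of left $D(H)^{*}$-comodules for the regular coaction, i.e.\ the paper's \eqref{eq:BraidingDH*Comodule}, which is the dual form of quasi-cocommutativity $R\Delta=\Delta^{op}R$; it does not follow from non-crossing or from the recursion alone. Your plan instead locates the nontrivial $R$-transport at the tails $\rho_{1}',\rho_{2}'$, where only the easy compatibility \eqref{eq:RightCoactionLeftActionCommute} is needed. So as written the essential algebraic input for the shared prefix is missing; once you insert \eqref{eq:BraidingDH*Comodule} at that spot (your appeal to the $R$-matrix axioms is the right instinct, though the relevant one is quasi-cocommutativity rather than the hexagons), the remainder of your plan goes through essentially as in the paper.
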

\begin{proof}
	Proof of~\eqref{eq:NonOverlap}. For an edge $e$, the triangle operators for $e^{\pm s}$ commute with those for $e^{\pm t}$, see Definition~\ref{definition:HolonomyBasic}. The triangle operators for $e^{\pm L}$ commute with the ones for $e^{\pm R}$. Triangle operators of different edges also commute with one another, since they act on different copies of $H$. 
	For $(\rho_{1}, \rho_{2})_{\emptyset}$, the holonomy along $\rho_{1}$ is therefore composed of triangle operators that commute with the triangle operators in the holonomy along $\rho_{2}$.
	\\
	\\
	Proof of~\eqref{eq:HolonomyReversal} by induction over $\rho$. For $\rho=e^{\nu}$ with $e$ an edge and $\nu\in \left\{ \pm s,\pm t,\pm L,\pm R \right\} $ the claim follows from~\eqref{eq:HolonomyOpposites} in Definition~\ref{definition:HolonomyBasic}. 
	For $\rho =\rho_{1}\circ\rho_{2}$ as in Definition~\ref{definition:HolonomyComposite} we either have (i) $(\rho_{1},\rho_{2})_{\emptyset}$, (ii) $(\rho_{2},\rho_{1}^{-1})_{\prec}$, or (iii) $(\rho_{1}^{-1},\rho_{2})_{\prec}$. 
	In case (i) we have
	\begin{align*}
		\Hol_{\rho^{-1}}^{ \alpha} 
		&\overset{\eqref{eq:HolonomyRecursionStandard}}{=} \Hol_{\rho_{2}^{-1}}^{\alpha_{(1)}}  \circ \Hol_{\rho_{1}^{-1}}^{ \alpha_{(2)}} 
		\overset{\mathrm{(I)}}{=} \Hol_{\rho_{2}}^{S(\alpha_{(1)})}  \circ \Hol_{\rho_{1}}^{S( \alpha_{(2)})} 
		=\Hol_{\rho_{2}}^{S(\alpha)_{(2)}}  \circ \Hol_{\rho_{1}}^{S( \alpha)_{(1)}} 
		\\
		&\overset{\eqref{eq:NonOverlap}}{=}  \Hol_{\rho_{1}}^{S( \alpha)_{(1)}}\circ \Hol_{\rho_{2}}^{S(\alpha)_{(2)}} 
		\overset{\eqref{eq:HolonomyRecursionStandard}}{=} \Hol_{\rho}^{ S(\alpha)} 
	\end{align*}
	Here we have used induction in step $\mathrm{(I)}$. For case (ii) note that we need to apply the first case~\eqref{eq:HolonomyRecursionStandard} of the recursive Definition~\ref{definition:HolonomyComposite} for $\Hol_{\rho}$, but the second case~\eqref{eq:HolonomyRecursionLeftJoin} for $\Hol_{\rho^{-1}}$. We then obtain
	\begin{align*}
		\Hol_{\rho^{-1}}^{ \alpha} 
		&\overset{\eqref{eq:HolonomyRecursionLeftJoin}}{=} \Hol_{\rho_{1}^{-1}}^{\alpha_{(2)}}  \circ \Hol_{\rho_{2}^{-1}}^{ \alpha_{(1)}} 
		\overset{\mathrm{(I)}}{=} \Hol_{\rho_{1}}^{S(\alpha_{(2)})}  \circ \Hol_{\rho_{1}}^{S( \alpha_{(1)})} 
		=\Hol_{\rho_{1}}^{S(\alpha)_{(2)}}  \circ \Hol_{\rho_{2}}^{S( \alpha)_{(2)}} 
		\overset{\eqref{eq:HolonomyRecursionStandard}}{=} \Hol_{\rho}^{ S(\alpha)} 
	\end{align*}
	Here we again used induction in step $\mathrm{(I)}$. The third case follows analogously to the second case.
	\\
	\\
	Proof of \eqref{eq:LeftRightHolonomy} to \eqref{eq:RightRightHolonomy} by induction. We have four basic cases, one for each equation:
	\begin{compactenum}
		\item $\rho$ is a left-right ribbon path.
		\item $\rho$ is a right-left ribbon path.
		\item $\rho=e^{d}$ or $e^{-d}$.
		\item $\rho=e^{\overline{d}}$ or $\rho=e^{-\overline{d}}$.
	\end{compactenum}
	For case~1 we write $\rho=\rho_{1}^{\nu_{1}}\circ\cdots \rho_{n}^{\nu_{n}}$ in reduced form with $\rho_{i} \in E$, $\nu_{i} \in \left\{ s,-t,R,-L \right\}$. The triangle operators $\Hol_{\rho_{i}^{\nu_{i}}}$ from~\eqref{eq:HolonomyEndsStandard} to~\eqref{eq:HolonomyOpposites} define actions of $D(H)^{*}$ on $\HSpace$. Since $\rho$ is a ribbon path, the triangle operator for $\rho_{i}^{\nu_{i}}$ commutes with the one for $\rho_{k}^{\nu_{k}}$ for $i\neq k$.
	Combining this with~\eqref{eq:HolonomyRecursionStandard} we conclude that $\Hol_{\rho}$ is a tensor product of the $D(H)^{*}$-actions $\Hol_{\rho_{i}^{\nu_{i}}} $, i.e. a $D(H)^{*}$-action. The proof for case~2 is analogous and uses that the operators $\Hol_{\rho_{i}^{\nu_{i}}}$ for $\nu_{i}\in \left\{ -s,t,-R,L \right\}$ from~\eqref{eq:HolonomyEndsStandard} to~\eqref{eq:HolonomyOpposites} define actions of $D(H)^{*}$ on $\HSpace$.
	For case~3 let $m\in H\subseteq \HSpace$ be the element associated to the edge $e$. Then we have for $\alpha=h \oo \delta\in H \oo H^{*}$:
	\begin{align*}
		\Hol_{e^{d}}^{\alpha} (m)
		&\overset{\eqref{eq:HolonomyRecursionStandard}}{=} 
		\Hol_{e^{t}}^{ h_{(1)} \oo X^{1}\delta_{(1)}X^{2}} \circ \Hol_{e^{R}}^{S(x^{2})h_{(2)}x^{1} \oo \delta_{(2)}} (m)
		\\
		&=
		\langle \varepsilon , S(x^{2})h_{(2)}x^{(1) } \rangle \langle \delta_{(2)} , m_{(2)} \rangle  \Hol_{e^{t}}^{ h_{(1)} \oo X^{1}\delta_{(1)}X^{2}} (m_{(1)})
		\\
		&=
		\langle \delta , m_{(2)} \rangle (hm_{(1)})
	\end{align*}
	where we extend linearly to $\HSpace$.
	Using this identity together with $\beta=k \oo \theta\in H \oo H^{*}$ we obtain
	\begin{align*}
		\Hol_{e^{d}}^{\alpha}\circ \Hol_{e^{d}}^{\beta} (m)
		&=\langle \delta_{(1)} , k_{(2)} \rangle \langle \delta_{(2)} \theta ,m_{(2)}  \rangle hk_{(1)}m_{(1)}
		=
		\langle \delta_{(1)} , k_{(2)} \rangle 
		\Hol_{e^{d}}^{hk_{(1)} \oo \delta_{(2)}\theta} (m)
		\\
		&\overset{\eqref{eq:RightHeisenbergDoubleRTwist}}{=}
		\Hol_{e^{d}}^{\left( R^{(2)} \vartriangleright\beta \right)\cdot \left( R^{(1)} \vartriangleright\alpha \right)}(m)
	\end{align*}
	The proof is analogous for $e^{-d}$, $ e^{\pm d'}$ and~case 4.
	\\
	Now let $\rho$ be a simple 
	left-right path that is not a ribbon path. 
\begin{figure}[H]
\begin{center}
\begin{tikzpicture}[scale=.7]
\draw[line width=1pt, color=black, ->, >=stealth] (-3,0)--(0,0);
\draw[line width=1pt, color=black, ] (3,0)--(0,0);
\draw[line width=1pt, color=black, ->, >=stealth] (-3,4)--(0,4);
\draw[line width=1pt, color=black, ] (3,4)--(0,4);
\draw[line width=1pt, color=black, ->, >=stealth] (-3,0)--(-3,2);
\draw[line width=1pt, color=black, ] (-3,4)--(-3,2);
\draw[line width=1pt, color=black, ] (3,0)--(3,2);
\draw[line width=1pt, color=black, ->, >=stealth ] (3,4)--(3,2);
\draw[color=black, fill=black] (-3,0) circle (.2);
\draw[color=black, fill=black] (3,0) circle (.2);
\draw[color=black, fill=black] (-3,4) circle (.2);
\draw[color=black, fill=black] (3,4) circle (.2);
\draw[line width=.5, color=blue] (-2.5,-.5)--(-2.5,.5);
\draw[line width=.5, color=blue, ->, >=stealth] (-2.5,.5)--(0,.5) node[anchor=south]{$\rho_2$};
\draw[line width=.5, color=blue, ->, >=stealth] (0,.5)--(2.3,.5);
\draw[line width=.5, color=blue, ->, >=stealth] (2.5,.5)--(2.5, 2);
\draw[line width=.5, color=blue,] (2.5,2)--(2.5,4.5);
\draw[line width=.5, color=blue, ->, >=stealth] (2.5,4.5)--(0, 4.5) node[anchor=south]{$\rho_1$};
\draw[line width=.5, color=blue, ->, >=stealth] (0,4.5)--(-2.5, 4.5);
\end{tikzpicture}
\end{center}
\caption{Splitting a left-right path $\rho=\rho_1\circ \rho_2$ into a right-right path $\rho_1$ and a left-left path $\rho_2$.}
\label{fig:leftrightsplit}
\end{figure}
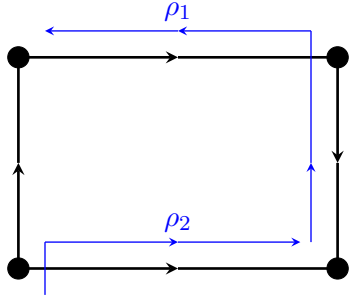
	Then we can write $\rho=\rho_{1}\circ\rho_{2}$ with $(\rho_{1},\rho_{2})_{\emptyset}$, $\rho_{1}$ a nonempty right-right path and $\rho_{2}$ a nonempty left-left path, as shown in Figure \ref{fig:leftrightsplit}. We compute
	\begin{align*}
		\Hol_{\rho}^{ \alpha} \circ \Hol_{\rho}^{ \beta} 
		&\overset{\eqref{eq:HolonomyRecursionStandard}}{=} 
		\Hol_{\rho_{1}}^{ \alpha_{(1)}} \circ \Hol_{\rho_{2}}^{ \alpha_{(2)}}  
		\circ \Hol_{\rho_{1}}^{ \beta_{(1)}} \circ \Hol_{\rho_{2}}^{ \alpha_{(2)}} 
		\overset{\eqref{eq:NonOverlap}}{=}
		\Hol_{\rho_{1}}^{ \alpha_{(1)}}\circ \Hol_{\rho_{1}}^{ \beta_{(1)}}
		\circ \Hol_{\rho_{2}}^{ \alpha_{(2)}}  \circ \Hol_{\rho_{2}}^{ \alpha_{(2)}} 
		\\
		&\overset{\eqref{eq:RightRightHolonomy},\eqref{eq:LeftLeftHolonomy}}{=}
		\Hol_{\rho_{1}}^{ \alpha_{(1)} \cdot_{\overline{\mathcal{H}}_{R}(H)}\beta_{(1)}}  \circ \Hol_{\rho_{2}}^{ \alpha_{(2)} \cdot_{\mathcal{H}_{R}(H)}\beta_{(2)}}
		\overset{\eqref{eq:ComultHeisenbergDoubleAlgebraHom},\eqref{eq:HolonomyRecursionStandard}}{=}
		\Hol_{\rho}^{ \alpha \cdot_{D(H)^{*}} \beta} 
	\end{align*}
	For right-left paths we proceed analogously and split $\rho$ into a left-left path $\rho_{1}$ and a right-right path $\rho_{2}$. Here we use~\eqref{eq:ComultHeisenbergDoubleOppositeAlgebraHom} instead of~\eqref{eq:ComultHeisenbergDoubleAlgebraHom}. 
	If $\rho$ is a left-left path which is not of the form $e^{d}$ or $e^{-d}$, we can write $\rho=\rho_{1}\circ\rho_{2}$ such that $(\rho_{1},\rho_{2})_{\emptyset}$ and either
	\begin{itemize}
		\item $\rho_{1}$ is a left-left path and $\rho_{2}$ is a left-right path, or
		\item $\rho_{1}$ is a left-right path and $\rho_{2}$ is a left-left path.
	\end{itemize}
	The calculation is again analogous, but we use Remark~\ref{remark:HeisenbergDoubleComoduleAlgebra}.\ref{remarkPoint:RightHeisenbergComoduleAlgebra} instead of~\eqref{eq:ComultHeisenbergDoubleAlgebraHom}. The proof for right-right paths follows analogously from Remark~\ref{remark:HeisenbergDoubleComoduleAlgebra}.\ref{remarkPoint:RightHeisenbergAlternativeComoduleAlgebra}.
	\\
	\\
	Proof of~\eqref{eq:LeftJoint}: Let $(\rho_{1},\rho_{2})_{\prec}$ with $\rho_{1}=\rho\circ e^{a}\circ\rho_{1}'$ and $\rho_{2}=\rho\circ e^{b}\circ\rho_{2}'$ such that $\rho$ is ribbon and ends to the right of its target. 
	W.l.o.g. we can assume $e^{a}\in\left\{ e^{R}, e^{R}\circ e^{-s}, e^{-t}\circ e^{L} \right\}$, since we can reverse the orientation of $e$. The possible combinations of $a$ and $b$ then are:
	\begin{align*}
		(e^{a},e^{b})\in \left\{ (e^{R},e^{\overline{d}}), (e^{R}, e^{-t}), (e^{\overline{d}}, e^{-t}) \right\}
	\end{align*}
	In all of these cases, direct calculations show
	\begin{align}
		\Hol_{e^{a}}^{h \oo \alpha} \circ \Hol_{e^{b}}^{k \oo \beta}  
		&= \langle \alpha_{(1)} , S(k_{(2)}) \rangle \Hol_{e^{b}}^{k_{(4)} \oo \beta} \Hol_{e^{a}}^{k_{(1)}hS(k_{(3)}) \oo \alpha_{(2)}} 
		\nonumber
		\\
		&= \Hol_{e^{b}}^{ \left( k \oo \beta \right) \vartriangleleft X \oo 1} \Hol_{e^{a}}^{ \left( h \oo \alpha \right) \vartriangleleft \varepsilon \oo S(x)} 
		\label{eq:LeftJointBasic}
	\end{align}
	We now consider the tensor product $D(H)^{*} \oo D(H)^{*}$ of $D(H)^{*}$ as $D(H)^{*}$-left comodule with the left regular coaction from Example \ref{definition:CoregularAction}.
	As $R^{(1)} \oo R^{(2)}:=\varepsilon \oo x \oo X \oo 1\in D(H) \oo D(H)$ is the $R$-matrix for $D(H)$ from \eqref{eq:Rmatrix}, the braiding
	\begin{align*}
		c: D(H)^{*} \oo D(H)^{*}&\to D(H)^{*} \oo D(H)^{*}\\
		\alpha \oo \beta &\mapsto \left( \beta  \vartriangleleft R^{(2)} \right) \oo \left( \alpha   \vartriangleleft S(R^{(1)}) \right)
	\end{align*}
	is a $D(H)^{*}$-comodule morphism. Concretely, this means that for $\alpha,\beta\in D(H)^{*}$
	\begin{align}
		&\alpha_{(1)}\beta_{(1)} \oo \left( \beta_{(2)}  \vartriangleleft R^{(2)} \right) \oo \left( \alpha_{(2)}   \vartriangleleft S(R^{(1)}) \right)
		\nonumber
		\\&\;=\left(\left( \beta  \vartriangleleft R^{(2)} \right)_{(1)} \cdot \left( \alpha   \vartriangleleft S(R^{(1)}) \right)_{(1)}\right)
		\oo \left( \beta  \vartriangleleft R^{(2)} \right)_{(2)} \oo \left( \alpha   \vartriangleleft S(R^{(1)}) \right)_{(2)}
		\label{eq:BraidingDH*Comodule}
	\end{align}
	For the paths $\rho\circ e^{a}, \rho\circ e^{b}$ and $\alpha,\beta\in D(H)^{*}$ we now compute
	\begin{align}
		\Hol_{\rho \circ e^{a}}^{\alpha} \circ \Hol_{\rho\circ e^{b}}^{\beta}
		&\overset{\eqref{eq:HolonomyRecursionStandard}}{=}
		\Hol_{\rho}^{\alpha_{(1)}} \Hol_{e^{a}}^{\alpha_{(2)}} 
		\Hol_{\rho}^{\beta_{(1)}} \Hol_{e^{b}}^{\beta_{(2)}} 
		\overset{\eqref{eq:NonOverlap}}{=}
		\Hol_{\rho}^{\alpha_{(1)}}\Hol_{\rho}^{\beta_{(1)}}
		\Hol_{e^{a}}^{\alpha_{(2)}} 
		\Hol_{e^{b}}^{\beta_{(2)}} \nonumber
		\\&\overset{\eqref{eq:LeftRightHolonomy}}{=}
		\Hol_{\rho}^{\alpha_{(1)}\beta_{(1)}}
		\Hol_{e^{a}}^{\alpha_{(2)}}
		\Hol_{e^{b}}^{\beta_{(2)}} 
		\overset{\eqref{eq:LeftJointBasic}}{=}
		\Hol_{\rho}^{\alpha_{(1)}\beta_{(1)}}
		\Hol_{e^{b}}^{\beta_{(2)} \vartriangleleft R^{(2)}} 
		\Hol_{e^{a}}^{\alpha_{(2)} \vartriangleleft S(R^{(1)})}\nonumber
		\\&\overset{\eqref{eq:BraidingDH*Comodule}}{=}
		\Hol_{\rho}^{\left( \beta  \vartriangleleft R^{(2)} \right)_{(1)} \cdot \left( \alpha   \vartriangleleft S(R^{(1)}) \right)_{(1)}}
		\Hol_{e^{b}}^{\left( \beta  \vartriangleleft R^{(2)} \right)_{(2)}} 
		\Hol_{e^{a}}^{\left( \alpha   \vartriangleleft S(R^{(1)}) \right)_{(2)}}\nonumber
		\\&\overset{\eqref{eq:LeftRightHolonomy}}{=}
		\Hol_{\rho}^{\left( \beta  \vartriangleleft R^{(2)} \right)_{(1)}}
		\Hol_{\rho}^{\left( \alpha   \vartriangleleft S(R^{(1)}) \right)_{(1)}}
		\Hol_{e^{b}}^{\left( \beta  \vartriangleleft R^{(2)} \right)_{(2)}} 
		\Hol_{e^{a}}^{\left( \alpha   \vartriangleleft S(R^{(1)}) \right)_{(2)}}\nonumber
		\\
		&
		\overset{\eqref{eq:NonOverlap}}{=}
		\Hol_{\rho}^{\left( \beta  \vartriangleleft R^{(2)} \right)_{(1)}}
		\Hol_{e^{b}}^{\left( \beta  \vartriangleleft R^{(2)} \right)_{(2)}} 
		\Hol_{\rho}^{\left( \alpha   \vartriangleleft S(R^{(1)}) \right)_{(1)}}
		\Hol_{e^{a}}^{\left( \alpha   \vartriangleleft S(R^{(1)}) \right)_{(2)}}\nonumber
		\\
		&\overset{\eqref{eq:HolonomyRecursionStandard}}{=}
		\Hol_{\rho\circ e^{b}}^{\beta  \vartriangleleft R^{(2)}}
		\Hol_{\rho\circ e^{a}}^{\alpha   \vartriangleleft S(R^{(1)}) }
		\label{eq:LeftJointSecond}
		\end{align}
		In the last step we also used the following identity for $g\in D(H), \alpha\in D(H)^{*}$		
		\begin{align}
			\left( \alpha_{(1)}  \vartriangleleft g \right)
			\oo \alpha_{(2)}
			= \left( \alpha  \vartriangleleft g \right)_{(1)} \oo \left( \alpha  \vartriangleleft g \right)_{(2)}
			\label{eq:RightCoactionLeftActionCommute}
		\end{align}
		Finally, we obtain for the paths $\rho\circ e^{a}\circ \rho_{1}',\rho\circ e^{b}\circ \rho_{2}'$:
		\begin{align*}
			\Hol_{\rho\circ e^{a}\circ \rho_{1}'}^{\alpha} \Hol_{\rho\circ e^{b}\circ \rho_{2}'}^{ \beta} 
			&=
			\Hol_{\rho\circ e^{a}}^{\alpha_{(1)}} \Hol_{\rho_{1}'}^{\alpha_{(2)}} 
			\Hol_{\rho\circ e^{b}}^{\beta_{(1)}} \Hol_{\rho_{2}'}^{\beta_{(2)}} 
			\overset{\eqref{eq:NonOverlap}}{=}
			\Hol_{\rho\circ e^{a}}^{\alpha_{(1)}} 
			\Hol_{\rho\circ e^{b}}^{\beta_{(1)}}
			\Hol_{\rho_{2}'}^{\beta_{(2)}}
			\Hol_{\rho_{1}'}^{\alpha_{(2)}} 
			\\&\overset{\eqref{eq:LeftJointSecond}}{=}
			\Hol_{\rho\circ e^{b}}^{\beta_{(1)}  \vartriangleleft R^{(2)}}
			\Hol_{\rho\circ e^{a}}^{\alpha_{(1)}  \vartriangleleft S(R^{(1)})} 
			\Hol_{\rho_{2}'}^{\beta_{(2)}}
			\Hol_{\rho_{1}'}^{\alpha_{(2)}}
			\\&\overset{\eqref{eq:NonOverlap}}{=}
			\Hol_{\rho\circ e^{b}}^{\beta_{(1)}  \vartriangleleft R^{(2)}}
			\Hol_{\rho_{2}'}^{\beta_{(2)}}
			\Hol_{\rho\circ e^{a}}^{\alpha_{(1)}  \vartriangleleft S(R^{(1)})} 
			\Hol_{\rho_{1}'}^{\alpha_{(2)}}
			\\&\overset{\eqref{eq:RightCoactionLeftActionCommute}}{=}
			\Hol_{\rho\circ e^{b}}^{\left(\beta  \vartriangleleft R^{(2)}\right)_{(1)}}
			\Hol_{\rho_{2}'}^{\left(\beta  \vartriangleleft R^{(2)}\right)_{(2)}}
			\Hol_{\rho\circ e^{a}}^{\left(\alpha  \vartriangleleft S(R^{(1)})\right)_{(1)}} 
			\Hol_{\rho_{1}'}^{\left(\alpha  \vartriangleleft S(R^{(1)})\right)_{(2)}}
			\\
			&\overset{\eqref{eq:HolonomyRecursionStandard}}{=}
			\Hol_{\rho\circ e^{b}\circ \rho_{2}'}^{ \beta  \vartriangleleft R^{(2)} } 
			\Hol_{\rho\circ e^{a} \circ \rho_{1}'}^{\alpha  \vartriangleleft S(R^{(1)})} 
		\end{align*}
		This shows the identity~\eqref{eq:LeftJoint}.
		Equation~\eqref{eq:RightJoint} follows by an analogous proof. 
	\\
	\\
	Proof of \eqref{eq:MiddleJoint}. If $(\rho,\gamma)_{\succ\prec}$, then there are paths $\rho_{1},\rho_{2},\gamma_{1},\gamma_{2}$ such that $(\rho_{1},\gamma_{1})_{\succ}$, $(\rho_{2},\gamma_{2})_{\prec}$ and $\rho_{1}$ and $\gamma_{1}$ are disjoint to $\rho_{2},\gamma_{2}$. The claim follows by writing
	\begin{align*}
		\Hol_{\rho}^{ \alpha} = \Hol_{\rho_{1}}^{ \alpha_{(1)}} \circ \Hol_{\rho_{2}}^{ \alpha_{(2)}},\quad 
		\Hol_{\gamma}^{ \beta} = \Hol_{\rho_{1}}^{ \alpha_{(1)}} \circ \Hol_{\rho_{2}}^{ \alpha_{(2)}}
	\end{align*}
	and using the equations~\eqref{eq:NonOverlap}, \eqref{eq:LeftJoint} and~\eqref{eq:RightJoint}.
	\\
\end{proof}
We  now explain how the holonomy along a ribbon path $\rho:s_{1}\to s_{2}$ creates excitations at the end points of that ribbon path in more detail.
We will first describe the commutation relation of the vertex and face operators at $s_{1}$ and $s_{2}$ with the holonomy along $\rho$,
generalizing the commutation relations~(B41) and~(B42) for Kitaev models based on groups from \cite{BD}. 
\begin{lemma}
	\label{lemma:RibbonHolonomyCommutators}
	Let $\rho:s_{1}^{\eta_{1}}\to s_{2}^{\eta_{2}}$ a simple path between disjoint sites $s_{1}$ and $s_{2}$ and $\eta_{1},\eta_{2}\in \left\{ L,R \right\}$.
	Then the holonomy along $\rho$ satisfies the following commutation relations with the vertex and face operators at $s_{1}$ and $s_{2}$:
	\begin{align}
		&B_{s_{1}}^{\beta} \circ A_{s_{1}}^{k} \circ \Hol_{\rho}^{h \oo \alpha} =  \Hol_{\rho}^{(\beta_{(1)} \oo k_{(2)})  \vartriangleright\left( h \oo \alpha \right)  } \circ B_{s_{1}}^{\beta_{(2)}} \circ A_{s_{1}}^{k_{(1)}}, \quad \text{if $\eta_{1}=R$}
		\label{eq:HolonomyCommutatorsStartRight}
		\\
		&B_{s_{1}}^{\beta} \circ A_{s_{1}}^{k} \circ \Hol_{\rho}^{h \oo \alpha} =  \Hol_{\rho}^{\left( \beta_{(2)} \oo k_{(1)} \right)  \vartriangleright \left( h \oo \alpha \right)} \circ B_{s_{1}}^{\beta_{(1)}} \circ A_{s_{1}}^{k_{(2)}}, \quad \text{if $\eta_{1}=L$}
		\label{eq:HolonomyCommutatorsStartLeft}
		\\
		&B_{s_{2}}^{\beta} \circ A_{s_{2}}^{k} \circ \Hol_{\rho}^{h \oo \alpha} =  \Hol_{\rho}^{  \left( h \oo \alpha \right) \vartriangleleft S\left( \beta_{(2)} \oo k_{(1)} \right)} \circ B_{s_{2}}^{\beta_{(1)}} \circ A_{s_{2}}^{k_{(2)}},\quad \text{if $\eta_{2}=L$}
		\label{eq:HolonomyCommutatorsEndLeft}
		\\
		&B_{s_{2}}^{\beta} \circ A_{s_{2}}^{k} \circ \Hol_{\rho}^{h \oo \alpha} =  \Hol_{\rho}^{\left( h \oo \alpha \right) \vartriangleleft S(\beta_{(1)} \oo k_{(2)})} \circ B_{s_{2}}^{\beta_{(2)}} \circ A_{s_{2}}^{k_{(1)}},\quad \text{if $\eta_{2}=R$}
		\label{eq:HolonomyCommutatorsEndRight}
	\end{align}
\end{lemma}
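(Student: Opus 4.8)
The plan is to write the vertex and face operators as holonomies and reduce everything to the joint-commutation relations of Lemma~\ref{lemma:HolonomyCommutators}. By Definition~\ref{definition:VertexFaceOperators}, $A_{s_1}^k = \Hol_v^{k\oo\varepsilon}$ and $B_{s_1}^\beta = \Hol_f^{1\oo\beta}$ for the vertex path $v$ and face path $f$ at $s_1$, and similarly at $s_2$. I would first prove the two special cases of~\eqref{eq:HolonomyCommutatorsStartRight} obtained by setting $\beta=\varepsilon$, resp.\ $k=1_H$, namely
\begin{align*}
	A_{s_1}^k\circ\Hol_\rho^{h\oo\alpha} &= \Hol_\rho^{(\varepsilon\oo k_{(2)})\vartriangleright(h\oo\alpha)}\circ A_{s_1}^{k_{(1)}},\\
	B_{s_1}^\beta\circ\Hol_\rho^{h\oo\alpha} &= \Hol_\rho^{(\beta_{(1)}\oo 1_H)\vartriangleright(h\oo\alpha)}\circ B_{s_1}^{\beta_{(2)}}
\end{align*}
for a path $\rho$ starting to the right of $s_1$ (so $\eta_1=R$). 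Granting these, \eqref{eq:HolonomyCommutatorsStartRight} follows by moving $\Hol_\rho$ in $B_{s_1}^\beta\circ A_{s_1}^k\circ\Hol_\rho^{h\oo\alpha}$ leftward, first past $A_{s_1}^k$ and then past $B_{s_1}^\beta$: this produces $\Hol_\rho$ with index $(\beta_{(1)}\oo 1_H)\vartriangleright\big((\varepsilon\oo k_{(2)})\vartriangleright(h\oo\alpha)\big)=\big((\beta_{(1)}\oo 1_H)\cdot(\varepsilon\oo k_{(2)})\big)\vartriangleright(h\oo\alpha)=(\beta_{(1)}\oo k_{(2)})\vartriangleright(h\oo\alpha)$, using that $\vartriangleright$ is a $D(H)$-action and the multiplication~\eqref{eq:Drinfel'dMultiplication} of $D(H)$ (which gives $(\beta_{(1)}\oo 1_H)\cdot(\varepsilon\oo k_{(2)})=\beta_{(1)}\oo k_{(2)}$), together with the leftover operator $B_{s_1}^{\beta_{(2)}}\circ A_{s_1}^{k_{(1)}}$.

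For each of the two special cases I would determine the relative position of the closed path ($v$, resp.\ $f$), suitably oriented, and $\rho$ at $s_1$. Since $v$ traverses only edges $e^{-s},e^{t}$ and $f$ only $e^{-L},e^{R}$, and $\rho$ attaches to $s_1$ on the side recorded by $\eta_1$, a direct inspection of the edge labels $\eta\in\{\pm s,\pm t,\pm L,\pm R\}$ shows that the relevant pair forms a left joint or a right joint in the sense of Definition~\ref{def:cross}, with the type governed by $\eta_1$. One then applies~\eqref{eq:LeftJoint} or~\eqref{eq:RightJoint} (with~\eqref{eq:NonOverlap} for the non-crossing parts of the decomposition), so that $\Hol_\rho$ acquires an $R$-matrix twist of its index of the form $\vartriangleleft R^{(\pm)}$ or $R^{(\pm)}\vartriangleright$. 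Substituting the explicit $R$-matrix $R=\varepsilon\oo x\oo X\oo 1$ of $D(H)$ and simplifying with the comultiplication~\eqref{eq:Drinfel'dDualComultiplication} of $D(H)^*$ and the coregular-action formulas~\eqref{eq:LeftCoregularActionDrinfel'd},~\eqref{eq:RightCoregularActionDrinfel'd} collapses this twist into $(\varepsilon\oo k_{(2)})\vartriangleright(-)$, resp.\ $(\beta_{(1)}\oo 1_H)\vartriangleright(-)$, with the remaining Sweedler leg of $k$, resp.\ $\beta$, reassembling the leftover operator $A_{s_1}^{k_{(1)}}$, resp.\ $B_{s_1}^{\beta_{(2)}}$. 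The case $\eta_1=L$ is entirely parallel: the joints at $s_1$ are then of the opposite type, which interchanges the Sweedler legs of $\beta$ and $k$ and yields~\eqref{eq:HolonomyCommutatorsStartLeft}.

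The two relations at $s_2$ follow from those at $s_1$ by path reversal. The reversed path $\rho^{-1}: s_2^{\eta_2}\to s_1^{\eta_1}$ is again simple, $A_{s_2}$ and $B_{s_2}$ are unchanged, and $\Hol_{\rho^{-1}}^{\gamma}=\Hol_\rho^{S_{D(H)^*}(\gamma)}$ by~\eqref{eq:HolonomyReversal}. Applying~\eqref{eq:HolonomyCommutatorsStartRight} (resp.~\eqref{eq:HolonomyCommutatorsStartLeft}) to $\rho^{-1}$ with index $S_{D(H)^*}(h\oo\alpha)$, and using $S_{D(H)^*}^2=\id$ (valid since $D(H)^*$ is semisimple) together with the Hopf-algebra identity $S_{D(H)^*}(g\vartriangleright\phi)=S_{D(H)^*}(\phi)\vartriangleleft S_{D(H)}(g)$ for $g\in D(H)$, $\phi\in D(H)^*$, converts the left coregular action into the right one precomposed with $S_{D(H)}$, giving~\eqref{eq:HolonomyCommutatorsEndRight} (resp.~\eqref{eq:HolonomyCommutatorsEndLeft}).

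I expect the work to concentrate in two bookkeeping steps. The first is pinning down, for each of the four cases, which joint type each of $(v,\rho)$ and $(f,\rho)$ has — a finite but error-prone case distinction over the edge labels at $s_1$, using the geometry of the thickening and Definition~\ref{def:cross}. The second is the Sweedler-notation computation that turns the $R$-matrix twists produced by~\eqref{eq:LeftJoint}/\eqref{eq:RightJoint} into the single coregular action $(\beta_{(1)}\oo k_{(2)})\vartriangleright(-)$; this relies on the triangular shape of the $D(H)$-$R$-matrix and a disciplined use of~\eqref{eq:Drinfel'dMultiplication},~\eqref{eq:Drinfel'dDualComultiplication} and~\eqref{eq:LeftCoregularActionDrinfel'd}. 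Neither step is conceptually subtle, but both require care.
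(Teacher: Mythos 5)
Your proposal is correct and essentially reproduces the paper's argument: both write the vertex and face operators as holonomies along the vertex and face paths, reduce to the special cases $\beta=\varepsilon$ and $k=1$, and apply the joint relations \eqref{eq:LeftJoint}/\eqref{eq:RightJoint} together with \eqref{eq:HolonomyReversal} and the explicit triangular $R$-matrix $R=\varepsilon\oo x\oo X\oo 1$ of $D(H)$ to collapse the resulting twist into the coregular actions, with the leftover Sweedler legs reassembling $B_{s}^{\beta_{(2)}}\circ A_{s}^{k_{(1)}}$ etc. The only cosmetic difference is that the paper proves the end-of-path case \eqref{eq:HolonomyCommutatorsEndRight} directly, using $(\rho,v^{-1})_{\prec}$ and reversing the vertex path, and calls the other cases analogous, whereas you prove the start cases directly and deduce the end cases by reversing $\rho$ itself via $S^{2}=\id$ and $S(g\vartriangleright\phi)=S(\phi)\vartriangleleft S(g)$ — a valid variation resting on the same Lemma~\ref{lemma:HolonomyCommutators}.
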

\begin{proof}
	We only show~\eqref{eq:HolonomyCommutatorsEndRight}, the rest follows analogously. First note, that we can prove the identity separately for $\beta=\varepsilon$ and $k\in H$ and for $k=1$ and $\beta\in H^{*}$. 
	Let $v$ be the vertex path of $s_{2}$. Then we have $(\rho,v^{-1})_{\prec}$. By writing $A_{s_{2}^{k}}= \Hol_{v}^{k \oo \varepsilon} $ we obtain
	\begin{align}
		\Hol_{v}^{k \oo \varepsilon} \Hol_{\rho}^{h \oo \alpha} 
		&\overset{\eqref{eq:HolonomyReversal}}{=}
		\Hol_{v^{-1}}^{S(k) \oo \varepsilon} \Hol_{\rho}^{h \oo \alpha} 
		\overset{\eqref{eq:LeftJoint}}{=} \Hol_{\rho}^{\left(h \oo \alpha\right) \vartriangleleft R^{(1)}} 
		\Hol_{v^{-1}}^{\left(S(k) \oo \varepsilon\right)  \vartriangleleft R^{(2)}} 
		\\
		&\overset{\eqref{eq:HolonomyReversal}}{=} \Hol_{\rho}^{\left(h \oo \alpha\right) \vartriangleleft R^{(1)}} \Hol_{v}^{S(R^{(2)})  \vartriangleright \left(k \oo \varepsilon\right)} 
		\label{eq:1}
	\end{align}
	We now compute
	\begin{align*}
		\left(\left( h \oo \alpha \right)  \vartriangleleft R^{(1)}\right) 
		\oo \left( S(R^{(2)})  \vartriangleright (k \oo \varepsilon)\right)
		&=
		\left(\left( h \oo \alpha \right)  \vartriangleleft \varepsilon \oo x\right) 
		\oo 
		\langle S(X) \oo 1 , k_{(2)} \oo \varepsilon \rangle 
		\left(k_{(1)} \oo \varepsilon\right)
		\\
		&=
		\left(\left( h \oo \alpha \right)  \vartriangleleft \varepsilon \oo S(k_{(2)})\right) 
		\oo 
		\left(k_{(1)} \oo \varepsilon\right)
	\end{align*}
	By inserting this into~\eqref{eq:1} we obtain~\eqref{eq:HolonomyCommutatorsEndRight} for $\beta=\varepsilon$. The result for $k=1, \beta\in H^{*}$ follows analogously by writing $B_{s_{2}}^{\beta}= \Hol_{f}^{1 \oo \beta} $, where $f$ is the face path of $s_{2}$ and using $(f,\rho)_{\prec}$.
\end{proof}

\begin{corollary}
	Let $\rho:s_{1}\to s_{2}$ a simple path between disjoint vertices. Then $\Hol_{\rho}$ defines module homomorphisms
\begin{align}
	D(H)^{*}_{ \vartriangleright}  &\oo \HSpace_{s_{1}} \to \HSpace_{s_{1}} \quad&&\text{if $\rho$ starts to the left of $s_{1}$}
	\label{eq:HolonomyModuleLeftStart}
	\\
	D(H)^{*}_{ \vartriangleright} &\oo^{cop} \HSpace_{s_{1}}  \to \HSpace_{s_{1}}\quad&&\text{if $\rho$ starts to the right of $s_{1}$}\\
	D(H)^{*}_{ \vartriangleleft}  &\oo \HSpace_{s_{2}} \to \HSpace_{s_{2}}\quad&&\text{if $\rho$ ends to the left of $s_{2}$}\\
	D(H)^{*}_{ \vartriangleleft} &\oo^{cop}  \HSpace_{s_{2}} \to \HSpace_{s_{2}}\quad&&\text{if $\rho$ ends to the right of $s_{2}$}
	\label{eq:HolonomyModuleRightEnd}
\end{align}
Here $ \oo $ is the tensor product of $D(H)$-modules and $ \oo^{cop} $ the tensor product of $D(H)^{cop}$-modules, $\HSpace_{s_{i}}$ is the extended space with the $D(H)$-action defined by the site $s_{i}$, $D(H)^{*}_{ \vartriangleright}$ is the vector space $D(H)^{*}$ with the left regular action of $D(H)$ and $D(H)^{*}_{ \vartriangleleft}$ is the vector space $D(H)^{*}$ with the left action of $D(H)$ defined by~\eqref{eq:RightCoregularLeftAction}.
\label{corollary:HolonomyModuleHom}
\end{corollary}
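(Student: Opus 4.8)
The plan is to recognize all four claims as the module-morphism reformulations of the commutation relations in Lemma~\ref{lemma:RibbonHolonomyCommutators}. In every case the map under consideration is the bilinear assignment $\phi_\rho(\gamma \oo m) := \Hol_\rho^\gamma(m)$, which is linear in the decoration $\gamma \in D(H)^{*}$ and in $m \in \HSpace$ and hence descends to a well-defined linear map $D(H)^{*} \oo \HSpace \to \HSpace$; that $\Hol_\rho$ itself is well defined for the simple path $\rho$ is Lemma~\ref{lemma:HolonomyWellDefined}, and disjointness of $s_1$ and $s_2$ places us in the hypotheses of Lemma~\ref{lemma:RibbonHolonomyCommutators}.

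First I would invoke the standard fact that, for modules over a Hopf algebra, a linear map $\phi: V \oo W \to W$ out of a tensor-product module is a module homomorphism if and only if $g \vartriangleright \phi(v \oo m) = \phi\big(g_{(1)} \vartriangleright v \oo g_{(2)} \vartriangleright m\big)$ for all $g$, $v$, $m$, with the analogous identity using $\Delta^{cop}$ characterizing morphisms of $D(H)^{cop}$-modules. Applying this to $\phi_\rho$ with codomain $\HSpace_{s_i}$, on which $g = \beta \oo k$ acts by $B_{s_i}^\beta A_{s_i}^k$ by Theorem~\ref{ht:siterep}, and with $V = D(H)^{*}_{\vartriangleright}$ or $V = D(H)^{*}_{\vartriangleleft}$, reduces each of the four statements to an explicit identity in $\End_\C(\HSpace)$.

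It then remains to match these identities with Lemma~\ref{lemma:RibbonHolonomyCommutators} by unwinding the coproduct $\Delta_{D(H)}(\beta \oo k) = \beta_{(2)} \oo k_{(1)} \oo \beta_{(1)} \oo k_{(2)}$ and the action on the decoration. The case ``$\rho$ starts to the left of $s_1$'' with the tensor product of $D(H)$-modules is exactly~\eqref{eq:HolonomyCommutatorsStartLeft}; replacing $\Delta_{D(H)}$ by $\Delta^{cop}$ swaps the two tensor legs and turns ``$\rho$ starts to the right of $s_1$'' with $\oo^{cop}$ into~\eqref{eq:HolonomyCommutatorsStartRight}. For $s_2$ one uses that the $D(H)$-action $g \vartriangleright' \gamma = \gamma \vartriangleleft S(g)$ defining $D(H)^{*}_{\vartriangleleft}$, the specialization of~\eqref{eq:RightCoregularLeftAction} to $D(H)$, is precisely the $S$-twisted right coregular action appearing in~\eqref{eq:HolonomyCommutatorsEndLeft} and~\eqref{eq:HolonomyCommutatorsEndRight}, so that ``$\rho$ ends to the left of $s_2$'' with $\oo$ becomes~\eqref{eq:HolonomyCommutatorsEndLeft} and ``$\rho$ ends to the right of $s_2$'' with $\oo^{cop}$ becomes~\eqref{eq:HolonomyCommutatorsEndRight}. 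For a fixed $\rho$, exactly one of the first pair and one of the second pair of statements applies, according to the start and end sides of $\rho$ in the sense of Definition~\ref{def:leftorright}. The only real work is this bookkeeping --- keeping straight which commutation relation is paired with $\oo$ versus $\oo^{cop}$, and that the $\gamma \vartriangleleft S(-)$ decoration-action is the intended module structure on $D(H)^{*}_{\vartriangleleft}$ --- and there is no substantive obstacle beyond it, since the computational content already resides in Lemma~\ref{lemma:RibbonHolonomyCommutators}.
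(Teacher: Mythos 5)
Your proposal is correct and follows essentially the same route as the paper's proof: the paper likewise writes out the $D(H)$-action on $D(H)^{*}_{\vartriangleright}\oo\HSpace_{s_1}$ (respectively the other three tensor products) explicitly and observes that the module-homomorphism condition for $\Hol_\rho$ is precisely the corresponding commutation relation \eqref{eq:HolonomyCommutatorsStartLeft}--\eqref{eq:HolonomyCommutatorsEndRight} of Lemma~\ref{lemma:RibbonHolonomyCommutators}. Your bookkeeping of which relation pairs with $\oo$ versus $\oo^{cop}$ and the identification of the $\gamma\vartriangleleft S(-)$ action with the module structure on $D(H)^{*}_{\vartriangleleft}$ match the paper's intended argument.
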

\begin{proof}
	We show the claim~\eqref{eq:HolonomyModuleLeftStart}. The action of $\beta \oo k\in D(H)$ on $(h \oo \alpha) \oo m\in D(H)^{*}_{ \vartriangleright} \oo \HSpace_{s_{1}}$ is given by
	\begin{align*}
		\left( \beta_{(2)} \oo k_{(1)}  \vartriangleright\left( h \oo \alpha \right) \right)  \oo \left( B_{s_{1}}^{\beta_{(1)}} \circ A_{s_{1}}^{k_{(2)}} (m)  \right).
	\end{align*}
	and the action of $D(H)$ on $m\in \HSpace_{s_{1}}$ is given by
	\begin{align*}
		B_{s_{1}}^{\beta}\circ A_{s_{1}}^{k} (m)
	\end{align*}
	The claim that $\Hol_{\rho}$ is a module homomorphism then simply translates to the identity~\eqref{eq:HolonomyCommutatorsStartLeft}.
	The other three claims follow analogously from Lemma~\ref{lemma:RibbonHolonomyCommutators}.
\end{proof}

We now use this corollary to show how holonomies generate and fuse excitations. 
We first note that Theorem of Artin-Wedderburn (Theorem~\ref{theorem:ArtinWedderburn}) implies, that the dual $D(H)^{*}$ of the Drinfel'd double with left and right regular action decomposes as a $D(H)$-bimodule into
\begin{align}
	\varphi: D(H)^{*} \xrightarrow{\sim} \bigoplus_{d \in \mathrm{Irr}\left( D(H) \right))} d \oo d^{*}.
	\label{eq:Drinfel'dDualArtinWedderburn}
\end{align}
Here $d$ runs through a set of representatives of irreducible $D(H)$-modules.
Combining this with Corollary~\ref{corollary:HolonomyModuleHom}, we obtain the following statement, illustrated by Figure~\ref{figure:HolonomyFusion}.
\begin{corollary}
	\label{corollary:HolonomyFusion}
	Let $\rho: s_{1} \to s_{2}$ a simple left-right path, $M_{1}, M_{2}$ $D(H)-modules$, $d$ a simple $D(H)$-module and $h \oo \alpha \in D(H)^{*}$ in the $d \oo d^{*}$-component of the decomposition in~\eqref{eq:Drinfel'dDualArtinWedderburn}. 
	Then $\Hol_{\rho}^{h \oo \alpha} :\HSpace \to \HSpace$ restricts to a map
	\begin{align}
		\Hol_{\rho}^{h \oo \alpha} : \HSpace\left( s_{1},M_{1},s_{2},M_{2} \right) \to \HSpace \left( s_{1},d \oo M_{1}, s_{2}, M_{2} \oo d^{*} \right)
		\label{eq:HolonomyFusion} 
	\end{align}
\end{corollary}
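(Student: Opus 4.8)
The plan is to reduce the statement to the two module-homomorphism properties of $\Hol_\rho$ recorded in Corollary~\ref{corollary:HolonomyModuleHom}, one for each endpoint of the left-right path $\rho\colon s_1\to s_2$, combined with the bimodule decomposition~\eqref{eq:Drinfel'dDualArtinWedderburn} of $D(H)^*$. I would write $w:=h\oo\alpha\in D(H)^*$, which by hypothesis lies in the sub-bimodule $d\oo d^*$ of $D(H)^*$, and fix $m\in\HSpace(s_1,M_1,s_2,M_2)$. Since $\Hol_\rho^{w}$ is linear, it suffices to prove the two inclusions $\Hol_\rho^{w}(m)\in\HSpace(s_1,d\oo M_1)$ and $\Hol_\rho^{w}(m)\in\HSpace(s_2,M_2\oo d^*)$; the assertion then follows from~\eqref{eq:MultipleExcitationSpace}. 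Throughout I will use that $\HSpace(s,N)$ is a $D(H)$-submodule of $\HSpace_{s}$ whose simple constituents are exactly those of $N$ (by~\eqref{eq:SingleExcitationSpace} and point~(ii) after~\eqref{eq:SpaceParticleStates}), and that a $D(H)$-submodule of $\HSpace_{s}$ whose constituents lie among those of $N$ is automatically contained in $\HSpace(s,N)$, because the isotypic decomposition of $\HSpace_s$ is canonical.

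For the endpoint $s_1$: being a left-right path, $\rho$ starts to the left of $s_1$, so by~\eqref{eq:HolonomyModuleLeftStart} the map $w'\oo n\mapsto\Hol_\rho^{w'}(n)$ is a homomorphism of $D(H)$-modules $D(H)^*_{\vartriangleright}\oo\HSpace_{s_1}\to\HSpace_{s_1}$, where $D(H)^*_{\vartriangleright}$ carries the left regular action. By~\eqref{eq:Drinfel'dDualArtinWedderburn} the summand $d\oo d^*$ is a $D(H)$-sub-bimodule on which the left regular action only moves the $d$-tensorand, so as a left module it is isomorphic to $\dim(d)$ copies of $d$. Hence $(d\oo d^*)\oo\HSpace(s_1,M_1)$ is a $D(H)$-submodule of the source whose constituents lie among those of $d\oo M_1$ ($M_1$ being semisimple, each of its constituents embeds into $M_1$, so tensoring with $d$ and with $\HSpace(s_1,M_1)$ produces only constituents of $d\oo M_1$). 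Its image under the module map $\Hol_\rho$ is therefore a $D(H)$-submodule of $\HSpace_{s_1}$ with the same property, hence contained in $\HSpace(s_1,d\oo M_1)$. Since $w\in d\oo d^*$ and $m\in\HSpace(s_1,M_1)$, this gives $\Hol_\rho^{w}(m)\in\HSpace(s_1,d\oo M_1)$.

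For the endpoint $s_2$ the argument is formally the same, but the bookkeeping is more delicate, and this is the step I would be most careful about. Here $\rho$ ends to the right of $s_2$, so Corollary~\ref{corollary:HolonomyModuleHom} gives a homomorphism of $D(H)^{cop}$-modules $D(H)^*_{\vartriangleleft}\oo^{cop}\HSpace_{s_2}\to\HSpace_{s_2}$, where $D(H)^*_{\vartriangleleft}$ carries the left action $g\oo x\mapsto x\vartriangleleft S(g)$ from~\eqref{eq:RightCoregularLeftAction}. On the summand $d\oo d^*$ this action again only affects the $d^*$-tensorand, and since $g\vartriangleright\eta=\eta\vartriangleleft S(g)$ for the dual module it equals $\dim(d)$ copies of the left dual module $d^*$. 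The flip $\eta\oo n\mapsto n\oo\eta$ is an isomorphism $d^*\oo^{cop}M_2\xrightarrow{\sim}M_2\oo d^*$ of modules over the underlying algebra $D(H)$ (the opposite coproduct on the source being undone by the transposition), so $(d\oo d^*)\oo^{cop}\HSpace(s_2,M_2)$ has constituents among those of $M_2\oo d^*$. As before, its image under $\Hol_\rho$ is a $D(H)$-submodule of $\HSpace_{s_2}$ with constituents among those of $M_2\oo d^*$, hence contained in $\HSpace(s_2,M_2\oo d^*)$; in particular $\Hol_\rho^{w}(m)\in\HSpace(s_2,M_2\oo d^*)$. Intersecting the two inclusions yields $\Hol_\rho^{w}(m)\in\HSpace(s_1,d\oo M_1,s_2,M_2\oo d^*)$, which is the asserted restriction. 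The only genuine obstacle is the one just flagged: correctly tracking the dual and opposite-coproduct structures at $s_2$, i.e.\ checking that $D(H)^*_{\vartriangleleft}$ restricted to $d\oo d^*$ really is a multiple of the left dual $d^*$ and that $\oo^{cop}$ turns $d^*\oo^{cop}M_2$ into $M_2\oo d^*$ rather than some other rearrangement.
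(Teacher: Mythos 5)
Your proposal is correct and follows essentially the same route as the paper's proof: it applies the two module-homomorphism statements of Corollary~\ref{corollary:HolonomyModuleHom} at the two endpoints, restricts to the summand $d\oo d^{*}$ of the Artin--Wedderburn decomposition~\eqref{eq:Drinfel'dDualArtinWedderburn}, and intersects the two resulting inclusions. The only difference is that you spell out what the paper leaves implicit, namely the flip isomorphism $d^{*}\oo^{cop}M_{2}\cong M_{2}\oo d^{*}$ and the semisimplicity argument that a submodule of $\HSpace_{s}$ whose constituents occur in $N$ lies in $\HSpace(s,N)$; both checks are accurate.
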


\begin{proof}
	Consider $D(H)^{*}$ with the left coregular action $ \vartriangleright$ of $D(H)$ from~\eqref{eq:LeftCoregularActionDrinfel'd}, $\HSpace$ with the left $D(H)$-action defined by the site $s_{1}$ and $D(H)^{*} \oo \HSpace$ the tensor product of these $D(H)$-modules in $D(H)\mathrm{-Mod}$.
	Then~\eqref{eq:HolonomyModuleLeftStart} states that 
	\begin{align*}
		\Hol_{\rho}: D(H)^{*}_{ \vartriangleright} \oo \HSpace_{s_{1}} \to \HSpace_{s_{1}}
	\end{align*}
	is a homomorphism of $D(H)$-modules. 
	After restricting $\Hol_{\rho}$ to the submodule $d \oo d^{*}\subseteq D(H)^{*}$ with the $D(H)$-left action on $d$ and the submodule $\HSpace(s_{1},M_{1})\subseteq \HSpace$, we obtain the corestriction 
	\begin{align*}
		\Hol_{\rho} : (d \oo d^{*})  \oo  \HSpace(s_{1} ,M_{1}) \to \HSpace(s_{1}, d \oo M_{1}).
	\end{align*}
	Now consider instead the $D(H)$-action on $D(H)^{*}$ from~\eqref{eq:RightCoregularLeftAction} and the $D(H)$-action on $\HSpace$ defined by the site $s_{2}$. Again $\HSpace  \oo D(H)^{*}$ is the tensor product of these $D(H)$-modules in $D(H)\mathrm{-Mod}$.
	Then~\eqref{eq:HolonomyModuleRightEnd} states that 
	\begin{align*}
		\Hol_{\rho}:  \HSpace \oo D(H)^{*} \to \HSpace
	\end{align*}
	is a homomorphism of $D(H)$-modules.
	After restricting $\Hol_{\rho}$ to the submodule $d \oo d^{*}\subseteq D(H)^{*}$ with the $D(H)$-left action on $d^{*}$ and the submodule $\HSpace(s_{2},M_{2})\subseteq \HSpace$, we obtain the corestriction 
	\begin{align*}
		\Hol_{\rho} :  \HSpace(s_{2} ,M_{2}  \oo (d \oo d^{*})   ) \to \HSpace(s_{2}, M_{2} \oo d^{*}).
	\end{align*}
	Combining these two restrictions and inserting $h \oo \alpha \in d  \oo d^{*} \subseteq D(H)^{*}$, we obtain~\eqref{eq:HolonomyFusion}.
\end{proof}

\begin{figure}[H]
	\centering
	\scalebox{0.5}{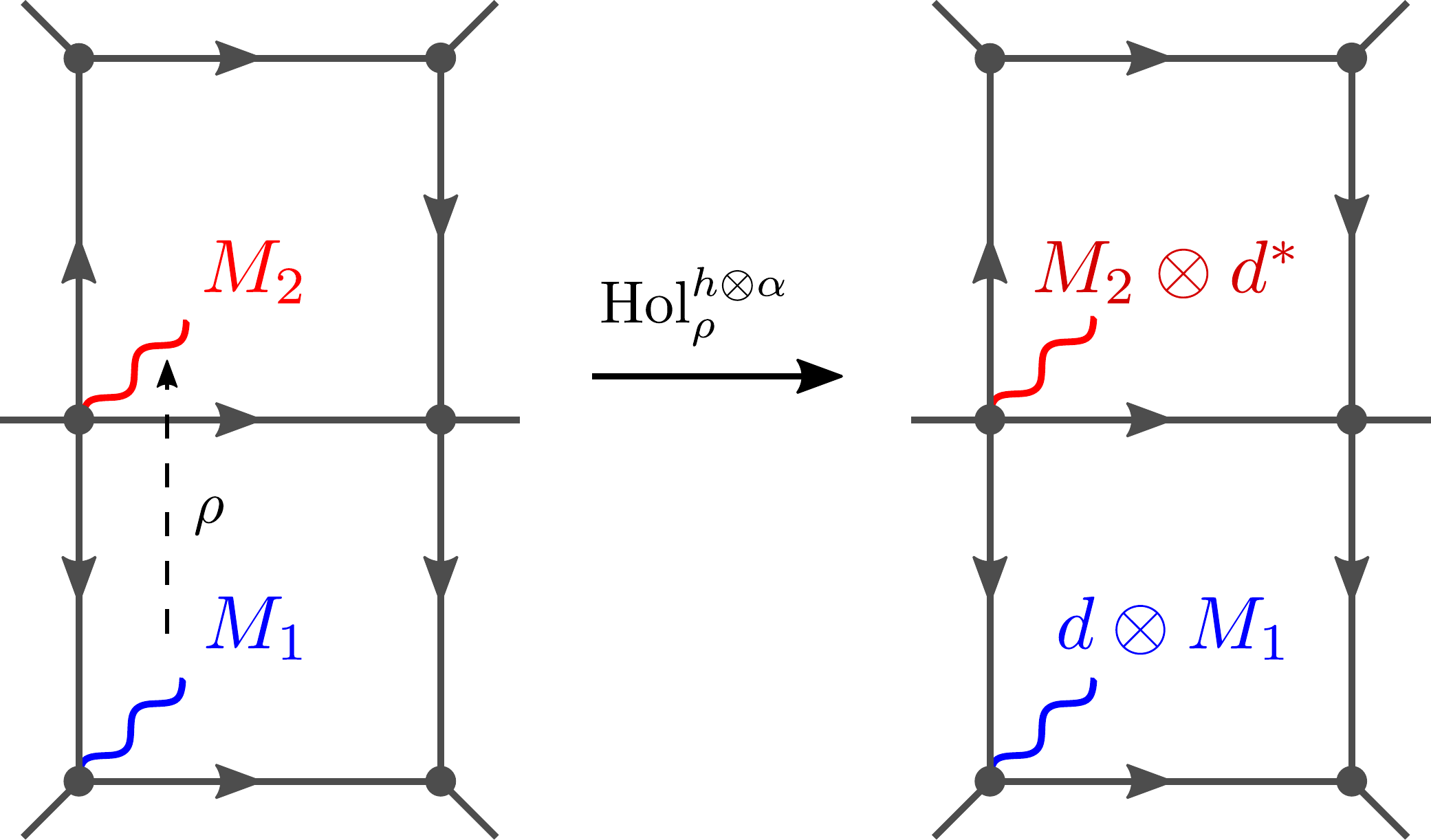}
	\caption{The holonomy $\Hol_{\rho}^{h \oo \alpha} $ along a left-right path $\rho$ for $h \oo \alpha \in d \oo d^{*} \subseteq D(H)^{*}$ creates excitations of type $d$ and $d^{*}$ at the endpoints of $\rho$ and fuses them with the preexisting excitations at these points.}
	\label{figure:HolonomyFusion}
\end{figure}


\section{Turaev-Viro-TQFTs with topological boundaries and topological defects}

\label{section:FSVSummary}

It was shown in~\cite{BK12} that the Kitaev model for a semisimple, finite-dimensional Hopf algebra $H$ reproduces the two-dimensional parts of the Turaev-Viro-TQFT based on the finitely semisimple fusion category $H\mathrm{-Mod}$. 
If one starts with a Turaev-Viro-TQFT based on a finitely semisimple fusion category $\mathcal{A}$ one can use a fiber functor on $\mathcal{A}$ to construct a Hopf algebra $H$ with $\mathcal{A} \cong H\mathrm{-Mod}$. This Tannaka duality relates the input data for Kitaev models and Turaev-Viro-TQFTs.
In this section we summarize the categorical data for Turaev-Viro-TQFTs with topological boundaries and topological surface defects from \cite{FSV}. We then equip this categorical data with fiber functors to obtain corresponding Hopf-algebraic data. In Section \ref{sec:defect model} we will use this Hopf algebraic data to define a Kitaev model with topological defects and topological boundaries. 

\begin{definition}
	\label{definition:TVTQFTDefect}
	A Turaev-Viro-TQFT with topological boundary conditions and topological surface defects as defined in \cite{FSV} consists of the following data:
	\begin{itemize}
		\item The center $\mathcal{Z}(\mathcal{A})$ of a fusion category $\mathcal{A}$ for every three-dimensional region.
		\item A fusion category $\mathcal{W}_{a}$ (resp. $\mathcal{W}_{d}$) for every topological boundary $a$ (resp. topological surface defect $d$).
		\item A braided equivalence $\widetilde{F}_{\to a}:\mathcal{Z}(\mathcal{A}) \to \mathcal{Z}(\mathcal{W}_{a})$ for every pair of a topological boundary labeled with $\mathcal{W}_{a}$ and an adjacent bulk labeled with $\mathcal{Z(A)}$. 
		\item A braided equivalence $\widetilde{F}_{\to d \leftarrow}: \mathcal{Z}(\mathcal{A}_1)\boxtimes \mathcal{Z}(\mathcal{A}_2)^{rev} \to \mathcal{Z}(\mathcal{W}_{d})$ for every topological surface defect labeled with $\mathcal{W}_{d}$ separating three-dimensional regions labeled with $\mathcal{Z}(\mathcal{A}_{1})$ and $\mathcal{Z}(\mathcal{A}_2)$. The functor $\widetilde{F}_{\to d \leftarrow}$ is a composite of two braided monoidal functors $\widetilde{F}_{\to d}:\mathcal{Z}\left( \mathcal{A}_{1} \right) \to \mathcal{Z}(\mathcal{W}_{d})$ and $\widetilde{F}_{d \leftarrow}:\mathcal{Z}\left( \mathcal{A}_{2} \right)^{rev} \to \mathcal{Z}(\mathcal{W}_{d})$.
	\end{itemize}
\end{definition}
By composing $\widetilde{F}_{\to a}$ with the forgetful functor $\mathcal{Z}(\mathcal{W}_{a})\to \mathcal{W}_{a}$, one obtains a functor $F_{\to a}: \mathcal{Z(A)} \to \mathcal{W}_{a}$. Similarly one obtains functors $F_{\to d \leftarrow}, F_{\to d}, F_{d \leftarrow}$ by composing $\widetilde{F}_{\to d \leftarrow}, \widetilde{F}_{\to d}, \widetilde{F}_{d\leftarrow}$ with the forgetful functor $\mathcal{Z}(\mathcal{W}_{d}) \to \mathcal{W}_{d}$.

A concrete example for the data for a defect and adjacent bulk regions in Definition~\ref{definition:TVTQFTDefect} is given by Hopf algebras:
\begin{example}
	\label{example:FSVCategories}
	Let $H_{1}$ and $H_{2}$ be semisimple finite-dimensional Hopf algebras over $\C$, and $I\in H_{1} \oo H_{2} \oo H_{1} \oo H_{2}$ a twist for $H_{1} \oo H_{2}$. Then $\mathcal{A}_1 = H_{1}\mathrm{-Mod}, \mathcal{A}_2=H_{2}\mathrm{-Mod}$  and $ \mathcal{W}_{d} = \left( H_{1} \oo H_{2} \right)_{I}\mathrm{-Mod}$ are fusion categories. There is a braided equivalence 
	\begin{align}
		\mathcal{Z}(\mathcal{A}_1) \boxtimes \mathcal{Z}(\mathcal{A}_2)^{rev}\cong D(H_{1} \oo H_{2})\mathrm{-Mod} \to D\left( \left( H_{1} \oo H_{2} \right)_{I} \right)\mathrm{-Mod} \cong \mathcal{Z}(\mathcal{W}_{d}).
		\label{eq:FSVExampleTransport}
	\end{align}
\end{example}
In a Turaev-Viro TQFT with topological boundaries and defects, the objects of the categories $ \mathcal{W}_{a},\mathcal{Z(A)}$ and $\mathcal{W}_{d}$ appear as insertions in Wilson lines. The behavior of the insertions when moved, fused and braided was analyzed in~\cite{FSV} to derive the data in~Definition~\ref{definition:TVTQFTDefect}.

These insertions are the counterpart of the excitations in Kitaev models, as shown in~\cite[Theorem~6.1]{BK12}. From now on we therefore refer to them as excitations. We now describe the conditions from~\cite{FSV} for these excitations.

\begin{itemize}
	\item[\textbf{Excitations:}] Excitations in a bulk labeled with $\mathcal{Z(A)}$ are objects of $\mathcal{Z(A)}$. Excitations in a topological surface defect (resp. boundary) labeled with $\mathcal{W}_{d}$ (resp. $\mathcal{W}_{a}$) are objects of $\mathcal{W}_{d}$ (resp. $\mathcal{W}_{a}$).
	\item[\textbf{Fusion in the bulk:}] Excitations can be moved around inside a bulk region. If an excitation $M_{1}\in \mathcal{Z(A)}$ is moved to a spot occupied by an excitation $M_{2}\in \mathcal{Z(A)}$, then $M_{1}$ and $M_{2}$ are fused to an excitation of type $M_{1} \oo M_{2}$ or to an excitation of type $M_{2} \oo M_{1}$. The first occurs if $M_{1}$ is moved to the left of $M_{2}$, the second if $M_{1}$ is moved to the right of $M_{2}$. Analogously, excitations can also be moved around inside a topological surface defect and inside a topological boundary and analogous rules apply.
	\item[\textbf{Fusion in the boundary:}] Moving an excitation $M\in \mathcal{Z(A)}$ from the bulk region labeled with $\mathcal{Z(A)}$ to an adjacent boundary labeled with $\mathcal{W_{a}}$ turns $M$ into a boundary excitation $F_{\to a}M$.
		\\
		Fusing two excitations $M_{1},M_{2}\in \mathcal{Z(A)}$ in the bulk region and transporting them to the boundary results in a boundary excitation $F_{\to a}\left( M_{1} \oo M_{2} \right)$. Moving $M_{1}$ and $M_{2}$ to the boundary separately and fusing them there instead results in an excitation of type $F_{\to a}M_{1}  \oo F_{\to a}M_2$. These two objects are related by the natural isomorphism $F_{\to a}\left( M_{1} \oo M_{2} \right) \cong F_{\to a}M_{1}  \oo F_{\to a} M_2$ that is coherence data of the the monoidal functor $F_{\to a}$.
	\item[\textbf{Fusion in the defect:}] Moving an excitation $M\in \mathcal{Z}(\mathcal{A}_{1})$ from the bulk region labeled with $\mathcal{Z}(\mathcal{A}_{1})$ to the topological defect labeled with $\mathcal{W}_{d}$ is subject to analogous rules as moving an excitation from a bulk region into a boundary.
		Moving $M$ into the defect turns $M$ into $F_{\to d}M$.
	Again, one can fuse two excitations $M_{1},M_{2}\in \mathcal{Z}(\mathcal{A}_{1})$ in the bulk region and transport them to the defect results in a defect excitation $F_{\to d}\left( M_{1} \oo M_{2} \right)$.
	Instead moving $M_{1}$ and $M_{2}$ to the defect separately and fusing them there results in an excitation of type $F_{\to d}M_{1}  \oo F_{\to d}M_2$. These two procedures only differ up to the natural isomorphism $F_{\to d}\left( M_{1} \oo M_{2} \right) \cong F_{\to d}M_{1}  \oo F_{\to d} M_2$ given by the monoidal functor $F_{\to d}$. 
	Moving an excitation $N\in \mathcal{Z}(\mathcal{A}_{2})$ from the bulk region labeled with $\mathcal{Z}(\mathcal{A}_{2})$ to the topological defect labeled with $\mathcal{W}_{d}$ is subject to same rules, where one replaces the monoidal functor $F_{\to d}$ with $F_{d \leftarrow}$.
	\item[\textbf{Braiding in the bulk:}] Let $M_{1},M_{2}\in \mathcal{Z(A)}$ be excitations in a bulk region labeled with $\mathcal{Z(A)}$. One can fuse these excitations into $M_{1} \oo M_{2}$ or $M_{2} \oo M_{1}$ by moving $M_{1}$ to $M_{2}$. 
		The order of the tensor product depends on the relative position of the excitations. Moving $M_{1}$ around $M_{2}$ first relates the two products with the braiding $M_{1} \oo M_{2}\to M_{2} \oo M_{1}$ in $\mathcal{Z(A)}$.
	\item[\textbf{Braiding in the boundary:}] Moving an excitation $M\in \mathcal{Z(A)}$ from a bulk region to an excitation $N\in \mathcal{W}_{a}$ in the boundary fuses the excitations to $F_{\to a}M  \oo N$ or $N \oo F_{\to a}M $.  The order of the tensor product depends on the relative position of the excitations. Moving $M$ around $N$ first relates the two products with the half-braiding $M \oo F_{\to a}M \to F_{\to a}M \oo N$ of $\mathcal{Z}(\mathcal{W}_{a})$ with $\mathcal{W}_{a}$.
	\item[\textbf{Braiding in the defect:}] Braiding bulk excitations with defect excitations follows the same rules as braiding bulk excitations with boundary excitations, \emph{mutatis mutandis}. One simply needs to exchange $W_{a}$ with $W_{d}$, $\mathcal{Z}(A)$ with either $\mathcal{Z}(\mathcal{A}_{1})$ or $\mathcal{Z}(\mathcal{A}_{2})^{rev}$ and $F_{\to a}$ with either $F_{\to d}$ or $F_{d \leftarrow}$. \end{itemize}

To construct a Kitaev model with topological boundaries and defects we require the Hopf algebraic counterparts corresponding to the categorical data in Definition~\ref{definition:TVTQFTDefect}.
We determine these counterparts by utilizing Tannaka duality as follows.
We take one fiber functor $\omega_{\mathcal{C}}$ for every fusion category $\mathcal{C}$ labeling a topological boundary, topological defect or three-dimensional region. 
If a bulk region labeled with $\mathcal{A}$ is adjacent to a boundary labeled with $\mathcal{W}_{a}$, we obtain two fiber functor for $\mathcal{Z(A)}$, given by the two sequences
\begin{align*}
	\mathcal{Z(A)} \to \mathcal{A} \overset{\omega_{A}}{\to} \mathrm{Vect}_{\C},
	\qquad \mathcal{Z(A)} \overset{F_{\to a}}{\to} \mathcal{Z}(\mathcal{W}_{a}) \to \mathcal{W}_{a} \overset{\omega_{\mathcal{W}_{a}}}{\to} \mathrm{Vect}_{\C}
\end{align*}
To determine a Hopf-algebraic counterpart to the braided equivalences $F_{\to a}$ we need to impose a compatibility on these fiber functors (cf. Lemma~\ref{lemma:FibreFunctorTensorEquivalence}). Concretely we require that these two functors are naturally isomorphic as tensor functor. 

For defects we similarly obtain two sets of fiber functors
\begin{align*}
	&\mathcal{Z}(\mathcal{A}_{1}) \boxtimes \mathcal{Z}(\mathcal{A}_{2})
	\to \mathcal{A}_{1} \boxtimes \mathcal{A}_{2} 
	\overset{\omega_{\mathcal{A}_{1}}\boxtimes \omega_{\mathcal{A}_{2}} }{\to}
	\mathrm{Vect}_{\C}\boxtimes \mathrm{Vect}_{\C} 
	\overset{\otimes}{\to} \mathrm{Vect}_{\C}
	\\
	&\mathcal{Z}(\mathcal{A}_{1}) \boxtimes \mathcal{Z}(\mathcal{A}_{2})
	\overset{F_{\to d \leftarrow}}{\to}
		\mathcal{W}_{d} 
		\overset{\omega_{\mathcal{W}_{d}}}{\to}
		\mathrm{Vect}_{\C} 
\end{align*}
and again we impose that these are isomorphic as tensor functors.

\begin{proposition}
	The data from Definition~\ref{definition:TVTQFTDefect} is in Tannaka duality with the following Hopf-algebraic data (summarized in the table below the proof):
	\begin{compactenum}
		\item A semisimple, finite-dimensional Hopf algebra $H=\mathrm{End}(\omega_{\mathcal{A}})$ for every three-dimensional region labeled with $\mathcal{A}$. Excitations in this region are modules over $D(H)$.
		\item A semisimple, finite-dimensional Hopf algebra $K=\mathrm{End}(\omega_{\mathcal{W}})$ for every topological boundary or topological surface defect labeled with $\mathcal{W}$. Excitations in the topological boundary or topological surface defect are modules over $K$.
		\item For every pair $(H,K)$ of a topological boundary labeled with $K$ with an adjacent three-dimensional region labeled with $H$: 
			A twist $J$ of $D(H)$ such that $D(H)_{J} \cong D(K)$ as quasitriangular Hopf algebras.
		\item For every triple $(H_{1},H_{2},K)$ of a topological defect surface labeled with $K$ separating two three-dimensional regions labeled with $H_{1}$ and $H_{2}$: A twist $J$ of $D(H) \oo D(K)^{rev}$ such that $\left( D(H_{1}) \oo D(H_{2})^{rev} \right)_{J} \cong D(K)$ as quasitriangular Hopf algebras, where $H^{rev}$ denotes the quasitriangular Hopf algebra $H$ with the opposite $R$-matrix.
	\end{compactenum}
	\label{proposition:FSVKitaevTranslation}	
\end{proposition}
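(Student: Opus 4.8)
The plan is to derive all four items --- and the converse direction implicit in the phrase ``is in Tannaka duality with'' --- from the two reconstruction statements Lemma~\ref{lemma:FibreFunctorClassic} and Lemma~\ref{lemma:FibreFunctorTensorEquivalence}, applied to the fiber functors fixed just before the proposition. First I would treat items~1 and~2: for any fusion category $\mathcal{C}$ with chosen fiber functor $\omega_{\mathcal{C}}$, Lemma~\ref{lemma:FibreFunctorClassic}.1 gives that $\End(\omega_{\mathcal{C}})$ is a semisimple finite-dimensional Hopf algebra over $\C$ and Lemma~\ref{lemma:FibreFunctorClassic}.2 a strict tensor equivalence $\mathcal{C}\cong\End(\omega_{\mathcal{C}})\mathrm{-Mod}$; applied to $\mathcal{A}$ and to $\mathcal{W}$ this produces $H$ and $K$. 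Excitations in a topological boundary or surface defect are objects of $\mathcal{W}$, hence $K$-modules under this equivalence, while excitations in a bulk region are objects of $\mathcal{Z}(\mathcal{A})$, which by Lemma~\ref{lemma:FibreFunctorClassic}.4 applied to the fiber functor $\omega_{\mathcal{A}}\circ\varphi_{\mathcal{A}}$ is equivalent to $D(H)\mathrm{-Mod}$ as a braided category.

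For item~3 I would use the two fiber functors on $\mathcal{Z}(\mathcal{A})$ fixed in the text, namely $\omega_{\mathcal{A}}\circ\varphi_{\mathcal{A}}$ and $\omega_{\mathcal{W}_{a}}\circ\varphi_{\mathcal{W}_{a}}\circ\widetilde{F}_{\to a}$, together with the fiber functor $\omega_{\mathcal{W}_{a}}\circ\varphi_{\mathcal{W}_{a}}$ on $\mathcal{Z}(\mathcal{W}_{a})$. By Lemma~\ref{lemma:FibreFunctorClassic}.4 these reconstruct $D(H)$ and $D(K)$ respectively, as quasitriangular Hopf algebras. The compatibility imposed before the proposition is exactly a natural isomorphism of tensor functors $\omega_{\mathcal{A}}\circ\varphi_{\mathcal{A}}\cong(\omega_{\mathcal{W}_{a}}\circ\varphi_{\mathcal{W}_{a}})\circ\widetilde{F}_{\to a}$, which is precisely the hypothesis of Lemma~\ref{lemma:FibreFunctorTensorEquivalence} for the braided equivalence $T=\widetilde{F}_{\to a}$; its braided version then yields a twist $J$ of $D(H)$ with $D(H)_{J}\cong D(K)$ as quasitriangular Hopf algebras.

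Item~4 runs along the same lines with the braided equivalence $\widetilde{F}_{\to d \leftarrow}:\mathcal{Z}(\mathcal{A}_{1})\boxtimes\mathcal{Z}(\mathcal{A}_{2})^{rev}\to\mathcal{Z}(\mathcal{W}_{d})$ in place of $\widetilde{F}_{\to a}$. The extra work here is to identify $\End$ of the first fiber functor $\oo_{\C}\circ\left((\omega_{\mathcal{A}_{1}}\circ\varphi_{\mathcal{A}_{1}})\boxtimes(\omega_{\mathcal{A}_{2}}\circ\varphi_{\mathcal{A}_{2}})\right)$ with $D(H_{1})\oo D(H_{2})^{rev}$ \emph{as a quasitriangular Hopf algebra}: one combines Lemma~\ref{lemma:FibreFunctorClassic}.4 on each tensor factor with the identification of the endomorphism Hopf algebra of a Deligne-product fiber functor as a tensor product (Lemma~\ref{lemma:FibreFunctorClassic}.5), uses Lemma~\ref{lemma:FibreFunctorClassic}.3 to transport the product braiding of the Deligne product, and notes that forgetting the reversed braiding on $\mathcal{Z}(\mathcal{A}_{2})^{rev}$ leaves the monoidal structure --- hence the underlying Hopf algebra $D(H_{2})$ --- intact but reconstructs the reversed braiding as the opposite $R$-matrix, that is, as $D(H_{2})^{rev}$. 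The braided version of Lemma~\ref{lemma:FibreFunctorTensorEquivalence} then produces the twist $J$ of $D(H_{1})\oo D(H_{2})^{rev}$ with $(D(H_{1})\oo D(H_{2})^{rev})_{J}\cong D(K)$; a precisely analogous argument records the decomposition of $\widetilde{F}_{\to d \leftarrow}$ into $\widetilde{F}_{\to d}$ and $\widetilde{F}_{d \leftarrow}$.

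For the converse direction, given $H$, $K$ and a twist $J$ of $D(H)$ with $D(H)_{J}\cong D(K)$ as quasitriangular Hopf algebras, I would set $\mathcal{A}=H\mathrm{-Mod}$, $\mathcal{W}_{a}=K\mathrm{-Mod}$ and obtain $\widetilde{F}_{\to a}$ by composing the braided tensor equivalence $D(H)_{J}\mathrm{-Mod}\to D(H)\mathrm{-Mod}$ from Proposition~\ref{proposition:FunctorTranslation} with the equivalences $\mathcal{Z}(\mathcal{A})\cong D(H)\mathrm{-Mod}$ and $D(K)\mathrm{-Mod}\cong\mathcal{Z}(\mathcal{W}_{a})$ and the given isomorphism $D(H)_{J}\cong D(K)$; the defect case is identical with a twist of $D(H_{1})\oo D(H_{2})^{rev}$. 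I expect the genuinely delicate point to be the quasitriangular bookkeeping in item~4, since Lemma~\ref{lemma:FibreFunctorClassic}.5 records only the underlying Hopf algebra of a Deligne-product reconstruction, so one must keep careful track of the $R$-matrices through the Deligne product, the passage to the center, and the ``$rev$'' on the second factor; the remaining steps are routine applications of the two cited lemmas.
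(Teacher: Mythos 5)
Your proposal is correct and follows essentially the same route as the paper, whose proof consists precisely of applying Lemma~\ref{lemma:FibreFunctorClassic} (fusion categories and their centers correspond to semisimple Hopf algebras and their Drinfel'd doubles) and Lemma~\ref{lemma:FibreFunctorTensorEquivalence} (braided equivalences correspond to braided twist equivalences) to the data of Definition~\ref{definition:TVTQFTDefect}. You merely spell out in more detail what the paper leaves implicit --- in particular the quasitriangular bookkeeping through the Deligne product and the ``rev'' factor, and the converse construction via Proposition~\ref{proposition:FunctorTranslation} --- which is consistent with, not divergent from, the paper's argument.
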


\begin{proof}
	In Lemma~\ref{lemma:FibreFunctorClassic} we have seen that fusion categories correspond to semisimple, finite-dimensional Hopf algebras and that centers of fusion categories correspond to the Drinfel'd doubles of said Hopf algebras.
	In Lemma~\ref{lemma:FibreFunctorTensorEquivalence} we have seen that braided tensor equivalences correspond to braided twist equivalences of Hopf algebras.
	Applying both lemmata to the data for Turaev-Viro-TQFTs with boundary conditions and surface defects yields the proposition.
\end{proof}
\begin{tabular}[center]{|p{3cm}|p{5.5cm}|p{6.5cm}|}
	\hline
	&TV-TQFT & Kitaev model
	\\\hline\hline
	bulk region&  center $\mathcal{Z}(\mathcal{A})$ of fusion category $\mathcal{A}$ & Drinfel'd double $D(H)$ of semisimple, finite-dimensional Hopf algebra $H$
	\\\hline
	boundary component & fusion category $\mathcal{W}_{a}$ &semisimple, finite-dimensional Hopf algebra $K$
	\\\hline
	bulk $\to$ boundary &braided tensor equivalence $\mathcal{Z}(\mathcal{A}) \to \mathcal{Z}(\mathcal{W}_{a})$ & twist $J$ and isomorphism $D(H)_{J} \cong D(K)$ of quasitriangular Hopf algebras
	\\\hline
	codimension-one defect & fusion category $\mathcal{W}_{d}$ & semisimple, finite-dimensional Hopf algebra $K_{d}$
	\\\hline
	bulk $\to$ defect & braided tensor equivalence $\mathcal{Z}(\mathcal{A}_{1})\boxtimes \mathcal{Z}(\mathcal{A}_{2})^{rev} \to \mathcal{Z}(\mathcal{W}_{d})$ 
	& twist $F$ and isomorphism  $(D(H_{1})  \oo  D(H_{2}) )_{F} \cong D(K) $ of
quasitriangular Hopf algebras
	\\\hline
	\end{tabular}

\section{Conditions for a Kitaev model with topological defects and boundaries}

\label{section:TranslationKitaev}

In this section we derive the counterparts of the conditions in Section \ref{section:FSVSummary} for Kitaev models with topological defects and topological boundaries.
We start by highlighting how the Hopf algebra $D(H)$ appears in the Kitaev model without defects and boundaries.
In the model with defects and boundaries, the Drinfel'd double $D(H)$ is then replaced with the Hopf algebraic data from Proposition~\ref{proposition:FSVKitaevTranslation} at the edges and sites in defects and boundaries. 


In the Kitaev model without defects and boundaries the Hopf algebra $D(H)$ appears in the following manner: 
\begin{itemize}
	\item There are local operators which define a $D(H)$-module structure on the extended space $\HSpace$. These are the face and vertex operators for a site $s$ from  Theorem~\ref{theorem:Drinfel'dActionHSpace}.
	\item For suitable paths $\rho:u\to v$, the holonomy  is a $D(H)$-module homomorphism $\Hol_{\rho}: D(H)^{*} \oo \HSpace \to \HSpace$, where $ \oo $ is the tensor product of $D(H)$-modules (see Corollary~\ref{corollary:HolonomyModuleHom}), $D(H)^{*}$ is equipped with the left coregular action and $\HSpace$ with the action defined by the starting site $u$ of $\rho$. 
\end{itemize}
Note that the coalgebra structure of $D(H)$ only plays a role for the second point, as the tensor product $D(H)^* \oo \HSpace$ is defined in terms of its comultiplication $\Delta$.
	
	We now give a slightly simpler description of the Hopf algebraic data from Proposition~\ref{proposition:FSVKitaevTranslation} for bulk regions, boundaries and defects. 
	We no longer list the Hopf algebras $D(K)$ for boundaries from~\ref{proposition:FSVKitaevTranslation}.3, as they are isomorphic to $D(H)_{F}$.
	We similarly omit the Hopf algebras $D(K)$ for defects.
A Kitaev model with topological boundaries and defects is then given by the following algebraic data:
\begin{compactenum}[({D}1):]
	\item A Drinfel'd double $D(H_{b})$ of a semisimple finite-dimensional Hopf algebra $H_{b}$ for every bulk region $b$.
		\label{D1}
	\item A twist $F_{a}$ of $D(H_{b})$ for every boundary $a$ adjacent to a bulk region $b$.
		\label{D2}
	\item A twist $F_{d}$ of $D(H_{b_{1}}) \oo D(H_{b_{2}})$ for every defect $d$ separating two boundary regions $b_{1}$ and $b_{2}$.
		\label{D3}
\end{compactenum}

	We now compare with the Kitaev model without defects and boundaries and propose that a Kitaev model with defects and boundaries should have the following local operators
	\begin{compactenum}[({O}1):]
		\item For every bulk region $b$ and every boundary adjacent to $b$: Local operators which define a $D(H_{b})$-module structure on the extended space $\HSpace$.
			\label{O1}
		\item For every defect $d$ separating two boundary regions $b_{1}$ and $b_{2}$.: Local operators which define a $D(H_{b_{1}}) \oo D(H_{b_{2}})$-module structure on the extended space $\HSpace$.
			\label{O2}
	\end{compactenum}
	In the model with defects and boundaries, we will use holonomies to describe the creation and fusion of excitations. As in the Kitaev model without defects, we define them as endomorphisms of $\HSpace$ assigned to paths in the underlying thickened ribbon graph $D(\Gamma)$. As we will only consider paths inside a bulk region  and adjacent boundary or defect lines, these holonomies depend on the  data assigned to the bulk regions. For a path $\rho$ in a bulk region $b$ and adjacent boundary and defect lines, we define them as linear maps
	
	\begin{align*}
		\widetilde{\Hol}_{b,\rho}: D(H_{b})^{*} \oo \HSpace \to \HSpace.
	\end{align*}
	 As in Corollary \ref{corollary:HolonomyModuleHom}, we require that this map 
	is a $D(H_{b})$-module homomorphism
	when $D(H_b)^*$ is equipped with one of the two $D(H_b)$-module structures associated with  $\rho$, as in Corollary \ref{corollary:HolonomyModuleHom}, and where the tensor product in $ D(H_b)^*\oo\HSpace $ depends on whether $\rho$ starts or ends in a bulk region, a boundary or a defect:
	\begin{compactenum}[({H}1):]
		\item \label{H1}  If $\rho$ starts in the bulk region $b$, then $ \oo $ is the tensor product of $D(H_{b})$-modules and $\HSpace$ is equipped with the local $D(H_{b})$-module structure associated to the starting site of $\rho$ from~(O\ref{O1}).

		\item If $\rho$ starts in a boundary $a$ adjacent to $b$, then  $ \oo$ is the tensor product of $D(H_{b})_{F_{a}}$-modules, $\HSpace$ is equipped with the local $D(H_{b})$-module structure associated to the starting site of $\rho$ from~(O\ref{O1}).
			\label{H2}
			
		\item If $\rho$ starts in a defect $d$ separating bulk regions $b_{1}$ and $b_{2}$ and $b=b_{1}$, then $D(H_{b})=D(H_{b_{1}})\subseteq D(H_{b_{1}}) \oo D(H_{b_{2}})$ is a subalgebra. We equip $D(H_{b})^{*}$ with the trivial $D(H_{b_{2}})$-action and equip $\HSpace$ with the local  $D(H_{b_{1}}) \oo D(H_{b_{2}})$-module structure associated to the starting site of $\rho$ from~(O\ref{O2}). We let $ \oo$ be the tensor product of $\left( D(H_{b_{1}}) \oo D(H_{b_{2}}) \right)_{F_{d}}$-modules.
			\label{H3}
	\end{compactenum}

	In Section~\ref{section:FSVSummary} we also described the movement of excitations in a Turaev-Viro-TQFT. 
	The holonomy along a path $\rho$ in the Kitaev model without defects and boundaries generates excitations at the end points of $\rho$ (see Corollary~\ref{corollary:HolonomyFusion}), but it does not move excitations from one site to another. 
	To implement the process of moving excitations we associate to each simple path $\rho$  in $D(\Gamma)$ that traverses a bulk region $b$ and adjacent defects and boundaries 
	 an additional operator on $\HSpace$,  the \emph{transport operator}  $T_{\rho}$. 
	In analogy to the conditions for moving excitations in a Turaev-Viro TQFT from section~\ref{section:FSVSummary} we require that $T_{\rho}$ fulfills the following conditions:
	\begin{compactenum}[({T}1):]	
	\item \textbf{Fusion:} 
		If $\rho$ ends to the left of a site (see Definition~\ref{def:leftorright}), then $T_{\rho}$ restricts to a map:
	\begin{align}
		T_{\rho}:\HSpace(u,M,v,N) \to \HSpace(u,\C, v,M \oo N)
			\label{eq:fusion}
		\end{align}
		\label{T1}
		Here $\HSpace(u,M,v,N)$ is defined analogously to \eqref{eq:MultipleExcitationSpace}, $M$ and $N$ are $D(H_{b})$-modules and $v$ and $ \oo $ is the tensor product from (H\ref{H1}),(H\ref{H2}) or~(H\ref{H3}). 
		
		If $\rho$ instead ends to the right of a site, then the order of the tensor product is reversed. We illustrate this by Figure~\ref{figure:FuseExcitations}.
	\item \textbf{Braiding:} For a path $\rho:u\to v$ in a bulk region $b$ such that $\rho$ ends to the left of $v$ there is another path $\rho':u\to v$, constructed from $\rho$, that ends to the right of $v$ such that
		\begin{align}
			T_{\rho'}= T_{\rho} \circ R
			\label{eq:KitaevBraidingCondition}
		\end{align}
		Here $R$ denotes an action of the $R$-matrix of $D(H_{b})$ on the sites $u$ and $v$, if $v$ is in the bulk region $b$.
		If instead $v$ is in an adjacent boundary or defect, then $R$ instead denotes an action with the twisted $R$-matrix of $D(H_{b})$.
		\label{T2}
	\end{compactenum}

\begin{figure}[H]
	\centering
	\scalebox{0.35}{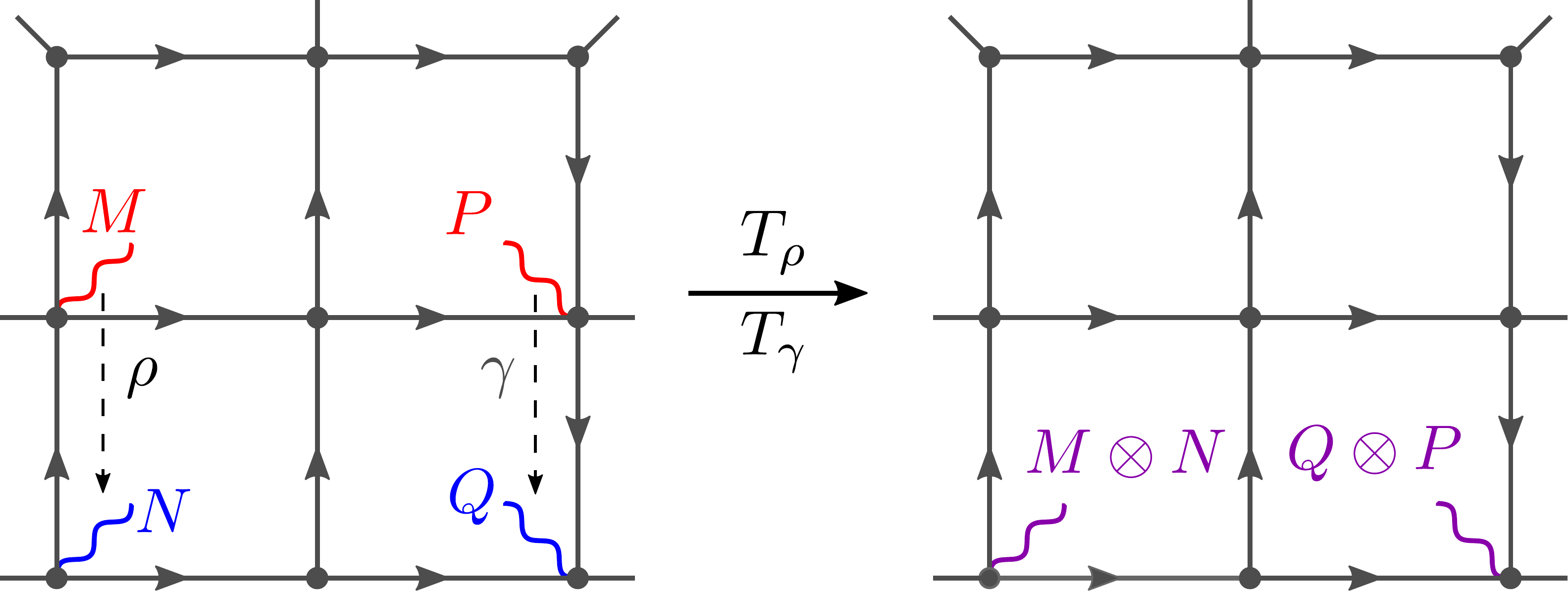}
	\caption{Moving an excitation of type $M$ along $\rho:u\to v$ to an excitation of type $N$ fuses them to an excitation of type  $M \oo N$, as $\rho$ ends to the left of $v$.  Moving an excitation of type $P$ along $\gamma$ fuses $P$ with $Q$ to an excitation of type $Q \oo P$, as $\gamma:w\to x$ ends to the right of $x$.}
	\label{figure:FuseExcitations}
\end{figure}

\section{Kitaev models with topological defects and  boundaries}
\label{sec:defect model}

Codimension 1 and 2 defect structures in Kitaev lattice models based on group algebras of finite groups were first investigated in \cite{BD}. The defect data for Kitaev lattice models based on unitary quantum groupoids was then identified in \cite{KK}, and an explicit Kitaev model with defects based on Hopf algebras was then constructed in \cite{K}. In  \cite{KMM} defects were generalized to higher Kitaev models based on crossed modules for semisimple Hopf algebras. 

In this section, we construct a Kitaev model with \emph{topological} defects and boundaries, that satisfies the conditions derived in \cite{FSV}. More specifically, we construct a Kitaev model with defects and boundaries, which exhibits the structures and satisfies the conditions from Section~\ref{section:TranslationKitaev}, which are the Hopf algebraic counterparts of the conditions in \cite{FSV}. 

In a bulk region our model behaves identically to the Kitaev model without defects and boundaries. We also show that the model without defects and boundaries can be obtained as a special case of our model  by choosing trivial defects.

We show that the processes of moving and braiding excitations described in \cite{FSV} can be implemented in our model as linear endomorphisms of its extended space assigned to the  the relevant paths.

Just as the Kitaev model without defects and boundaries, our model is based on a ribbon graph. We require additional structure on these ribbon graphs to describe oriented surfaces with defects and boundaries. This structure is introduced in Subsection~\ref{subsec:ribdefect}. In Subsection~\ref{subsection:HilbertSpace}, we then define the extended space of the model, its vertex and face operators and the holonomies. We also give some basic properties of these operators. In Subsection~\ref{subsec:fusionbraiding} we describe the fusion, the transport and the braiding of excitations. We show that our implementation of these processes has the properties formulated in \cite{FSV} and summarized in Section~\ref{section:TranslationKitaev}.
In Subsection \ref{subsec:transparent} we then show that our model reduced to the Kitaev model without defects and boundaries for trivial choices of defect data.

\subsection{Ribbon graphs with defect lines and boundaries}
\label{subsec:ribdefect}

For the Kitaev model with defects and boundaries we require additional structure on ribbon graphs to describe defect and boundary lines. These defect and boundary lines  are \emph{oriented cyclic} subgraphs, i.e. connected subgraphs in which every vertex has exactly one incoming and one outgoing edge end. Examples for ribbon graphs with defects and boundaries are given in Figure~\ref{fig:boundarygraph} and Figure~\ref{fig:defect}. 

\begin{definition}
	\label{definition:RibbonGraphDefect}
A \emph{ribbon graph with defects and boundaries} is a ribbon graph $\Gamma=(V,E)$ with the following additional data
\begin{itemize}
	\item a non-empty family $\left( \Gamma_{b} =(V_{b},E_{b})\right)_{b \in B}$ of connected subgraphs, the \emph{bulk regions} of $\Gamma$, with edges in these subgraphs called \emph{bulk edges},
	
	\item a family $\left( \Gamma_{a} \right)_{a \in A}$ of  connected, oriented cyclic subgraphs, the \emph{boundary lines}, with vertices and edges in these subgraphs called \emph{boundary vertices} and  \emph{boundary edges},
	\item for every $a\in A$ a bulk region $b_{a} \in B$, called the \emph{bulk region bordered by $a$,}
	\item a family $\left( \Gamma_{d} \right)_{d \in D}$ of connected, oriented cyclic subgraphs, the \emph{defect lines}, with vertices and edges in these subgraphs  called \emph{defect vertices} and  \emph{defect edges},
	\item for every $d\in D$ a pair $(b_{dL},b_{dR}) \in B \times B$ of bulk regions with $b_{dL}\neq b_{dR}$,  the \emph{bulk region to the left (resp. to the right) of $d$.} 
\end{itemize}
This data has to satisfy the following conditions:
\begin{itemize}
	\item For $d,d' \in D \cup A$ with $d\neq d'$, the graphs $\Gamma_{d}$ and $\Gamma_{d'}$ do not share any vertices.
	\item Every edge of $\Gamma$ is in exactly one of the subgraphs $\Gamma_{i}$ for $i\in B \cup D \cup A$.
	\item For $d \in D$, every vertex $v$ of $\Gamma_{d}$ also is a vertex of $\Gamma_{b_{dL}}$ and $\Gamma_{b_{dR}}$. The edges of $v$ in $\Gamma_{b_{dL}}$ (resp. $\Gamma_{b_{dR}}$) are to the left (resp. to the right) of the orientation of $\Gamma_{d}$. 
	\item Every vertex is at least in one bulk region and at most in two bulk regions. If a vertex $v$ is in two bulk regions $b_1, b_2$, then
	there is a defect line $d$ containing $v$ such that $b_1,b_2$ are to the left and right of $d$.
	
	\item For $a \in A$, every vertex $v$ of $\Gamma_{a}$ also is a vertex of $\Gamma_{b_{a}}$. The edges of $v$ in $\Gamma_{b_{a}}$ are to the left of the orientation of $\Gamma_{a}$.
\end{itemize}
\end{definition}

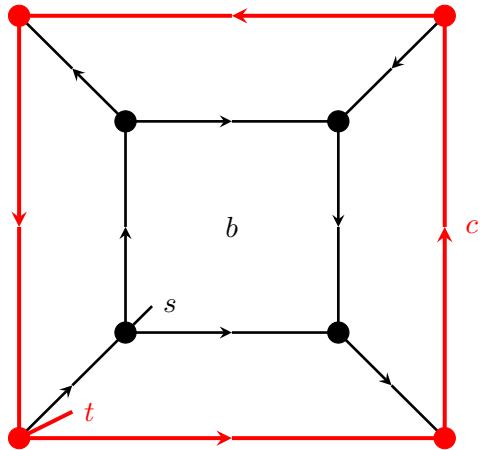
\begin{figure}[H]
\begin{center}
\begin{tikzpicture}[scale=.7]
\draw[line width=1pt, color=black, ->, >=stealth] (-2,0)--(0,0);
\draw[line width=1pt, color=black, ] (2,0)--(0,0);
\draw[line width=1pt, color=black, ->, >=stealth] (-2,4)--(0,4);
\draw[line width=1pt, color=black, ] (2,4)--(0,4);
\draw[line width=1pt, color=black, ->, >=stealth] (-2,0)--(-2,2);
\draw[line width=1pt, color=black, ] (-2,4)--(-2,2);
\draw[line width=1pt, color=black, ] (2,0)--(2,2);
\draw[line width=1pt, color=black, ->, >=stealth ] (2,4)--(2,2);
\draw[color=black, fill=black] (-2,0) circle (.2);
\draw[color=black, fill=black] (2,0) circle (.2);
\draw[color=black, fill=black] (-2,4) circle (.2);
\draw[color=black, fill=black] (2,4) circle (.2);
\draw[line width=1pt, color=black, ] (-2,0)--(-1.5,.5) node[anchor=west]{$s$};
\draw[line width=1pt, color=black, ->, >=stealth] (-4,-2)--(-3,-1);
\draw[line width=1pt, color=black,] (-2,0)--(-3,-1);
\draw[line width=1pt, color=black, ->, >=stealth] (2,0)--(3,-1);
\draw[line width=1pt, color=black,] (4,-2)--(3,-1);
\draw[line width=1pt, color=black, ->, >=stealth] (4,6)--(3,5);
\draw[line width=1pt, color=black,] (3,5)--(2,4);
\draw[line width=1pt, color=black, ->, >=stealth] (-2,4)--(-3,5);
\draw[line width=1pt, color=black,] (-3,5)--(-4,6);
\draw[color=red, fill=red] (-4,-2) circle (.2);
\draw[color=red, fill=red] (-4,6) circle (.2);
\draw[color=red, fill=red] (4,-2) circle (.2);
\draw[color=red, fill=red] (4,6) circle (.2);
\draw[line width=1.5pt, color=red] (-4,-2)--(-3,-1.5) node[anchor=west]{$t$};
\draw[line width=1.5pt, color=red, ->, >=stealth] (-4,6)--(-4,2);
\draw[line width=1.5pt, color=red,] (-4,-2)--(-4,2);
\draw[line width=1.5pt, color=red, ->, >=stealth] (4,-2)--(4,2);
\draw[line width=1.5pt, color=red,] (4,2)--(4,6);
\draw[line width=1.5pt, color=red, ->, >=stealth] (4,6)--(0,6);
\draw[line width=1.5pt, color=red,] (0,6)--(-4,6);
\draw[line width=1.5pt, color=red, ->, >=stealth] (-4,-2)--(0,-2);
\draw[line width=1.5pt, color=red,] (0,-2)--(4,-2);
\node at (0,2) {$b$};
\node at (4.2,2)[color=red, anchor=west]{$c$};
\end{tikzpicture}
\end{center}
\caption{Bulk graph $b$ with a boundary component $a$, a bulk site $s$ and a boundary site $t$.}
\label{fig:boundarygraph}
\end{figure}

\begin{figure}[H]
\begin{center}
\begin{tikzpicture}[scale=.7]
\draw[line width=1.5pt, color=red, ->,>=stealth] (-2,-2)--(0,-2);
\draw[line width=1.5pt, color=red] (0,-2)--(2,-2);
\draw[line width=1.5pt, color=red, ->,>=stealth] (2,-2)--(2,0);
\draw[line width=1.5pt, color=red] (2,0)--(2,2);
\draw[line width=1.5pt, color=red, ->,>=stealth] (2,2)--(0,2);
\draw[line width=1.5pt, color=red] (0,2)--(-2,2);
\draw[line width=1.5pt, color=red, ->,>=stealth] (-2,2)--(-2,0);
\draw[line width=1.5pt, color=red] (-2,0)--(-2,-2);
\draw[line width=1pt, color=black, ->,>=stealth] (0,0)--(-1,1);
\draw[line width=1pt, color=black] (-1,1)--(-2,2);
\draw[line width=1pt, color=black, ->,>=stealth] (0,0)--(1,1);
\draw[line width=1pt, color=black] (1,1)--(2,2);
\draw[line width=1pt, color=black, ->,>=stealth] (-2,-2)--(-1,-1);
\draw[line width=1pt, color=black] (-1,-1)--(0,0);
\draw[line width=1pt, color=black, ->,>=stealth] (2,-2)--(1,-1);
\draw[line width=1pt, color=black] (1,-1)--(0,0);
\draw[line width=1pt, color=blue, ->,>=stealth] (2,2)--(1,3);
\draw[line width=1pt, color=blue] (1,3)--(0,4);
\draw[line width=1pt, color=blue, ->,>=stealth] (-2,2)--(-1,3);
\draw[line width=1pt, color=blue] (-1,3)--(0,4);
\draw[line width=1pt, color=blue, ->,>=stealth] (-4,0)--(-3,1);
\draw[line width=1pt, color=blue] (-3,1)--(-2,2);
\draw[line width=1pt, color=blue, ->,>=stealth] (-2,-2)--(-3,-1);
\draw[line width=1pt, color=blue] (-3,-1)--(-4,0);
\draw[line width=1pt, color=blue, ->,>=stealth] (0,-4)--(-1,-3);
\draw[line width=1pt, color=blue] (-1,-3)--(-2,-2);
\draw[line width=1pt, color=blue, ->,>=stealth] (2,-2)--(1,-3);
\draw[line width=1pt, color=blue] (1,-3)--(0,-4);
\draw[line width=1pt, color=blue, ->,>=stealth] (2,-2)--(3,-1);
\draw[line width=1pt, color=blue] (3,-1)--(4,0);
\draw[line width=1pt, color=blue, ->,>=stealth] (4,0)--(3,1);
\draw[line width=1pt, color=blue] (3,1)--(2,2);
\draw[color=black, line width=1pt] (-2,-2)--(-1, -1.5) node[anchor=west]{$s_L$};
\draw[color=blue, line width=1pt] (-2,-2)--(-1, -2.5) node[anchor=west]{$s_R$};
\draw[color=red, fill=red] (-2,-2) circle (.2);
\draw[color=red, fill=red] (-2,2) circle (.2);
\draw[color=red, fill=red] (2,-2) circle (.2);
\draw[color=red, fill=red] (2,2) circle (.2);
\draw[color=black, fill=black] (0,0) circle (.2);
\draw[color=blue, fill=blue] (0,4) circle (.2);
\draw[color=blue, fill=blue] (0,-4) circle (.2);
\draw[color=blue, fill=blue] (-4,0) circle (.2);
\draw[color=blue, fill=blue] (4,0) circle (.2);
\node at (1,0)[color=black]{$b_{dL}$};
\node at (2.5,0)[anchor=west, color=blue]{$b_{dR}$};
\node at (-2,0)[color=red, anchor=east]{$d$};
\end{tikzpicture}
\end{center}
\caption{Defect $d$ separating bulk regions $b_{dL}$ and $b_{dR}$ with a pair $(s_L,s_R)$ of defect sites.}
\label{fig:defect}
\end{figure}
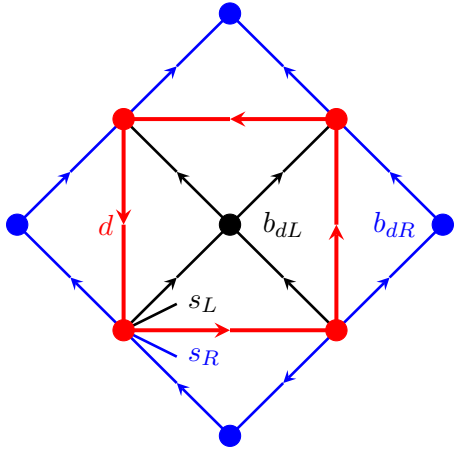

The additional structure of a ribbon graph with defects and boundaries leads to a distinction of bulk sites, boundary sites and defect sites in its thickening.

\begin{definition}\label{def:sitesdefect} Let $\Gamma$ be a ribbon graph with defects and boundaries and $D(\Gamma)$
its thickening.
\begin{compactenum}
	\item A \emph{bulk site} in $b\in B$ is a site $s$, whose face path and vertex path only consist of edges in $D(\Gamma_{b})$.
	\item A \emph{pair of defect sites} in $d\in D$ at a defect vertex $v$ is the pair $(s_{L},s_{R})$ of sites $s_{L}$ and $s_{R}$ at $v$ such that $s_{L}$ ($s_{R}$) is directly to the left (to the right) of the outgoing defect edge, viewed in the direction of its orientation, as shown in see Figure~\ref{fig:defect}. The sites $s_{L}$ and $s_{R}$ are called \emph{defect sites} in $b_{dL}$ and $b_{dR}$, respectively.
	\item A \emph{boundary site} in $a \in A$ is a site $t$ at a boundary vertex $v$ in $\Gamma_{a}$ directly to the left of the outgoing boundary edge of $v$, as shown in Figure~\ref{fig:boundarygraph}.
	\end{compactenum}
\end{definition}	
	
Paths in a ribbon graph with defects and boundaries can be characterized by their behavior with respect to defect lines and boundaries. In the following, we only consider simple paths that do not cross over defect or boundary edges.	

\begin{definition}\label{def:permissible}	
 A simple path $\rho \in D(\Gamma)$ is called \emph{permissible}, if $\rho$ does not contain edges of the form $e^{\pm t},e^{\pm s}$ with $e$ a defect or boundary edge.
\end{definition}

%

\subsection{The extended space and operators}
\label{subsection:HilbertSpace}

	 In this section we define the basic structures of the Kitaev model with defects and boundaries: The extended space, the vertex and face operators, holonomies and a transport operator. We then show that this model fulfills the conditions stated in Section~\ref{section:TranslationKitaev}. 

	 The ingredients for the model are a ribbon graph with boundaries and defects $\Gamma$ together with the algebraic data derived in Section \ref{section:TranslationKitaev}, (D\ref{D1})-(D\ref{D3}):
\begin{itemize}
	\item A semisimple, finite-dimensional Hopf algebra $H_{b}$ to every bulk region $b\in B$.
	\item A twist $F_{a}$ of $D(H_{b})$  to every boundary line $a \in A$.
	\item A twist $F_{d}$ of $D(H_{b_{dL}}) \oo D(H_{b_{dR}})$ to every defect line $d \in D$. 
\end{itemize}
We now associate a twist $F$ of $D(H_{b})$ to every site $s$ in a bulk region $b$, the \emph{twist at $s$}. 
We distinguish whether $s$ is (i) a bulk site in $b$, (ii) a boundary site in a boundary line $a$ adjacent to $b$ or (iii) a defect site in a defect line $d$ adjacent to $b$. In case (i) $F$ is the trivial twist. In case (ii), $F$ is the twist $F_{a}$. In case (iii)  we have $b\in\left\{b_{dL}, b_{dR}  \right\}$ and $F$ is the the projection of the twist $F_{d}$ onto the Hopf subalgebra $D(H_{b}) \subseteq D(H_{b_{dL}}) \oo D(H_{b_{dR}})$ as in Example~\ref{example:ProjectedTwist}.

To construct the extended space, we assign to every edge $e\in E$ a vector space $H_{e}$, where
\begin{itemize}
	\item $H_{e}=H_{b}$, if $e\in E_{b}$ is an edge in a bulk region $b\in B$.
	\item $H_{e}=H_{b_{a}}$, if $e\in E_{a}$ is a boundary edge in a boundary line $a\in A$.
	\item $H_{e}=H_{b_{dL}} \oo H_{b_{dR}}$, if $e\in E_{d}$ is a defect edge in a defect line $d\in D$.
\end{itemize}
We define the \emph{extended space} of the model as the vector space 
\begin{align}\label{eq:hilbdef}
	\HSpace := \tensor_{e\in E}H_{e}
\end{align}

For every bulk region $b$, $h \oo \alpha\in D( H_b)^{*}$, and every simple path $\rho$  that is either (a) a permissible path in $b$ and the adjacent defect and boundary lines or (b) a vertex path around a defect vertex, we define
 a holonomy map
\begin{align}
	\Hol_{b,\rho}^{h \oo \alpha } :\HSpace\to \HSpace.
	\label{eq:HolonomyBulk}
\end{align}
{Note that the index $b$ is superfluous in case (a), but necessary in case (b) to distinguish the two bulk regions separated by a defect line. As in the case of a Kitaev model without defects, these holonomies are defined in terms of edge operators associated to the edges $e\in E$ of the underlying ribbon graph $\Gamma=(V,E)$, but we have to distinguish four cases:
\begin{compactenum}[(i)]
	\item $e\in E_{b}$ or $e\in E_{a}$, where $a\in A$ is a boundary line adjacent to $b$,
	\item $e\in E_{d}$, where $b$ is the bulk region to the left of the defect line $d$,
	\item $e\in E_{d}$, where $b$ is the bulk region to the right of the defect line $d$,
	\item $e$ is in a different bulk region or in a defect or boundary line not adjacent to $b$.
\end{compactenum}
 In all cases, we define the triangle operators as the holonomies $\Hol_{b,e^{\nu}}^{h \oo \alpha} $  along the paths $e^{\nu}$ for $\nu\in \left\{\pm s,\pm t,\pm L,\pm R  \right\}$ for $\alpha\oo h\in D(H_b^*)$. The four cases differ only in the vector space $H_e$ assigned to the edge $e$. In case (i) one has  $H_e=H_b$, in case (ii) $H_e=H_b\oo H_{b'}$, where $b'$ is the bulk region to the right of $d$, in case (iii) $H_e=H_{b'}  \oo H_{b}$, where $b'$ is the bulk region to the left of $d$, and in case (iv) $H_e=H_{b'}$ or $H_e=H_{b'}\oo H_{b''}$ for $b\neq b'$ and $b\neq b''$. Case (iv) is only required to inductively define holonomies for vertex paths around a defect vertex, and the associated holonomy is taken to be trivial. As in the model without boundaries and defects, we require that the triangle operators act trivially on all tensor factors in $\HSpace=\bigotimes_{e\in E} H_e$ except the one associated with $e\in E$.

\begin{definition}\label{def:edgeopsdefect} The \emph{triangle operators} for an edge $e\in E$ are the holonomies $H^{h\oo \alpha}_{b, e^\nu}$ defined by
\begin{align*}
	\Hol_{b,e^{-\nu}}^{h \oo \alpha} := \Hol_{b,e^{\nu}}^{S(h \oo \alpha)}\qquad  \nu\in \left\{ L,R,s,t \right\},
\end{align*}
where $S$ is the antipode of $D(H_{b})^{*}$, and by
\begin{description}
\item[case (i):] $m\in H_b$
\begin{align*}
	\Hol_{b,e^{s}}^{h \oo \alpha} (m) &= \alpha(1) \cdot mh, &
	\Hol_{b,e^{-t}}^{h \oo \alpha} (m) &=\alpha(1) \cdot S(h)m
	\\
	\Hol_{b,e^{R}}^{h \oo \alpha} (m) &= \varepsilon(h) \langle \alpha , m_{(2)} \rangle m_{(1)}, &
	\Hol_{b,e^{-L}}^{h \oo \alpha} (m) &= \varepsilon(h)\langle \alpha , S(m_{(1)}) \rangle m_{(2)},
\end{align*}

\item[case (ii):] $m\oo n\in H_b\oo H_{b'}$
\begin{align*}
	\Hol_{b,e^{s}}^{h \oo \alpha} (m \oo n) &=\alpha(1)  mh \oo n, &
	\Hol_{b,e^{t}}^{h \oo \alpha} (m \oo n) &=\alpha(1)  hm \oo n
	\\
	\Hol_{b,e^{L}}^{h \oo \alpha} (m \oo n) &=\varepsilon(h) \langle \alpha , m_{(1)} \rangle  m_{(2)} \oo n, &
	\Hol_{b,e^{R}}^{h \oo \alpha} (m \oo n) &=\varepsilon(h)\alpha(1) m\oo n,
\end{align*}

\item[case (iii):] $m\oo n\in H_{b'}\oo H_{b}$
\begin{align*}
	\Hol_{b,e^{s}}^{h \oo \alpha} (m \oo n) &=\alpha(1)  m \oo nh, &
	\Hol_{b,e^{t}}^{h \oo \alpha} (m \oo n) &=\alpha(1)  m \oo hn
	\\
	\Hol_{b,e^{R}}^{h \oo \alpha} (m \oo n) &=\varepsilon(h) \langle \alpha , n_{(2)} \rangle  m \oo n_{(1)}, &
	\Hol_{b,e^{L}}^{h \oo \alpha} (m \oo n) &=\varepsilon(h) \alpha(1) \rangle  m \oo n,
\end{align*}
\item[case (iv):] $m\oo n\in H_{b'}\oo H_{b''}$
\begin{align*}
	\Hol_{b,e^{\nu}}^{h \oo \alpha}(m\oo n) = \varepsilon(h)\alpha(1) m\oo n\quad \nu\in\{s,t,R,L\},
\end{align*}
\end{description}
\end{definition}

%
%
%
%

 Note that the formulas for case (i) coincide with the formulas in Definition \ref{definition:HolonomyBasic} for the triangle operators in a model without defects and boundaries. 
Definition \ref{def:edgeopsdefect} thus generalizes the triangle operators from Definition \ref{definition:HolonomyBasic} for models without defects and boundaries. 
As in the model without defects, see Definition \ref{definition:HolonomyComposite}, we define the holonomies along  simple paths $\rho$ by decomposing them into subpaths that form a left joint. 

\begin{definition}
	\label{definition:HolonomyCompositeDefect}
	Let $\rho=\rho_{1}\circ\rho_{2}$ a simple path in reduced form with $\rho_{2}:s \to t^{\eta_{2}}$ and $\rho_{1}:t^{\eta_{1}}\to u$ with $\eta_{1},\eta_{2}\in \left\{ L,R \right\}$. Write $\beta:=h \oo \alpha\in D(H_{b})^{*}$ and $\Delta_{D(H_{b})^{*}}(\beta)=\beta_{(1)} \oo \beta_{(2)}$. Then we define
\begin{align}
	&\Hol_{b,\rho}^{\beta}  = \Hol_{b,\rho_{1}}^{\beta_{(1)}}\circ \Hol_{b,\rho_{2}}^{\beta_{(2)}} \quad &\text{if $(\rho_{1},\rho_{2})_{\emptyset}$ or $(\rho_{2},\rho_{1}^{-1})_{\prec}$}
	\label{eq:HolonomyRecursionStandardDefect}
	\\
	&\Hol_{b,\rho}^{\beta}  = \Hol_{b,\rho_{2}}^{\beta_{(2)}}\circ \Hol_{b,\rho_{1}}^{\beta_{(1)}} &\text{if $(\rho_{1}^{-1},\rho_{2})_{\prec}$}
	\label{eq:HolonomyRecursionLeftJoinDefect}
\end{align}
\end{definition}

By an argument that is fully analogous to the proof of Lemma \ref{lemma:HolonomyWellDefined} one shows that the resulting holonomy is independent of the decomposition of $\rho$ into subpaths $\rho_1,\rho_2$ in Definition \ref{definition:HolonomyCompositeDefect}. By considering vertex and face paths associated with sites in   $\Gamma$, we can then define the associated vertex and face operators as the holonomies along these paths, thus generalizing Definition \ref{definition:VertexFaceOperators} for the model without defects and boundaries. We also associate generalized vertex and face operators to pairs of defect sites.

\begin{definition}\label{def:vertexfacedefect}
Let $\Gamma$ be a ribbon graph with defects and boundaries.
\begin{enumerate}
\item The \emph{vertex and face operator}  for a site $s$ in a bulk region $b$ 
 are the linear maps
	\begin{align}
		A_{b,s}^{h}=\Hol_{b,v}^{h \oo \varepsilon} :\, &\HSpace \to \HSpace 
		&
		B_{b,s}^{\alpha}= \Hol_{b,f}^{1 \oo \alpha}:\, &\HSpace \to \HSpace,
		\label{eq:FaceOperatorBulk}
	\end{align}
where $f$ and $v$ are the face and vertex path based at $s$ and $\alpha \oo h\in D(H_{b})$. We use the notation
$BA_{b,s}^{\alpha \oo h}= B_{s}^{\alpha}A_{s}^{h} :\, \HSpace \to \HSpace$.

\item The \emph{vertex and face operator} for a pair $(s_{L},s_{R})$ of defect sites in a defect line $d$ are
	\begin{align}
		BA_{d, (s_{L},s_{R})}^{h \oo k}:= BA_{b_{dL},s_{L}}^{h}\circ BA_{b_{dR},s_{R}}^{k}:\HSpace\to \HSpace,
		\label{eq:FaceVertexOperatorDefect}
	\end{align}
where $h \oo k\in D(H_{b_{dL}}) \oo D(H_{b_{dR}})$.
\end{enumerate}
	\end{definition}
	
An example of the action of a vertex and face operator at a  defect vertex vertex is given in Figure~\ref{fig:vertexdefect}.

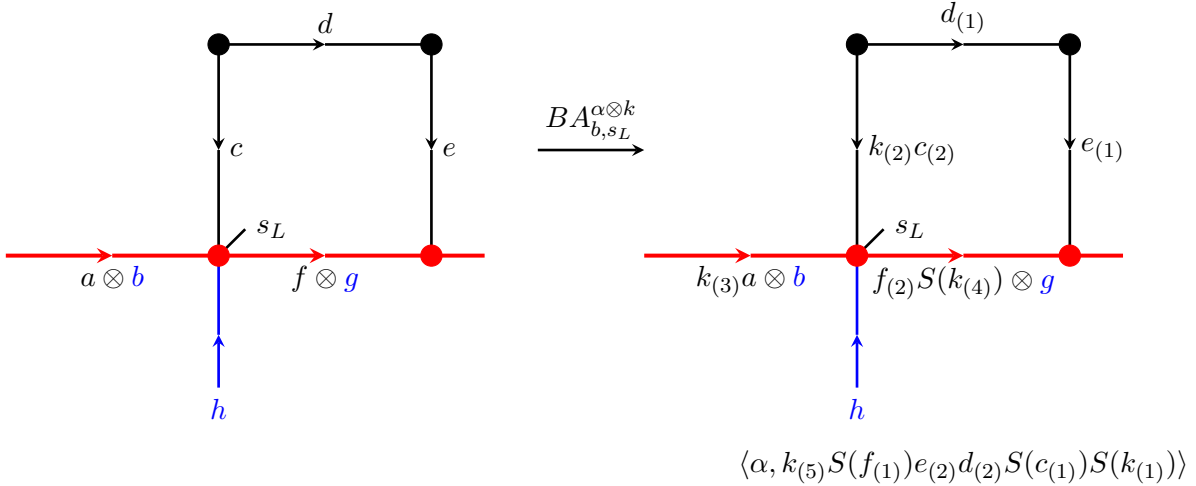
\begin{figure}[H]
\begin{center}
\begin{tikzpicture}[scale=.7]
\begin{scope}[shift={(0,0)}]
\draw[line width=1.5pt, color=red, ->,>=stealth] (-4,0)--(-2,0);
\draw[line width=1.5pt, color=red] (-2,0)--(0,0);
\draw[line width=1.5pt, color=red, ->,>=stealth] (0,0)--(2,0);
\draw[line width=1.5pt, color=red] (2,0)--(5,0);
\draw[line width=1pt, color=black,->,>=stealth] (0,4)--(0,2);
\draw[line width=1pt, color=black,] (0,2) node[anchor=west]{$c$}--(0,0);
\draw[line width=1pt, color=black,->,>=stealth] (0,4)--(2,4) node[anchor=south]{$d$};
\draw[line width=1pt, color=black,] (2,4)--(4,4);
\draw[line width=1pt, color=black,->,>=stealth] (4,4)--(4,2) node[anchor=west]{$e$};
\draw[line width=1pt, color=black,] (4,2)--(4,0);
\draw[line width=1pt, color=blue,->,>=stealth] (0,-2.5)node[anchor=north]{$h$}--(0,-1.5);
\draw[line width=1pt, color=blue,] (0,-1.5)--(0,0);
\draw[line width=1pt, color=black](0,0)--(.5,.5) node[anchor=west]{$s_L$};
\draw[color=red, fill=red] (0,0) circle (.2);
\draw[color=red, fill=red] (4,0) circle (.2);
\draw[color=black, fill=black] (0,4) circle (.2);
\draw[color=black, fill=black] (4,4) circle (.2);
\node at (2,0) [anchor=north]{$f\oo {\color{blue}g}$};
\node at (-2,0)[anchor=north]{$a\oo{\color{blue} b}$};
\end{scope}
\draw[line width=1pt, color=black,->,>=stealth] (6,2)--(8,2);
\node at (7,2)[anchor=south]{$BA^{\alpha\oo k}_{b,s_L}$};
\begin{scope}[shift={(12,0)}]
\draw[line width=1.5pt, color=red, ->,>=stealth] (-4,0)--(-2,0);
\draw[line width=1.5pt, color=red] (-2,0)--(0,0);
\draw[line width=1.5pt, color=red, ->,>=stealth] (0,0)--(2,0);
\draw[line width=1.5pt, color=red] (2,0)--(5,0);
\draw[line width=1pt, color=black,->,>=stealth] (0,4)--(0,2);
\draw[line width=1pt, color=black,] (0,2) node[anchor=west]{$k_{(2)}c_{(2)}$}--(0,0);
\draw[line width=1pt, color=black,->,>=stealth] (0,4)--(2,4) node[anchor=south]{$d_{(1)}$};
\draw[line width=1pt, color=black,] (2,4)--(4,4);
\draw[line width=1pt, color=black,->,>=stealth] (4,4)--(4,2) node[anchor=west]{$e_{(1)}$};
\draw[line width=1pt, color=black,] (4,2)--(4,0);
\draw[line width=1pt, color=blue,->,>=stealth] (0,-2.5)node[anchor=north]{$h$}--(0,-1.5);
\draw[line width=1pt, color=blue,] (0,-1.5)--(0,0);
\draw[line width=1pt, color=black](0,0)--(.5,.5) node[anchor=west]{$s_L$};
\draw[color=red, fill=red] (0,0) circle (.2);
\draw[color=red, fill=red] (4,0) circle (.2);
\draw[color=black, fill=black] (0,4) circle (.2);
\draw[color=black, fill=black] (4,4) circle (.2);
\node at (2,0) [anchor=north]{$f_{(2)}S(k_{(4)})\oo {\color{blue}g}$};
\node at (-2,0)[anchor=north]{$k_{(3)}a\oo{\color{blue} b}$};
\node at (2,-4.5)[anchor=south]{$\langle \alpha, k_{(5)}S(f_{(1)})e_{(2)}d_{(2)}S(c_{(1)})S(k_{(1)})\rangle$};
\end{scope}
\end{tikzpicture}
\end{center}
\caption{Action of the vertex and face operator at a defect vertex.}
\label{fig:vertexdefect}
\end{figure}
Note that  the first part of Definition \ref{def:vertexfacedefect} considers general sites of $\Gamma$, and just specifies the bulk region, in which they are located. This is not to be confused with the \emph{bulk sites} from Definition \ref{def:sitesdefect}.  It  follows directly from   Definition \ref{def:edgeopsdefect} of the triangle operators and  Definition \ref{definition:HolonomyCompositeDefect} of the holonomies, that the vertex and face operators from Definition \ref{def:vertexfacedefect} reduce to the ones from Definition \ref{definition:VertexFaceOperators}, whenever the site $s$ is a bulk site. Together with the definition of the extended space in \eqref{eq:hilbdef} this yields the following example.

	\begin{example}
		\label{example:StandardModelSpecialCase}
		If the ribbon graph $\Gamma$ only has a single bulk region and no defect or boundary lines, then the extended space, the vertex and face operators and the holonomies of the Kitaev model with boundaries and defects coincide with those of the Kitaev model without boundaries and defects.
	\end{example}

	After defining the the generalized vertex and face operators, we can now show  that our model satisfies the condition~(O\ref{O1}) and~(O\ref{O2}) from Section \ref{section:TranslationKitaev}:
\begin{proposition}
	\label{proposition:OperatorActionDefect}
	Let $s,t$ be sites in $\Gamma$, $b\in B$ a bulk region, $a\in A$ a boundary line adjacent to $b$ and $d\in D$ a defect line adjacent to $b$. Then
	\begin{compactenum}
		\item $BA_{b,s}: D(H_{b}) \oo \HSpace\to \HSpace$ defines an action of $D(H_{b})$, if $s$ is a bulk site in $b$, a boundary site in $a$ or a defect site in $b$.
			\label{item1}
		\item $BA_{d,s_{L},s_{R}}: D(H_{b_{dL}}) \oo D(H_{b_{dR}}) \oo \HSpace \to \HSpace$ defines an action of $D(H_{b_{dL}}) \oo D(H_{b_{dR}})$, if $(s_{L},s_{R})$ is a pair of defect sites in $d$.
			\label{item3}
	\end{compactenum}
\end{proposition}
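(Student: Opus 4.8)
The plan is to reduce everything to the already-established Theorem~\ref{theorem:Drinfel'dActionHSpace} for the model without defects, using the observation (Example~\ref{example:StandardModelSpecialCase}) that in a bulk region our operators coincide with the standard ones. First I would treat item~\ref{item1}. If $s$ is a bulk site in $b$, the vertex and face paths at $s$ consist only of edges in $D(\Gamma_b)$, so by Definition~\ref{def:edgeopsdefect} only case~(i) triangle operators occur, which are literally the operators from Definition~\ref{definition:HolonomyBasic}; hence $BA_{b,s}$ is exactly the $D(H_b)$-action of Theorem~\ref{theorem:Drinfel'dActionHSpace}, restricted to the tensor factors in the bulk subgraph and extended by identity elsewhere. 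For $s$ a boundary site in $a$ or a defect site in $b$, the vertex and face paths at $s$ may traverse boundary or defect edges $e$ (for which $H_e = H_{b_a}$ or $H_e = H_{b_{dL}}\oo H_{b_{dR}}$), but by Definition~\ref{def:permissible} and the structure of vertex/face paths these paths only use $e^{\pm L}, e^{\pm R}$ on such edges (they never use $e^{\pm s}, e^{\pm t}$), so only the $L,R$-formulas of case~(ii) or~(iii) are invoked. The key point is that these $L,R$-formulas act on the relevant tensor factor $H_b$ (or the $H_b$-slot of a defect edge) by exactly the same coregular formulas as in case~(i), the other tensor slot being untouched. Thus the operators $A_{b,s}^h$ and $B_{b,s}^\alpha$ satisfy the same local commutation relations as in the undefected model, and the proof that \eqref{eq:Drinfel'dActionHSpace} defines a $D(H_b)$-action goes through verbatim.

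More carefully, I would phrase this as follows: Definition~\ref{def:vertexfacedefect}.1 sets $A_{b,s}^h = \Hol_{b,v}^{h\oo\varepsilon}$ and $B_{b,s}^\alpha = \Hol_{b,f}^{1\oo\alpha}$, and the holonomy along a composite path is built from triangle operators via Definition~\ref{definition:HolonomyCompositeDefect}, which has the same recursive form as Definition~\ref{definition:HolonomyComposite}. Since the face and vertex paths at $s$ are ribbon paths using only $e^{\pm L}, e^{\pm R}$ (for the face path) and $e^{\pm s}, e^{\pm t}$ on bulk edges only (for the vertex path), each triangle operator involved acts on its tensor factor either by the Heisenberg-double-type formulas of case~(i) or by the restrictions in cases~(ii)/(iii) that agree with case~(i) on the $H_b$-slot. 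Consequently, Lemma~\ref{lemma:HolonomyCommutators} and Lemma~\ref{lemma:RibbonHolonomyCommutators} apply with $H_b$ in place of $H$, and the computation that $BA_{b,s}^{\alpha\oo h}$ respects the multiplication of $D(H_b)$ — i.e. $BA_{b,s}^{(\alpha\oo h)\cdot(\beta\oo k)} = BA_{b,s}^{\alpha\oo h}\circ BA_{b,s}^{\beta\oo k}$ — is the one already carried out in \cite{BMCA} and recalled as Theorem~\ref{theorem:Drinfel'dActionHSpace}. The unit condition $BA_{b,s}^{1_{D(H_b)}} = \id$ is immediate from $\varepsilon(1_H)=1$ and $\varepsilon(\varepsilon)=1$.

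For item~\ref{item3}, by Definition~\ref{def:vertexfacedefect}.2 we have $BA_{d,(s_L,s_R)}^{h\oo k} = BA_{b_{dL},s_L}^h \circ BA_{b_{dR},s_R}^k$. Each factor is a $D(H_{b_{dL}})$-, resp. $D(H_{b_{dR}})$-action by item~\ref{item1}, since $s_L$ is a defect site in $b_{dL}$ and $s_R$ a defect site in $b_{dR}$. So it remains to check that the two commute, which then gives an action of the tensor product Hopf algebra $D(H_{b_{dL}})\oo D(H_{b_{dR}})$. Here I would argue that $s_L$ and $s_R$ sit at the same defect vertex $v$ but on opposite sides of the outgoing defect edge, so their vertex and face paths are disjoint as paths in $D(\Gamma)$ except that they may share defect edges $e$; on such a shared edge, $A_{b_{dL},s_L}^h$ and $B_{b_{dL},s_L}^\alpha$ act only on the $H_{b_{dL}}$-slot (case~(ii) $L$-formula) while $A_{b_{dR},s_R}^k$ and $B_{b_{dR},s_R}^\beta$ act only on the $H_{b_{dR}}$-slot (case~(iii) $R$-formula), and these manifestly commute. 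On all edges not shared they act on different tensor factors of $\HSpace$, hence commute by the same reasoning as in the proof of \eqref{eq:NonOverlap}. Therefore $BA_{b_{dL},s_L}^\bullet$ and $BA_{b_{dR},s_R}^\bullet$ commute, and $BA_{d,(s_L,s_R)}$ defines an action of $D(H_{b_{dL}})\oo D(H_{b_{dR}})$.

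The main obstacle I expect is the bookkeeping in item~\ref{item1} for the boundary and defect sites: one has to verify that the vertex and face paths at such a site genuinely avoid the "bad" edge operators $e^{\pm s}, e^{\pm t}$ on boundary/defect edges (this is where permissibility and Definition~\ref{def:sitesdefect} of defect/boundary sites enter, together with the fact that such sites are placed "directly to the left/right" of the oriented boundary/defect edge), and that on the surviving $L,R$-operators the case~(ii)/(iii) formulas restrict correctly to the $H_b$-slot so that the undefected commutation relations are preserved unchanged. Once that compatibility is pinned down, the $D(H_b)$-action property is inherited directly from Theorem~\ref{theorem:Drinfel'dActionHSpace} with no new computation, and item~\ref{item3} follows formally.
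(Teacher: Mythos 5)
Your overall strategy is the same as the paper's: item~1 is reduced to the result of~\cite{BMCA} recalled in Theorem~\ref{theorem:Drinfel'dActionHSpace} (for bulk and boundary sites the operators are literally the undefected ones; for defect sites they act only on bulk edges of $b$ and on the $H_b$-slots of defect edges, by the same formulas), and item~2 is obtained from item~1 via Definition~\ref{def:vertexfacedefect}.2 together with the commutativity of the two factors, which is exactly Proposition~\ref{proposition:OperatorsCommute} (they touch different tensor slots). Your explicit check of commutativity for item~2 is a point the paper glosses over, and it is correct.

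However, one intermediate claim you make is false and should be repaired. You assert, invoking Definition~\ref{def:permissible}, that the vertex and face paths at a boundary or defect site never traverse $e^{\pm s},e^{\pm t}$ for boundary or defect edges, so that ``only the $L,R$-formulas of case~(ii) or~(iii) are invoked.'' By Definition~\ref{def:ribbon path} the vertex path of a defect site encircles the whole defect vertex and therefore does traverse $e^{\pm s},e^{\pm t}$ of the two incident defect edges; this is precisely why vertex paths around defect vertices are admitted as the separate case~(b) in the definition of the holonomies \eqref{eq:HolonomyBulk}, permissibility being waived there, and it is visible in Figure~\ref{fig:vertexdefect}, where the $H_{b_{dL}}$-components of the defect edges are multiplied by $k_{(3)}$ and $S(k_{(4)})$. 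If only the $L,R$-operators occurred, the vertex operator at a defect site would act trivially on defect edges, which is wrong. The repair is immediate from Definition~\ref{def:edgeopsdefect}: in cases~(ii) and~(iii) \emph{all four} triangle operators $e^{\pm s},e^{\pm t},e^{\pm L},e^{\pm R}$ act only on the $H_b$-slot of $H_{b_{dL}}\oo H_{b_{dR}}$, and on that slot they coincide with the case~(i) formulas (the side operator facing the other bulk region acting trivially, since that face lies in the other region); hence the computation of~\cite{BMCA} transfers verbatim, which is the paper's argument. Also note that boundary edges carry $H_e=H_{b_a}$ and fall under case~(i), so for boundary sites no case~(ii)/(iii) discussion is needed at all: the vertex and face operators are identical to those of the model without defects, exactly as the paper states.
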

\begin{proof}
	Proof of~\ref{item1}:
	For a bulk site or a boundary site $s$, the operators $A_{b,s},B_{b,s}$ are defined identically to the ones of a Kitaev model without defects and boundaries. They thus define an action of $D(H_{s})$, as proven in \cite{BMCA}. The proof is analogous for defect sites $s$, as they only act on the defect edges and the bulk edges in $b$ (cf. Figure~\ref{fig:vertexdefect}). The statement~\ref{item3} follows as a corollary of~\ref{item1} by using~\eqref{eq:FaceVertexOperatorDefect}.
\end{proof}

We also obtain a direct generalization of the second statement in Theorem \ref{ht:siterep} on the vertex and face operators of disjoint sites. An analogous result holds for pairs of defect sites at a defect vertex.

\begin{proposition}
	Let $s,t$ be sites in bulk regions $b_{s}$ and $b_{t}$ and let $h\in D(H_{b_{s}}), k\in D(H_{b_{t}})$. Then $BA_{b_{s},s}^{h}$ and $BA_{b_{t},t}^{k}$ commute, if either
	\begin{itemize}
		\item $s$ and $t$ are disjoint sites.
		\item $(s,t)$ is a pair of defect sites.
\end{itemize}
\label{proposition:OperatorsCommute}
\end{proposition}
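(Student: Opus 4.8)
The plan is to reduce the statement to the locality of triangle operators, exactly as in the proof of Theorem~\ref{ht:siterep} for the model without defects. The key observation is that the vertex and face operators $BA_{b_s,s}^{h}$ are holonomies along the vertex and face paths based at $s$, and by Definition~\ref{def:vertexfacedefect} together with the recursive Definition~\ref{definition:HolonomyCompositeDefect}, each such operator is a composite of triangle operators $\Hol_{b_s,e^{\nu}}^{\,\cdot}$ for edges $e$ lying in $D(\Gamma_{b_s})$ and the adjacent defect or boundary lines. Since each triangle operator acts nontrivially only on the tensor factor $H_e$ of $\HSpace$ associated with the edge $e$, two such composites commute as soon as the sets of edges involved are disjoint, or — when they share an edge $e$ — the two operators act on $H_e$ through the two mutually commuting "halves" of the triangle-operator algebra (the $e^{\pm s},e^{\pm t}$ part versus the $e^{\pm L},e^{\pm R}$ part, cf.\ the proof of~\eqref{eq:NonOverlap} in Lemma~\ref{lemma:HolonomyCommutators}, or in cases (ii) and (iii) of Definition~\ref{def:edgeopsdefect} the two distinct tensor legs $H_{b}\oo H_{b'}$).

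First I would handle the case where $s$ and $t$ are disjoint sites. By Definition~\ref{def:sites} and the construction of $D(\Gamma)$, disjoint sites have neither an adjacent vertex nor an adjacent face in common, so their vertex and face paths traverse entirely different edges of $D(\Gamma)$ (one must check that a bulk, boundary, or defect site's operators only involve edges of the rectangles around edges incident to its vertex and bounding its face, which is immediate from the definitions of vertex path and face path in Definition~\ref{def:ribbon path}). Hence $BA_{b_s,s}^{h}$ and $BA_{b_t,t}^{k}$ are composites of triangle operators acting on disjoint collections of tensor factors of $\HSpace$, and therefore commute. This argument is verbatim the one in~\cite{BMCA}, the only new point being to note from Definition~\ref{def:edgeopsdefect} that the modified triangle operators for boundary and defect edges still act on a single tensor factor $H_e$.

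Next I would treat the case where $(s,t)=(s_L,s_R)$ is a pair of defect sites at a defect vertex $v$ of a defect line $d$. Here $s_L$ is a site in $b_{dL}$ and $s_R$ a site in $b_{dR}$, so $BA_{b_{dL},s_L}^{h}$ uses triangle operators $\Hol_{b_{dL},e^{\nu}}^{\,\cdot}$ and $BA_{b_{dR},s_R}^{k}$ uses $\Hol_{b_{dR},e^{\nu}}^{\,\cdot}$. On bulk edges in $b_{dL}$ (resp.\ $b_{dR}$) these operators act on disjoint sets of edges, since $b_{dL}\neq b_{dR}$ and, by Definition~\ref{definition:RibbonGraphDefect}, a vertex lies in at most two bulk regions meeting along a defect. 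On the defect edges incident to $v$ — the only shared edges — the operators $\Hol_{b_{dL},e^{\nu}}$ act by cases (ii)/(iii) of Definition~\ref{def:edgeopsdefect} on the left tensor leg $H_{b_{dL}}$ of $H_e = H_{b_{dL}}\oo H_{b_{dR}}$, while $\Hol_{b_{dR},e^{\nu}}$ act on the right leg $H_{b_{dR}}$; operators acting on different tensor legs of the same vector space commute. Concretely one checks this on the level of the individual triangle operators by inspecting the four formulas in cases (ii) and (iii) (they manifestly affect different tensor factors $m$ versus $n$), and then the commutation of the full composites follows. The statement~\eqref{eq:FaceVertexOperatorDefect} for $BA_{d,(s_L,s_R)}$ being a genuine $D(H_{b_{dL}})\oo D(H_{b_{dR}})$-action — Proposition~\ref{proposition:OperatorActionDefect}.\ref{item3} — already uses this commutativity, so the two results are consistent.

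The main obstacle is the bookkeeping in the shared-edge subcase: one must be careful that a vertex path and a face path based at the \emph{same} vertex $v$ of the defect line do not inadvertently force the left- and right-type triangle operators of the \emph{same} bulk region to collide in a noncommuting way — but this cannot happen because each of $A$ and $B$ individually is built from one type ($e^{\pm s},e^{\pm t}$ for $A$, $e^{\pm L},e^{\pm R}$ for $B$), and the cross-commutativity $A_{s}B_{s}$ versus $A_{t}B_{t}$ across the two bulk regions is exactly what the tensor-leg separation in Definition~\ref{def:edgeopsdefect}(ii)–(iii) provides. Once one has convinced oneself of this, the proof is a short reduction to locality and requires no new computation beyond what appears in the proof of Lemma~\ref{lemma:HolonomyCommutators}.
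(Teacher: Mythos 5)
Your second case (the pair of defect sites) is correct and is exactly the paper's argument: $BA_{b_{dL},s_L}$ only touches the tensor legs $H_{b_{dL}}$ and $BA_{b_{dR},s_R}$ only the legs $H_{b_{dR}}$, so on a shared defect edge with $H_e=H_{b_{dL}}\oo H_{b_{dR}}$ they act on different tensor factors and commute trivially.

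Your first case, however, has a genuine gap. Disjointness of the sites does \emph{not} give disjoint supports: the tensor factors of $\HSpace$ are indexed by edges $e$ of $\Gamma$, and all four triangle operators $\Hol_{e^{s}},\Hol_{e^{t}},\Hol_{e^{L}},\Hol_{e^{R}}$ act on the \emph{same} factor $H_e$, so "different edges of $D(\Gamma)$" does not mean "different tensor factors". The problematic configuration is a vertex of $s$ lying on the face of $t$ (or vice versa): then the vertex operator of $s$ acts on a shared edge through $e^{\pm s},e^{\pm t}$-type triangle operators while the face operator of $t$ acts on the same $H_e$ through $e^{\pm L},e^{\pm R}$-type ones, and these two "halves" do \emph{not} mutually commute — by Remark~\ref{rem:heisenbergdoubleendo} the operators $\Hol_{e^{t}}$ and $\Hol_{e^{R}}$ generate the Heisenberg double $\mathcal{H}_R(H)\cong \End_\C(H)$. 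The proof of~\eqref{eq:NonOverlap} only asserts that $e^{\pm s}$-operators commute with $e^{\pm t}$-operators and $e^{\pm L}$-operators with $e^{\pm R}$-operators, and the non-crossing condition of Definition~\ref{def:cross} is designed precisely to exclude the mixed $s/t$-versus-$L/R$ collision; a vertex path of $s$ and a face path of $t$ sharing an edge are not non-crossing in this sense. The commutation of the full site operators at disjoint sites is therefore a genuinely nontrivial cancellation (involving the antipode and counit along the two edges where the face path passes through the vertex), which is the classical result for the defect-free model; the paper disposes of this case by invoking exactly that result (citing the proof in the literature, e.g.\ \cite{Me}), noting that bulk, boundary and defect sites are covered because their operators are built from the same triangle operators. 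To repair your proof you should either reproduce that computation or cite it; the pure locality argument as you state it does not go through.
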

\begin{proof}
	The proof for the first case is analogous to the one for Kitaev models without defects and boundaries, see for instance \cite[Lemma~5.9]{Me}. 
	For the second case we note that for a pair of defect sites $(s,t)$ the map $BA_{b_{s},s}^{h}$ only acts on copies of $H_{b_{s}}$ in the tensor product $\HSpace = \tensor_{e\in E} H_{e}$, while $BA_{b_{t},t}^{k}$ only acts on copies of $H_{b_{t}}$. They thus commute trivially.
\end{proof}

We will now investigate how holonomies along permissible paths in the thickening of a ribbon graph with defects and boundaries generate and fuse excitations at their endpoints. The first step   is to study the interaction of these holonomies with the vertex and face operators for sites at the endpoints of the path. This yields a direct generalization of Lemma
 \ref{lemma:RibbonHolonomyCommutators} for models with defects. Just as Lemma~\ref{lemma:RibbonHolonomyCommutators} it shows that the action of its holonomy generates two inverse excitations at the path's start and target site and fuses them with the excitations already present at those sites.

\begin{corollary}
	Let $\rho:s_{1}^{\eta_{1}}\to s_{2}^{\eta_{2}}$ be a permissible path in the bulk region $b$ and adjacent defects and boundaries. Then the identities from Lemma~\ref{lemma:RibbonHolonomyCommutators} hold, i.e. we have
	\begin{align}
		&BA_{b,s_{1}}^{k}\circ \Hol_{b,\rho}^{\alpha} =  \Hol_{b,\rho}^{ k_{(2)}  \vartriangleright \alpha  } \circ BA_{b,s_{1}}^{k_{(1)}}, \quad \text{if $\eta_{1}=R$}
		\label{eq:HolonomyCommutatorsStartRightDefect}
		\\
		&BA_{b,s_{1}}^{k}\circ \Hol_{b,\rho}^{\alpha} =  \Hol_{b,\rho}^{k_{(1)}   \vartriangleright\alpha } \circ BA_{b,s_{1}}^{k_{(2)}} , \quad \text{if $\eta_{1}=L$}
		\label{eq:HolonomyCommutatorsStartLeftDefect}
		\\
		&BA_{b,s_{2}}^{k} \circ \Hol_{b,\rho}^{\alpha} =  \Hol_{b,\rho}^{   \alpha  \vartriangleleft S\left( k_{(1)} \right)} \circ BA_{b,s_{2}}^{k_{(2)}},\quad \text{if $\eta_{2}=L$}
		\label{eq:HolonomyCommutatorsEndLeftDefect}
		\\
		&BA_{b,s_{2}}^{k} \circ \Hol_{b,\rho}^{\alpha} =  \Hol_{b,\rho}^{\alpha \vartriangleleft S(k_{(2)})} \circ BA_{b,s_{2}}^{k_{(1)}},\quad \text{if $\eta_{2}=R$}.
		\label{eq:HolonomyCommutatorsEndRightDefect}
	\end{align}
	\label{corollary:DefectBoundaryHolonomyFaceVertex}
\end{corollary}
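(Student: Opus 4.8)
The plan is to deduce the corollary from its defect-free counterparts, Lemma~\ref{lemma:HolonomyCommutators} and Lemma~\ref{lemma:RibbonHolonomyCommutators}, by checking that all the \emph{local} ingredients of their proofs survive the passage to a ribbon graph with defects and boundaries. The proof of Lemma~\ref{lemma:RibbonHolonomyCommutators} uses only three things: the recursive construction of holonomies from left joints, which is literally the same in Definition~\ref{definition:HolonomyCompositeDefect} as in the model without defects; the elementary triangle-operator identities underlying Lemma~\ref{lemma:HolonomyCommutators} (commutation of the $e^{\pm s}$-operators with the $e^{\pm t}$-operators, of the $e^{\pm L}$-operators with the $e^{\pm R}$-operators, the Heisenberg double relations for the operators along $e^{\pm d}$, $e^{\pm\overline{d}}$, etc., and the basic left joint identity~\eqref{eq:LeftJointBasic}); and the fact that vertex and face operators are holonomies along vertex and face paths. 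So the first step is to determine which of the four cases of Definition~\ref{def:edgeopsdefect} can occur along a permissible path $\rho$ in $b$ and along the vertex and face paths at $s_1$ and $s_2$, and to verify those elementary identities in each case.

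Since $\rho$ is permissible and lies in $b$ together with the adjacent defect and boundary lines, the only triangle operators entering $\Hol_{b,\rho}$ are those of case (i) on bulk or boundary edges and those of cases (ii) and (iii) on defect edges; in the latter two cases the operator acts on $H_e=H_b\oo H_{b'}$ (resp. $H_{b'}\oo H_b$) only through the $H_b$-tensor factor, and on that factor its formula coincides with case (i), i.e. with the triangle operators of Definition~\ref{definition:HolonomyBasic}. The vertex and face paths at $s_1$ and $s_2$ may additionally involve triangle operators of case (iv), on edges belonging to the bulk region on the other side of an adjacent defect; by Definition~\ref{def:edgeopsdefect} these contribute trivially. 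Hence every elementary identity used in the proof of Lemma~\ref{lemma:HolonomyCommutators} holds verbatim for the operators $\Hol_{b,\cdot}$, and in particular the defect analogues of~\eqref{eq:HolonomyReversal} and~\eqref{eq:LeftJoint} are available in this setting.

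With these in hand, the proof of Lemma~\ref{lemma:RibbonHolonomyCommutators} applies word for word. Writing $A_{b,s_2}^{k}=\Hol_{b,v}^{k\oo\varepsilon}$ and $B_{b,s_2}^{\beta}=\Hol_{b,f}^{1\oo\beta}$ for the vertex path $v$ and face path $f$ at $s_2$, one uses the left joints $(\rho,v^{-1})_{\prec}$ and $(f,\rho)_{\prec}$ together with~\eqref{eq:HolonomyReversal} and~\eqref{eq:LeftJoint}, and the analogous joints at $s_1$, to obtain the four identities~\eqref{eq:HolonomyCommutatorsStartRight}--\eqref{eq:HolonomyCommutatorsEndRight} with $H$ replaced by $H_b$; the reduction to the sub-cases $\beta=\varepsilon$ and $k=1$ is justified, as before, by Proposition~\ref{proposition:OperatorActionDefect}, since $\beta\oo k=(\beta\oo 1_{H_b})\cdot(\varepsilon\oo k)$ in $D(H_b)$. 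Finally I would merge the pair $(\beta,k)$ into the single element $\kappa=\beta\oo k\in D(H_b)$, use $\Delta_{D(H_b)}(\kappa)=\kappa_{(1)}\oo\kappa_{(2)}$ with $\kappa_{(1)}=\beta_{(2)}\oo k_{(1)}$ and $\kappa_{(2)}=\beta_{(1)}\oo k_{(2)}$, and recognise the resulting action of $D(H_b)$ on $D(H_b)^{*}$ as the left coregular action $\vartriangleright$ of~\eqref{eq:LeftCoregularActionDrinfel'd} at $s_1$ and the right coregular action $\vartriangleleft$ of~\eqref{eq:RightCoregularActionDrinfel'd} at $s_2$; this turns~\eqref{eq:HolonomyCommutatorsStartRight}--\eqref{eq:HolonomyCommutatorsEndRight} into the claimed identities~\eqref{eq:HolonomyCommutatorsStartRightDefect}--\eqref{eq:HolonomyCommutatorsEndRightDefect}.

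The only place that requires genuine care is the bookkeeping in the second step: one must check that the vertex and face operators at a defect site really do factor through the $H_b$-tensor factors of the defect edges exactly as in Figure~\ref{fig:vertexdefect}, that the case (iv) operators along the corresponding vertex and face paths are genuinely the identity, so that they neither obstruct nor modify the left joints with $\rho$, and that the cases (ii) and (iii) triangle operators for the edges actually traversed by $\rho$ obey the same Heisenberg double and left joint relations as their case (i) counterparts. Everything past that point is a transcription of the proofs of Lemmas~\ref{lemma:HolonomyCommutators} and~\ref{lemma:RibbonHolonomyCommutators} together with a routine algebraic rewriting in terms of $D(H_b)$.
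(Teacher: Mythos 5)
Your proposal is correct and follows essentially the same route as the paper: the paper's proof likewise writes $A_{b,s_i}$ and $B_{b,s_i}$ as holonomies along the vertex and face paths of $s_i$, observes that these form left and right joints with $\rho$, and applies the defect analogues of~\eqref{eq:HolonomyReversal}, \eqref{eq:LeftJoint} and~\eqref{eq:RightJoint} exactly as in Lemma~\ref{lemma:RibbonHolonomyCommutators}. Your additional case-by-case check that the triangle operators of Definition~\ref{def:edgeopsdefect} (including the trivial case (iv) contributions and the one-sided action on defect edges in cases (ii) and (iii)) satisfy the elementary relations underlying Lemma~\ref{lemma:HolonomyCommutators} is exactly the implicit content of the paper's phrase ``the proof is analogous'', so it is a welcome elaboration rather than a deviation.
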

\begin{proof}
	The proof is analogous to the proof of Lemma~\ref{lemma:RibbonHolonomyCommutators}, since we can again write $A_{b,s_{i}}^{k}$ and $B_{b,s_{i}}^{\alpha}$ as holonomies along the face and vertex path of $s_{i}$. These paths again form left and right joints with the holonomies. Applying~\eqref{eq:LeftJoint} and~\eqref{eq:RightJoint} then produces the identities above.
\end{proof}

Corollary~\ref{corollary:DefectBoundaryHolonomyFaceVertex} shows, that the maps $\Hol_{b,\rho}^{ \alpha}$ for a simple path $\rho$ do not satisfy the conditions (H\ref{H2}) and (H\ref{H3}) from Section \ref{section:TranslationKitaev} for paths that start or end at boundary or defect sites.
This is not surprising, since in the definition of the holonomies we did not take into account the twists associated with defect and boundary components.  

For instance, a path $\rho:s_{1}^{L}\to s_{2}$ starting to the left of a boundary site $s_{1}$ defines a module homomorphism $D(H_{b})^{*} \oo \HSpace\to \HSpace$, where $ \oo $ is the tensor product of $D(H_{b})$-modules (cf. Corollary~\ref{corollary:HolonomyFusion}).
Instead we require module homomorphisms $D(H_{b})^{*} \oo_{F} \HSpace\to \HSpace$, where $F$ is the twist at $s_{1}$ and $ \oo_{F}$ the tensor product of $D(H_{b})_{F}$-modules. 

By Proposition \ref{proposition:FunctorTranslation}, the $D(H_b)$-modules   $D(H_b)^*\oo\HSpace$ and $D(H_b)^*\oo_F \HSpace$ are isomorphic as $D(H_b)$-modules and the isomorphism is given by an action of the twist $F$. For the holonomies, we have to use different twists for the cases where the path starts or ends at the left or right of a defect or boundary site.
This is due to the fact that  these cases are associated with four different $D(H_{b})$-module structures  on $D(H_{b})^{*} \oo \HSpace$ that are given in
Corollary~\ref{corollary:HolonomyFusion}.
For each of these $D(H_b)$-module structures,  there is a different isomorphism relating the tensor product to a twisted tensor product.

We start with paths  that start or end at a boundary site.
For this let $\rho:s^{\sigma}\to t^{\tau}$ a permissible path with $\sigma,\tau\in \left\{ L,R \right\}$ that starts or ends in the boundary region labeled with the twist $F$ of $D(H_{b})$
and denote $F_{L}=F$ and $F_{R}= F_{21}= F^{(2)} \oo F^{(1)}$.
\begin{itemize}
		\item If the starting site $s^{\sigma}$ is boundary site, we define
	\begin{align}\label{eq:twistphidef}
		\varphi_{s_{\sigma}}: &D(H_{b})^{*} \oo_{F} \HSpace \to D(H_{b})^{*}\oo \HSpace,\quad
		\alpha \oo n \mapsto  \left(F_{\sigma}^{(-1)} \vartriangleright \alpha \right)  \oo  BA_{b,s}^{F_{\sigma}^{(-2)}}(n)
	\end{align}
	\item If the target site $t^{\tau}$, is a boundary site, we define
		\begin{align}\label{eq:twistpsidef}
			\psi_{t_{\tau}}: &D(H_{b})^{*} \oo_{F} \HSpace \to D(H_{b})^{*}\oo \HSpace,\quad
			\alpha \oo n \mapsto  \left( \alpha  \vartriangleleft S\left( F_{\tau}^{(-1)} \right)\right) \oo  BA_{b,t}^{F_{\tau}^{(-2)}}(n)
		\end{align}
\end{itemize}

We  now consider paths that start or end in a defect site. For this let $\rho:s^{\sigma}\to t^{\tau}$ a permissible path with $\sigma,\tau\in \left\{ L,R \right\}$ that starts or ends on a defect $d$ labeled with the twist $F$ of $D(H_{b_{dL}})\oo D(H_{b d_R})$. Then  $s\in\{s_R, s_L\}$,  where $(s_{L},s_{R})$ is a pair of defect sites in  $d$, or 
$t\in\{t_L, t_R\}$  for a pair $(t_{L},t_{R})$  of defect sites in $d$. 
 Abusing notation we write  $ \vartriangleright, \vartriangleleft$  for the left and right coregular action of $D(H_{b_{dL}}) \oo D(H_{b_{dR}})$ on the submodules $D(H_{b_{dL}})^{*} \oo 1 , 1 \oo  D(H_{b_{dR}})^{*} \subseteq D(H_{b_{dL}})^{*} \oo D(H_{b_{dR}})^{*}$.
\begin{itemize} 
\item If the starting site $s^{\sigma}$ is a defect site, we define
\begin{align}\label{eq:twistphidef2}
	\varphi_{s_{\sigma}}: &D(H_{b})^{*} \oo_{F} \HSpace \to D(H_{b})^{*} \oo \HSpace,\quad
	\alpha \oo n \mapsto  \left(F_{\sigma}^{(-1)} \vartriangleright \alpha \right)  \oo  BA_{d,s_{L},s_{R}}^{F_{\sigma}^{(-2)}}(n),
\end{align}
\item  If the target site $t^{\tau}$ is a defect site, we define
\begin{align}\label{eq:twistpsidef2}
	\psi_{t_{\tau}}: &D(H_{b})^{*} \oo_{F} \HSpace \to D(H_{b})^{*} \oo \HSpace,\quad
	\alpha \oo n \mapsto  \left( \alpha  \vartriangleleft S\left(F_{\tau}^{(-1)}\right)\right) \oo  BA_{d,t_{L},t_{R}}^{F_{\tau}^{(-2)}}(n)
\end{align}
\end{itemize}

To a permissible path $\rho:s^{\sigma}\to t^{\tau}$  with $\sigma,\tau\in \left\{ L,R \right\}$ that starts or ends on a bulk site, we assign the identity morphism. More specifically
\begin{itemize}
\item If the starting site $s^\sigma$ is a bulk site, we set
\begin{align}\label{eq:twistphidef3}
	\varphi_{s_{\sigma}}=\psi_{s_{\sigma}} = \id :D(H_{b})^{*}  \oo \HSpace \to D(H_{b})^{*} \oo \HSpace
\end{align}
\item If the target site $t^\tau$ is a bulk site we set
\begin{align}\label{eq:twistpsidef3}
	\psi_{t_{\tau}} = \id :D(H_{b})^{*}  \oo \HSpace \to D(H_{b})^{*} \oo \HSpace.
\end{align}
\end{itemize}

We thus assigned different isomorphisms to all possible constellations of endpoints of permissible paths in $D(\Gamma)$. The twisted holonomy is then obtained by modifying the holonomy of a path $\rho$ by applying the isomorphisms associated with its starting and target end. 

\begin{definition}
	\label{definition:HolonomyTwisted}
	Let $\rho:s^{\sigma}\to t^{\tau}$ a permissible path in the bulk region $b$ with $\sigma,\tau\in \left\{ L,R \right\}$.  The \emph{twisted holonomy} along $\rho$ is the map
	\begin{align}
		\widetilde{\Hol}_{b,\rho} = \Hol_{b,\rho} \circ\left( \varphi_{s^{\sigma}}\circ\psi_{t^{\tau}} \right) 
	\label{eq:HolonomyTwistedDefinition}
	\end{align}
\end{definition}

\begin{example}
If  $\rho:s^{L}\to t^{R}$, then 
\begin{align}
	\label{eq:HolonomyTwistedLeftRight}
	\widetilde{\Hol}_{b,\rho}^{\alpha} =
	\Hol_{b,\rho}^{ F^{(-1)} \vartriangleright\alpha \vartriangleleft S(G^{(-2)})} BA_{s}^{F^{(-2)}} BA_{t}^{G^{(-1)}} 
\end{align}
where either (i) $F$ is the twist at $s$ and $BA_{s}=BA_{b,s}$ if $s$ is a bulk or boundary site, or (ii) $F$ is the twist $F_{d}$ and $BA_{s}=BA_{d,s_{L},s_{R}}$ if $s$ is a defect site in $d$, and similar for $BA_{t}$.
If instead $\rho$ starts to the right of $s$, one has to exchange $F^{(-1)}$ and $F^{(-2)}$ and if $\rho$ ends to the left of $t$ one has to exchange $G^{(-1)}$ and $G^{(-2)}$ in~\eqref{eq:HolonomyTwistedLeftRight}.
\end{example}

  It is clear from the definitions that the twisted holonomies along a path $\rho$ coincide with the untwisted ones whenever the path $\rho$ starts and ends at a bulk site. 

  We now give a counterpart of Corollary \ref{corollary:HolonomyModuleHom} for the \emph{twisted} holonomies from Definition \ref{definition:HolonomyTwisted}. 
  It shows that the twisted holonomies generate excitations at the endpoints of the path $\rho$. If these sites already carry excitations, they are fused with the newly generated excitations. If the site in question is a boundary or defect site, then the fusion uses a twisted tensor product, i.e. the twisted holonomies satisfy the conditions~(H\ref{H1}) to~(H\ref{H3}) from Section \ref{section:TranslationKitaev}.
%
%
%
\begin{proposition}
	\label{proposition:HolonomyTwistedCondition}
	Let $\rho:s_{1}\to s_{2}$ a permissible path between disjoint sites in a bulk region or adjacent defects and boundaries and denote $F$ the twist at $s_{1}$ and $G$ the twist at $s_{2}$.  Then $\widetilde{\Hol}_{b,\rho}$ defines module homomorphisms
\begin{align}
	D(H_{b})^{*}_{ \vartriangleright}  \oo_{F} \HSpace_{s_{1}} \to \HSpace_{s_{1}} \quad&\text{if $\rho$ starts to the left of $s_{1}$}\\
	 D(H_{b})^{*}_{ \vartriangleright} \oo_{F}^{cop}\HSpace_{s_{1}}  \to \HSpace_{s_{1}}\quad&\text{if $\rho$ starts to the right of $s_{1}$}\\
	D(H_{b})^{*}_{ \vartriangleleft}  \oo_{G} \HSpace_{s_{2}} \to \HSpace_{s_{2}}\quad&\text{if $\rho$ ends to the left of $s_{2}$}\\
	 D(H_{b})^{*}_{ \vartriangleleft} \oo_{G}^{cop} \HSpace_{s_{2}} \to \HSpace_{s_{2}}\quad&\text{if $\rho$ ends to the right of $s_{2}$}
\end{align}
	Here  $\HSpace_{s_{i}}$ is the extended space with the action defined by the site $s_{i}$, $D(H_b)^{*}_{ \vartriangleright}$ is the vector space $D(H_b)^{*}$ with the left regular action of $D(H_b)$ and $D(H_b)^{*}_{ \vartriangleleft}$ is the vector space $D(H_b)^{*}$ with the left action of $D(H_b)$ defined by~\eqref{eq:RightCoregularLeftAction}.
\end{proposition}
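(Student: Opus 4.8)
The plan is to reduce the statement to Corollary~\ref{corollary:HolonomyModuleHom} (the untwisted case) via the coherence data of the tensor equivalence $G\colon D(H_b)_F\mathrm{-Mod}\to D(H_b)\mathrm{-Mod}$ from Proposition~\ref{proposition:FunctorTranslation}. The key observation is that each of the four isomorphisms $\varphi_{s^\sigma}, \psi_{t^\tau}$ in \eqref{eq:twistphidef}--\eqref{eq:twistpsidef3} is, up to applying a vertex-and-face operator on the second tensor factor, precisely the coherence isomorphism $G(D(H_b)^*\oo_F\HSpace)\to D(H_b)^*\oo\HSpace$ of the relevant module structure. Concretely, for a path starting to the left of a boundary or defect site $s_1$, the left regular action $\vartriangleright$ on $D(H_b)^*$ combined with the $BA$-action on $\HSpace$ is exactly the $D(H_b)$-action for which $\varphi_{s_1^L}$ implements the comparison $m\oo n\mapsto (F^{(-1)}\vartriangleright m)\oo(F^{(-2)}\vartriangleright n)$ of \eqref{eq:TwistCoherenceData}; the extra $BA_{b,s_1}^{F_\sigma^{(-2)}}$ (resp. $BA_{d,s_L,s_R}^{F_\sigma^{(-2)}}$) is the realization of $F_\sigma^{(-2)}\vartriangleright(-)$ on $\HSpace$ by Proposition~\ref{proposition:OperatorActionDefect}. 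The analogous statements hold for the right action (using $F_{21}$ in place of $F$), and for $\psi$ (using the action \eqref{eq:RightCoregularLeftAction} on $D(H_b)^*$, which accounts for the $S(F_\tau^{(-1)})$ and the $\vartriangleleft$).

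First I would fix, say, the case ``$\rho$ starts to the left of $s_1$'' and unwind the definitions: $\widetilde\Hol_{b,\rho}=\Hol_{b,\rho}\circ(\varphi_{s_1^\sigma}\circ\psi_{s_2^\tau})$. I would check that $\psi_{s_2^\tau}$ is a morphism of $D(H_b)$-modules for the $s_1$-module structure on $\HSpace$ --- this is immediate, since $\psi_{s_2^\tau}$ only involves vertex-and-face operators at $s_2$, which commute with those at $s_1$ by Proposition~\ref{proposition:OperatorsCommute} (the sites are disjoint, or form a defect pair), and only involves the action on $D(H_b)^*$ via the opposite factor of the twist, hence is $D(H_b)$-linear by naturality. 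Then Corollary~\ref{corollary:DefectBoundaryHolonomyFaceVertex} (the generalization of Lemma~\ref{lemma:RibbonHolonomyCommutators} to the defect/boundary setting) shows $\Hol_{b,\rho}\colon D(H_b)^*_\vartriangleright\oo\HSpace_{s_1}\to\HSpace_{s_1}$ is a morphism of $D(H_b)$-modules, exactly as in Corollary~\ref{corollary:HolonomyModuleHom}. Finally, Proposition~\ref{proposition:FunctorTranslation} says $\varphi_{s_1^\sigma}\colon D(H_b)^*_\vartriangleright\oo_F\HSpace_{s_1}\to D(H_b)^*_\vartriangleright\oo\HSpace_{s_1}$ is an isomorphism of $D(H_b)$-modules. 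Composing, $\widetilde\Hol_{b,\rho}$ is a $D(H_b)$-module morphism $D(H_b)^*_\vartriangleright\oo_F\HSpace_{s_1}\to\HSpace_{s_1}$, which is the first claim. The remaining three cases are obtained by the same argument, using the $D(H_b)^{cop}$-tensor product when the path starts/ends to the right (the braiding-reversal is already built into $F_{21}$), and using the action \eqref{eq:RightCoregularLeftAction} and $\psi$ instead of $\varphi$ when the endpoint is $s_2$ instead of $s_1$.

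The main obstacle I expect is the bookkeeping verifying that the specific formulas \eqref{eq:twistphidef}--\eqref{eq:twistpsidef2} really coincide with the coherence data of Proposition~\ref{proposition:FunctorTranslation} for the four \emph{different} $D(H_b)$-module structures on $D(H_b)^*\oo\HSpace$ identified in Corollary~\ref{corollary:DefectBoundaryHolonomyFaceVertex}/Corollary~\ref{corollary:HolonomyFusion} --- in particular getting the placement of $S$, of $F$ versus $F_{21}$, and of $F^{(-1)}$ versus $F^{(-2)}$ exactly right. This is a finite check but one must be careful: the ``left-left'' versus ``left-right'' (and the $cop$ vs.\ non-$cop$) distinctions each flip one of these choices, and the comparison map for $D(H_b)^{cop}_F$ is the coherence data of Proposition~\ref{proposition:FunctorTranslation} applied to $D(H_b)^{cop}$ with twist $F_{21}$, whose inverse-twist Sweedler components reverse the tensor order. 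Once this matching is done edge-case by edge-case, the functoriality and naturality supplied by Proposition~\ref{proposition:FunctorTranslation} do the rest, and no new computation beyond Corollary~\ref{corollary:DefectBoundaryHolonomyFaceVertex} is needed.
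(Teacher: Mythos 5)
Your proposal is correct, but it is organized differently from the paper's proof. The paper proves the statement by a direct Sweedler computation: it writes out $BA_{b,s_{1}}^{h}\circ\widetilde{\Hol}_{b,\rho}^{\alpha}$ using the explicit formula \eqref{eq:HolonomyTwistedLeftRight}, pushes $BA_{b,s_{1}}^{h}$ through $\Hol_{b,\rho}$ with Corollary~\ref{corollary:DefectBoundaryHolonomyFaceVertex}, inserts $1=F^{-1}F$ into the vertex-and-face operators, and recognizes the twisted comultiplication $h_{(F1)}\oo h_{(F2)}$ via Lemma~\ref{lemma:TwistedHopfAlgebra}, arriving at $BA_{b,s_{1}}^{h}\circ\widetilde{\Hol}_{b,\rho}^{\alpha}=\widetilde{\Hol}_{b,\rho}^{h_{(F1)}\vartriangleright\alpha}\circ BA_{b,s_{1}}^{h_{(F2)}}$ for one representative case (boundary site, path starting left and ending right), the other cases being declared analogous. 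You instead factor $\widetilde{\Hol}_{b,\rho}=\Hol_{b,\rho}\circ\varphi_{s_{1}^{\sigma}}\circ\psi_{s_{2}^{\tau}}$ and check each factor separately: $\psi_{s_{2}^{\tau}}$ commutes with the $s_{1}$-module structure (this needs, besides Proposition~\ref{proposition:OperatorsCommute}, the fact that the left regular and the right coregular actions on $D(H_{b})^{*}$ commute --- your phrase ``via the opposite factor of the twist, by naturality'' is the one imprecise spot, but the intended fact is true and elementary), $\varphi_{s_{1}^{\sigma}}$ is exactly the coherence isomorphism \eqref{eq:TwistCoherenceData} of Proposition~\ref{proposition:FunctorTranslation} for the relevant module structure, and $\Hol_{b,\rho}$ is a module map for the untwisted tensor product by Corollary~\ref{corollary:DefectBoundaryHolonomyFaceVertex}, as in Corollary~\ref{corollary:HolonomyModuleHom}. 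In effect, the explicit $F^{-1}F$-insertion in the paper is replaced in your argument by a citation of Proposition~\ref{proposition:FunctorTranslation}; this is the structural content the paper itself alludes to just before Definition~\ref{definition:HolonomyTwisted} but then verifies by hand. Your route buys a uniform treatment of all four endpoint cases (once the matching of $F$ versus $F_{21}$, the antipode placement in $\psi$, and the $cop$-twist $F_{21}$ for $D(H_{b})^{cop}$ are checked, as you note), while the paper's computation buys an explicit, self-contained identity with all Sweedler indices in place and no reliance on the categorical coherence statement; both hinge on the same key input, Corollary~\ref{corollary:DefectBoundaryHolonomyFaceVertex}.
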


\begin{proof}
	We prove the first statement for the case where $s_{1}$ is a boundary site and and $\rho$ starts to the left of $s_{1}$ and ends to the right of $s_{2}$.
	The other cases and the proof for defect sites is analogous.
	We compute 	
	\begin{align*}
		BA_{b,s_{1}}^{h}\circ \widetilde{\Hol}_{b,\rho}^{\alpha} 
		&\overset{\eqref{eq:HolonomyTwistedLeftRight}}{=} 
		BA_{b,s_{1}}^{h}\Hol_{b,\rho}^{ F^{(-1)} \vartriangleright\alpha \vartriangleleft S(G^{(-2)})} BA_{b,s_{1}}^{F^{(-2)}} BA_{b,s_{2}}^{G^{(-1)}} 
		\\&
			\overset{\eqref{eq:HolonomyCommutatorsStartLeftDefect}}{=}
		\Hol_{b,\rho}^{ \left(h_{(1)} F^{(-1)}\right) \vartriangleright\alpha \vartriangleleft S(G^{(-2)})} 
		BA_{b,s_{1}}^{h_{(2)}} BA_{b,s_{1}}^{F^{(-2)}} BA_{b,s_{2}}^{G^{(-1)}} 
		\\&=
		\Hol_{b,\rho}^{ \left(F^{(-3)}F^{(1)}h_{(1)} F^{(-1)}\right) \vartriangleright\alpha \vartriangleleft S(G^{(-2)})} 
		BA_{b,s_{1}}^{F^{(-4)}}BA_{b,s_{1}}^{F^{(2)}h_{(2)}F^{(-2)}} BA_{b,s_{2}}^{G^{(-1)}} 
		\\&
		\overset{\ref{lemma:TwistedHopfAlgebra}}{=}
		\Hol_{b,\rho}^{ \left(F^{(-3)} h_{(F1)}\right) \vartriangleright\alpha \vartriangleleft S(G^{(-2)})} 
		BA_{b,s_{1}}^{F^{(-4)}}BA_{b,s_{1}}^{h_{(F2)}} BA_{b,s_{2}}^{G^{(-1)}}
		\\
		&\overset{\eqref{eq:HolonomyTwistedLeftRight}}{=} 
		\widetilde{\Hol}_{b,\rho}^{  h_{(F1)} \vartriangleright\alpha} 
		BA_{b,s_{1}}^{h_{(F2)}}
	\end{align*}
	From this we conclude, that $\widetilde{\Hol}_{b,\rho}$ is a module homomorphism from $D(H_{b})^{*}_{ \vartriangleright}  \oo_{F} \HSpace_{s_{1}}$ to $\HSpace_{s_{1}}$.
\end{proof}

\subsection{Moving and braiding of excitations}
\label{subsec:fusionbraiding}

The movement and braiding of excitations of the Kitaev model was first considered in \cite{Ki} for the model based on the group algebra of a finite group. The article \cite{BMCA} comments on the generalization to a finite-dimensional semisimple Hopf algebra $H$, but does not give a detailed and explicit description. 

In this section we describe how to move and braid excitations in the Kitaev model with topological defects and boundaries based on semisimple finite-dimensional Hopf algebras. 
Our description is based on the rules for fusion and braiding of excitations in a Turaev-Viro TQFT with topological defects and boundaries from~\cite{FSV} outlined in Section~\ref{section:FSVSummary} and  translated into conditions for our model in Section~\ref{section:TranslationKitaev}.
Our results also apply to a Kitaev model without defects and boundaries based on a semisimple finite-dimensional Hopf algebra, as this is just a special case (cf. Example~\ref{example:StandardModelSpecialCase}).

Moving an excitation along a path $\rho:s_{1}\to s_{2}$ in $D(\Gamma)$ in a bulk region $b$ from the site $s_{1}$ to $s_{2}$ should leave no excitation at the site $s_{1}$. Holonomies do not satisfy this condition, as they generate excitations at both endpoints of $\rho$. 
This is remedied by applying the Haar integrals of  $D(H_b)$ and $D(H_b)^*$. When inserted into the holonomy along $\rho$, the Haar integral of $D(H_b)^{*}$ generates pairs of dual excitations at the sites $s_1$ and $s_2$ and fuses them with the existing excitations at these sites. The Haar integral of $D(H_b)$ then destroys the excitation at $s_1$ and moves it to $s_2$.

\begin{definition}
	\label{definition:TransportOperator}
	Let $\rho:s_{1}\to s_{2}^{\eta}$ a permissible path in the bulk region $b$ with $\eta\in\left\{ L,R \right\}$, $F$ the twist at $s_{2}$ and $\lambda\in D(H_{b}), \smallint \in D(H_{b})^{*}$ the Haar integrals of $D(H_{b})$ and $D(H_{b})^{*}$.
	The \emph{transport operator} is the linear map map
	\begin{align}
		&T_{\rho}:\HSpace\to \HSpace,\quad
		m\mapsto  BA_{b,s_{1}}^{\lambda}\circ \Hol_{b,\rho}^{\smallint \vartriangleleft S(F^{(-2)})} \circ BA_{b,s_{2}}^{F^{(-1)}} (m), \quad\text{if $\eta=R$}
		\label{eq:TransportOperatorBulkRight}
		\\
		&T_{\rho}:\HSpace\to \HSpace,\quad
		m\mapsto  BA_{b,s_{1}}^{\lambda}\circ \Hol_{b,\rho}^{\smallint \vartriangleleft S(F^{(-1)})} \circ BA_{b,s_{2}}^{F^{(-2)}} (m) , \quad\text{if $\eta=L$}
		\label{eq:TransportOperatorBulkLeft}
	\end{align}
\end{definition}

%
%
We now present a concrete example for the transport operator in a Kitaev model without defects and boundaries.
\begin{example}
	Let $\lambda\in H$, $\smallint\in H^{*}$ be the Haar integrals of $H$ and $H^{*}$ and consider the  graph below with a single bulk region $b$ and three edges, labeled with elements $a,b,c \in H$. Then the transport operator can be computed by first applying the operator $\Hol_{\rho}^{\lambda \oo \smallint}$: 
	\begin{align*}
\begingroup%
  \makeatletter%
  \providecommand\color[2][]{%
    \errmessage{(Inkscape) Color is used for the text in Inkscape, but the package 'color.sty' is not loaded}%
    \renewcommand\color[2][]{}%
  }%
  \providecommand\transparent[1]{%
    \errmessage{(Inkscape) Transparency is used (non-zero) for the text in Inkscape, but the package 'transparent.sty' is not loaded}%
    \renewcommand\transparent[1]{}%
  }%
  \providecommand\rotatebox[2]{#2}%
  \newcommand*\fsize{\dimexpr\f@size pt\relax}%
  \newcommand*\lineheight[1]{\fontsize{\fsize}{#1\fsize}\selectfont}%
  \ifx\svgwidth\undefined%
    \setlength{\unitlength}{335.09501059bp}%
    \ifx\svgscale\undefined%
      \relax%
    \else%
      \setlength{\unitlength}{\unitlength * \real{\svgscale}}%
    \fi%
  \else%
    \setlength{\unitlength}{\svgwidth}%
  \fi%
  \global\let\svgwidth\undefined%
  \global\let\svgscale\undefined%
  \makeatother%
  \begin{picture}(1,0.2634086)%
    \lineheight{1}%
    \setlength\tabcolsep{0pt}%
    \put(0,0){\includegraphics[width=\unitlength,page=1]{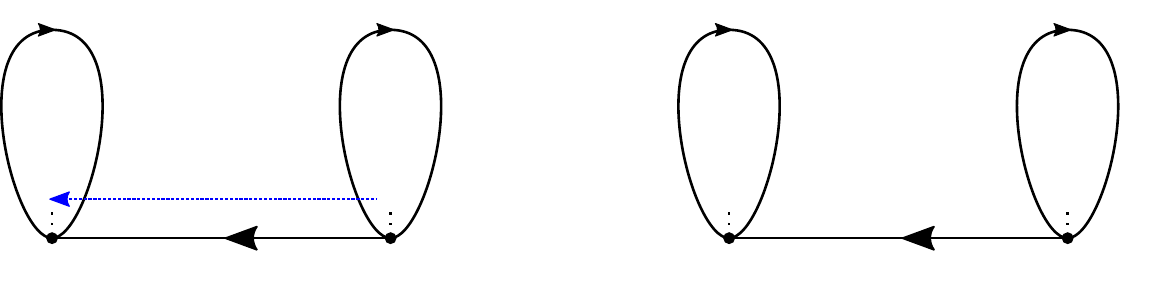}}%
    \put(0.09421537,0.23777622){\color[rgb]{0,0,0}\makebox(0,0)[lt]{\lineheight{1.25}\smash{\begin{tabular}[t]{l}$a$\end{tabular}}}}%
    \put(0.35720043,0.2385756){\color[rgb]{0,0,0}\makebox(0,0)[lt]{\lineheight{1.25}\smash{\begin{tabular}[t]{l}$c$\end{tabular}}}}%
    \put(0.67454113,0.23777622){\color[rgb]{0,0,0}\makebox(0,0)[lt]{\lineheight{1.25}\smash{\begin{tabular}[t]{l}$S(\lambda_{(1)})a$\end{tabular}}}}%
    \put(0.95559231,0.23777622){\color[rgb]{0,0,0}\makebox(0,0)[lt]{\lineheight{1.25}\smash{\begin{tabular}[t]{l}$c\lambda_{(2)}$\end{tabular}}}}%
    \put(0.17894616,0.12266998){\color[rgb]{0,0,1}\makebox(0,0)[lt]{\lineheight{1.25}\smash{\begin{tabular}[t]{l}$\rho$\end{tabular}}}}%
    \put(0.16935402,-0.01){\color[rgb]{0,0,0}\makebox(0,0)[lt]{\lineheight{1.25}\smash{\begin{tabular}[t]{l}$b$\end{tabular}}}}%
    \put(0.71064057,-0.01){\color[rgb]{0,0,0}\makebox(0,0)[lt]{\lineheight{1.25}\smash{\begin{tabular}[t]{l}$\int(b)$\end{tabular}}}}%
    \put(0,0){\includegraphics[width=\unitlength,page=2]{GeneralTwoCiliaTransported.pdf}}%
    \put(0.44512867,0.16103855){\color[rgb]{0,0,0}\makebox(0,0)[lt]{\lineheight{1.25}\smash{\begin{tabular}[t]{l}$Hol_{\rho}^{\lambda \oo \int}$\end{tabular}}}}%
  \end{picture}%
\endgroup%

	\end{align*}
	Applying the operators $B^{\smallint}_{s_{1}}$ and $A^{\lambda}_{s_{1}}$ afterwards we obtain
	\begin{align*}
\begingroup%
  \makeatletter%
  \providecommand\color[2][]{%
    \errmessage{(Inkscape) Color is used for the text in Inkscape, but the package 'color.sty' is not loaded}%
    \renewcommand\color[2][]{}%
  }%
  \providecommand\transparent[1]{%
    \errmessage{(Inkscape) Transparency is used (non-zero) for the text in Inkscape, but the package 'transparent.sty' is not loaded}%
    \renewcommand\transparent[1]{}%
  }%
  \providecommand\rotatebox[2]{#2}%
  \newcommand*\fsize{\dimexpr\f@size pt\relax}%
  \newcommand*\lineheight[1]{\fontsize{\fsize}{#1\fsize}\selectfont}%
  \ifx\svgwidth\undefined%
    \setlength{\unitlength}{335.09501059bp}%
    \ifx\svgscale\undefined%
      \relax%
    \else%
      \setlength{\unitlength}{\unitlength * \real{\svgscale}}%
    \fi%
  \else%
    \setlength{\unitlength}{\svgwidth}%
  \fi%
  \global\let\svgwidth\undefined%
  \global\let\svgscale\undefined%
  \makeatother%
  \begin{picture}(1,0.2634086)%
    \lineheight{1}%
    \setlength\tabcolsep{0pt}%
    \put(0,0){\includegraphics[width=\unitlength,page=1]{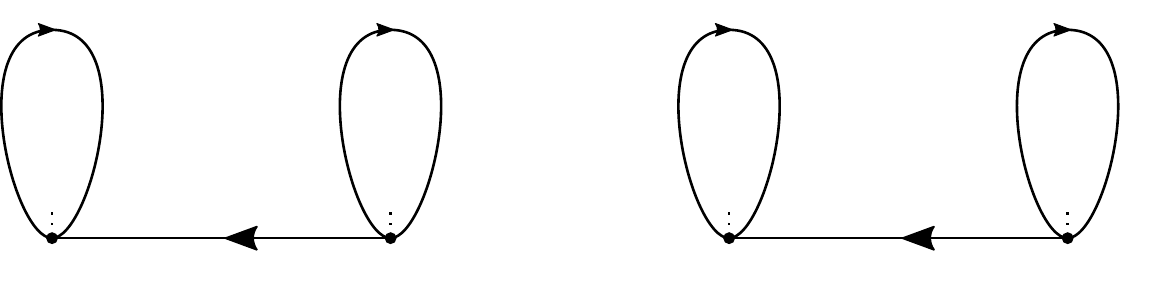}}%
    \put(0.09421537,0.23777622){\color[rgb]{0,0,0}\makebox(0,0)[lt]{\lineheight{1.25}\smash{\begin{tabular}[t]{l}$S(\lambda_{(1)})a$\end{tabular}}}}%
    \put(0.37958216,0.2385756){\color[rgb]{0,0,0}\makebox(0,0)[lt]{\lineheight{1.25}\smash{\begin{tabular}[t]{l}$\int(\lambda_{(2)}c)$\end{tabular}}}}%
    \put(0.67454113,0.23777622){\color[rgb]{0,0,0}\makebox(0,0)[lt]{\lineheight{1.25}\smash{\begin{tabular}[t]{l}$ca$\end{tabular}}}}%
    \put(0.95559231,0.23777622){\color[rgb]{0,0,0}\makebox(0,0)[lt]{\lineheight{1.25}\smash{\begin{tabular}[t]{l}$1$\end{tabular}}}}%
    \put(0.16935402,-0.00522553){\color[rgb]{0,0,0}\makebox(0,0)[lt]{\lineheight{1.25}\smash{\begin{tabular}[t]{l}$\int(b)\lambda'$\end{tabular}}}}%
    \put(0.75735364,-0.00522553){\color[rgb]{0,0,0}\makebox(0,0)[lt]{\lineheight{1.25}\smash{\begin{tabular}[t]{l}$\int(b)\lambda'$\end{tabular}}}}%
    \put(0.44960502,0.11179945){\color[rgb]{0,0,0}\makebox(0,0)[lt]{\lineheight{1.25}\smash{\begin{tabular}[t]{l}$=$\end{tabular}}}}%
  \end{picture}%
\endgroup%

	\end{align*}
	Here we used the identity $\smallint(\lambda_{(2)}c) S(\lambda_{(1)}) = c$, which can be derived from the properties of the Haar integrals $\lambda$ and $\smallint$. 
\end{example}

In the remainder of this section we prove that the transport operator satisfies the conditions~(T\ref{T1}) and~(T\ref{T2}) from Section \ref{section:TranslationKitaev}. 
For this, we first show a useful commutation property of the transport operator and holonomies. Commuting the transport operator $T_{\rho}$ with a holonomy along the path $\gamma$ extends the path $\gamma$ to $\rho\circ\gamma$ - under suitable conditions on the paths.
\begin{lemma}
	\label{lemma:HolonomyExtension}
	Let $\rho:s_{1}\to s_{2}, \gamma:s_{0}\to s_{1}$ and $\rho\circ\gamma:s_{0}\to s_{2}$ be permissible paths between disjoint sites such that both $\gamma$ and $\rho\circ\gamma$ end to the right of a site and either (cf. Figure~\ref{fig:AllowedCompositions})
	\begin{compactenum}
		\item $\rho$ starts to the left of $s_{1}$, or
		\item $(\gamma,\rho^{-1})_{\prec}$, or
		\item $\rho^{-1}$ is a subpath of $\gamma$.
	\end{compactenum}
	Denote $F$ the twist at $s_{2}$. Then 
	\begin{align}
		T_{\rho}\circ  \Hol_{b,\gamma}^{ \alpha \vartriangleleft S(F^{(-2)})} \circ BA_{b,s_{1}}^{F^{(-1)}}  	
		&= \Hol_{b,\rho\circ\gamma}^{\alpha  \vartriangleleft S(F^{(-2)})}\circ BA_{b,s_{2}}^{F^{(-1)}} \circ T_{\rho}  
		\label{eq:HolonomyStretching}
	\end{align}
\end{lemma}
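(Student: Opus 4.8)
The plan is to unfold the transport operator on both sides via Definition~\ref{definition:TransportOperator} and reduce everything to a composite of vertex/face operators $BA_{b,\cdot}$ and holonomies $\Hol_{b,\cdot}$, which can then be rearranged with the commutation relations already available. Since $\rho\circ\gamma$ ends to the right of $s_2$ by hypothesis, so does $\rho$, and hence $T_\rho$ is given by~\eqref{eq:TransportOperatorBulkRight}; writing $\lambda\in D(H_b)$, $\smallint\in D(H_b)^*$ for the Haar integrals, $F$ for the twist at $s_2$ and $F'$ for a second independent copy of it, the left-hand side of~\eqref{eq:HolonomyStretching} becomes $BA_{b,s_1}^{\lambda}\circ\Hol_{b,\rho}^{\smallint\vartriangleleft S(F'^{(-2)})}\circ BA_{b,s_2}^{F'^{(-1)}}\circ\Hol_{b,\gamma}^{\alpha\vartriangleleft S(F^{(-2)})}\circ BA_{b,s_1}^{F^{(-1)}}$ and the right-hand side becomes $\Hol_{b,\rho\circ\gamma}^{\alpha\vartriangleleft S(F^{(-2)})}\circ BA_{b,s_2}^{F^{(-1)}}\circ BA_{b,s_1}^{\lambda}\circ\Hol_{b,\rho}^{\smallint\vartriangleleft S(F'^{(-2)})}\circ BA_{b,s_2}^{F'^{(-1)}}$. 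So the whole content is an equality between two explicit composites of $BA$'s and $\Hol$'s.

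The core step is to merge $\Hol_{b,\rho}$ with $\Hol_{b,\gamma}$ into $\Hol_{b,\rho\circ\gamma}$ on the left-hand side. First I would commute the $T_\rho$-internal operator $BA_{b,s_2}^{F'^{(-1)}}$ past $\Hol_{b,\gamma}$: in cases~1 and~2 the site $s_2$ is disjoint from the edges traversed by $\gamma$, so these simply commute, whereas in case~3 (where $\rho^{-1}$ is a subpath of $\gamma$, so $\gamma$ runs through $s_2$) I would invoke Corollary~\ref{corollary:DefectBoundaryHolonomyFaceVertex}, which moves $BA_{b,s_2}$ through at the cost of an $R$-matrix action. Then I would fuse the two holonomies using the recursive Definition~\ref{definition:HolonomyCompositeDefect} together with the joint relations of Lemma~\ref{lemma:HolonomyCommutators}; the hypotheses in cases~1--3 are exactly what guarantees that $\rho$ and $\gamma$ either do not cross, or satisfy $(\gamma,\rho^{-1})_{\prec}$, or that $\rho^{-1}$ is a subpath of $\gamma$ so the composite reduces, and in each case the $R$-matrix factors produced by the concatenation rule match those picked up in the previous commutation.

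The remaining ingredient is absorption of the Haar integrals. Because $\lambda$ is the Haar integral of $D(H_b)$ and $BA_{b,s_1}$ is an action (Proposition~\ref{proposition:OperatorActionDefect}), one has $BA_{b,s_1}^{\lambda}\circ BA_{b,s_1}^{h}=\varepsilon(h)\,BA_{b,s_1}^{\lambda}$, which collapses the extra vertex/face operators at $s_1$ created by the previous step; because $\smallint$ is the Haar integral of $D(H_b)^*$, the coefficients in the holonomy label simplify, and the identity $\smallint(\lambda_{(2)}c)\,S(\lambda_{(1)})=c$ used in the worked example above pins the surviving label along $\rho\circ\gamma$ to be exactly $\alpha\vartriangleleft S(F^{(-2)})$, leaving a single $BA_{b,s_2}^{F^{(-1)}}$ together with a factor $BA_{b,s_1}^{\lambda}\circ\Hol_{b,\rho}^{\smallint\vartriangleleft S(F'^{(-2)})}\circ BA_{b,s_2}^{F'^{(-1)}}=T_\rho$, i.e.\ the right-hand side.

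The main obstacle I expect is the case bookkeeping: one has to verify separately in cases~1, 2 and~3 that the pairs of paths occurring ($\rho$ with $\gamma$, $\rho^{-1}$ with $\gamma$, the vertex and face paths of $s_2$ with $\gamma$) form precisely the joints required to apply Lemma~\ref{lemma:HolonomyCommutators} and Corollary~\ref{corollary:DefectBoundaryHolonomyFaceVertex}, and that the $R$-matrix and twist-copy factors cancel as claimed. Case~3 is the delicate one, since there $\rho\circ\gamma$ is only well defined after cancelling the edges shared by $\rho$ and $\rho^{-1}$ (and one must also handle the possibility that the subpath $\rho^{-1}$ of $\gamma$ is followed by a closed loop at $s_1$); consistency of the recursive definition of the holonomy with this reduction is exactly what Lemma~\ref{lemma:HolonomyWellDefined} and its analogue for the defect model provide.
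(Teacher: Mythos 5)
Your overall strategy (unfold $T_{\rho}$, commute the face/vertex operators, merge the two holonomies, absorb the Haar integrals, reassemble $T_{\rho}$) is the same as the paper's, but two of your steps do not go through as stated. First, the reduction ``since $\rho\circ\gamma$ ends to the right of $s_{2}$, so does $\rho$, hence $T_{\rho}$ is given by \eqref{eq:TransportOperatorBulkRight}'' is false in case~3: there $\rho^{-1}$ cancels inside $\rho\circ\gamma$, so the final edge of $\rho\circ\gamma$ is unrelated to that of $\rho$, and in fact $\rho$ is a right-left path ending to the \emph{left} of $s_{2}$, so $T_{\rho}$ is given by \eqref{eq:TransportOperatorBulkLeft}. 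The paper's proof makes exactly this case distinction (case~1 left-right, case~2 right-right, case~3 right-left) and switches between \eqref{eq:HolonomyCommutatorsEndLeftDefect} and \eqref{eq:HolonomyCommutatorsEndRightDefect}, resp.\ the start versions, accordingly. Relatedly, in case~3 the site $s_{2}$ is not an endpoint of $\gamma$, so Corollary~\ref{corollary:DefectBoundaryHolonomyFaceVertex} does not apply to commute $BA_{b,s_{2}}$ past $\Hol_{b,\gamma}$; one needs the joint relations of Lemma~\ref{lemma:HolonomyCommutators}, and the cancellation of the resulting $R$-matrix factors has to be exhibited, not just asserted.

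The more serious gap is the merging step itself. The labels of $\Hol_{b,\rho}^{\smallint\vartriangleleft S(F'^{(-2)})}$ and $\Hol_{b,\gamma}^{\alpha\vartriangleleft S(F^{(-2)})}$ are independent elements of $D(H_{b})^{*}$, not the two legs of one coproduct, so Definition~\ref{definition:HolonomyCompositeDefect} cannot fuse them directly; moreover the right-hand side of \eqref{eq:HolonomyStretching} still contains a full copy of $\Hol_{b,\rho}$ inside $T_{\rho}$, so a plain fusion into a single $\Hol_{b,\rho\circ\gamma}$ would be wrong even if it were available. What is actually needed is the intermediate ``stretching'' identity
\begin{align*}
\Hol_{b,\rho}^{\smallint\vartriangleleft S(h)}\,\Hol_{b,\gamma}^{\alpha}
=\Hol_{b,\rho\circ\gamma}^{\alpha\vartriangleleft S(h_{(2)})}\,\Hol_{b,\rho}^{\smallint\vartriangleleft S(h_{(1)})},
\end{align*}
which the paper proves first (in all three cases) by using the absorption property of the Haar integral, $\alpha_{(1)}\smallint\oo\alpha_{(2)}=\smallint\oo\alpha$ from \eqref{eq:HaarIntegral}, then splitting $\Hol_{b,\rho}$ of the product label via \eqref{eq:LeftRightHolonomy}--\eqref{eq:RightRightHolonomy} (this is where the Heisenberg-double/$R$-matrix terms arise and cancel via $(\id\oo S)(R)=R^{-1}$ in case~2), and finally recombining with \eqref{eq:HolonomyRecursionStandardDefect}. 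The identity $\smallint(\lambda_{(2)}c)S(\lambda_{(1)})=c$ you invoke belongs to the worked example of the transport operator and does not perform this step. Finally, the regrouping of the two twist copies -- so that exactly one $BA_{b,s_{2}}^{F^{(-1)}}$ survives outside and a complete $T_{\rho}$ reassembles -- requires the cocycle identity \eqref{eq:TwistDelta} combined with the end-point commutation relations and Proposition~\ref{proposition:OperatorsCommute}; this bookkeeping, which is the bulk of the paper's computation, is missing from your outline.
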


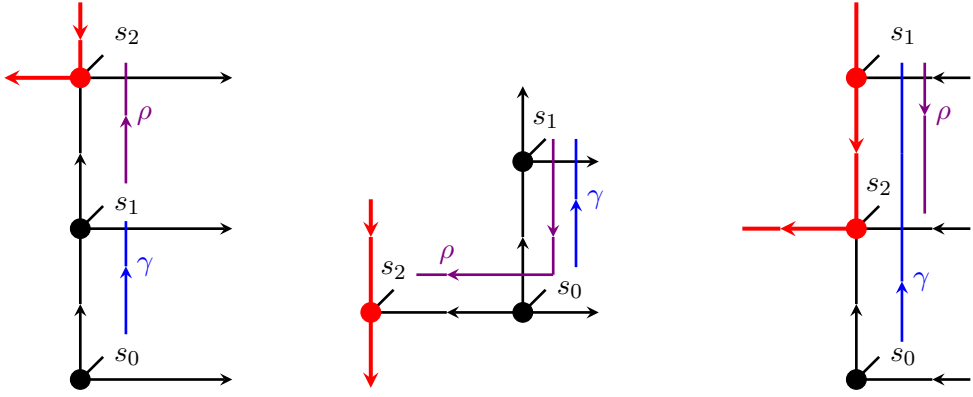
\begin{figure}[H]
\begin{center}
\begin{tikzpicture}[scale=1]
\draw[line width=1pt, ->,>=stealth](0,-2)--(2,-2);
\draw[line width=1pt, ->,>=stealth](0,0)--(2,0);
\draw[line width=1pt, ->,>=stealth](0,2)--(2,2);
\draw[line width=1pt, ->,>=stealth](0,-2)--(0,-1);
\draw[line width=1pt, ->,>=stealth](0,-1)--(0,1);
\draw[line width=1pt](0,1)--(0,2);
\draw[line width=1.5pt, color=red,->,>=stealth] (0,3)--(0,2.5);
\draw[line width=1.5pt, color=red] (0,2)--(0,2.5);
\draw[line width=1.5pt, color=red,->,>=stealth] (0,2)--(-1,2);
\draw[line width=1pt, color=black](0,-2)--(.3,-1.7)node[anchor=west]{$s_0$};
\draw[line width=1pt, color=black](0,0)--(.3,.3)node[anchor=west]{$s_1$};
\draw[line width=1pt, color=black](0,2)--(.3,2.3)node[anchor=south west]{$s_2$};
\draw[line width=1pt, color=blue,->,>=stealth] (.6,-1.4)--(.6,-.5) node[anchor=west]{$\gamma$};
\draw[line width=1pt, color=blue] (.6,-.5)--(.6,.1);

\draw[line width=1pt, color=violet,->,>=stealth] (.6,.6)--(.6,1.5) node[anchor=west]{$\rho$};
\draw[line width=1pt, color=violet] (.6,1.5)--(.6,2.2);
\draw[color=red, fill=red] (0,2) circle (.13) ;
\draw[color=black, fill=black] (0,0) circle (.13) ;
\draw[color=black, fill=black] (0,-2) circle (.13) ;
\end{tikzpicture}
\qquad
\qquad
\begin{tikzpicture}[scale=1]
\draw[line width=1pt, color=black,->,>=stealth](0,0)--(0,1);
\draw[line width=1pt, color=black,->,>=stealth](0,1)--(0,3);
\draw[line width=1pt, color=black, ->,>=stealth](0,0)--(1,0);
\draw[line width=1pt, color=black,->,>=stealth](0,0)--(-1,0);
\draw[line width=1pt, color=black,](-2,0)--(-1,0);
\draw[line width=1pt, color=black, ->,>=stealth](0,2)--(1,2);
\draw[line width=1.5pt, color=red,->,>=stealth] (-2,1.5)--(-2,1);
\draw[line width=1.5pt, color=red,->,>=stealth] (-2,1)--(-2,-1);
\draw[line width=1pt, color=black] (-2,0)--(-1.7,.3) node[anchor=south]{$s_2$};
\draw[line width=1pt, color=black] (0,0)--(.3,.3) node[anchor=west]{$s_0$};
\draw[line width=1pt, color=black] (0,2)--(.3,2.3) node[anchor=south]{$s_1$};
\draw[line width=1pt, color=blue,->,>=stealth] (.7,.6)--(.7, 1.5) node[anchor=west]{$\gamma$};
\draw[line width=1pt, color=blue,] (.7,1.5)--(.7, 2.3);
\draw[line width=1pt, color=violet,->,>=stealth] (.4,2.3)--(.4, 1);
\draw[line width=1pt, color=violet,] (.4,1)--(.4, .5);
\draw[line width=1pt, color=violet,->,>=stealth] (.4,.5)--(-1, .5) node[anchor=south]{$\rho$};
\draw[line width=1pt, color=violet,] (-1,.5)--(-1.4, .5);
\draw[color=red, fill=red] (-2,0) circle (.13) ;
\draw[color=black, fill=black] (0,0) circle (.13) ;
\draw[color=black, fill=black] (0,2) circle (.13) ;
\end{tikzpicture}
\qquad
\qquad
\begin{tikzpicture}[scale=1]
\draw[line width=1pt, color=black, ->,>=stealth] (1.5,-2)--(1,-2);
\draw[line width=1pt, color=black] (1,-2)--(0,-2);
\draw[line width=1pt, color=black, ->,>=stealth] (1.5,0)--(1,0);
\draw[line width=1pt, color=black] (1,0)--(0,0);
\draw[line width=1pt, color=black, ->,>=stealth] (1.5,2)--(1,2);
\draw[line width=1pt, color=black] (1,2)--(0,2);
\draw[line width=1pt, color=black, ->,>=stealth] (0,-2)--(0,-1);
\draw[line width=1pt, color=black] (0,-1)--(0,0);
\draw[line width=1.5pt, color=red, ->,>=stealth] (0,3)--(0,1);
\draw[line width=1.5pt, color=red] (0,1)--(0,0);
\draw[line width=1.5pt, color=red, ->,>=stealth] (0,0)--(-1,0);
\draw[line width=1.5pt, color=red] (-1,0)--(-1.5,0);
\draw[line width=1pt, color=black] (0,-2)--(.3,-1.7)node[anchor=west]{$s_0$};
\draw[line width=1pt, color=black] (0,0)--(.3,.3)node[anchor=south]{$s_2$};
\draw[line width=1pt, color=black] (0,2)--(.3,2.3) node[anchor=south west]{$s_1$};
\draw[line width=1pt, color=blue, ->,>=stealth] (.6,-1.5)--(.6,-.7) node[anchor=west]{$\gamma$};
\draw[line width=1pt, color=blue, ] (.6,-.7)--(.6,1) ;
\draw[line width=1pt, color=blue, ] (.6,1)--(.6,2.2);
\draw[line width=1pt, color=violet, ->,>=stealth] (.9,2.2)--(.9,1.5) node[anchor=west]{$\rho$};
\draw[line width=1pt, color=violet, ] (.9,1.5)--(.9,.2);
\draw[color=black, fill=black] (0,-2) circle (.13);
\draw[color=red, fill=red] (0,0) circle (.13);
\draw[color=red, fill=red] (0,2) circle (.13);
\end{tikzpicture}
\end{center}
\caption{From left to right: Paths $\gamma:s_{0}\to s_{1}, \rho:s_{1}\to s_{2}$ such that 1. $\rho$ starts to the left of $s_{1}$, 2.$(\gamma,\rho^{-1})_{\prec}$, 3. $\rho^{-1}$ is a subpath of $\gamma$.}
\label{fig:AllowedCompositions}
\end{figure}
\begin{proof}
	We first make some observations about $\rho$ in case~1 and case~3. 
	If $\rho$ starts to the left of $s_{1}$, then $(\rho,\gamma)_{\emptyset}$, since $\rho\circ\gamma$ would not be a simple path otherwise.
	If $\rho^{-1}$ is a subpath of $\gamma$, then $(\rho, \rho\circ\gamma)_{\emptyset}$, since $\rho\circ\gamma$ would end to the left of $s_{2}$ otherwise.
	
	$\bullet$ Step 1: ~We first show that in all three cases we have
	\begin{align}
		\Hol_{b,\rho}^{\smallint  \vartriangleleft S(h)} \Hol_{b,\gamma}^{\alpha} 
		=\Hol_{b,\rho\circ\gamma}^{\alpha \vartriangleleft S(h_{(2)})} \Hol_{b,\rho}^{\smallint  \vartriangleleft S(h_{(1)})},
		\label{eq:TransportHelper1}
	\end{align}
	where $\lambda\in D(H_{b})$ and $\smallint \in D(H_{b})^{*}$ are the Haar integrals of $D(H)$ and $D(H)^{*}$ and $h\in D(H_{b})$.

1.~If $\rho$ starts to the left of $s_{1}$, we have
	\begin{align*}
		\Hol_{b,\rho}^{\smallint  \vartriangleleft S(h)} 
		\Hol_{b,\gamma}^{\alpha } 
		&\overset{\eqref{eq:HaarIntegral}}{=}
		\Hol_{b,\rho}^{ (\alpha_{(1)} \smallint) \vartriangleleft S(h)} 
		\Hol_{b,\gamma}^{\alpha_{(2)}}
		\overset{\eqref{eq:LeftRightHolonomy}}{=}
		\Hol_{b,\rho}^{ \alpha_{(1)}\vartriangleleft S(h_{(2)})} 
		\Hol_{b,\rho}^{ \smallint \vartriangleleft S(h_{(1)})} 
		\Hol_{b,\gamma}^{\alpha_{(2)}}
		\\
		&\overset{\eqref{eq:NonOverlap}}{=}
		\Hol_{b,\rho}^{ \alpha_{(1)}\vartriangleleft S(h_{(2)})} 
		\Hol_{b,\gamma}^{\alpha_{(2)}}
		\Hol_{b,\rho}^{ \smallint \vartriangleleft S(h_{(1)})} 
		\overset{\eqref{eq:HolonomyRecursionStandard}}{=}
		\Hol_{b,\rho\circ\gamma}^{ \alpha\vartriangleleft S(h_{(2)})} 
		\Hol_{b,\rho}^{ \smallint \vartriangleleft S(h_{(1)})} 
	\end{align*}
	2.~If $(\gamma, \rho^{-1})_{\prec}$, we obtain
	\begin{align*}
		\Hol_{b,\rho}^{\smallint  \vartriangleleft S(h)} 
		\Hol_{b,\gamma}^{\alpha } 
		&\overset{\eqref{eq:HaarIntegral}}{=}
		\Hol_{b,\rho}^{\alpha_{(1)}\smallint  \vartriangleleft S(h)} 
		\Hol_{b,\gamma}^{\alpha_{(2)}}
		\overset{\eqref{eq:RightRightHolonomy}}{=}
		\Hol_{b,\rho}^{R^{(1)} \vartriangleright \alpha_{(1)} \vartriangleleft S(h_{(2)})} 
		\Hol_{b,\rho}^{R^{(2)} \vartriangleright \smallint  \vartriangleleft S(h_{(1)})} 
		\Hol_{b,\gamma}^{\alpha_{(2)}}
		\\
		&\overset{\eqref{eq:HolonomyReversal}}{=}
		\Hol_{b,\rho}^{R^{(1)} \vartriangleright \alpha_{(1)} \vartriangleleft S(h_{(2)})} 
		\Hol_{b,\rho^{-1}}^{h_{(1)} \vartriangleright S(\smallint)  \vartriangleleft  S( R^{(2)} )} 
		\Hol_{b,\gamma}^{\alpha_{(2)}}
		\\
		&\overset{\eqref{eq:LeftJoint}}{=}
		\Hol_{b,\rho}^{R^{(1)} \vartriangleright \alpha_{(1)} \vartriangleleft S(h_{(2)})} 
		\Hol_{b,\gamma}^{\alpha_{(2)} \vartriangleleft R^{(3)}}
		\Hol_{b,\rho^{-1}}^{ h_{(1)} \vartriangleright S(\smallint)  \vartriangleleft  S( R^{(2)}) R^{(4)} }
		\\
		&\overset{(*)}{=}
		\Hol_{b,\rho}^{R^{(1)} R^{(3)} \vartriangleright \alpha_{(1)} \vartriangleleft S(h_{(2)})} 
		\Hol_{b,\gamma}^{\alpha_{(2)} }
		\Hol_{b,\rho^{-1}}^{ h_{(1)} \vartriangleright S(\smallint)  \vartriangleleft  S( R^{(2)}) R^{(4)} }
		\\
		&\overset{(**)}=
		\Hol_{b,\rho}^{\alpha_{(1)} \vartriangleleft S(h_{(2)})} 
		\Hol_{b,\gamma}^{\alpha_{(2)} }
		\Hol_{b,\rho^{-1}}^{ h_{(1)} \vartriangleright S(\smallint) }
		\\
		&\overset{\eqref{eq:HolonomyRecursionStandard}}=
		\Hol_{b,\rho\circ\gamma}^{\alpha \vartriangleleft S(h_{(2)})} 
		\Hol_{b,\rho^{-1}}^{ h_{(1)} \vartriangleright S(\smallint) }
		\overset{\eqref{eq:HolonomyReversal}}{=}
		\Hol_{b,\rho\circ\gamma}^{\alpha \vartriangleleft S(h_{(2)})} 
		\Hol_{b,\rho}^{\smallint  \vartriangleleft S(h_{(1)}) },
	\end{align*}
	where we used the identity $\left( k  \vartriangleright \alpha_{(1)}  \right)  \oo \alpha_{(2)} = \alpha_{(1)} \oo  \left( \alpha_{(2)}  \vartriangleleft k \right)$ in $(*)$ and the identity $\left( \id  \oo S \right)(R)=R^{-1}$ in $(**)$.\\

	3.~If $\rho^{-1}$ is a subpath of $\gamma$, then we have
	\begin{align*}
		\Hol_{b,\rho}^{\smallint  \vartriangleleft S(h)} 
		\Hol_{b,\gamma}^{\alpha } 
		&\overset{\eqref{eq:HaarIntegral}}{=}
		\Hol_{b,\rho}^{ (\alpha_{(1)} \smallint) \vartriangleleft S(h)} 
		\Hol_{b,\gamma}^{\alpha_{(2)}}
		\overset{\eqref{eq:LeftRightHolonomy}}{=}
		\Hol_{b,\rho}^{ \alpha_{(1)}\vartriangleleft S(h_{(2)})} 
		\Hol_{b,\rho}^{ \smallint \vartriangleleft S(h_{(1)})} 
		\Hol_{b,\gamma}^{\alpha_{(2)}}
		\\
		&\overset{\eqref{eq:NonOverlap}}{=}
		\Hol_{b,\rho}^{ \alpha_{(1)}\vartriangleleft S(h_{(2)})} 
		\Hol_{b,\gamma}^{\alpha_{(2)}}
		\Hol_{b,\rho}^{ \smallint \vartriangleleft S(h_{(1)})} 
		\overset{\eqref{eq:HolonomyRecursionStandardDefect}}{=}
		\Hol_{b,\rho\circ\gamma}^{ \alpha\vartriangleleft S(h_{(2)})} 
		\Hol_{b,\rho}^{ \smallint \vartriangleleft S(h_{(1)})}. 
	\end{align*}
	$\bullet$ Step 2: We use \eqref{eq:TransportHelper1} to prove~\eqref{eq:HolonomyStretching}. For this, we have three cases, namely that $\rho$ is a left-right, right-right or right-left path. The case that $\rho$ is a left-left path does not arise, since this would imply that $\rho\circ\gamma$ ends to the left of  a site, in contradiction to the assumptions. 
	The other  three cases correspond to case 1.~2.~and 3.~in Lemma \ref{lemma:HolonomyExtension}, respectively, and are depicted in Figure \ref{fig:AllowedCompositions}.
	 
	 1.~If $\rho$ is a left-right path, we have
	\begin{align*}
		T_{\rho}\circ \Hol_{b,\gamma}^{ \alpha  \vartriangleleft S( F^{(-2)})}BA_{b,s_{1}}^{F^{(-1)}}
		&\overset{\eqref{eq:TransportOperatorBulkRight}}{=}
		BA^{\lambda}_{b,s_{1}}
		\Hol_{b,\rho}^{\smallint  \vartriangleleft S( F^{(-4)})} 
		BA_{b,s_{2}}^{F^{(-3)}} 
		\Hol_{b,\gamma}^{ \alpha  \vartriangleleft S(F^{(-2)})}
		BA_{b,s_{1}}^{F^{(-1)}}
		\\
		&\overset{\eqref{eq:NonOverlap}}=
		BA^{\lambda}_{b,s_{1}}
		\Hol_{b,\rho}^{\smallint  \vartriangleleft S(F^{(-4)})} 
		\Hol_{b,\gamma}^{ \alpha  \vartriangleleft S(F^{(-2)})}
		BA_{b,s_{2}}^{F^{(-3)}} 
		BA_{b,s_{1}}^{F^{(-1)}}
		\\
		&\overset{\eqref{eq:TransportHelper1}}{=}
		BA^{\lambda}_{b,s_{1}}
		\Hol_{b,\rho\circ\gamma}^{ \alpha  \vartriangleleft S(F^{(-4)}_{(2)}F^{(-2)})}
		\Hol_{b,\rho}^{\smallint  \vartriangleleft S(F^{(-4)}_{(1)})} 
		BA_{b,s_{2}}^{F^{(-3)}} 
		BA_{b,s_{1}}^{F^{(-1)}}
		\\
		&\overset{\eqref{eq:MiddleJoint}}=
		\Hol_{b,\rho\circ\gamma}^{ \alpha  \vartriangleleft S(F^{(-4)}_{(2)}F^{(-2)})}
		BA^{\lambda}_{b,s_{1}}
		\Hol_{b,\rho}^{\smallint  \vartriangleleft S(F^{(-4)}_{(1)})} 
		BA_{b,s_{2}}^{F^{(-3)}} 
		BA_{b,s_{1}}^{F^{(-1)}}
		\\
		&\overset{\eqref{eq:HolonomyCommutatorsStartLeft}}{=}
		\Hol_{b,\rho\circ\gamma}^{ \alpha  \vartriangleleft S(F^{(-4)}_{(2)}F^{(-2)})}
		BA_{b,s_{1}}^{\lambda F^{(-1)}_{(2)}}
		\Hol_{b,\rho}^{S(F^{-1}_{(1)})  \vartriangleright\smallint  \vartriangleleft S(F^{(-4)}_{(1)})} 
		BA_{b,s_{2}}^{F^{(-3)}}
		\\
		&\overset{\eqref{eq:HaarIntegral}}=
		\Hol_{b,\rho\circ\gamma}^{ \alpha  \vartriangleleft S(F^{(-4)}_{(2)}F^{(-2)})}
		BA^{\lambda}_{b,s_{1}}
		\Hol_{b,\rho}^{S(F^{-1})  \vartriangleright \smallint  \vartriangleleft S(F^{(-4)}_{(1)})} 
		BA_{b,s_{2}}^{F^{(-3)}}
		\\
		&\overset{(*)}{=}
		\Hol_{b,\rho\circ\gamma}^{ \alpha  \vartriangleleft S(F^{(-4)}_{(2)}F^{(-2)})}
		BA^{\lambda}_{b,s_{1}}
		\Hol_{b,\rho}^{\smallint  \vartriangleleft S( F^{(-4)}_{(1)}F^{(-1)})} 
		BA_{b,s_{2}}^{F^{(-3)}}
		\\
		&\overset{\eqref{eq:TwistDelta}}{=}
		\Hol_{b,\rho\circ\gamma}^{ \alpha  \vartriangleleft S(F^{(-4)})}
		BA^{\lambda}_{b,s_{1}}
		\Hol_{b,\rho}^{\smallint  \vartriangleleft S(F^{(-3)}_{(2)} F^{(-2)})} 
		BA_{b,s_{2}}^{F^{(-3)}_{(1)}F^{(-1)}}
		\\
		&\overset{\eqref{eq:HolonomyCommutatorsEndRight}}{=}
		\Hol_{b,\rho\circ\gamma}^{ \alpha  \vartriangleleft S(F^{(-4)})}
		BA^{\lambda}_{b,s_{1}}
		BA_{b,s_{2}}^{F^{(-3)}}
		\Hol_{b,\rho}^{\smallint  \vartriangleleft S( F^{(-2)})} 
		BA_{b,s_{2}}^{F^{(-1)}}
		\\
		&\overset{\ref{proposition:OperatorsCommute}}=
		\Hol_{b,\rho\circ\gamma}^{ \alpha  \vartriangleleft S(F^{(-4)})}
		BA_{b,s_{2}}^{F^{(-3)}}
		BA^{\lambda}_{b,s_{1}}
		\Hol_{b,\rho}^{\smallint  \vartriangleleft S (F^{(-2)})} 
		BA_{b,s_{2}}^{F^{(-1)}}
		\\
		&\overset{\eqref{eq:TransportOperatorBulkRight}}{=}
		\Hol_{b,\rho\circ\gamma}^{ \alpha  \vartriangleleft S(F^{(-4)})}
		BA_{b,s_{2}}^{F^{(-3)}}
		T_{\rho}
	\end{align*}
	where we used the identity $h  \vartriangleright \smallint = \smallint  \vartriangleleft h $ for $h\in D(H_{b})$ in (*).

	2.~and 3.:~The computations  for right-right and right-left paths  are analogous. If $\rho$ ends to the left of $s_1$, one simply has to exchange $F^{(-4)}$ and $F^{(-3)}$ and use~\eqref{eq:HolonomyCommutatorsEndLeft} instead of~\eqref{eq:HolonomyCommutatorsEndRight}. If $\rho$ starts to the right of $s_1$, one has to use~\eqref{eq:HolonomyCommutatorsStartRight} instead of~\eqref{eq:HolonomyCommutatorsStartLeft}.

\end{proof}

The following proposition shows that the transport operator $T_{\rho}$ does indeed move excitations along a permissible path $\rho:s_{1}\to s_{2}$ from $s_{1}$ to $s_{2}$. It fuses the excitations $M_{1}$ at $s_{1}$ and $M_{2}$ at $s_{2}$ via the (twisted) tensor product at the site $s_2$. In other words, 
 $T_{\rho}$ satisfies condition~(T\ref{T1}) from Section~\ref{section:TranslationKitaev}. This result can also be seen as a counterpart of~Proposition~\ref{proposition:HolonomyTwistedCondition}, which describes the fusion of the $D(H_b)$-module $D(H_b)^*$ with the excitation at $s_1$ by the holonomy along $\rho$. 
 
\begin{proposition}[Fusion]
	\label{proposition:TransportOperatorTensorProduct}
	Let $\rho:s_{1} \to s_{2}^{\eta}$ a permissible path between disjoints sites in a bulk region $b$ with $\eta\in \left\{ L,R \right\}$ and denote $F$ the twist at the site $s_{2}$.
	Then $T_{\rho}$ induces a $D(H_{b})$-linear map
	\begin{align}
		\label{eq:TransportOperatorTensorProduct}
		T_{\rho}: \HSpace\left( s_{1}, M_{1}, s_{2},M_{2} \right)
		\to
		\HSpace\left( s_{1}, \C, s_{2}, M_{2} \oo_{F} M_{1} \right) \quad\text{if $\eta=R$}.
		\\
		\label{eq:TransportOperatorTensorProductLeft}
		T_{\rho}: \HSpace\left( s_{1}, M_{1}, s_{2},M_{2} \right)
		\to
		\HSpace\left( s_{1}, \C, s_{2}, M_{1}  \oo_{F} M_{2}\right) \quad\text{if $\eta=L$}.
	\end{align}
	Here $M_{1},M_{2}$ are $D(H_{b})$-modules and $\HSpace\left( s_{1}, M_{1}, s_{2},M_{2} \right)$ is defined as in~\eqref{eq:MultipleExcitationSpace}.
\end{proposition}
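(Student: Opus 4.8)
The plan is to reduce the statement, via the definition of $T_\rho$, to the module-homomorphism properties already established for the holonomy $\Hol_{b,\rho}$ and the twisted holonomy $\widetilde{\Hol}_{b,\rho}$, together with the behaviour of the Haar integrals of $D(H_b)$ and $D(H_b)^*$. First I would treat the case $\eta = R$; the case $\eta = L$ is entirely analogous with left/right interchanged throughout, using \eqref{eq:TransportOperatorBulkLeft} in place of \eqref{eq:TransportOperatorBulkRight} and the appropriate coregular action. Expanding $T_\rho$ by \eqref{eq:TransportOperatorBulkRight}, one has $T_\rho = BA_{b,s_1}^{\lambda}\circ \Hol_{b,\rho}^{\smallint \vartriangleleft S(F^{(-2)})}\circ BA_{b,s_2}^{F^{(-1)}}$. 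The operator $BA_{b,s_1}^{\lambda}$ is the projector onto the trivial excitation at $s_1$ (it is $B^{\smallint}_{s_1}A^{\lambda}_{s_1}$ in the untwisted case, and an analogous projector for defect sites by Proposition \ref{proposition:OperatorActionDefect} and Remark \ref{remark:TwistedSemisimpleHopfalgebra}, since $\lambda$ remains a Haar integral after twisting). Thus the target of $T_\rho$ lies in $\HSpace(s_1,\C,\dots)$ automatically, which handles the $s_1$-component of the claim.

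For the $s_2$-component, the key point is that inserting the Haar integral $\smallint$ of $D(H_b)^*$ into the holonomy along $\rho$ realizes, at the endpoints of $\rho$, the copy of the regular bimodule $D(H_b)^*$ in the sense of Corollary \ref{corollary:HolonomyFusion}. Concretely, I would first show that the composite $m \mapsto \Hol_{b,\rho}^{\smallint \vartriangleleft S(F^{(-2)})}\circ BA_{b,s_2}^{F^{(-1)}}(m)$, restricted to $\HSpace(s_1,M_1,s_2,M_2)$, lands in $\HSpace(s_1, D(H_b)^* \otimes M_1, s_2, M_2 \otimes_F D(H_b)^*)$. This follows by combining Proposition \ref{proposition:HolonomyTwistedCondition} — which gives that $\widetilde{\Hol}_{b,\rho}$ is a $D(H_b)_{F}$-module homomorphism with $D(H_b)^*$ equipped with the left (at $s_1$) or right-coregular-derived (at $s_2$) action — with the observation that $\widetilde{\Hol}_{b,\rho}^{\smallint}$ and the above composite agree up to the definition \eqref{eq:HolonomyTwistedDefinition} of the twisted holonomy and the placement of the integral; here one uses that the Haar integral $\smallint$ is a two-sided integral so that $\smallint \vartriangleleft S(F^{(-2)}) $ absorbed with $BA_{b,s_2}^{F^{(-1)}}$ reproduces exactly the twist data $\varphi, \psi$ from \eqref{eq:twistphidef}--\eqref{eq:twistpsidef2}. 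Then applying $BA_{b,s_1}^{\lambda}$ contracts the $D(H_b)^*$-factor at $s_1$ against the trivial module: since $D(H_b)^* \cong \bigoplus_d d\otimes d^*$ as a bimodule \eqref{eq:Drinfel'dDualArtinWedderburn}, the invariant part of $d \otimes d^* \otimes M_1$ under the $s_1$-action picks out, for each $d$, a copy of $d^*$ tensored into the $s_2$-slot, and summing over $d$ identifies the resulting $s_2$-module with $M_2 \otimes_F M_1$ (using the duality $\bigoplus_d d^* \otimes \mathrm{Hom}(d, M_1) \cong M_1$ and that $\otimes_F$ is the tensor product of $D(H_b)_F\mathrm{-Mod}$ when $s_2$ is a boundary or defect site, by (H\ref{H1})--(H\ref{H3}) and \eqref{eq:FaceVertexOperatorDefect}).

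$D(H_b)$-linearity of $T_\rho$ as a map between excitation spaces is then a formal consequence: by Proposition \ref{proposition:HolonomyTwistedCondition} the inner composite intertwines the $D(H_b)$-actions at $s_2$ appropriately, and $BA_{b,s_1}^{\lambda}$ commutes with the $D(H_b)$-action at $s_2$ by Proposition \ref{proposition:OperatorsCommute} (the sites $s_1$ and $s_2$ are disjoint, or a pair of defect sites). Alternatively, and perhaps more cleanly, I would deduce this directly from Lemma \ref{lemma:HolonomyExtension}: taking $\gamma$ to be the trivial path (or a short vertex/face path) at $s_2$ shows $T_\rho$ commutes with the vertex and face operators at $s_2$ up to the action on the fused module, which is precisely $D(H_b)$-linearity with respect to the $\otimes_F$-module structure.

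The main obstacle I anticipate is bookkeeping the twist $F$ correctly: one must verify that the particular combination $\smallint \vartriangleleft S(F^{(-2)})$ together with $BA_{b,s_2}^{F^{(-1)}}$ in Definition \ref{definition:TransportOperator} is exactly what is needed so that the fused $s_2$-module is $M_2 \otimes_F M_1$ (with the twisted tensor product of Lemma \ref{lemma:TwistedHopfAlgebra} and Proposition \ref{proposition:FunctorTranslation}) rather than the untwisted $M_2 \otimes M_1$ or some other twist. This is where the cocycle identity \eqref{eq:TwistDelta} and the coherence data \eqref{eq:TwistCoherenceData} enter, and it parallels the computation already carried out in the proof of Proposition \ref{proposition:HolonomyTwistedCondition}; I would model the argument on that proof, using \eqref{eq:HolonomyCommutatorsStartLeftDefect}--\eqref{eq:HolonomyCommutatorsEndRightDefect} and the fact that $\lambda$, $\smallint$ are unchanged under twisting (Remark \ref{remark:TwistedSemisimpleHopfalgebra}). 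Everything else — the vanishing of the $s_1$-excitation, the identification of the summand structure — is a routine application of Artin--Wedderburn \eqref{eq:Drinfel'dDualArtinWedderburn} and Corollary \ref{corollary:HolonomyFusion}.
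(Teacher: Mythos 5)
Your skeleton agrees with the paper's: the $s_{1}$-statement holds because $BA_{b,s_{1}}^{\lambda}$ is (by the Haar property of $\lambda$) a projector onto the invariants of the $s_{1}$-action, and the entire substance of the proposition is the intertwining at $s_{2}$. The paper proves exactly two operator identities, $BA_{b,s_{1}}^{k}\circ T_{\rho}=\varepsilon(k)\,T_{\rho}$ and $BA_{b,s_{2}}^{k}\circ T_{\rho}=T_{\rho}\circ BA_{b,s_{2}}^{k_{(F1)}}\circ BA_{b,s_{1}}^{k_{(F2)}}$, by a direct computation with Corollary~\ref{corollary:DefectBoundaryHolonomyFaceVertex}, Lemma~\ref{lemma:TwistedHopfAlgebra}, the Haar integral properties (including the cyclicity $\smallint\vartriangleleft S(h)=S(h)\vartriangleright\smallint$) and Proposition~\ref{proposition:OperatorsCommute}; these identities give both the containment and the $D(H_{b})$-linearity at once. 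In your proposal the second identity is never actually established, and the two places where you claim it comes for free contain genuine gaps. First, the asserted identification of $\Hol_{b,\rho}^{\smallint\vartriangleleft S(F^{(-2)})}\circ BA_{b,s_{2}}^{F^{(-1)}}$ with the twisted holonomy $\widetilde{\Hol}_{b,\rho}^{\smallint}$ is literally true only when $s_{1}$ is a bulk site: for a boundary or defect starting site, \eqref{eq:HolonomyTwistedDefinition} and \eqref{eq:twistphidef}, \eqref{eq:twistphidef2} add the dressing $F_{1}^{(-1)}\vartriangleright(\cdot)$ and $BA_{b,s_{1}}^{F_{1}^{(-2)}}$ by the twist $F_{1}$ at $s_{1}$, which Definition~\ref{definition:TransportOperator} does not contain. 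This extra dressing is not removed by a counit argument, because it sits on the opposite side of the holonomy from $BA_{b,s_{1}}^{\lambda}$; to absorb it into $\lambda$ you must first commute it through $\Hol_{b,\rho}$ via \eqref{eq:HolonomyCommutatorsStartLeftDefect} or \eqref{eq:HolonomyCommutatorsStartRightDefect}, which is already a piece of the computation you are trying to avoid.

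Second, $D(H_{b})$-linearity is not a formal consequence of Proposition~\ref{proposition:HolonomyTwistedCondition}: that proposition intertwines the $s_{2}$-action with the coregular action on the \emph{argument slot} $D(H_{b})^{*}$, whereas the statement to be proven requires the action $BA_{b,s_{1}}^{k_{(F2)}}$ at the \emph{site} $s_{1}$ on the source. Converting the coregular action on $\smallint$ into an action at $s_{1}$ — via the cyclicity of $\smallint$, the start-of-path commutation relation and absorption into $\lambda$ by \eqref{eq:HaarIntegral} — is precisely the computational core of the paper's proof, so at this point you are not bypassing it but reproducing it; your closing remark that you would model this on the proof of Proposition~\ref{proposition:HolonomyTwistedCondition} concedes as much, but the proposal as written leaves the identity unproven. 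The suggested shortcut via Lemma~\ref{lemma:HolonomyExtension} with $\gamma$ trivial (or a vertex/face path at $s_{2}$) does not apply: that lemma needs a genuine path $\gamma:s_{0}\to s_{1}$ between disjoint sites ending at the start of $\rho$, and its conclusion \eqref{eq:HolonomyStretching} has a different shape from the needed commutator with $BA_{b,s_{2}}$. Finally, once the two operator identities are available, the detour through Corollary~\ref{corollary:HolonomyFusion}, \eqref{eq:Drinfel'dDualArtinWedderburn} and the contraction of $\bigoplus_{d}d\oo d^{*}$ is unnecessary (it can only deliver the isotypic containment, not the linearity), so the cleanest repair of your argument is to prove the two identities directly, as the paper does.
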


\begin{proof}
	We only prove the first claim as the second claim follows by analogous computation. In this case we need to show the two identities
	\begin{align}
		BA_{b,s_{1}}^{k}\circ T_{\rho} = \varepsilon(k) T_{\rho}, \quad
		BA_{b,s_{2}}^{k} \circ T_{\rho} = T_{\rho} \circ 	BA_{b,s_{2}}^{k_{(F1)}} \circ	BA_{b,s_{1}}^{k_{(F2)}} 
		\qquad \text{for $k\in D(H)$},
		\label{eq:H2}
	\end{align}
	where $F\Delta F^{-1}: D(H)\to D(H)\oo D(H)$, $k\mapsto k_{(F1)}\oo k_{(F2)}$ is the twisted comultiplication.
	The first identity in~\eqref{eq:H2} follows by direct computation:
	\begin{align*}
		BA_{b,s_{1}}^{k}\circ T_{\rho} 
		&\overset{\eqref{eq:TransportOperatorBulkRight}}{=}
		BA_{b,s_{1}}^{k} BA_{b,s_{1}}^{\lambda} \Hol_{b,\rho}^{ \smallint  \vartriangleleft F^{(-2)}} BA_{b,s_{2}}^{F^{(-1)}}
		\\
		&= \varepsilon(k) BA_{b,s_{1}}^{\lambda} \Hol_{b,\rho}^{ \smallint  \vartriangleleft F^{(-2)}} BA_{b,s_{2}}^{F^{(-1)}}
		= \varepsilon(k) T_{\rho}
	\end{align*}
	For the second identity in~\eqref{eq:H2} we additionally assume that $\rho$ starts to the left of $s_{1}$ and ends to the right of $s_2$. In this case, we have
	\begin{align*}
		BA_{b,s_{2}}^{k}T_{\rho}
		&\overset{\eqref{eq:TransportOperatorBulkRight}}{=}
		BA_{b,s_{2}}^{k} BA_{b,s_{1}}^{\lambda} \Hol_{b,\rho}^{ \smallint  \vartriangleleft F^{(-2)}} BA_{b,s_{2}}^{F^{(-1)}}
		\overset{\ref{proposition:OperatorsCommute}}{=}
		BA_{b,s_{1}}^{\lambda} BA_{b,s_{2}}^{k} \Hol_{b,\rho}^{ \smallint  \vartriangleleft F^{(-2)}} BA_{b,s_{2}}^{F^{(-1)}}
		\\
		&\overset{\eqref{eq:HolonomyCommutatorsEndRightDefect}}{=}
		BA_{b,s_{1}}^{\lambda} \Hol_{b,\rho}^{ \smallint  \vartriangleleft S(k_{(2)} F^{(-2)})} BA_{b,s_{2}}^{k_{(1)}F^{(-1)}}
		\overset{\ref{lemma:TwistedHopfAlgebra}}{=}
		BA_{b,s_{1}}^{\lambda} \Hol_{b,\rho}^{ \smallint  \vartriangleleft S(F^{(-2)}k_{(F2)})} BA_{b,s_{2}}^{F^{(-1)}k_{(F1)}}
		\\
		&\overset{(*)}{=}
		BA_{b,s_{1}}^{\lambda} \Hol_{b,\rho}^{ S(k_{(F2)}) \vartriangleright \smallint  \vartriangleleft S(F^{(-2)})} BA_{b,s_{2}}^{F^{(-1)}k_{(F1)}}
		\\&\overset{\eqref{eq:HolonomyCommutatorsStartLeftDefect}}{=}
		BA_{b,s_{1}}^{\lambda S(k_{(F2)(2)})} \Hol_{b,\rho}^{  \smallint  \vartriangleleft S(F^{(-2)})} BA_{b,s_{1}}^{ k_{(F2)(1)}} BA_{b,s_{2}}^{F^{(-1)}k_{(F1)}}
		\\
		&\overset{\eqref{eq:HaarIntegral}}{=}
		BA_{b,s_{1}}^{\lambda } \Hol_{b,\rho}^{  \smallint  \vartriangleleft S(F^{(-2)})} BA_{b,s_{1}}^{ k_{(F2)}} BA_{b,s_{2}}^{F^{(-1)}k_{(F1)}}
		\\&\overset{\ref{proposition:OperatorsCommute}}{=}
		BA_{b,s_{1}}^{\lambda } \Hol_{b,\rho}^{  \smallint  \vartriangleleft S(F^{(-2)})}  BA_{b,s_{2}}^{F^{(-1)}}BA_{b,s_{2}}^{k_{(F1)}} BA_{b,s_{1}}^{ k_{(F2)}}
		= T_{\rho}BA_{b,s_{2}}^{k_{(F1)}} BA_{b,s_{1}}^{ k_{(F2)}}.
	\end{align*}
	In $(*)$ we have used the identity
	\begin{align*}
		\smallint  \vartriangleleft S(h) = \langle {\smallint}_{(1)} , S(h) \rangle {\smallint}_{(2)}
		= \langle {\smallint}_{(2)} , S(h) \rangle {\smallint}_{(1)}
		= S(h) \vartriangleright \smallint \qquad \text{for $h\in D(H)$ }
	\end{align*}
	which follows from the cyclicity of the Haar integral $\smallint$.
	The proof for the case, where $\rho$ starts to the right of $s_{1}$, is analogous, but uses 	identity \eqref{eq:HolonomyCommutatorsStartRightDefect} instead of \eqref{eq:HolonomyCommutatorsStartLeftDefect}. The same holds for the case, where $\rho$ ends to the right of $s_2$, but with  \eqref{eq:HolonomyCommutatorsEndLeftDefect} instead of \eqref{eq:HolonomyCommutatorsEndRightDefect}. 
\end{proof}

We now show that the fusion of multiple excitations satisfies an associativity condition. 
This associativity reflects the associativity of the tensor product of $D(H_{b})$-modules. For three $D(H_{b})$-modules $M_{1},M_{2},M_{3}$, the two tensor products
$(M_{1} \oo M_{2}) \oo M_{3}$ and  $M_{1}  \oo \left( M_{2} \oo M_{3}\right)$
coincide as $D(H_{b})$-modules. 
In the Kitaev model, we have two ways of fusing three excitations of type $M_{1},M_{2},M_{3}$ in the bulk region $b$ into an excitation of type $M_{1} \oo M_{2} \oo M_{3}$, as shown in Figure~\ref{figure:BulkAssociativity}.
\begin{figure}[H]
	\centering
	\scalebox{0.25}{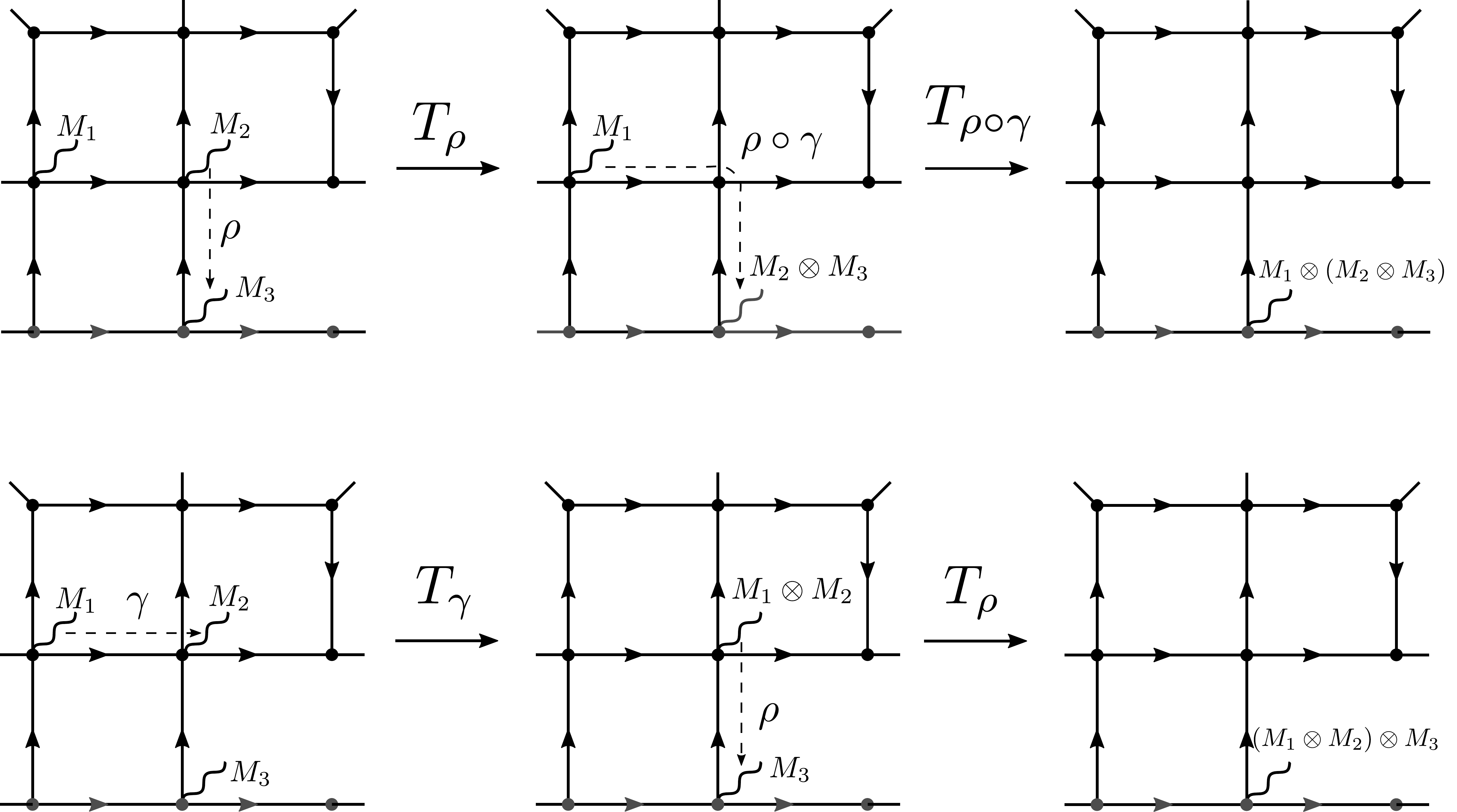}
	\caption{The two ways of fusing three bulk excitations $M_1,M_2,M_3$.}
	\label{figure:BulkAssociativity}
\end{figure}
As  the tensor product of $D(H_b)$-modules is associative, these two procedures must coincide, i.e. define the same map $\HSpace\to\HSpace$. 
If all three excitations are in a defect line or a boundary line, the same condition should hold after replacing the tensor product $ \oo $ with a twisted tensor product $ \oo_{F}$.
\\
If the excitations $M_{1}$ and $M_{2}$ are in the bulk and $M_{3}$ is in an adjacent boundary or defect line, then we again have two ways of fusing the excitations illustrated by Figure~\ref{figure:BoundaryAssociativity}.
\begin{figure}[H]
	\centering
	\scalebox{0.25}{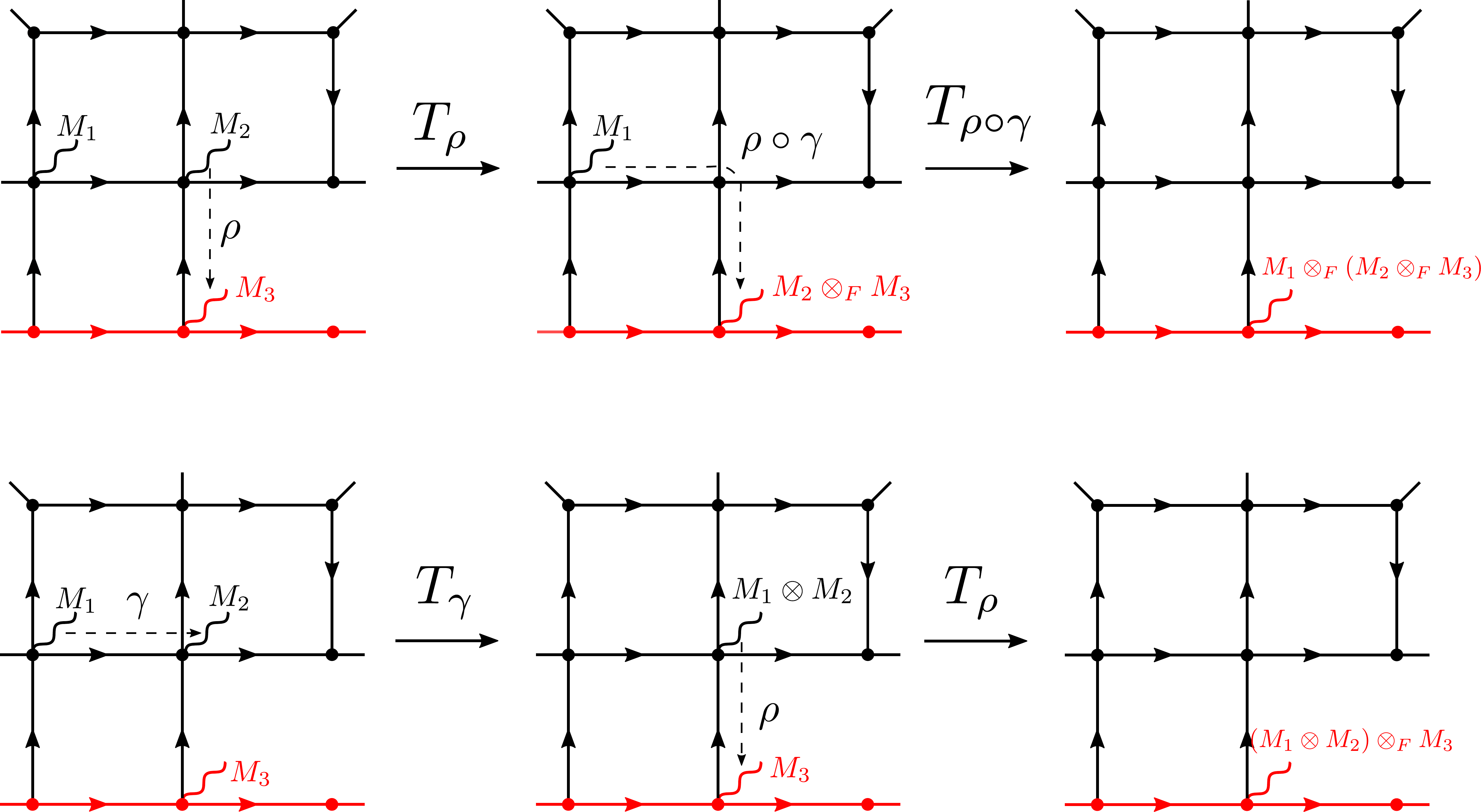}
	\caption{The two ways of fusing bulk excitations $M_1,M_2$ and a boundary excitation $M_3$.}
	\label{figure:BoundaryAssociativity}
\end{figure}
The two $D(H_{b})$-modules $(M_{1} \oo M_{2}) \oo_{F} M_{3}$ and $M_{1} \oo_{F} (M_{2}\oo_{F} M_{3})$  are isomorphic with an isomorphism given by the action of the twist $F$ from~\eqref{eq:TwistCoherenceData} on $M_{1}$ and $M_{2}$. The two procedures in Figure~\ref{figure:BoundaryAssociativity} should therefore coincide up to a twist acting on $M_{1}$ and $M_{2}$.
The following proposition shows that the transport operator $T_{\rho}$ indeed fulfills this associativity condition:
\begin{proposition}[Associativity]
	\label{proposition:TransportAssociativity}
	Let $\gamma:s_{0}\to s_{1}, \rho:s_{1}\to s_{2}$ be permissible paths satisfying the conditions of Lemma~\ref{lemma:HolonomyExtension}. Denote by $F$ the twist at $s_{2}$ and by $G$ the twist at $s_{1}$ and let either $F=G$ or $G$ trivial.
	Then we have
	\begin{align}
		T_{\rho\circ\gamma}\circ T_{\rho} &=  T_{\rho}\circ T_{\gamma}\quad&\text{if $F=G$.}
		\label{eq:TransportAssociativitySameRegion}
		\\
		T_{\rho\circ\gamma}\circ T_{\rho} &= T_{\rho}\circ  T_{\gamma}\circ  BA_{b,s_{0}}^{F^{(-2)}} \circ BA_{b,s_{1}}^{F^{(-1)}} \quad&\text{if $G$ trivial.}
		\label{eq:TransportAssociativityBulkToDefect}
	\end{align}
\end{proposition}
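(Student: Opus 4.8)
The strategy is to expand both sides of \eqref{eq:TransportAssociativitySameRegion} and \eqref{eq:TransportAssociativityBulkToDefect} using the definition of the transport operator in \eqref{eq:TransportOperatorBulkRight}--\eqref{eq:TransportOperatorBulkLeft} and then to reduce everything to the stretching identity \eqref{eq:HolonomyStretching} of Lemma~\ref{lemma:HolonomyExtension}. Concretely, the left-hand side $T_{\rho\circ\gamma}\circ T_\rho$ contains the holonomy $\Hol_{b,\rho\circ\gamma}$ built from $F$ (the twist at $s_2$), composed with $T_\rho$; by \eqref{eq:HolonomyStretching} the combination $\Hol_{b,\rho\circ\gamma}^{\alpha\vartriangleleft S(F^{(-2)})}\circ BA_{b,s_2}^{F^{(-1)}}\circ T_\rho$ equals $T_\rho\circ \Hol_{b,\gamma}^{\alpha\vartriangleleft S(F^{(-2)})}\circ BA_{b,s_1}^{F^{(-1)}}$ (with $\alpha=\smallint$), so the holonomy along the long path $\rho\circ\gamma$ gets ``unstretched'' back to a holonomy along $\gamma$ alone, at the cost of moving $T_\rho$ to the left. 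The remaining factors $BA_{b,s_0}^{\lambda}$ (from $T_{\rho\circ\gamma}$) and the operator $BA_{b,s_1}^{\lambda}$ (hidden inside what is now $T_\gamma$) must then be matched against the right-hand side $T_\rho\circ T_\gamma$.

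I would carry this out as follows. First, handle \eqref{eq:TransportAssociativitySameRegion}: write out $T_{\rho\circ\gamma}$ explicitly as $BA_{b,s_0}^\lambda\circ\Hol_{b,\rho\circ\gamma}^{\smallint\vartriangleleft S(F^{(-2)})}\circ BA_{b,s_2}^{F^{(-1)}}$, then insert $T_\rho$ on the right, apply Lemma~\ref{lemma:HolonomyExtension} to turn the $\rho\circ\gamma$-holonomy into a $\gamma$-holonomy preceded by $T_\rho$, and recognise that the leftover $BA_{b,s_0}^\lambda\circ\Hol_{b,\gamma}^{\smallint\vartriangleleft S(G^{(-2)})}\circ BA_{b,s_1}^{G^{(-1)}}$ is precisely $T_\gamma$ when $G=F$ (using the definition of $T_\gamma$ and that the twist at $s_1$ is $G$). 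Here one must be careful that the $F^{(-1)},F^{(-2)}$ appearing when stretching the $s_2$-twist are the \emph{same} twist used to define $T_\gamma$ at the site $s_1$ — this is exactly the hypothesis $F=G$. For \eqref{eq:TransportAssociativityBulkToDefect}, the twist $G$ at $s_1$ is trivial, so $T_\gamma=BA_{b,s_0}^\lambda\circ\Hol_{b,\gamma}^{\smallint}\circ BA_{b,s_1}^{1}$ has no twist factors; after applying Lemma~\ref{lemma:HolonomyExtension} one is left with a $\gamma$-holonomy twisted by $S(F^{(-2)})$ and a spurious $BA_{b,s_1}^{F^{(-1)}}$, which accounts for the extra factor $BA_{b,s_0}^{F^{(-2)}}\circ BA_{b,s_1}^{F^{(-1)}}$ on the right-hand side of \eqref{eq:TransportAssociativityBulkToDefect} — one commutes this correction term past $T_\gamma$ using Corollary~\ref{corollary:DefectBoundaryHolonomyFaceVertex} and Proposition~\ref{proposition:OperatorsCommute} and the twist axioms \eqref{eq:TwistEpsilon}--\eqref{eq:TwistDelta}.

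Throughout, the auxiliary facts needed are: the Haar-integral identities \eqref{eq:HaarIntegral} (to absorb or split $\lambda$ and $\smallint$), the cyclicity $\smallint\vartriangleleft S(h)=S(h)\vartriangleright\smallint$ used in the proof of Proposition~\ref{proposition:TransportOperatorTensorProduct}, the commutation relations \eqref{eq:HolonomyCommutatorsStartLeftDefect}--\eqref{eq:HolonomyCommutatorsEndRightDefect} between holonomies and vertex/face operators at the endpoints, Proposition~\ref{proposition:OperatorsCommute} to commute operators at the disjoint sites $s_0,s_1,s_2$, and the twisted-comultiplication bookkeeping from Lemma~\ref{lemma:TwistedHopfAlgebra}. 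The three cases of Lemma~\ref{lemma:HolonomyExtension} (whether $\rho$ starts left of $s_1$, forms a left joint with $\gamma^{-1}$, or $\rho^{-1}$ is a subpath of $\gamma$) all feed into \eqref{eq:HolonomyStretching} uniformly, so the associativity proof does not need to split into these cases separately.

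\textbf{Main obstacle.} The delicate point is the bookkeeping of \emph{which} twist sits at \emph{which} site and how the coproduct legs $F^{(-1)},F^{(-2)},F^{(-3)},F^{(-4)}$ get distributed when Lemma~\ref{lemma:HolonomyExtension} is invoked inside a composite already containing two nested transport operators. In particular, in case \eqref{eq:TransportAssociativityBulkToDefect} the twist at $s_1$ being trivial means that $T_\gamma$ as defined carries no compensating twist action, so the $F$-dependent correction produced by unstretching the $s_2$-holonomy cannot be reabsorbed into $T_\gamma$ and must be carried out explicitly as the extra factor $BA_{b,s_0}^{F^{(-2)}}\circ BA_{b,s_1}^{F^{(-1)}}$; verifying that this is exactly the residue — no more, no less — requires careful use of the twist cocycle condition \eqref{eq:TwistDelta} and the counit normalisation \eqref{eq:TwistEpsilon}, and is where a sign- or leg-indexing slip is most likely. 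Beyond this, the computation is a routine chain of substitutions analogous to the proof of Lemma~\ref{lemma:HolonomyExtension} itself.
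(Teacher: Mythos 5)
Your plan is correct and matches the paper's proof in all essentials: both rest on unfolding the definition of the transport operators, invoking the stretching identity \eqref{eq:HolonomyStretching} of Lemma~\ref{lemma:HolonomyExtension}, and, in the $G$-trivial case, absorbing the correction $BA_{b,s_{0}}^{F^{(-2)}}\circ BA_{b,s_{1}}^{F^{(-1)}}$ via the endpoint commutators of Corollary~\ref{corollary:DefectBoundaryHolonomyFaceVertex}, the Haar-integral property \eqref{eq:HaarIntegral} and the cyclicity $h\vartriangleright\smallint=\smallint\vartriangleleft h$, so as to reduce to the $F=G$ computation. You merely read the chain of equalities from the $T_{\rho\circ\gamma}\circ T_{\rho}$ end rather than from $T_{\rho}\circ T_{\gamma}$, which is an immaterial difference.
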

\begin{proof}
	For $F=G$ we show the claim by direct computation
	\begin{align*}
		T_{\rho}\circ T_{\gamma}  
		&\overset{\eqref{eq:TransportOperatorBulkRight}}{=}
		T_{\rho}\circ BA_{b,s_{0}}^{\lambda} \circ \Hol_{b,\gamma}^{\smallint  \vartriangleleft S( F^{(-2)})} \circ BA_{b,s_{1}}^{F^{(-1)}}
		\overset{\ref{proposition:OperatorsCommute}}{=}
		BA_{b,s_{0}}^{\lambda} \circ T_{\rho} \circ \Hol_{b,\gamma}^{\smallint  \vartriangleleft S(F^{(-2)})}\circ BA_{b,s_{1}}^{F^{(-1)}}
		\\
		&\overset{\eqref{eq:HolonomyStretching}}{=}
		BA_{b,s_{0}}^{\lambda} \circ \Hol_{b,\rho\circ\gamma}^{\smallint \vartriangleleft S(F^{(-2)})}\circ BA_{b,s_{2}}^{F^{(-1)}} \circ T_{\rho}
		\overset{\eqref{eq:TransportOperatorBulkRight}}{=}
		 T_{\rho\circ\gamma} \circ T_{\rho}
	\end{align*}
	For $G$ trivial we first assume that $\gamma$ starts to the left of $s_{0}$ and compute 
	\begin{align} 
		T_{\gamma}\circ  BA_{b,s_{0}}^{F^{(-2)}} \circ BA_{b,s_{1}}^{F^{(-1)}} 
		&\overset{\eqref{eq:TransportOperatorBulkRight}}{=}
		BA_{b,s_{0}}^{\lambda} \circ \Hol_{b,\gamma}^{ \smallint} \circ BA_{b,s_{0}}^{F^{(-2)}} \circ BA_{b,s_{1}}^{F^{(-1)}}
		\nonumber\\
		&\overset{\eqref{eq:HolonomyCommutatorsStartLeftDefect}}{=}
		BA_{b,s_{0}}^{\lambda F^{(-2)}_{(2)}}\circ \Hol_{b,\gamma}^{ S(F^{(-2)}_{(1)})  \vartriangleright \smallint}  \circ BA_{b,s_{1}}^{F^{(-1)}}
		\nonumber\\
		&=
		BA_{b,s_{0}}^{\lambda} \circ \Hol_{b,\gamma}^{ S(F^{(-2)})  \vartriangleright \smallint}  \circ BA_{b,s_{1}}^{F^{(-1)}}
		\nonumber\\
		&\overset{(*)}{=}
		BA_{b,s_{0}}^{\lambda} 
		\circ \Hol_{b,\gamma}^{ \smallint  \vartriangleleft S(F^{(-2)})}
		\circ BA_{b,s_{1}}^{F^{(-1)}}
		\label{eq:H1}
	\end{align}
	where we used the identity
	$ h  \vartriangleright \smallint = \smallint  \vartriangleleft h $ for $h\in D(H_{b})$ in $(*)$. If $\gamma$ starts to the right of $s_{0}$, the identity~\eqref{eq:H1} follows analogously, but we have to use~\eqref{eq:HolonomyCommutatorsStartRightDefect} instead of~\eqref{eq:HolonomyCommutatorsStartRightDefect}.
	Inserting~\eqref{eq:H1} into the right hand side of~\eqref{eq:TransportAssociativityBulkToDefect}, we obtain the second term of the proof of the case $F=G$. 
	Proceeding identically, we then obtain~\eqref{eq:TransportAssociativityBulkToDefect}.
\end{proof}

When fusing two excitations $M_{1},M_{2}$ with $T_{\rho}$ into an excitation $M_{1} \oo_{F} M_{2}$, the order of $M_1$ and $M_2$ in the tensor product depends on whether $\rho:s_{1}\to s_{2}$ ends to the left or the right of $s_{2}$ (cf. Proposition~\ref{proposition:TransportOperatorTensorProduct}). 
The two $D(H_{b})$-modules $M_{1} \oo M_{2}$ and $M_{2} \oo M_{1}$ are related by the braiding of $D(H_{b})\mathrm{-Mod}$. 
The following proposition shows that the transport operator satisfies condition~(T\ref{T2}) from Section \ref{section:TranslationKitaev}. 
In other words, for every permissible path $\rho: s_1\to s_2^R$ in a bulk region, there is a canonical path $\rho': s_1\to s_2^L$, obtained by composing $\rho$ with a face path, such 
that the maps $T_{\rho}$ and $T_{\rho'}$ are also related by that braiding. 
If $s_{2}$ is a defect or boundary site, then they instead are related by a twisted braiding. 

\begin{proposition}[Braiding]
	Let $\rho:s_{1}^{L}\to s_{2}^{R}$ a permissible path in $b$, $f$ the face path of $s_{2}$, $F$ the twist at $s_{2}$ and $R_{F}\in D(H_{b}) \oo D(H_{b})$ the twisted $R$-matrix of $D(H_{b})$ from \eqref{eq:TwistedRMatrix}. Then we have
	\begin{align}
		T_{f^{-1}\circ\rho} = T_{\rho}\circ BA_{b,s_{1}}^{R_{F}^{(2)}}\circ BA_{b,s_{2}}^{R_{F}^{(1)}}
		\label{eq:TransportOperatorBraidingBulk}
	\end{align}
\end{proposition}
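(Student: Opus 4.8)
The plan is to reduce the claimed identity to a combination of the commutation relations already established for holonomies and the structural properties of twists. The key observation is that $f^{-1}\circ\rho$ differs from $\rho$ by precomposition with the face path $f$ of $s_2$, reversed; since $f$ is a closed ribbon path based at $s_2$ and $\rho$ ends to the right of $s_2$, the pair $(\rho, f)$ or $(\rho^{-1}, f^{-1})$ forms a joint, so Lemma~\ref{lemma:HolonomyExtension} (applied with $\gamma = \rho$ and the "path" being $f^{-1}$) or more directly the relations~\eqref{eq:HolonomyReversal}, \eqref{eq:LeftJoint}, \eqref{eq:RightJoint} in Lemma~\ref{lemma:HolonomyCommutators} let me commute the holonomy along $f^{-1}$ past the holonomy along $\rho$. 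The effect of this commutation is precisely to produce an action of the $R$-matrix of $D(H_b)$ on the two sites. Writing out the definition~\eqref{eq:TransportOperatorBulkLeft} of $T_{f^{-1}\circ\rho}$ (note $f^{-1}\circ\rho$ ends to the \emph{left} of $s_2$, so the "$\eta=L$" branch applies) and the definition~\eqref{eq:TransportOperatorBulkRight} of $T_\rho$, the difference between the two is exactly the extra $\Hol_{b,f^{-1}}$ factor together with the bookkeeping of which $F^{(-i)}$ goes where.

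Concretely, I would first recall that $A_{b,s_2}^h = \Hol_{b,f}^{h\oo\varepsilon}$ and $B_{b,s_2}^\beta = \Hol_{b,f}^{1\oo\beta}$ (Definition~\ref{def:vertexfacedefect}), so that $BA_{b,s_2}$ is a holonomy along the face path. Thus $T_{f^{-1}\circ\rho}$, after inserting the Haar integral of $D(H_b)^*$ and expanding $\Hol_{b,f^{-1}\circ\rho} = \Hol_{b,f^{-1}}\circ\Hol_{b,\rho}$ (using that $\rho$ ends to the right of $s_2$ and $f^{-1}$ starts to the left of $s_2$, so the relevant recursion case of Definition~\ref{definition:HolonomyCompositeDefect} applies), becomes a composite of $BA^\lambda_{b,s_1}$, $\Hol_{b,f^{-1}}$, $\Hol_{b,\rho}$ and the twist-correction operators $BA_{b,s_2}^{F^{(-i)}}$. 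The next step is to commute $\Hol_{b,f^{-1}}$ rightward past $\Hol_{b,\rho}$ using~\eqref{eq:LeftJoint} (or its reversal), which introduces the $R$-matrix, and then to recognize the resulting $\Hol_{b,f^{-1}}$-factors, now standing alone, as $BA_{b,s_2}$-operators again. The fact that the $R$-matrix of $D(H_b)$ has the special form $R = \varepsilon\oo x\oo X\oo 1$ (Definition~\ref{def:Drinfel'dDouble}) means it only involves the vertex/face operators at $s_2$ and the edges along $\rho$, so the action lands on $s_1$ and $s_2$ as claimed.

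The step where the twist enters is the passage from the bare $R$-matrix to the twisted $R$-matrix $R_F = F_{21} R F^{-1}$ from~\eqref{eq:TwistedRMatrix}. Here I would use that the twist-correction operators $BA_{b,s_2}^{F^{(-i)}}$ in the two transport operators differ, and that conjugating the $R$-action by these corrections produces exactly $F_{21} R F^{-1}$; this is the Hopf-algebraic content of Lemma~\ref{lemma:TwistedHopfAlgebra} together with the fact (Remark~\ref{remark:TwistedSemisimpleHopfalgebra}) that the Haar integrals are unchanged under twisting, so that $BA^\lambda_{b,s_1}$ behaves the same way before and after. One must be careful about the order in which $F^{(-1)}$ and $F^{(-2)}$ appear: since $\rho$ ends to the right of $s_2$ but $f^{-1}\circ\rho$ ends to the left, the definitions~\eqref{eq:TransportOperatorBulkRight} and~\eqref{eq:TransportOperatorBulkLeft} assign the two tensor-factors of $F^{-1}$ in swapped roles, and it is precisely this swap, combined with the $F_{21}$ in $R_F$, that makes the identity come out. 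I would also invoke Proposition~\ref{proposition:OperatorsCommute} repeatedly to move $BA^\lambda_{b,s_1}$ past operators supported near $s_2$.

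The main obstacle I anticipate is the careful bookkeeping of the several copies of $F^{-1}$ and of the coproduct of the Haar integral $\smallint$ as they propagate through the sequence of commutations: one has to track which Sweedler component of $\smallint$ and which tensor leg of $F^{-1}$ ends up acting at $s_1$ versus $s_2$, using the cyclicity of $\smallint$ (i.e. $h\vartriangleright\smallint = \smallint\vartriangleleft h$, used in the proofs of Proposition~\ref{proposition:TransportOperatorTensorProduct} and Proposition~\ref{proposition:TransportAssociativity}) and the twist axiom~\eqref{eq:TwistDelta} to reorganize the $F^{(-i)}$'s into the pattern $F_{21}(\cdots)F^{-1}$. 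The topological input — that $(\rho, f)$ forms the right kind of joint so that~\eqref{eq:LeftJoint} applies, and that $f^{-1}\circ\rho$ is again a permissible simple path ending to the left of $s_2$ — is routine given the definitions in Section~\ref{sec:ribbon}, but should be stated explicitly at the outset. Once the $R$-matrix appears, identifying its conjugate as $R_F$ is a direct application of~\eqref{eq:TwistedRMatrix}, so the genuine work is confined to the algebra of moving the twist corrections into place.
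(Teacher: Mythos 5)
Your proposal circles the right ingredients (the $\eta=L$ branch of Definition~\ref{definition:TransportOperator} for $f^{-1}\circ\rho$, the splitting of the composite holonomy, the identification of face-path holonomies with operators at $s_{2}$, the formula $R_{F}=F_{21}RF^{-1}$, Haar and cyclicity properties of $\smallint$), but the central mechanism you propose is not viable as stated. First, the expansion $\Hol_{b,f^{-1}\circ\rho}=\Hol_{b,f^{-1}}\circ\Hol_{b,\rho}$ invokes the wrong case of Definition~\ref{definition:HolonomyCompositeDefect}: since $(f,\rho)_{\prec}$, it is case~\eqref{eq:HolonomyRecursionLeftJoinDefect} that applies, giving $\Hol_{b,f^{-1}\circ\rho}^{\beta}=\Hol_{b,\rho}^{\beta_{(2)}}\circ\Hol_{b,f^{-1}}^{\beta_{(1)}}$ (with the Sweedler splitting of $\beta=\smallint\vartriangleleft S(F^{(-1)})$, which your outline suppresses). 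The face-path factor therefore already sits to the right of $\Hol_{b,\rho}$, adjacent to $BA_{b,s_{2}}^{F^{(-2)}}$; there is no step of ``commuting $\Hol_{b,f^{-1}}$ rightward past $\Hol_{b,\rho}$'', and that commutation is not where the $R$-matrix comes from. In the paper's computation the $R$-matrix enters through the dual-basis identity for the face path, $\Hol_{b,f}^{h\oo\alpha}=\langle h\oo\alpha,\varepsilon\oo x\rangle\,\Hol_{b,f}^{1\oo X}$, which trades a Sweedler leg of $\smallint$ for the canonical element $R=\varepsilon\oo x\oo X\oo 1$ of \eqref{eq:Rmatrix}; i.e.\ it is produced by the coproduct of the Haar integral of $D(H_{b})^{*}$, not by the joint relations \eqref{eq:LeftJoint}/\eqref{eq:RightJoint} applied to the pair $(f^{-1},\rho)$.

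Second, your outline never accounts for the factor $BA_{b,s_{1}}^{R_{F}^{(2)}}$ at the \emph{starting} site; saying that the special form of $R$ makes ``the action land on $s_{1}$ and $s_{2}$'' is not an argument. The actual mechanism is the start-site commutation relation \eqref{eq:HolonomyCommutatorsStartLeftDefect} combined with the absorption property $\lambda\cdot h=\varepsilon(h)\lambda$ from \eqref{eq:HaarIntegral}: commuting $BA_{b,s_{1}}^{R_{F}^{(2)}}$ through $\Hol_{b,\rho}$ turns it into a right action of $S(R_{F}^{(2)})$ on $\smallint$ inside the holonomy argument while the other Sweedler leg is absorbed by $\lambda$ at $s_{1}$; cyclicity $h\vartriangleright\smallint=\smallint\vartriangleleft h$ and \eqref{eq:TwistedRMatrix} then reorganize the twist legs. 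This transfer between the $s_{1}$-operator and the holonomy argument is the core of the proof and is absent from your plan. Two smaller points: Lemma~\ref{lemma:HolonomyExtension} cannot be applied with the closed path $f^{-1}$, since it requires permissible paths between \emph{disjoint} sites, and Remark~\ref{remark:TwistedSemisimpleHopfalgebra} is not what is needed — the relevant Haar facts are \eqref{eq:HaarIntegral} and the cyclicity of $\smallint$.
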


\begin{proof}
	We denote the $R$-matrix of $D(H_{b})$ by $R=\varepsilon \oo x \oo X \oo 1$. We then show the claim by direct computation:
	\begin{align*}
		T_{\rho}\circ BA_{b,s_{1}}^{R_{F}^{(2)}}\circ BA_{b,s_{2}}^{R_{F}^{(1)}}
		&\overset{\eqref{eq:TransportOperatorBulkRight}}{=}
		BA_{b,s_{1}}^{\lambda} \Hol_{b,\rho}^{ \smallint  \vartriangleleft S(F^{(-2)})} BA_{b,s_{2}}^{F^{(-1)}}BA_{b,s_{1}}^{R_{F}^{(2)}} BA_{b,s_{2}}^{R_{F}^{(1)}}
		\\
		&\overset{\eqref{eq:HolonomyCommutatorsStartLeftDefect}}=
		BA_{b,s_{1}}^{\lambda R^{(2)}_{F,(2)}} \Hol_{b,\rho}^{ S(R_{F,(1)}^{(2)})  \vartriangleright \smallint  \vartriangleleft S(F^{(-2)})} BA_{b,s_{2}}^{F^{(-1)}R_{F}^{(1)}}
		\\
		&\overset{\eqref{eq:HaarIntegral}}=
		BA_{b,s_{1}}^{\lambda} \Hol_{b,\rho}^{ S(R_F^{(2)})  \vartriangleright \smallint  \vartriangleleft S(F^{(-2)})} BA_{b,s_{2}}^{F^{(-1)}R_{F}^{(1)}}
		\\
		&\overset{(*)}=
		BA_{b,s_{1}}^{\lambda} \Hol_{b,\rho}^{ \smallint  \vartriangleleft S(F^{(-2)} R_{F}^{(2)})} BA_{b,s_{2}}^{F^{(-1)}R_{F}^{(1)}}
		\\
		&\overset{\eqref{eq:TwistedRMatrix}}{=}
		BA_{b,s_{1}}^{\lambda} \Hol_{b,\rho}^{ \smallint  \vartriangleleft S(R^{(2)} F^{(-1)})} BA_{b,s_{2}}^{R^{(1)} F^{(-2)}}
		\\
		&=
		BA_{b,s_{1}}^{\lambda} \Hol_{b,\rho}^{ \smallint  \vartriangleleft S(R^{(2)} F^{(-1)})} BA_{b,s_{2}}^{R^{(1)}} BA_{b,s_{2}}^{F^{(-2)}}
		\\
		&\overset{\eqref{eq:Rmatrix}}=
		BA_{b,s_{ 1}}^{\lambda} \Hol_{b,\rho}^{ \smallint  \vartriangleleft S((\varepsilon  \oo x) F^{(-1)})} BA_{b,s_{2}}^{X \oo 1}  BA_{b,s_{2}}^{F^{(-2)}}
		\\
		&\overset{\eqref{eq:FaceOperatorBulk}}{=}
		BA_{b,s_{1}}^{\lambda} \Hol_{b,\rho}^{ \smallint  \vartriangleleft S( (\varepsilon  \oo x) F^{(-1)})} \Hol_{b,f}^{X \oo 1}  BA_{b,s_{2}}^{F^{(-2)}}
		\\
		&\overset{\eqref{eq:CoregularRightAction}}=
		\langle {\smallint}_{(1)} , S(F^{(-1)}) S(\varepsilon \oo x)  \rangle 
		BA_{b,s_{1}}^{\lambda} \Hol_{b,\rho}^{ \smallint_{(2)} } \Hol_{b,f}^{1 \oo X}  BA_{b,s_{2}}^{F^{(-2)}}
		\\
		&\overset{(**)}=
		\langle {\smallint}_{(1)} , S(F^{(-1)}) \rangle 
		BA_{b,s_{1}}^{\lambda} \Hol_{b,\rho}^{ \smallint_{(3)} } \Hol_{b,f}^{\smallint_{(2)}}  BA_{b,s_{2}}^{F^{(-2)}}
		\\
		&\overset{\eqref{eq:HolonomyRecursionLeftJoinDefect}}=
		BA_{b,s_{1}}^{\lambda} \Hol_{b,f^{-1}\circ\rho}^{ \smallint  \vartriangleleft S(F^{(-1)})}  BA_{b,s_{2}}^{F^{(-2)}}
		\\
		&=T_{f^{-1}\circ\rho},
	\end{align*}
	where we used 
	the identity 
	$ h  \vartriangleright \smallint = \smallint  \vartriangleleft h $ for $h\in D(H_{b})$ in $(*)$ and  
	in $(**)$ we used the identity
	\begin{align*}
		\Hol_{b,f}^{\beta} = \langle h,\varepsilon \rangle \Hol_{b,f}^{ 1 \oo \alpha} = 
		\langle h \oo \beta , \varepsilon \oo x \rangle \Hol_{b,f}^{1 \oo X} 
	\end{align*}
	 for the face path $f$ and $\beta:=h \oo \alpha\in D(H_{b})^{*}$ that follows because $x$ and $X$ stand for dual bases.
\end{proof}

\subsection{Transparent defects}
\label{subsec:transparent}

In this section we show that the Kitaev model without defects arises as a special case of the model with defects, if one considers defects  labeled with trivial defect data, called \emph{transparent defects} in the following. 
These are defects between two bulk regions labeled with the same Hopf algebra $H$, that are decorated with trivial defect data, namely the twist from Example \ref{example:Drinfel'dQuadrupleTwist} that relates the Drinfel'd double of a factorizable Hopf algebra $K$ to the tensor product Hopf algebra $K\oo K$.

\begin{definition}
	\label{definition:TransparentDefect}
	A \emph{transparent defect} is a defect $d$ 
	between bulk regions $b_{dL},b_{dR}$ such that
		\begin{compactenum}
			\item the bulk regions $b_{dL},b_{dR}$ are labeled with the same Hopf algebra $H=H_{b_{dL}}=H_{b_{dR}}$,
			\item the defect $d$ is labeled with the twist of $D(H) \oo D(H)$ from Example~\ref{example:Drinfel'dQuadrupleTwist} for $K=D(H)$.
		\end{compactenum}
	\end{definition}

We now show that the extended space of the model with a transparent defect and of the model without the defects are related by a module homomorphism for the action defined by any site of the graph except the defect sites.  For this, we consider the Kitaev models with defects and boundaries  related by removing a defect line $d$ labeled with transparent defect data. 

Removing such a transparent defect line  involves two steps:  (i) modifying the underlying ribbon graph $\Gamma$ to obtain a graph $\Gamma'$, (ii) applying a linear map $\mathcal R:\HSpace\to \HSpace'$ from the extended space $\HSpace $ for $\Gamma$ to the extended space $\HSpace'$ of $\Gamma'$. The modified graph $\Gamma'$ is  the ribbon graph with defects and boundaries obtained by  removing the defect line $d$ and all the edges of  the associated cyclic subgraph $\Gamma_{d}$,
but not its vertices, and by identifying the bulk regions $b_{dL}$ and $b_{dR}$.
	
We consider the linear map $\mathcal R: \HSpace\to \HSpace'$ that acts on the tensor factors of  $H \oo H$ for edges of $d$ by
	\begin{align}\label{eq:rdef}
 m\oo n \mapsto   \smallint\left( mS(n) \right)
	\end{align}
	and  as the identity map on the tensor factors associated to other edges, see Figure~\ref{fig:removingdefect}.
	
	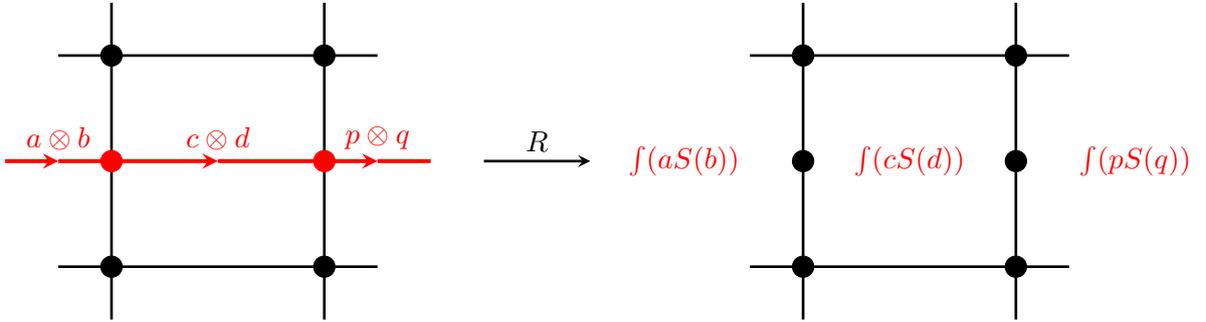
\begin{figure}[H]
\begin{center}
\begin{tikzpicture}[scale=.7]
\begin{scope}[shift={(0,0)}]
\draw[line width=1.5pt, color=red, ->,>=stealth] (-2,0)--(-1,0) node[anchor=south]{$a\oo b$};
\draw[line width=1.5pt, color=red,->,>=stealth] (-1,0)--(2,0) node[anchor=south]{$c\oo d$};
\draw[line width=1.5pt, color=red, ->,>=stealth] (2,0)--(5,0) node[anchor=south]{$p\oo q$};
\draw[line width=1.5pt, color=red,] (5,0)--(6,0); 
\draw[line width=1pt, color=black] (0,-3)--(0,3);
\draw[line width=1pt, color=black] (4,-3)--(4,3);
\draw[line width=1pt, color=black] (-1,2)--(5,2);
\draw[line width=1pt, color=black] (-1,-2)--(5,-2);
\draw[color=red, fill=red] (0,0) circle (.2);
\draw[color=red, fill=red] (4,0) circle (.2);
\draw[color=black, fill=black] (0,2) circle (.2);
\draw[color=black, fill=black] (0,-2) circle (.2);
\draw[color=black, fill=black] (4,2) circle (.2);
\draw[color=black, fill=black] (4,-2) circle (.2);
\end{scope}
\draw[line width=1pt, color=black,->,>=stealth] (7,0)--(9,0);
\node at (8,0)[anchor=south]{$R$};
\begin{scope}[shift={(13,0)}]
\node at (-1,0) [anchor=east, color=red]{$\int(aS(b))$};
\node at (2,0) [color=red]{$\int (cS(d))$};
\node at (5,0) [anchor=west, color=red]{$\int (pS(q))$};
\draw[line width=1pt, color=black] (0,-3)--(0,3);
\draw[line width=1pt, color=black] (4,-3)--(4,3);
\draw[line width=1pt, color=black] (-1,2)--(5,2);
\draw[line width=1pt, color=black] (-1,-2)--(5,-2);
\draw[color=black, fill=black] (0,0) circle (.2);
\draw[color=black, fill=black] (4,0) circle (.2);
\draw[color=black, fill=black] (0,2) circle (.2);
\draw[color=black, fill=black] (0,-2) circle (.2);
\draw[color=black, fill=black] (4,2) circle (.2);
\draw[color=black, fill=black] (4,-2) circle (.2);
\end{scope}
\end{tikzpicture}
\end{center}
\caption{Removing a defect}
\label{fig:removingdefect}
\end{figure}
	It is clear from this definition that $\mathcal{R}$ commutes with the holonomies along all paths $\rho$  in $D(\Gamma)$ that do not traverse edges of $\Gamma_{d}$. In particular, $\mathcal{R}$ is a module homomorphism with respect to the $D(H_b)$-module structures and  $D(H_{b_{dL}})\oo D(H_{b_{dR}})$-module structures  from Proposition \ref{proposition:OperatorActionDefect} that are associated with sites that are disjoint to all sites in $d$.

	

	We now consider a pair of defect sites $(s_{L},s_{R})$ in $d$. Removing the defect turns $(s_{L},s_{R})$ into a single site $s \in \Gamma'$, as shown in Figures~\ref{fig:vertextransparentdefect} and Figure~\ref{fig:facevertexbulk}.
	The pair $(s_{L},s_{R})$  is associated with 
	 the $D(H)$-action on $\HSpace$ from Proposition \ref{proposition:OperatorActionDefect}, 2.~defined by
		\begin{align}\label{eq:sitepair}
			BA_{d,s_{L},s_{R}}^{t_{(1)} \oo t_{(2)}}:\HSpace\to\HSpace \quad\text{ for $t\in D(H)$},
		\end{align}
		see Figure~\ref{fig:vertextransparentdefect} and Figure~\ref{fig:facetransparentdefect} for a concrete example.
		The bulk site $s\in \Gamma'$ is associated with $D(H)$-action on $\HSpace'$ from from Proposition \ref{proposition:OperatorActionDefect}, 1.~defined by
		\begin{align}\label{eq:singlesite}
			BA_{s,b}^{t}:\HSpace'\to\HSpace' \quad\text{ for $t\in D(H)$},
		\end{align}
		for a concrete example, see Figure~\ref{fig:facevertexbulk}. It turns out that the map $\mathcal R:\HSpace\to\HSpace'$ from \eqref{eq:rdef} is a module homomorphism, when $\HSpace$ and $\HSpace'$ are equipped with these  two actions.
		
\begin{figure}[H]
\begin{center}
\begin{tikzpicture}[scale=.7]
\begin{scope}[shift={(0,0)},scale=1.3]
\draw[line width=1.5pt, color=red, ->,>=stealth] (-2,0)--(-1,0) node[anchor=south]{$a\oo b\quad$};
\draw[line width=1.5pt, color=red,->,>=stealth] (-1,0)--(2,0) node[anchor=south]{$\quad c\oo d$};
\draw[line width=1.5pt, color=red, ->,>=stealth] (2,0)--(5,0);
\draw[line width=1pt, color=black,->,>=stealth] (0,-3)--(0,-1);
\draw[line width=1pt, color=black] (0,-1)node[anchor=east]{$f$}--(0,1)node[anchor=east]{$e$};
\draw[line width=1pt, color=black,->,>=stealth] (0,3)--(0,1);
\draw[line width=1pt, color=black] (4,-3)--(4,3);
\draw[line width=1pt, color=black] (-1,2)--(5,2);
\draw[line width=1pt, color=black] (-1,-2)--(5,-2);
\draw[line width=1pt, color=black] (0,0)--(.5,.5) node[anchor=west]{$s_L$};
\draw[line width=1pt, color=black] (0,0)--(.5,-.5) node[anchor=west]{$s_R$};
\draw[color=red, fill=red] (0,0) circle (.15);
\draw[color=red, fill=red] (4,0) circle (.15);
\draw[color=black, fill=black] (0,2) circle (.15);
\draw[color=black, fill=black] (0,-2) circle (.15);
\draw[color=black, fill=black] (4,2) circle (.15);
\draw[color=black, fill=black] (4,-2) circle (.15);
\end{scope}
\draw[line width=1pt, color=black,->,>=stealth] (7,0)--(9,0);
\node at (8,0)[anchor=south]{$BA^{\epsilon\oo h}_{d,s_L,s_R}$};
\begin{scope}[shift={(13,0)}, scale=1.3]
\draw[line width=1.5pt, color=red, ->,>=stealth] (-2,0)--(-1,0) node[anchor=south]{$h_{(2)}a\oo h_{(5)}b\qquad$};
\draw[line width=1.5pt, color=red,->,>=stealth] (-1,0)--(2,0) node[anchor=south]{$\; c S(h_{(3)})\oo d S(h_{(4)})$};
\draw[line width=1.5pt, color=red, ->,>=stealth] (2,0)--(5,0);
\draw[line width=1pt, color=black,->,>=stealth] (0,-3)--(0,-1);
\draw[line width=1pt, color=black] (0,-1)node[anchor=east]{$h_{(6)}f$}--(0,1)node[anchor=east]{$h_{(1)}e$};
\draw[line width=1pt, color=black,->,>=stealth] (0,3)--(0,1);
\draw[line width=1pt, color=black] (4,-3)--(4,3);
\draw[line width=1pt, color=black] (-1,2)--(5,2);
\draw[line width=1pt, color=black] (-1,-2)--(5,-2);
%
%
\draw[color=red, fill=red] (0,0) circle (.15);
\draw[color=red, fill=red] (4,0) circle (.15);
\draw[color=black, fill=black] (0,2) circle (.15);
\draw[color=black, fill=black] (0,-2) circle (.15);
\draw[color=black, fill=black] (4,2) circle (.15);
\draw[color=black, fill=black] (4,-2) circle (.15);
\end{scope}
\end{tikzpicture}
\end{center}
\caption{$H$-action defined by a vertex and face operator at a transparent defect}
\label{fig:vertextransparentdefect}
\end{figure}

\begin{figure}[H]
\begin{center}
\begin{tikzpicture}[scale=.7]
\begin{scope}[shift={(0,0)},scale=1.3]
\draw[line width=1.5pt, color=red, ->,>=stealth] (-1,0)--(-.5,0);
\draw[line width=1.5pt, color=red,->,>=stealth] (-.5,0)--(2,0) node[anchor=south]{$\quad c\oo d$};
\draw[line width=1.5pt, color=red, ->,>=stealth] (2,0)--(5,0);
\draw[line width=1pt, color=black,->,>=stealth] (0,-3)--(0,-1);
\draw[line width=1pt, color=black] (0,-1)node[anchor=east]{$f$}--(0,1)node[anchor=east]{$e$};
\draw[line width=1pt, color=black,->,>=stealth] (0,3)--(0,1);

\draw[line width=1pt, color=black,->,>=stealth] (4,3)--(4,1) node[anchor=west]{$k$};
\draw[line width=1pt, color=black,->,>=stealth] (4,1)--(4,-1)node[anchor=west]{$m$};
\draw[line width=1pt, color=black,] (4,-1)--(4,-3);
\draw[line width=1pt, color=black, ->,>=stealth] (-1,2)--(2,2) node[anchor=north]{$g$};
\draw[line width=1pt, color=black] (2,2)--(5,2);
\draw[line width=1pt, color=black, ->,>=stealth] (5,-2)--(2,-2) node[anchor=south]{$n$};
\draw[line width=1pt, color=black] (-1,-2)--(2,-2);
\draw[line width=1pt, color=black] (0,0)--(.5,.5) node[anchor=west]{$s_L$};
\draw[line width=1pt, color=black] (0,0)--(.5,-.5) node[anchor=west]{$s_R$};
\draw[color=red, fill=red] (0,0) circle (.15);
\draw[color=red, fill=red] (4,0) circle (.15);
\draw[color=black, fill=black] (0,2) circle (.15);
\draw[color=black, fill=black] (0,-2) circle (.15);
\draw[color=black, fill=black] (4,2) circle (.15);
\draw[color=black, fill=black] (4,-2) circle (.15);
\end{scope}
\draw[line width=1pt, color=black,->,>=stealth] (7,0)--(9,0);
\node at (8,0)[anchor=south]{$BA^{\alpha\oo 1}_{d,s_L,s_R}$};
\begin{scope}[shift={(13,0)}, scale=1.3]
\draw[line width=1.5pt, color=red, ->,>=stealth] (-1,0)--(-.5,0);
\draw[line width=1.5pt, color=red,->,>=stealth] (-.5,0)--(2,0) node[anchor=south]{$\quad c_{(2)}\oo d_{(1)}$};
\draw[line width=1.5pt, color=red, ->,>=stealth] (2,0)--(5,0);
\draw[line width=1pt, color=black,->,>=stealth] (0,-3)--(0,-1);
\draw[line width=1pt, color=black] (0,-1)node[anchor=east]{$f_{(1)}$}--(0,1)node[anchor=east]{$e_{(2)}$};
\draw[line width=1pt, color=black,->,>=stealth] (0,3)--(0,1);

\draw[line width=1pt, color=black,->,>=stealth] (4,3)--(4,1) node[anchor=west]{$k_{(1)}$};
\draw[line width=1pt, color=black,->,>=stealth] (4,1)--(4,-1)node[anchor=west]{$m_{(1)}$};
\draw[line width=1pt, color=black,] (4,-1)--(4,-3);
\draw[line width=1pt, color=black, ->,>=stealth] (-1,2)--(2,2) node[anchor=north]{$g_{(1)}$};
\draw[line width=1pt, color=black] (2,2)--(5,2);
\draw[line width=1pt, color=black, ->,>=stealth] (5,-2)--(2,-2) node[anchor=south]{$n_{(1)}$};
\draw[line width=1pt, color=black] (-1,-2)--(2,-2);
%
%
\draw[color=red, fill=red] (0,0) circle (.15);
\draw[color=red, fill=red] (4,0) circle (.15);
\draw[color=black, fill=black] (0,2) circle (.15);
\draw[color=black, fill=black] (0,-2) circle (.15);
\draw[color=black, fill=black] (4,2) circle (.15);
\draw[color=black, fill=black] (4,-2) circle (.15);
\node at (2,-3.5)[color=black]{$\langle \alpha, f_{(2)}n_{(2)}m_{(2)}d_{(2)}S(c_{(1)})k_{(2)}g_{(2)}S(e_{(1)})$};
\end{scope}
\end{tikzpicture}
\end{center}
\caption{$H^*$-action defined by a vertex and face operator at a transparent defect.}
\label{fig:facetransparentdefect}
\end{figure}

\begin{figure}[H]
\begin{center}
\begin{tikzpicture}[scale=.7]
\begin{scope}[shift={(0,0)},scale=1.3]
%
\draw[line width=1pt, color=black,->,>=stealth] (0,-3)--(0,-1);
\draw[line width=1pt, color=black] (0,-1)node[anchor=east]{$f$}--(0,1)node[anchor=east]{$e$};
\draw[line width=1pt, color=black,->,>=stealth] (0,3)--(0,1);

\draw[line width=1pt, color=black,->,>=stealth] (4,3)--(4,1) node[anchor=west]{$k$};
\draw[line width=1pt, color=black,->,>=stealth] (4,1)--(4,-1)node[anchor=west]{$m$};
\draw[line width=1pt, color=black,] (4,-1)--(4,-3);
\draw[line width=1pt, color=black, ->,>=stealth] (-1,2)--(2,2) node[anchor=north]{$g$};
\draw[line width=1pt, color=black] (2,2)--(5,2);
\draw[line width=1pt, color=black, ->,>=stealth] (5,-2)--(2,-2) node[anchor=south]{$n$};
\draw[line width=1pt, color=black] (-1,-2)--(2,-2);
\draw[line width=1pt, color=black] (0,0)--(.5,0) node[anchor=west]{$s$};
\draw[color=black, fill=black] (0,0) circle (.15);
\draw[color=black, fill=black] (4,0) circle (.15);
\draw[color=black, fill=black] (0,2) circle (.15);
\draw[color=black, fill=black] (0,-2) circle (.15);
\draw[color=black, fill=black] (4,2) circle (.15);
\draw[color=black, fill=black] (4,-2) circle (.15);
\node at (-1,0) [anchor=east, color=red]{$\int(aS(b))$};
\node at (2,0) [color=red]{$\int (cS(d))$};
\end{scope}
\draw[line width=1pt, color=black,->,>=stealth] (7,0)--(9,0);
\node at (8,0)[anchor=south]{$BA^{\alpha\oo h}_{s}$};
\begin{scope}[shift={(13,0)}, scale=1.3]
%
\draw[line width=1pt, color=black,->,>=stealth] (0,-3)--(0,-1);
\draw[line width=1pt, color=black] (0,-1)node[anchor=east]{$h_{(3)}f_{(1)}$}--(0,1)node[anchor=east]{$h_{(2)}e_{(2)}$};
\draw[line width=1pt, color=black,->,>=stealth] (0,3)--(0,1);

\draw[line width=1pt, color=black,->,>=stealth] (4,3)--(4,1) node[anchor=west]{$k_{(1)}$};
\draw[line width=1pt, color=black,->,>=stealth] (4,1)--(4,-1)node[anchor=west]{$m_{(1)}$};
\draw[line width=1pt, color=black,] (4,-1)--(4,-3);
\draw[line width=1pt, color=black, ->,>=stealth] (-1,2)--(2,2) node[anchor=north]{$g_{(1)}$};
\draw[line width=1pt, color=black] (2,2)--(5,2);
\draw[line width=1pt, color=black, ->,>=stealth] (5,-2)--(2,-2) node[anchor=south]{$n_{(1)}$};
\draw[line width=1pt, color=black] (-1,-2)--(2,-2);
%
%
\draw[color=black, fill=black] (0,0) circle (.15);
\draw[color=black, fill=black] (4,0) circle (.15);
\draw[color=black, fill=black] (0,2) circle (.15);
\draw[color=black, fill=black] (0,-2) circle (.15);
\draw[color=black, fill=black] (4,2) circle (.15);
\draw[color=black, fill=black] (4,-2) circle (.15);
\node at (2, -3.5)[color=black]{$\langle \alpha, h_{(4)}f_{(2)}n_{(2)}m_{(2)}k_{(2)}g_{(2)}S(e_{(1)})S(h_{(1)})$};
\node at (-1,0) [anchor=east, color=red]{$\int(aS(b))$};
\node at (2,0) [color=red]{$\int (cS(d))$};
\end{scope}
\end{tikzpicture}
\end{center}
\caption{$D(H)$-action defined by a vertex and face operator in the bulk after removing the defect line.}
\label{fig:facevertexbulk}
\end{figure}
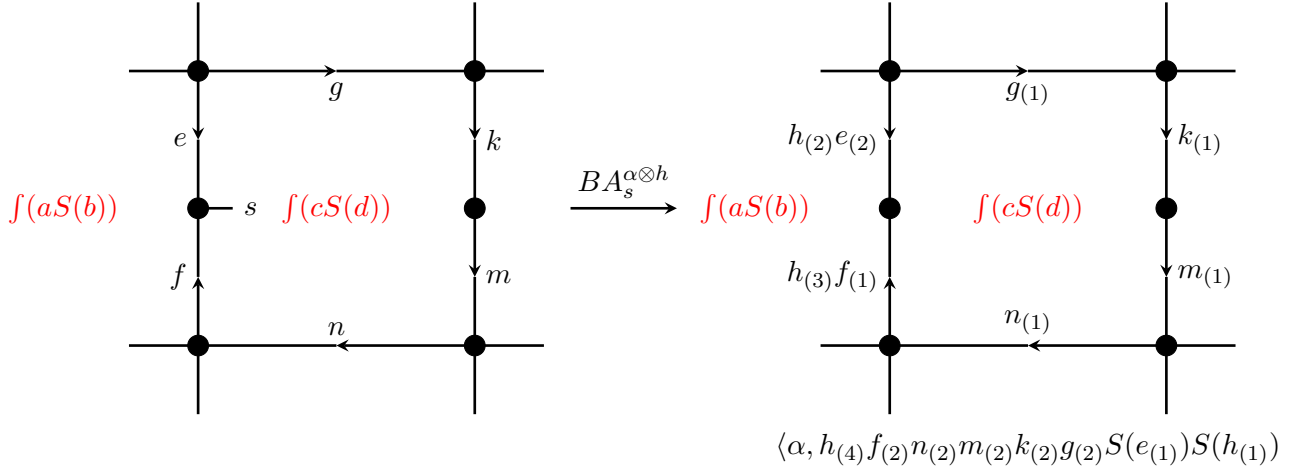

\begin{proposition}
		\label{proposition:RemoveTransparentHomomorphism}
		The map $\mathcal{R}:\HSpace\to\HSpace'$ is a $D(H)$-module homomorphism with respect to  the $D(H)$-actions defined by \eqref{eq:sitepair} and \eqref{eq:singlesite}:
		\begin{align*}
		\mathcal R\circ BA^{t_{(1)}\oo t_{(2)}}_{d,s_L,s_R}=BA^t_s\circ \mathcal R.
		\end{align*}
		
\end{proposition}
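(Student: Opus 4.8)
The plan is to reduce the statement to a purely local computation on the tensor factors attached to the two defect edges meeting the defect vertex $v$ of the pair $(s_L,s_R)$, since $\mathcal R$ is the identity on all other tensor factors. First I would fix an element $t\in D(H)$ and write $t_{(1)}\oo t_{(2)} = (\alpha_{(2)}\oo k_{(1)})\oo(\alpha_{(1)}\oo k_{(2)})$ in terms of $t=\alpha\oo k\in D(H)^*\oo H$, so that $BA^{t_{(1)}\oo t_{(2)}}_{d,s_L,s_R} = BA^{\alpha\oo k}_{d,s_L,s_R}$ in the notation of Definition~\ref{def:vertexfacedefect}; by Proposition~\ref{proposition:OperatorActionDefect} it is an algebra action, so it suffices to check the intertwining identity separately for $t=1\oo k$ (a pure vertex operator) and $t=\alpha\oo 1$ (a pure face operator), corresponding to Figures~\ref{fig:vertextransparentdefect} and~\ref{fig:facetransparentdefect} on the left side and Figure~\ref{fig:facevertexbulk} on the right.

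For the vertex part $t=1\oo k$ the operator $BA^{1\oo k}_{d,s_L,s_R}=A^k_{b_{dL},s_L}A^k_{b_{dR},s_R}$ multiplies the defect-edge labels as indicated in Figure~\ref{fig:vertextransparentdefect}: an incoming defect edge carrying $a\oo b$ becomes $k_{(2)}a\oo k_{(5)}b$ and an outgoing one carrying $c\oo d$ becomes $cS(k_{(3)})\oo dS(k_{(4)})$, while the bulk edges of $v$ in $\Gamma_{b_{dL}}$ are left-multiplied by a leg of $\Delta(k)$ exactly as in the bulk vertex operator on $\Gamma'$. After applying $\mathcal R$, the pair $a\oo b$ contributes $\smallint\!\left(k_{(2)}a\,S(k_{(5)}b)\right)=\smallint\!\left(k_{(2)}a\,S(b)S(k_{(5)})\right)$; using cyclicity of the Haar integral $\smallint$ (Equation~\eqref{eq:HaarIntegral}) this collapses to $\varepsilon(k_{(2)})\varepsilon(k_{(5)})\smallint(aS(b)) = \smallint(aS(b))$ after the remaining legs $k_{(3)},k_{(4)}$ cancel via $S(k_{(3)})k_{(4)}=\varepsilon$ on the outgoing edge, and the leftover single leg of $k$ then acts on the bulk edges of $\Gamma'$ precisely as $A^k_s$. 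The face part $t=\alpha\oo 1$ is handled the same way: $BA^{\alpha\oo 1}_{d,s_L,s_R}$ comultiplies all edge labels around the face path and pairs $\alpha$ with the product $f_{(2)}n_{(2)}m_{(2)}d_{(2)}S(c_{(1)})k_{(2)}g_{(2)}S(e_{(1)})$ of Figure~\ref{fig:facetransparentdefect}; applying $\mathcal R$ turns $c_{(2)}\oo d_{(1)}$ into $\smallint(c_{(2)}S(d_{(1)}))$, and since $\smallint$ is also the Haar integral of $(H_{F_d})^*$ (Remark~\ref{remark:TwistedSemisimpleHopfalgebra}) one checks that $\smallint(c_{(2)}S(d_{(1)}))\,c_{(1)}\oo d_{(2)}$ is compatible with the coproduct leg that $\alpha$ sees on $\Gamma'$, yielding the pairing $\langle\alpha, h_{(4)}f_{(2)}n_{(2)}m_{(2)}k_{(2)}g_{(2)}S(e_{(1)})S(h_{(1)})\rangle$ of Figure~\ref{fig:facevertexbulk} with $h=1$, which is exactly $B^\alpha_s\circ\mathcal R$.

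The main obstacle is purely bookkeeping: one must verify that the two defect-edge legs that disappear under $\mathcal R$ (the $mS(n)$-type contractions) interact correctly with \emph{both} the incoming defect-edge contribution and the outgoing one, and that the combined effect of $A^k_{b_{dL},s_L}A^k_{b_{dR},s_R}$ on the two copies $H\oo H$ of a defect edge — which act on the two tensor legs \emph{separately}, left leg as in case (ii), right leg as in case (iii) of Definition~\ref{def:edgeopsdefect} — recombines after applying $\smallint(m\,S(n))$ into the single-copy bulk action of Definition~\ref{definition:HolonomyBasic}. I would carry this out by expanding $BA^{t_{(1)}\oo t_{(2)}}_{d,s_L,s_R}$ on a basis element of $\HSpace$ using Definition~\ref{def:vertexfacedefect} and the explicit triangle-operator formulas, applying $\mathcal R$, and repeatedly using cyclicity of $\smallint$ together with the antipode axioms $S(x_{(1)})x_{(2)}=\varepsilon(x)1=x_{(1)}S(x_{(2)})$ to cancel the excess Sweedler legs; the two figures~\ref{fig:vertextransparentdefect}--\ref{fig:facevertexbulk} already record the intermediate edge labels, so the verification amounts to matching those labels after the contraction. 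Finally, since $\mathcal R$ is visibly surjective, the identity $\mathcal R\circ BA^{t_{(1)}\oo t_{(2)}}_{d,s_L,s_R}=BA^t_s\circ\mathcal R$ for all $t\in D(H)$ establishes that $\mathcal R$ is the claimed $D(H)$-module homomorphism.
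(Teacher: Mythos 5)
Your proposal is correct and follows essentially the same route as the paper: an explicit local computation at the two defect edges, pushing the vertex and face parts of the operator through $\mathcal R$, cancelling the surplus Sweedler legs with the antipode axioms, involutivity of $S$ and cyclicity of $\smallint$, and then matching the result against Figure~\ref{fig:facevertexbulk}; your reduction to the generators $\varepsilon\oo k$ and $\alpha\oo 1$ of $D(H)$ is a harmless repackaging of the paper's single computation for general $t=\alpha\oo h$. Two small corrections: the operator for $t=\varepsilon\oo k$ is $A^{k_{(1)}}_{b_{dL},s_L}\circ A^{k_{(2)}}_{b_{dR},s_R}$ rather than $A^{k}A^{k}$ (your use of Figure~\ref{fig:vertextransparentdefect} shows you mean the former), and the face-part step you attribute to Remark~\ref{remark:TwistedSemisimpleHopfalgebra} has nothing to do with the twist, which never enters this proposition; what is actually needed there is the invariance property of the Haar integral, $\langle\smallint,p_{(2)}\rangle\,p_{(1)}=\langle\smallint,p\rangle\,1$ applied to $p=cS(d)$ together with $S^{2}=\id$, which is precisely the identity the paper uses in its step $(***)$ and is not a consequence of cyclicity and the antipode axioms alone.
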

\begin{proof}
		For $t=\alpha \oo h\in D(H)$ we have 	
		\begin{align}
			BA_{d,s_{L},s_{R}}^{t_{(1)} \oo t_{(2)}} 
			&=
			BA_{b_{dL},s_{L}}^{t_{(1)} } \circ BA_{b_{dR},s_{R}}^{t_{(2)}} 
			=
			B_{b_{dL},s_{L}}^{\alpha_{(2)}}\circ A_{b_{dL},s_{L}}^{h_{(1)}}\circ B_{b_{dR},s_{R}}^{\alpha_{(1)}}  \circ A_{b_{dR},s_{R}}^{h_{(2)}} \nonumber
			\\
			&\overset{\ref{proposition:OperatorsCommute}}{=}
			B_{b_{dL},s_{L}}^{\alpha_{(2)}}\circ B_{b_{dR},s_{R}}^{\alpha_{(1)}} \circ A_{b_{dL},s_{L}}^{h_{(1)}} \circ A_{b_{dR},s_{R}}^{h_{(2)}} 
			\label{eq:TransparentDefectAction}
		\end{align}
		We now show the claim for the example from Figure~\ref{fig:removingdefect}.  
		We start by computing $\mathcal R\circ BA_{d,s_{L},s_{R}}^{t_{(1)} \oo t_{(2)}} $. Using the formulas from  Figure~\ref{fig:vertextransparentdefect} and \ref{fig:facetransparentdefect} we obtain:
		\begin{align*}
			&BA_{d,s_{L},s_{R}}^{t_{(1)} \oo t_{(2)}} \left( a \oo b \oo c \oo d \oo e \oo f \oo g  \oo k \oo  m \oo n\right)
			\\
			&\;
			\overset{\eqref{eq:TransparentDefectAction}}{=}
			B_{b_{dL},s_{L}}^{\alpha_{(2)}}\circ B_{b_{dR},s_{R}}^{\alpha_{(1)}} \circ A_{b_{dL},s_{L}}^{h_{(1)}} \circ A_{b_{dR},s_{R}}^{h_{(2)}} 
			\left( a \oo b \oo c \oo d \oo e \oo f \oo g  \oo k \oo  m \oo n\right)
			\\
			&\overset{\ref{fig:vertextransparentdefect}}{=} B_{b_{dL},s_{L}}^{\alpha_{(2)}}\circ B_{b_{dR},s_{R}}^{\alpha_{(1)}} 
			( h_{(2)}a  \oo h_{(5)}b \oo c_{(2)}S(h_{(3)}) \oo d_{(1)}S(h_{(4)}) 
			\\
			&\qquad\qquad\qquad\qquad\qquad\qquad\qquad\qquad\oo h_{(1)}e_{(2)} \oo h_{(6)} f_{(1)} \oo g_{(1)} \oo k_{(1)} \oo m_{(1)} \oo n_{(1)} )
			\\&
			\overset{\ref{fig:facetransparentdefect}}=
			\langle \alpha ,h_{(10)} f_{(2)}n_{(2)}m_{(2)}d_{(2)}S(h_{(6)})h_{(5)} S(c_{(1)})k_{(2)}g_{(2)}S(e_{(1)}) S(h_{(1)}) \rangle 
			\\
			&\qquad h_{(3)}a  \oo h_{(8)}b \oo c_{(2)}S(h_{(4)}) \oo d_{(1)}S(h_{(7)}) \oo h_{(2)}e_{(2)} \oo h_{(9)} f_{(1)} \oo g_{(1)} \oo k_{(1)} \oo m_{(1)} \oo n_{(1)}
			\\
			&\;=\langle \alpha ,h_{(8)} f_{(2)}n_{(2)}m_{(2)}d_{(2)}S(c_{(1)})k_{(2)}g_{(2)}S(e_{(1)}) S(h_{(1)}) \rangle 
			\\
			&\qquad h_{(3)}a  \oo h_{(6)}b \oo c_{(2)}S(h_{(4)}) \oo d_{(1)}S(h_{(5)}) \oo h_{(2)}e_{(2)} \oo h_{(7)} f_{(1)} \oo g_{(1)} \oo k_{(1)} \oo m_{(1)} \oo n_{(1)}
		\end{align*}

	Applying the map $\mathcal{R}$ to this expression yields 
	\begin{align}
		&\mathcal{R}\circ BA_{d,s_{L},s_{R}}^{t_{(1)} \oo t_{(2)}} \left( a \oo b \oo c \oo d \oo e \oo f \oo g  \oo k \oo  m \oo n\right)\nonumber
		\\ \nonumber
			&\;=
			\langle \alpha ,h_{(8)} f_{(2)}n_{(2)}m_{(2)}d_{(2)}S(c_{(1)})k_{(2)}g_{(2)}S(e_{(1)}) S(h_{(1)}) \rangle  
			\cdot\langle \smallint ,  h_{(3)}a  S( h_{(6)}b)  \rangle 
			\\ \nonumber
			&\qquad
			\langle \smallint ,   c_{(2)}S(h_{(4)}) h_{(5)}S(d_{(1)}) \rangle 
			  h_{(2)}e_{(2)} \oo h_{(7)} f_{(1)} \oo g_{(1)} \oo k_{(1)} \oo m_{(1)} \oo n_{(1)}
			  \\ \nonumber
			&\stackrel{(*)}=
			\langle \alpha ,h_{(6)} f_{(2)}n_{(2)}m_{(2)}d_{(2)}S(c_{(1)})k_{(2)}g_{(2)}S(e_{(1)}) S(h_{(1)}) \rangle  
			\cdot\langle \smallint ,  h_{(3)}a S(b) S( h_{(4)})  \rangle 
			\\ \nonumber
			&\qquad\langle \smallint , c_{(2)}S(d_{(1)}) \rangle 
			  h_{(2)}e_{(2)} \oo h_{(5)} f_{(1)} \oo g_{(1)} \oo k_{(1)} \oo m_{(1)} \oo n_{(1)}
			  \\ \nonumber
			&\stackrel{(**)}=
			\langle \alpha ,h_{(4)} f_{(2)}n_{(2)}m_{(2)}d_{(2)}S(c_{(1)})k_{(2)}g_{(2)}S(e_{(1)}) S(h_{(1)}) \rangle  
			\cdot\langle \smallint ,  a S(b)  \rangle 
			\\ \nonumber
			&\qquad\langle \smallint , c_{(2)}S(d_{(1)}) \rangle 
			  h_{(2)}e_{(2)} \oo h_{(3)} f_{(1)} \oo g_{(1)} \oo k_{(1)} \oo m_{(1)} \oo n_{(1)}
			  \\ \nonumber
			&\stackrel{(***)}=
			\langle \alpha ,h_{(4)} f_{(2)}n_{(2)}m_{(2)}k_{(2)}g_{(2)}S(e_{(1)}) S(h_{(1)}) \rangle  
			\cdot\langle \smallint ,  a S(b)  \rangle 
			\\
			&\qquad\langle \smallint , cS(d) \rangle 
			  h_{(2)}e_{(2)} \oo h_{(3)} f_{(1)} \oo g_{(1)} \oo k_{(1)} \oo m_{(1)} \oo n_{(1)}.
			  \label{eq:RemoveDefectVertexFaceOp}
	\end{align}
Here, we used in $(*)$	 the defining property of the antipode $S$, in $(**)$ used the cyclicity property
		$\langle \smallint , pq \rangle = \langle \smallint , qp \rangle $
	of the Haar integral $\smallint$ and the properties of the antipode, and in $(***)$
	 the identity
	$
		\langle \smallint , p_{(2)} \rangle  p_{(1)} = \langle \smallint , p \rangle 1
	$
	for the Haar integral $\smallint$ applied to $p=cS(d)$.

The result of applying the operator $BA_s^t\circ \mathcal R$ instead is given in
 Figure~\ref{fig:facevertexbulk}. Comparing~\eqref{eq:RemoveDefectVertexFaceOp} to this result we find both terms to be identical. This shows that $\mathcal{R}$ is a $D(H)$-module homomorphism in this example.	
 The proof for a pair of defect sites in a general graph is analogous.
 In the general case, we may have a different number of edges at the vertex and the two faces of $s_{L}$ and $s_{R}$. Still, we have two defect edges at the vertex which are still labeled with elements $a \oo b, c \oo d\in H \oo H$. Applying $BA_{d,s_{L},s_{R}}^{t_{(1)} \oo t_{(2)}}$ we obtain the following term
 \begin{align*}
	 &BA_{d,s_{L},s_{R}}^{t_{(1)} \oo t_{(2)}} \left( a \oo b \oo c \oo d \oo \cdots \right)
	 \\&\quad=\langle \alpha , h_{(max)} \cdots d_{(2)}S(c_{(1)})\cdots S(h_{(1)}) \rangle 
	 \cdot h_{(n)}a \oo h_{(n+3)}b \oo c_{(2)}S(h_{(n+1)}) \oo d_{(1)}S(h_{(n+2)}) \oo \cdots
 \end{align*}
 where $n\in \mathbb{N}$ depends on the number of edges at the vertex of $(s_{L},s_{R})$ and $\cdots$ stands for terms coming from the other edges at the faces and vertices. 
 Applying $\mathcal{R}$ and proceeding analogously to the example, the terms coming from the defect edges cancel. 
 The result again coincides with the one obtained by computing $BA_s^t\circ \mathcal R$.
	\end{proof}
	
	In this article we have constructed a Kitaev model with topological boundaries and defects which satisfies the axioms~(D\ref{D1}) to~(T\ref{T2}) from Section~\ref{section:TranslationKitaev}, that are the counterparts of conditions postulated in~\cite{FSV} for a Turaev-Viro TQFT with topological boundaries and defects. 
	The vertex and face operators at a site define a representation of a Drinfeld double $D(H)$ on the extended space $\HSpace$. 
	This in turn allows us to define excitations and we can generate, move, fuse and braid these excitations by using (twisted) holonomies and the transport operator. 
	The rules governing these operations are the Hopf algebraic counterpart of the categorical rules for Turaev-Viro TQFTs from~\cite{FSV}.
	The last result shows how we regain the Kitaev model without defects from a model that only has transparent defects.
	
	Our construction suggests multiple questions for further research. The relation between the model with transparent defects and the model without defects suggests a definition for the protected space of our model. It is plausible that this would define a topological invariant and coincide with  the one from the Kitaev model without defects for a model with only transparent defects.  It would also be interesting to give an additional extension of our model with codimension-two defects between different defect lines and to investigate the passage of excitations through defect lines. 

	It would also be interesting to compare our models with Kitaev models with other, not necessarily topological, types of defects such as the ones in~\cite{BD,KK} or the defects based on bicomodule algebras in~\cite{K}. While some of these models admit more general defect data, the interaction of defects with excitations is not investigated there, and the works  \cite{KK,K} do not derive transport, fusion or braiding operators that allow one to investigate the behavior of excitations. 
	
	It would also be interesting to compare our construction with models inspired by TQFTs with defects that investigate mapping class group action such as 
~\cite{FSS}.

\newpage

\end{document}